\pdfoutput=1
\documentclass[11pt,a4paper,reqno]{amsart}

\usepackage[british]{babel}

\usepackage[DIV=9,oneside,BCOR=0mm]{typearea}
\usepackage{microtype}
\usepackage[T1]{fontenc}
\usepackage{setspace}
\usepackage{amssymb}
\usepackage{amsmath}
\usepackage{amsthm}
\usepackage{enumitem}
\usepackage{mathrsfs}
\usepackage[colorlinks,citecolor=blue,urlcolor=blue,linkcolor=blue,linktocpage]{hyperref}
\usepackage{mathrsfs}
\usepackage{mathtools}
\usepackage{xcolor}
\usepackage{xfrac}

\newtheorem{innercustomgeneric}{\customgenericname}
\providecommand{\customgenericname}{}
\newcommand{\newcustomtheorem}[2]{%
	\newenvironment{#1}[1]
	{%
		\renewcommand\customgenericname{#2}%
		\renewcommand\theinnercustomgeneric{##1}%
		\innercustomgeneric
	}
	{\endinnercustomgeneric}
}

\newcustomtheorem{customthm}{Theorem}
\newcustomtheorem{customcor}{Corollary}

\newtheorem{thm}{Theorem}[section]

\newtheorem{cor}[thm]{Corollary}
\newtheorem{lem}[thm]{Lemma}
\newtheorem{prop}[thm]{Proposition}

\theoremstyle{definition}

\newtheorem{exmpl}[thm]{Example}

\newtheorem{definition}[thm]{Definition}

\newtheorem{remark}[thm]{Remark}

\renewcommand{\epsilon}{\varepsilon}
\renewcommand{\phi}{\varphi}
\newcommand{\defeq}{\mathrel{\mathop:}=}                         
\newcommand{\eqdef}{\mathrel{\mathopen={\mathclose:}}}

\newcommand{\less}{\mathrel{\leq_{\rm{ess}}}}   

\renewcommand{\Re}{\operatorname{Re}}

\DeclareMathOperator{\Fix}{Fix}
\DeclareMathOperator{\dist}{dist}
\DeclareMathOperator{\Sym}{Sym}
\DeclareMathOperator{\Alt}{Alt}
\DeclareMathOperator{\free}{F}
\DeclareMathOperator{\cf}{cf}
\DeclareMathOperator{\scf}{scf}
\DeclareMathOperator{\End}{End}
\DeclareMathOperator{\Hom}{Hom}
\DeclareMathOperator{\M}{M}
\DeclareMathOperator{\I}{I}
\DeclareMathOperator{\IM}{I_{mo}}
\DeclareMathOperator{\IME}{I_{mo}^{\ast}}
\DeclareMathOperator{\lat}{L}

\DeclareMathOperator{\E}{E}
\DeclareMathOperator{\Emo}{E_{mo}}
\DeclareMathOperator{\U}{U}
\DeclareMathOperator{\GLM}{GL_{mo}}
\DeclareMathOperator{\GL}{GL}
\DeclareMathOperator{\EL}{EL}
\DeclareMathOperator{\cent}{Z}
\DeclareMathOperator{\Cl}{Cl}

\DeclareMathOperator{\im}{im}

\DeclareMathOperator{\C}{\mathbb{C}}
\DeclareMathOperator{\R}{\mathbb{R}}

\DeclareMathOperator{\Z}{\mathbb{Z}}
\DeclareMathOperator{\N}{\mathbb{N}}

\DeclareMathOperator{\Pow}{\mathcal{P}}
\DeclareMathOperator{\Pfin}{\mathcal{P}_{fin}}
\DeclareMathOperator{\Neigh}{\mathcal{U}}
\DeclareMathOperator{\Sub}{Sub}

\makeatletter
\def\moverlay{\mathpalette\mov@rlay}
\def\mov@rlay#1#2{\leavevmode\vtop{%
		\baselineskip\z@skip \lineskiplimit-\maxdimen
		\ialign{\hfil$\m@th#1##$\hfil\cr#2\crcr}}}
\newcommand{\charfusion}[3][\mathord]{
	#1{\ifx#1\mathop\vphantom{#2}\fi
		\mathpalette\mov@rlay{#2\cr#3}
	}
	\ifx#1\mathop\expandafter\displaylimits\fi}
\makeatother

\newcommand{\cupdot}{\charfusion[\mathbin]{\cup}{\cdot}}
\newcommand{\bigcupdot}{\charfusion[\mathop]{\bigcup}{\boldsymbol{\cdot}}}

\begin{document}
%%%%%%%%%%%%%%%%%%%%%%%%%%%%%%%%

\setlist{noitemsep}

\author{Friedrich Martin Schneider}
\address{F.M.~Schneider, Institute of Discrete Mathematics and Algebra, TU Bergakademie Freiberg, 09596 Freiberg, Germany}
\email{martin.schneider@math.tu-freiberg.de}
\thanks{This research is funded by the Deutsche Forschungsgemeinschaft (DFG, German Research Foundation) -- Projektnummer 561178190}

\title[On unit groups of purely infinite rings]{On unit groups of purely infinite rings}
\date{\today}

\begin{abstract} 
	We prove that the unit group of any purely infinite, regular, right self-injective ring admits a universal sequence of words (thus has strong distortion), is minimally almost periodic, has finite involution width, is verbally simple and verbally elliptic, and has uncountable uniform strong cofinality. These findings are consequences of more refined structural results, some of which provide concrete cardinal estimates involving the Goodearl--Boyle dimension of the ring. A central ingredient in our results is a comprehensive study of bounded normal generation, in particular with respect to certain classes of involutions. Along the way, we develop some abstract methods and concepts pertaining to geometric group theory.
\end{abstract}

\subjclass[2020]{16D50, 16E50, 20E45, 20F65}

\keywords{Regular ring, self-injectivity, unit group, bounded normal generation, verbal simplicity, width, minimal almost periodicity, Bergman property, cofinality.}

\maketitle

\allowdisplaybreaks

%%%%%%%%%%%%%%%%%%%%%%%%%%%%%%%%%%%%%%%%%%
%%%%%%%%%%%%%%%%%%%%%%%%%%%%%%%%%%%%%%%%%%

\tableofcontents

\section{Introduction}

This work is a contribution to the understanding of unit groups of rings. Every ring $R$ embeds naturally into its maximal right quotient ring~\cite[\S2.C]{GoodearlRingTheory}, which is right self-injective and regular if $R$ is right non-singular~\cite[Corollary~2.31]{GoodearlRingTheory}. Furthermore, every regular, right self-injective ring is isomorphic to a direct product of a directly finite and a purely infinite ring~\cite[Proposition~10.21]{GoodearlBook}, and such an isomorphism in turn gives rise to a direct product decomposition of the associated unit group. The present manuscript focuses on the latter factor in this decomposition, i.e., it studies unit groups of purely infinite, regular, right self-injective rings. Our results pertain to several different aspects of geometric group theory.

One of our main insights offers a deeper understanding of conjugacy classes and the phenomenon of bounded normal generation in the considered unit groups. A regular, right self-injective ring $R$ is purely infinite if and only if $R$ contains a \emph{moiety}\footnote{Our choice of terminology is inspired by an analogous set-theoretic notion (cf.~Section~\ref{section:moieties}).}, i.e., an idempotent element $e \in R$ such that ${eR_{R}} \cong {R_{R}} \cong {(1-e)R_{R}}$ (Proposition~\ref{proposition:pure.infinity.via.moieties}). We analyze elements of the unit group $\GL(R)$ based on their behavior with respect to such moieties (Definition~\ref{definition:moietious.involutions}) and, specifically, study the set $\GLM(R)$ of those units $g \in \GL(R)$ which satisfy $e \perp {geg^{-1}}$ for some moiety $e \in R$. Building on ideas of Menal and Moncasi~\cite{MenalMoncasi} (see Section~\ref{section:matrix.decomposition}), we establish the following.

\begin{customthm}{A}[Theorem~\ref{theorem:generation.by.moietious.units}]\label{theorem:a} Let $R$ be a regular, right self-injective ring. Then, for every $g \in \GLM(R)$, \begin{displaymath}
	\GL(R) \, = \, \Cl_{\GL(R)}\!\left( \left\{ g,g^{-1}\right\}\right)^{128} .
\end{displaymath} \end{customthm}

One application of Theorem~\ref{theorem:a} yields minimal almost periodicity (Corollary~\ref{corollary:unit.group.minap}), even a qualitative version of this property (Proposition~\ref{proposition:uniformly.bounded.normal.generation.over.large.subgroup}), for the considered family of unit groups. The insights gained in the course of the proof of Theorem~\ref{theorem:a}, in particular those concerning a certain type of involutions, have further structural ramifications, as outlined below.

Menal and Moncasi have shown that the unit group of any purely infinite, regular, right self-injective ring is perfect~\cite[Corollary~2.8]{MenalMoncasi}. We strengthen and extend this result through Theorem~\ref{theorem:b}. Given a group $G$ and considering the free group $\free (X)$ over a set $X$, let us recall that any map $g \in G^{X}$ naturally extends to a unique homomorphism $\free (X) \to G, \, w \mapsto w(g)$, and define \begin{displaymath}
	w(G) \, \defeq \, \left\{ w(g) \left\vert \, g \in G^{X} \right\}\! \right. \qquad (w \in \free(X)) .
\end{displaymath} A group $G$ is said to be \emph{verbally simple} if, for every $w \in \free (\N)$, the subgroup\footnote{Note that $\langle w(G) \rangle \unlhd G$, as $w(G)$ is closed under conjugation in $G$.} $\langle w(G) \rangle$ equals $G$ or $\{ 1\}$. Furthermore, a group $G$ is called \emph{verbally elliptic}~\cite[\S1.1, p.~2]{segal} if, for every $w \in \free(\N)$, there exists $m \in \N$ such that \begin{displaymath}
	\langle w(G) \rangle \, = \, \left( w(G) \cup w(G)^{-1} \right)^{m} .
\end{displaymath} Combining our own structural results with work of Larsen and Shalev~\cite{LarsenShalev08,LarsenShalev09}, we establish the following theorem.
	
\begin{customthm}{B}[Theorem~\ref{theorem:verbally.elliptic}]\label{theorem:b} Let $R$ be a purely infinite, regular, right self-injective ring. For every $w \in \free(\N)\setminus \{ 1 \}$, \begin{displaymath}
		\GL(R) \, = \, w(\GL(R))^{128} .
\end{displaymath} In particular, $\GL(R)$ is verbally simple and verbally elliptic. \end{customthm}

Our work provides further information concerning word images in the considered unit groups. Recall that a \emph{universal sequence} for a group $G$ is a sequence $w \in \free(m)^{\N}$ with $m \in \N$ such that \begin{displaymath}
	\forall g \in G^{\N} \ \exists h \in G^{m} \ \forall n \in \N \colon \qquad g_{n} \, = \, w_{n}(h) .
\end{displaymath} This notion has received substantial interest in recent years (see~\cite{HydeJonusasMitchellPeresse} for references). The existence of a universal sequence has non-trivial ramifications for a group, notably \emph{strong distortion}, as discussed in Section~\ref{section:strong.distortion.and.cofinality}. Revisiting the work of Galvin on infinite symmetric groups~\cite{galvin}, we isolate an abstract method for establishing the existence of universal sequences using wreath products (Lemma~\ref{lemma:galvin}\ref{lemma:galvin.2}). In combination with additional ring-theoretic insights, this criterion is then used to prove the following.

\begin{customthm}{C}[Theorem~\ref{theorem:universal.sequence}]\label{theorem:c} Let $R$ be a purely infinite, regular, right self-injective ring. Then $\GL(R)$ admits a universal sequence. In particular, $\GL(R)$ has strong distortion. \end{customthm}

Strong distortion of a group implies \emph{uncountable strong cofinality} and thus \emph{Bergman's property} (see Remark~\ref{remark:strong.distortion.implies.uscf}), phenomena originally discovered in infinite symmetric groups by Bergman~\cite{bergman} and subsequently found in many other natural examples (see, e.g.,~\cite{DrosteGoebel,DrosteHolland,tolstykh,TolstykhFree,Cornulier06,DrosteHollandUlbrich,KechrisRosendal,RicardRosendal,dowerk}). In particular, the groups considered in Theorem~\ref{theorem:c} possess these properties (Corollary~\ref{corollary:scf.for.rings}), which readily strengthens and generalizes a result by Tolstykh~\cite[Theorem~2.1]{tolstykh} about endomorphism rings of infinite-dimensional right vector spaces over division rings.

Exhibiting a proper strengthening of the conclusion pointed out in the preceding paragraph, we introduce the following cardinal invariant for groups (Definition~\ref{definition:uniform.strong.cofinality}): given any positive integer $k$, we define the \emph{$k$-strong cofinality} $\scf_{k}(G)$ of an infinite group $G$ to be the smallest cardinality of a directed subset $\mathcal{D}$ of $(\Pow(G),{\subseteq})$ such that $\bigcup \mathcal{D} = G \notin \{ D^{k} \mid D \in \mathcal{D} \}$. This definition is inspired by and modeled after a uniform version of uncountable strong cofinality studied in a number of earlier works~\cite{MillerThesis,KechrisRosendal,RicardRosendal,rosendal,dowerk}. We provide an abstract criterion for proving lower bounds for uniform strong cofinalities (Theorem~\ref{theorem:abstract.2}). This method is then applied to the unit group of a regular, right self-injective ring $R$ to establish the following connection with the ring's \emph{Goodearl--Boyle dimension} $\mu(R)$ (Definition~\ref{definition:goodearl.boyle}).

\begin{customthm}{D}[Theorem~\ref{theorem:uniform.bergman.unit.group}]\label{theorem:d} Let $R$ be a non-zero, purely infinite, regular, right self-injective ring. Then \begin{displaymath}
	\aleph_{0} \, < \, \mu(R) \, \leq \, \scf_{6144}(\GL(R)) .
\end{displaymath} \end{customthm}

The conclusion of Theorem~\ref{theorem:d} does not merely imply uncountable strong cofinality, but also entails a representation-theoretic rigidity phenomenon related to Kazhdan's property $(T)$: as established in Proposition~\ref{proposition:restricted.kazhdan.property}, if a group $G$ satisfies $\scf_{k}(G) > \alpha$ for an infinite cardinal $\alpha$ and some positive integer $k$, then $G$ has \emph{property $(T_{\alpha})$}, i.e., every unitary representation of $G$ admitting an almost invariant net of unit vectors of cardinality at most $\alpha$ must actually have an invariant unit vector (Definition~\ref{definition:restricted.property.t}). On the other hand, infinite groups with Bergman's property cannot be finitely generated, thus fail to have property $(T)$. Since $R \cong \M_{n}(R)$ and thus $\GL(R) \cong \GL_{n}(R)$ for every purely infinite, regular, right self-injective ring $R$ and every positive integer $n$ (cf.~Theorem~\ref{theorem:purely.infinite}), the resulting Corollary~\ref{corollary:restricted.kazhdan.property} should be contrasted with the work of Ershov and Jaikin-Zapirain~\cite[Theorem~1.1]{ErshovJaikinZapirain} establishing property $(T)$ for the matrix group $\EL_{n}(S) \leq \GL_{n}(S)$ generated by the set of elementary transvections over any finitely generated unital ring $S$ and any integer $n \geq 3$.

This article is organized as follows. Supplying some background concerning several rigidity phenomena in infinite groups, we revisit an approach to minimal almost periodicity due to Hart and Kunen in Section~\ref{section:minimal.almost.periodicity}, isolate an abstract criterion for the existence of universal sequences inspired by work of Galvin in Section~\ref{section:word.images}, and discuss some topological ramifications of uncountable strong cofinality in Section~\ref{section:strong.distortion.and.cofinality}. In Section~\ref{section:uniform.strong.cofinality} we introduce a family of cardinal invariants for groups, namely uniform strong cofinalities, for which we establish structural lower bounds in Section~\ref{section:general.criterion} as well as a connection with a family of relaxations of Kazhdan's property $(T)$ in Section~\ref{section:kazhdan}. The subsequent Section~\ref{section:idempotents} marks the beginning of our study of unit groups of rings, starting with several basic facts about conjugacy of idempotents. In Section~\ref{section:strcuture.theory}, we recollect some relevant material from the structure theory of regular, right self-injective rings. In Section~\ref{section:matrix.decomposition}, we revisit and refine some work by Menal and Moncasi concerning the decomposition of unit groups of matrix rings. In Section~\ref{section:moieties}, we turn to the study of moieties and several classes of units defined by their interaction with such. The following Section~\ref{section:essential.embeddings} revolves around a particular construction of embeddings and contains the proof of Theorem~\ref{theorem:c}. In Section~\ref{section:bng}, we investigate bounded normal generation with respect to several classes of units and provide proofs for Theorem~\ref{theorem:a} and Theorem~\ref{theorem:b}. The final Section~\ref{section:goodearl.boyle.dimension} is dedicated to a discussion of Goodearl--Boyle dimension, leading to the proof of Theorem~\ref{theorem:d} and a number of corollaries.

\section{Minimal almost periodicity}\label{section:minimal.almost.periodicity}

A group $G$ is said to be \emph{minimally almost periodic} if every homomorphism from $G$ to a compact Hausdorff topological group is trivial. The present section reviews and extends an approach to minimal almost periodicity due to Hart and Kunen~\cite[Section~5]{HartKunen}. The following definition is nothing but a slight variation on their notion of \emph{badness}~\cite[Definition~5.3]{HartKunen}. Here and throughout the entire manuscript, if $G$ is a group, then we let \begin{displaymath}
	\Cl_{G}(S) \, \defeq \, \left. \! \left\{ gsg^{-1} \, \right\vert s \in S, \, g \in G \right\} 
\end{displaymath} for any $S \subseteq G$ and abbreviate $\Cl_{G}(g) \defeq \Cl_{G}(\{ g \})$ for any $g \in G$.

\begin{definition} Let $G$ be a group and let $T \subseteq G$. Then $G$ is said to have \emph{uniformly bounded normal generation over} $T$ if there exists $n \in \N$ such that \begin{displaymath}
	\forall g \in T\setminus \{ 1 \} \colon \qquad G \, = \, \Cl_{G}\!\left( \left\{ g,g^{-1}\right\}\right)^{n} .
\end{displaymath}\end{definition}

We assume familiarity with the notion of a \emph{uniform space}~\cite[II, \S1.1]{bourbakitop1}. Given a topological group $G$, we let $\Neigh(G)$ denote the neighborhood filter at its neutral element, and we view $G$ as a uniform space endowed with its \emph{right uniformity}, i.e., the one generated by the basic entourages of the form \begin{displaymath}
	U_{\Rsh} \, \defeq \, \left\{ (x,y) \in G \times G \left\vert \, xy^{-1} \in U \right\} \! \right. \qquad (U \in \Neigh(G)) .
\end{displaymath} Moreover, recall that a topological group $G$ is said to have \emph{small invariant neighborhoods} if $\Neigh(G)$ admits a filter base consisting of conjugation-invariant sets, i.e., \begin{displaymath}
	\forall U \in \Neigh(G) \ \exists V \in \Neigh(G) \colon \qquad (V \subseteq U) \, \wedge \, \left(\forall g \in G \colon \, gVg^{-1} = V\right) .
\end{displaymath} 

\begin{lem}\label{lemma:rigidity.for.SIN.reps} Let $G$ be a group and let $T \subseteq G$ be such that $G$ has uniformly bounded normal generation over $TT^{-1}$. Moreover, let $H$ be a Hausdorff topological group with small invariant neighborhoods and let $\pi \colon G \to H$ be a non-trivial homomorphism. Then $\pi\vert_{T}$ is injective and $\pi(T)$ is uniformly discrete in $H$. \end{lem}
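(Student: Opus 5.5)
Fix $n \in \N$ witnessing uniformly bounded normal generation over $TT^{-1}$, so $G = \Cl_{G}(\{g,g^{-1}\})^{n}$ for every $g \in TT^{-1}\setminus\{1\}$. Since $\pi$ is non-trivial, pick $x \in G$ with $\pi(x) \neq 1$. As $H$ is Hausdorff, there is an open $U \in \Neigh(H)$ with $\pi(x) \notin U$. Using continuity of multiplication, choose $W \in \Neigh(H)$ with $W^{n} \subseteq U$. Then use small invariant neighborhoods to get a conjugation-invariant $V \in \Neigh(H)$ with $V \subseteq W$ and, after intersecting with $V^{-1}$ (which is again conjugation-invariant), $V = V^{-1}$. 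Hence $V^{n} \subseteq U$.

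The key claim is that $\pi(s)\pi(t)^{-1} \notin V$ for all distinct $s,t \in T$. Suppose instead that $g \defeq st^{-1} \neq 1$ lies in $TT^{-1}$ and $\pi(g) \in V$. Then $\pi(g^{-1}) \in V^{-1} = V$. For any $h \in G$ we have $\pi(hg^{\pm1}h^{-1}) = \pi(h)\pi(g)^{\pm1}\pi(h)^{-1} \in V$ by conjugation invariance of $V$, so $\pi(\Cl_{G}(\{g,g^{-1}\})) \subseteq V$. Uniformly bounded normal generation then gives $\pi(G) = \pi(\Cl_{G}(\{g,g^{-1}\}))^{n} \subseteq V^{n} \subseteq U$. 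In particular $\pi(x) \in U$, which contradicts the choice of $U$.

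Both conclusions follow from the claim. Injectivity: if $s \neq t$ in $T$ but $\pi(s)=\pi(t)$, then $\pi(s)\pi(t)^{-1} = 1 \in V$, which the claim rules out. Uniform discreteness with respect to the right uniformity: the basic entourage $V_{\Rsh}$ satisfies $(\pi(s),\pi(t)) \in V_{\Rsh}$ if and only if $\pi(s)\pi(t)^{-1} \in V$, and by the claim this happens only when $s = t$. So $V_{\Rsh} \cap (\pi(T)\times\pi(T))$ is the diagonal, which is exactly uniform discreteness.

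There is no serious obstacle. The one point requiring care is arranging that $V$ is simultaneously symmetric and conjugation-invariant with $V^{n} \subseteq U$, so that the whole normal-generation product stays inside $U$. This is handled by choosing $W$ first via continuity of multiplication and then shrinking to an invariant symmetric $V \subseteq W$.
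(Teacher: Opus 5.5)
Your proof is correct and follows essentially the same route as the paper: choose $U \in \Neigh(H)$ with $\pi(G) \nsubseteq U$, shrink to a symmetric conjugation-invariant $V$ with $V^{n} \subseteq U$, and note that $\pi(s)\pi(t)^{-1} \in V$ for distinct $s,t \in T$ would force $\pi(G) \subseteq V^{n} \subseteq U$. Your explicit construction of $V$ (first $W$ with $W^{n} \subseteq U$, then an invariant $V \subseteq W$, then $V \cap V^{-1}$) simply spells out a step the paper carries out in one line.
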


\begin{proof} Let $n \in \N$ be such that $G = \Cl_{G}\!\left( \left\{ g,g^{-1}\right\}\right)^{n}$ for every $g \in TT^{-1}\setminus \{ 1 \}$. Since $H$ is Hausdorff, there exists $U \in \Neigh(H)$ such that $\pi(G) \nsubseteq U$. As $H$ is a topological group with small invariant neighborhoods, we find some conjugation-invariant $V \in \Neigh(H)$ such that $V^{-1} = V$ and $V^{n} \subseteq U$. Now, if there exist $s,t \in T$ such that $s \ne t$ and $\pi(s)\pi(t)^{-1} \in V$, then also $\pi(t)\pi(s)^{-1} \in V^{-1} = V$ and hence \begin{align*}
	\pi(G) \, = \, \pi\!\left( \Cl_{G}\!\left( \left\{ st^{-1},ts^{-1}\right\}\right)^{n} \right) \, &\subseteq \, \Cl_{H}\!\left( \left\{ \pi(s)\pi(t)^{-1},\pi(t)\pi(s)^{-1}\right\}\right)^{n} \\
		& \subseteq \, \Cl_{H}(V)^{n} \, = \, V^{n} \, \subseteq \, U ,
\end{align*} which contradicts our choice of $U$. Consequently, $\pi(s)\pi(t)^{-1} \notin V$ for any two distinct $s,t \in T$, i.e., $\pi\vert_{T}$ is injective and $\pi(T)$ is $V_{\Rsh}$-discrete in $H$. \end{proof}

Extending the notion for topological groups~\cite[\S5.1, p.~286]{AT} in an obvious way, a uniform space $X$ will be called \emph{$\alpha$-narrow} with respect to an infinite cardinal $\alpha$ if, for every entourage $E$ of $X$, there exists $A \subseteq X$ such that $X=\bigcup_{a \in A} \{ x \in X \mid (x,a) \in E \}$ and $\vert A \vert \leq \alpha$. A standard application of Zorn's lemma proves the two well-known basic facts recorded in the following remark.

\begin{remark}\label{remark:narrow} Let $X$ be a uniform space. \begin{enumerate}
	\item\label{remark:narrow.1} $X$ is precompact if and only if every uniformly discrete subset of $X$ is finite.
	\item\label{remark:narrow.2} Let $\alpha$ be an infinite cardinal. Then $X$ is $\alpha$-narrow if and only if every uniformly discrete subset of $X$ has cardinality at most $\alpha$.
\end{enumerate} \end{remark}

\begin{prop}[cf.~{\cite[Corollary~5.7]{HartKunen}}]\label{proposition:rigidity.for.SIN.reps} Let $G$ be a group. Suppose that $G$ has uniformly bounded normal generation over~$TT^{-1}$ for some subset $T \subseteq G$. \begin{enumerate}
	\item\label{proposition:rigidity.for.SIN.reps.1} If $T$ is infinite, then $G$ is minimally almost periodic.
	\item\label{proposition:rigidity.for.SIN.reps.2} If $\alpha$ is an infinite cardinal less than $\vert T \vert$, then every homomorphism from $G$ to any $\alpha$-narrow Hausdorff topological group with small invariant neighborhoods is trivial.
\end{enumerate} \end{prop}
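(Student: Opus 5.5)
Both parts follow from Lemma~\ref{lemma:rigidity.for.SIN.reps} together with Remark~\ref{remark:narrow}; the common idea is to get a contradiction by producing a uniformly discrete subset that is too large.

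For \ref{proposition:rigidity.for.SIN.reps.2}, let $H$ be an $\alpha$-narrow Hausdorff topological group with small invariant neighborhoods, and let $\pi \colon G \to H$ be a homomorphism. Suppose $\pi$ is non-trivial. Then Lemma~\ref{lemma:rigidity.for.SIN.reps} shows that $\pi\vert_{T}$ is injective and that $\pi(T)$ is uniformly discrete in $H$, with $H$ carrying its right uniformity. Hence $\vert \pi(T)\vert = \vert T\vert > \alpha$. On the other hand, $H$ is $\alpha$-narrow, so by Remark~\ref{remark:narrow}\ref{remark:narrow.2} every uniformly discrete subset of $H$ has cardinality at most $\alpha$. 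This is a contradiction, so $\pi$ is trivial.

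For \ref{proposition:rigidity.for.SIN.reps.1}, let $K$ be a compact Hausdorff topological group and let $\pi \colon G \to K$ be a homomorphism. Two standard facts are needed about $K$:
\begin{enumerate}
	\item $K$ has small invariant neighborhoods. Given $U \in \Neigh(K)$, choose $W \in \Neigh(K)$ with $W \subseteq U$, and set $V \defeq \bigcap_{k \in K} kWk^{-1}$. This set is conjugation-invariant and contained in $U$, and it is a neighborhood of the identity by a routine compactness argument using continuity of $(k,x) \mapsto k^{-1}xk$.
	\item $K$ is precompact with respect to its right uniformity.
\end{enumerate}
Now suppose $\pi$ is non-trivial. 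By Lemma~\ref{lemma:rigidity.for.SIN.reps}, $\pi(T)$ is then an infinite uniformly discrete subset of $K$. This contradicts Remark~\ref{remark:narrow}\ref{remark:narrow.1}, so $\pi$ is trivial. Since $K$ and $\pi$ were arbitrary, $G$ is minimally almost periodic.

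The only step needing any care is fact (1), that compact groups have small invariant neighborhoods; it is classical and I would either cite it or include the short compactness argument. Everything else is a direct combination of results stated earlier.
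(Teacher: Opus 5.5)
Your proposal is correct and matches the paper's proof: both parts combine Lemma~\ref{lemma:rigidity.for.SIN.reps} with Remark~\ref{remark:narrow}. For part~(1), the paper likewise relies on compact groups having small invariant neighborhoods (it simply cites this classical fact) and being precompact; your tube-lemma sketch of the former is a valid substitute for that citation.
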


\begin{proof} \ref{proposition:rigidity.for.SIN.reps.1} Since every compact topological group has small invariant neighborhoods~\cite[Corollary~1.8.7]{AT}, this follows from Lemma~\ref{lemma:rigidity.for.SIN.reps} and Remark~\ref{remark:narrow}\ref{remark:narrow.1}.
	
\ref{proposition:rigidity.for.SIN.reps.2} This is a direct consequence of Lemma~\ref{lemma:rigidity.for.SIN.reps} and Remark~\ref{remark:narrow}\ref{remark:narrow.2}. \end{proof}

\section{Word images}\label{section:word.images}

This section provides some relevant background concerning word images and universal sequences of words for groups, including a technical tool (Lemma~\ref{lemma:galvin}) extracted from Galvin's work~\cite{galvin}.

Let $X$ be a set. Then $\free(X)$ will denote the free group over $X$. Given a group~$G$ and recollecting the fact that any map $g \in G^{X}$ naturally extends to a unique homomorphism $\free (X) \to G, \, w \mapsto w(g)$, we define \begin{displaymath}
	w(G) \, \defeq \, \left\{ w(g) \left\vert \, g \in G^{X} \right\}\! \right. \qquad (w \in \free(X)) 
\end{displaymath} as well as \begin{displaymath}
	W(G) \, \defeq \, \bigcup\nolimits \{ w(G) \mid w \in W \} \qquad (W \subseteq \free(X)) .
\end{displaymath} Moreover, if $Y$ is another set, then considering the disjoint union $X \sqcup Y$ along with the natural injections $\iota_{X} \colon X \to \free (X \sqcup Y)$ and $\iota_{Y} \colon Y \to \free (X \sqcup Y)$, we define \begin{displaymath}
	u \otimes v \, \defeq \, u(\iota_{X})v(\iota_{Y}) \, \in \, \free(X \sqcup Y) 
\end{displaymath} for any $u \in \free (X)$ and $v \in \free(Y)$ and \begin{displaymath}
	U \otimes V \, \defeq \, \{ u \otimes v \mid u \in U, \, v \in V \} \, \subseteq \, \free(X \sqcup Y)
\end{displaymath} for any $U \subseteq \free (X)$ and $V \subseteq \free(Y)$. Finally, given $w \in \free (X)$ and $W \subseteq \free(X)$, we put \begin{displaymath}
	w^{\otimes 1} \, \defeq \, w, \qquad \, W^{\otimes 1} \, \defeq \, W
\end{displaymath} and \begin{displaymath}
	w^{\otimes (n+1)} \, \defeq \, w^{\otimes n} \otimes w , \qquad \, W^{\otimes (n+1)} \, \defeq \, W^{\otimes n} \otimes W
\end{displaymath} for any $n \in \N_{>0}$. Let us record the following elementary observations in preparation of the proof of Theorem~\ref{theorem:verbally.elliptic}.

\begin{remark}\label{remark:word.images} Let $X$ be a set, let $W \subseteq \free(X)$, and let $G$ be a group. \begin{enumerate}
	\item\label{remark:word.images.1} $W^{\otimes n}(G) = W(G)^{n}$ for every $n \in \N_{>0}$.
	\item\label{remark:word.images.2} Let $H$ be a group and let $\pi \colon H \to G$ be a homomorphism. Then \begin{displaymath}
		\qquad \pi(W(H)) \, = \, W(\pi(H)) \, \subseteq \, W(G)
	\end{displaymath} and thus \begin{displaymath}
		\qquad \Cl_{G}(\pi(W(H))) \, \subseteq \, \Cl_{G}(W(G)) \, = \, W(G) .
	\end{displaymath} Hence, if $H = W(H)$ and $G = \Cl_{G}(\pi(H))^{n}$ for some $n \in \N$, then~$G = W(G)^{n}$.
\end{enumerate} \end{remark}

We now turn to the study of universal sequences. A \emph{universal sequence}~\cite{MitchellPeresse1} for a group $G$ is an element $w \in \free(m)^{\N}$ such that $m \in \N$ and \begin{displaymath}
	\forall g \in G^{\N} \ \exists h \in G^{m} \ \forall n \in \N \colon \qquad g_{n} \, = \, w_{n}(h) .
\end{displaymath} Universal sequences for groups have been studied extensively in the literature: the interested reader may consult~\cite{MitchellPeresse1,MitchellPeresse2,HydeJonusasMitchellPeresse} and the references therein for a discussion of examples. As noted in~\cite{HydeJonusasMitchellPeresse}, the work of Galvin~\cite{galvin} provides a concrete universal sequence for the symmetric group $\Sym(X)$ of any infinite set $X$. The subsequent Lemma~\ref{lemma:galvin}, a central ingredient in the proof of our Theorem~\ref{theorem:universal.sequence} and the main result of this section, is deeply inspired by the argument in~\cite[Proof of Theorem~3.3]{galvin}.

Let us agree on some convenient notation. Here and throughout the entire article, the set of \emph{involutions} of a group $G$ will be denoted by \begin{displaymath}
	\I(G) \, \defeq \, \left\{ g \in G \left\vert \, g^{2} = 1 \right\} . \right.
\end{displaymath} Furthermore, let us make the following definition, which is relevant only in Lemma~\ref{lemma:galvin}, Lemma~\ref{lemma:farrell.wagoner}, Lemma~\ref{lemma:essential.embedding}, and the proof of Lemma~\ref{lemma:generation.by.moietious.involutions}.

\begin{definition}\label{definition:permutation.groups} Let $G$ be a group and let $X$ be a set. Then we consider the group \begin{displaymath}
	G(X) \, \defeq \, G^{X}\! \rtimes \Sym(X) \, = \, \left\{ (g_{\bullet},g_{\circ}) \left\vert \, g_{\bullet} \in G^{X}\!, \, g_{\circ} \in \Sym(X) \right\} \right.
\end{displaymath} and, for every $Y \subseteq X$, its subgroup\footnote{For any $h \in \Sym(X)$, we define $\Fix(h) \defeq \{ x \in X \mid h(x) = x \}$.} \begin{displaymath}
	G(X,Y) \, \defeq \, \{ g \in G(X) \mid  g_{\bullet}\vert_{X\setminus Y} = 1, \, X \setminus Y \subseteq \Fix(g_{\circ}) \} \, \cong \, G(Y) .
\end{displaymath} Moreover, for any $u \in \I(\Sym(X))$, let \begin{displaymath}
	G(u) \, \defeq \, \left\{ g \in G(X) \left\vert \, g_{\circ} = 1, \, {g_{\bullet}\vert_{\Fix(u)}} = 1, \, {{g_{\bullet}} \circ u} = {g_{\bullet}^{-1}} \right\} . \right.
\end{displaymath} \end{definition}

\begin{lem}[cf.~{\cite[Proof of Theorem~3.3]{galvin}}]\label{lemma:galvin} Let $G$ be a group, let $X$ be an infinite set, let\footnote{Note that $S$ is a subgroup of $G(X)$ isomorphic to $\Sym(X)$.} $S \defeq \{ g \in G(X) \mid g_{\bullet} = 1 \}$. For every $i \in \N$, consider the two-letter words \begin{align*}
	u_{i} \, &\defeq \, a^{-1}ba^{-(2i+3)}b^{-1}a^{2i+3}b^{-1}a^{-(2i+3)}ba^{2i+4}\! \, \in \, \free(\{ a,b \}), \\
	v_{i} \, &\defeq \, u_{i}\!\left(a,b^{2}\right) \, = \, a^{-1}b^{2}a^{-(2i+3)}b^{-2}a^{2i+3}b^{-2}a^{-(2i+3)}b^{2}a^{2i+4}\! \, \in \, \free(\{ a,b \}) .
\end{align*} Let $x_{0} \in X$. Then the following hold. \begin{enumerate}
	\item\label{lemma:galvin.1} There exists $s \in S$ such that, for every $g \in G(X,\{ x_{0}\})^{\N}$, there exists some $t \in G(X,X\setminus \{x_{0}\})$ such that \begin{displaymath}
			\qquad \forall t' \in G(X,\{ x_{0}\}) \ \forall i \in \N \colon \quad u_{i}(s,tt') = g_{i} .
		\end{displaymath}
	\item\label{lemma:galvin.2} Let $H$ be a group, let $\pi \colon G(X) \to H$ be a homomorphism, and let $p \in \I(H)$ and $n \in \N$ be such that \begin{displaymath}
			\qquad H \, = \, (\pi(G(X,\{ x_{0}\}))p\pi(G(X,\{ x_{0}\}))p)^{n}
		\end{displaymath} and $p\pi(G(X,X\setminus \{x_{0}\}))p \subseteq \pi(G(X,\{ x_{0}\}))$. Then\footnote{where $\prod_{j=0}^{-1} \gamma_{j} \defeq 1$ and $\prod_{j=0}^{m} \gamma_{j} \defeq (\prod_{j=0}^{m-1}\gamma_{j})\gamma_{m}$ for $m \in \N$ and $\gamma \in \Gamma^{\N}$ over any group $\Gamma$} \begin{displaymath}
			\qquad w_{i} \, \defeq \, \prod\nolimits_{j=0}^{n-1} v_{2ni+2j}b^{-1}v_{2ni+2j+1}b \, \in \, \free(\{ a,b \}) \, \cong \, \free(2) \qquad (i \in \N)
		\end{displaymath} is a universal sequence for $H$.
\end{enumerate}\end{lem}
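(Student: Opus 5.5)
The plan is to prove \ref{lemma:galvin.1} by an explicit Galvin-type construction and then to derive \ref{lemma:galvin.2} from it by a short algebraic substitution. It helps first to rewrite the words. With $k \defeq 2i+3$ one has
\begin{displaymath}
	u_{i} \, = \, a^{-1}\bigl( b\,(a^{-k}b^{-1}a^{k})\,b^{-1}\,(a^{-k}ba^{k}) \bigr) a ,
\end{displaymath}
which is the $a$-conjugate of the commutator of $b$ with its $a^{k}$-conjugate. Evaluated at $a = s \in S$ and $b = \beta \in G(X)$, the $S$-part and the coordinates interact as follows. Conjugating by $s^{k}$ shifts the coordinate function $\beta_{\bullet}$ along $s$. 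Hence the coordinate of the commutator at a point $x$ is a product of the form $\beta_{x}\,\beta_{s^{k}x}^{\pm1}\cdots$, together with the permutation parts.

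For \ref{lemma:galvin.1} I will choose $s$ as a permutation of $X$ in which $x_{0}$ lies on a bi-infinite orbit $\{ s^{m}(x_{0}) \mid m \in \Z \}$, with any remaining points handled by a fixed auxiliary structure. Given $g \in G(X,\{x_{0}\})^{\N}$, I will construct $t \in G(X,X\setminus\{x_{0}\})$ by placing suitable permutation and coordinate data on the points $s^{m}(x_{0})$ with $m \neq 0$. The aim is that, for each $i$, the commutator $[\beta, s^{-k}\beta^{-1}s^{k}]$ with $k = 2i+3$ and $\beta = tt'$, after conjugation by $s$, reproduces exactly $g_{i}$: its permutation part should be trivial outside $x_{0}$, and its coordinate at $x_{0}$ should equal $(g_{i})_{\bullet}(x_{0})$.

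Since the exponents $2i+3$ are pairwise distinct and odd, the pairs of orbit positions that are linked by the shift differ for different $i$. So $t$ can be built so that each $i$ activates a separate pair of ``slots'' carrying the data of $g_{i}$, while all other contributions cancel. Independence from $t'$ should come from the commutator structure. The element $t'$ only modifies the coordinate at $x_{0}$, and at every point where its contribution appears, one of the two factors of the commutator is trivial there, so it cancels. Arranging these supports is the main obstacle, and I expect the bulk of the work to lie here. I would first try to mimic Galvin's arrangement verbatim, with $G$-valued coordinates added, checking the coordinate products position by position.

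For \ref{lemma:galvin.2}, let $h \in H^{\N}$. By hypothesis each $h_{i}$ can be written as $\prod_{j=0}^{n-1} \pi(g_{2ni+2j})\,p\,\pi(g_{2ni+2j+1})\,p$ with all $g_{m} \in G(X,\{x_{0}\})$. Apply \ref{lemma:galvin.1} to this sequence $g$ to obtain $s$ and $t$, and put $a \defeq \pi(s)$ and $b \defeq y \defeq \pi(t)p$. Then
\begin{displaymath}
	y^{2} \, = \, \pi(t)\,p\pi(t)p \, = \, \pi(tt'')
\end{displaymath}
for some $t'' \in G(X,\{x_{0}\})$, by the assumption $p\pi(G(X,X\setminus\{x_{0}\}))p \subseteq \pi(G(X,\{x_{0}\}))$. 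Therefore
\begin{displaymath}
	v_{m}(a,y) \, = \, u_{m}(\pi(s),\pi(tt'')) \, = \, \pi(g_{m}) ,
\end{displaymath}
using that \ref{lemma:galvin.1} holds for every $t'$. Moreover, $t$ commutes with $g_{m}$ because their supports $X\setminus\{x_{0}\}$ and $\{x_{0}\}$ are disjoint. Hence
\begin{displaymath}
	y^{-1}v_{m}(a,y)\,y \, = \, p\,\pi(t)^{-1}\pi(g_{m})\pi(t)\,p \, = \, p\,\pi(g_{m})\,p .
\end{displaymath}
Multiplying these terms out gives $w_{i}(a,y) = h_{i}$ for every $i \in \N$, so $(w_{i})_{i \in \N}$ is a universal sequence for $H$.
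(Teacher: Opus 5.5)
Your derivation of (2) from (1) is correct and is exactly the paper's argument. With $a=\pi(s)$ and $b=\pi(t)p$, the hypothesis $p\pi(G(X,X\setminus\{x_0\}))p\subseteq\pi(G(X,\{x_0\}))$, together with the fact that (1) holds for \emph{every} $t'$, gives $v_m(\pi(s),\pi(t)p)=\pi(g_m)$. The commutation of $t$ with $g_m$ then gives $(\pi(t)p)^{-1}\pi(g_m)\pi(t)p=p\pi(g_m)p$.

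\textbf{The gap is in (1).} Part (1) is the real content of the lemma, and you never specify $s$ or $t$; you explicitly postpone both the construction and every verification. Moreover, the mechanism you anticipate cannot produce $g_i$: a separate finite ``pair of slots'' for each $i$, with all other contributions cancelling. To see this, let $G$ be abelian. Then $(f,\sigma)\mapsto\sum_{x\in X}f(x)$ is a homomorphism to $G$ on the subgroup of $G(X)$ of elements with finitely supported coordinate function. It vanishes on $s$, and $u_i$ has exponent sum $0$ in $b$. Hence it vanishes on $u_i(s,tt')$ whenever $t_\bullet$ is finitely supported, whereas its value on $g_i$ is $(g_i)_\bullet(x_0)$. (If $t_\circ=1$, every coordinate of $u_i(s,tt')$ is even a commutator in $G$.) So $(g_i)_\bullet(x_0)$ must arise as a telescoping quotient along an \emph{infinite} $t_\circ$-orbit, with $t_\bullet$ infinitely supported. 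This is a swindle-type device, not a cancellation among finitely many orbit positions.

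\textbf{The paper's construction.} It achieves this using a second dimension.
\begin{itemize}
\item Assume $x_0=(-1,0)\in\Z\times\Z\subseteq X$.
\item Let $s_\circ(y,z)=(y+1,z)$ shift every row, and let $t_\circ(0,z)=(0,z+1)$ shift the single column $\{0\}\times\Z$.
\item Put $t_\bullet(2n+3,z)\defeq(g_n)_\bullet(x_0)$ for $n,z\in\N$, and $t_\bullet\defeq 1$ elsewhere.
\end{itemize}
Write $k=2i+3$. The coordinates of $u_i(s,tt')$ then behave as follows.
\begin{itemize}
\item At $(-1,z)$ the coordinate is $t_\bullet(k,z-1)^{-1}t_\bullet(k,z)$. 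This equals $(g_i)_\bullet(x_0)$ for $z=0$ and $1$ otherwise.
\item At the points $(-(k+1),z)$ a direct computation gives $1$.
\item At every other point $(y,z)$ the coordinate is a commutator of the values of $(tt')_\bullet$ at $(y+1,z)$ and $(y+k+1,z)$. These points lie in columns of opposite parity, and $(tt')_\bullet$ is supported in odd columns, so the coordinate is trivial.
\item The permutation part vanishes because $t_\circ$ and $s_\circ^{-k}t_\circ s_\circ^{k}$ are supported on the disjoint columns $0$ and $-k$.
\end{itemize}
Independence of $t'$ is built in. The element $t'$ only alters $(tt')_\bullet$ at $x_0$, which lies in the odd column $-1$. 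Since $k\neq 1$, the two special computations never pass through $x_0$.

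Note that the infinite $t_\circ$-orbit through $s(x_0)=(0,0)$ meets every $s$-orbit exactly once. Your plan places all of $t$ on the $s$-orbit of $x_0$, which gives up exactly this transversality. It is not clear that the required commutations and cancellations can be arranged inside a single $s$-orbit at all.
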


\begin{proof} \ref{lemma:galvin.1} Since $X$ is infinite, we may and will assume without loss of generality that $x_{0} = (-1,0) \in \Z \times \Z \subseteq X$. Then $s \defeq (1,s_{\circ}) \in S$ for \begin{displaymath}
	s_{\circ} \colon \, X \, \longrightarrow \, X, \quad x \, \longmapsto \, \begin{cases}
			\, (y+1,z) & \text{if } x = (y,z) \in \Z \times \Z, \\
			\, x & \text{otherwise}.
		\end{cases}
\end{displaymath} Consider any sequence $g \in G(X,\{ x_{0}\})^{\N}$. Then we obtain a well-defined element $t \defeq (t_{\bullet},t_{\circ}) \in G(X,X\setminus \{x_{0}\})$ by setting \begin{displaymath}
	t_{\bullet} \colon \, X \, \longrightarrow \, G, \quad x \, \longmapsto \, \begin{cases}
			\, (g_{n})_{\bullet}(-1,0) & \text{if } x = (2n+3,z) \text{ for } n \in \N \text{ and } z \in \N, \\
			\, 1 & \text{otherwise}
		\end{cases}
\end{displaymath} and \begin{displaymath}
	t_{\circ} \colon \, X \, \longrightarrow \, X, \quad x \, \longmapsto \, \begin{cases}
			\, (0,z+1) & \text{if } x = (0,z) \in \{ 0 \} \times \Z, \\
			\, x & \text{otherwise}.
		\end{cases}
\end{displaymath} Consider any $t' = (t'_{\bullet},1) \in G(X,\{ x_{0}\})$. Observe that \begin{equation}\label{eq:fixator}
	\Fix((tt')_{\circ}) \, = \, \Fix(t_{\circ}) \, = \, X \setminus (\{ 0 \} \times \Z) 
\end{equation} and \begin{equation}\label{eq:support}
	\{ x \in X \mid (tt')_{\bullet}(x) \ne 1 \} \, \subseteq \, (2\Z + 1) \times \Z .
\end{equation} Now, let $i \in \N$ and consider \begin{align*}
	h \, &\defeq \, u_{i}(s,tt') \, = \, s^{-1}tt's^{-(2i+3)}(tt')^{-1}s^{2i+3}(tt')^{-1}s^{-(2i+3)}tt's^{2i+4} \\
		& = \, s^{-1}tt's^{-(2i+3)}(t')^{-1}t^{-1}s^{2i+3}(t')^{-1}t^{-1}s^{-(2i+3)}tt's^{2i+4} .
\end{align*} We note that $h_{\bullet}(x) = 1 = (g_{i})_{\bullet}(x)$ and $h_{\circ}(x) = x = (g_{i})_{\circ}(x)$ for all $x \in X \setminus (\Z \times \Z)$. Therefore, suppose that $x = (y,z) \in \Z \times \Z$. If $y = -1$, then \begin{align*}
	h_{\bullet}(x) \, &= \, \left(s^{-1}tt's^{-(2i+3)}(t')^{-1}t^{-1}s^{2i+3}(t')^{-1}t^{-1}s^{-(2i+3)}tt's^{2i+4}\right)\!_{\bullet}(-1,z) \\
		&= \, \left(tt's^{-(2i+3)}(t')^{-1}t^{-1}s^{2i+3}(t')^{-1}t^{-1}s^{-(2i+3)}tt's^{2i+4}\right)\!_{\bullet}(0,z) \\
		&= \, \left(t's^{-(2i+3)}(t')^{-1}t^{-1}s^{2i+3}(t')^{-1}t^{-1}s^{-(2i+3)}tt's^{2i+4}\right)\!_{\bullet}(0,z-1) \\
		&= \, \left(s^{-(2i+3)}(t')^{-1}t^{-1}s^{2i+3}(t')^{-1}t^{-1}s^{-(2i+3)}tt's^{2i+4}\right)\!_{\bullet}(0,z-1) \\
		&= \, \left((t')^{-1}t^{-1}s^{2i+3}(t')^{-1}t^{-1}s^{-(2i+3)}tt's^{2i+4}\right)\!_{\bullet}(2i+3,z-1) \\
		&= \, \left(t^{-1}s^{2i+3}(t')^{-1}t^{-1}s^{-(2i+3)}tt's^{2i+4}\right)\!_{\bullet}(2i+3,z-1) \\
		&= \, t_{\bullet}(2i+3,z-1)^{-1} \left(s^{2i+3}(t')^{-1}t^{-1}s^{-(2i+3)}tt's^{2i+4}\right)\!_{\bullet}(2i+3,z-1) \\
		&= \, t_{\bullet}(2i+3,z-1)^{-1} \left((t')^{-1}t^{-1}s^{-(2i+3)}tt's^{2i+4}\right)\!_{\bullet}(0,z-1) \\
		&= \, t_{\bullet}(2i+3,z-1)^{-1} \left(t^{-1}s^{-(2i+3)}tt's^{2i+4}\right)\!_{\bullet}(0,z-1) \\
		&= \, t_{\bullet}(2i+3,z-1)^{-1} \left(s^{-(2i+3)}tt's^{2i+4}\right)\!_{\bullet}(0,z) \\
		&= \, t_{\bullet}(2i+3,z-1)^{-1} \left(tt's^{2i+4}\right)\!_{\bullet}(2i+3,z) \\
		&= \, t_{\bullet}(2i+3,z-1)^{-1} t_{\bullet}(2i+3,z) \left(t's^{2i+4}\right)\!_{\bullet}(2i+3,z) \\
		&= \, t_{\bullet}(2i+3,z-1)^{-1} t_{\bullet}(2i+3,z) \left(s^{2i+4}\right)\!_{\bullet}(2i+3,z) \\
		&= \, t_{\bullet}(2i+3,z-1)^{-1} t_{\bullet}(2i+3,z) \, = \, \begin{cases}
			\, (g_{i})_{\bullet}(-1,0) & \text{if } z = 0, \\
			\, 1 & \text{otherwise}
		\end{cases} \\
		&= \, (g_{i})_{\bullet}(x) ;
\end{align*} if $y = -(2i+4)$, then \begin{align*}
	h_{\bullet}(x) \, &= \, \left(s^{-1}tt's^{-(2i+3)}(t')^{-1}t^{-1}s^{2i+3}(t')^{-1}t^{-1}s^{-(2i+3)}tt's^{2i+4}\right)\!_{\bullet}(-(2i+4),z) \\
		&= \, \left(tt's^{-(2i+3)}(t')^{-1}t^{-1}s^{2i+3}(t')^{-1}t^{-1}s^{-(2i+3)}tt's^{2i+4}\right)\!_{\bullet}(-(2i+3),z) \\
		&= \, \left(t's^{-(2i+3)}(t')^{-1}t^{-1}s^{2i+3}(t')^{-1}t^{-1}s^{-(2i+3)}tt's^{2i+4}\right)\!_{\bullet}(-(2i+3),z) \\
		&\stackrel{i \geq 0}{=} \, \left(s^{-(2i+3)}(t')^{-1}t^{-1}s^{2i+3}(t')^{-1}t^{-1}s^{-(2i+3)}tt's^{2i+4}\right)\!_{\bullet}(-(2i+3),z) \\
		&= \, \left((t')^{-1}t^{-1}s^{2i+3}(t')^{-1}t^{-1}s^{-(2i+3)}tt's^{2i+4}\right)\!_{\bullet}(0,z) \\
		&= \, \left(t^{-1}s^{2i+3}(t')^{-1}t^{-1}s^{-(2i+3)}tt's^{2i+4}\right)\!_{\bullet}(0,z) \\
		&= \, \left(s^{2i+3}(t')^{-1}t^{-1}s^{-(2i+3)}tt's^{2i+4}\right)\!_{\bullet}(0,z+1) \\
		&= \, \left((t')^{-1}t^{-1}s^{-(2i+3)}tt's^{2i+4}\right)\!_{\bullet}(-(2i+3),z+1) \\
		&\stackrel{i \geq 0}{=} \, \left(t^{-1}s^{-(2i+3)}tt's^{2i+4}\right)\!_{\bullet}(-(2i+3),z+1) \\
		&= \, \left(s^{-(2i+3)}tt's^{2i+4}\right)\!_{\bullet}(-(2i+3),z+1) \, = \, \left(tt's^{2i+3}\right)\!_{\bullet}(0,z+1) \\
		&= \, \left(t's^{2i+3}\right)\!_{\bullet}(0,z) \, = \, \left(s^{2i+3}\right)\!_{\bullet}(0,z) \, = \, 1 \, = \, (g_{i})_{\bullet}(x) ;
\end{align*} and if $y \notin \{ -1, -(2i+4) \}$, then \begin{align*}
	h_{\bullet}(x) \, &= \, \left(s^{-1}tt's^{-(2i+3)}(tt')^{-1}s^{2i+3}(tt')^{-1}s^{-(2i+3)}tt's^{2i+4}\right)\!_{\bullet}(y,z) \\
		&= \, \left(tt's^{-(2i+3)}(tt')^{-1}s^{2i+3}(tt')^{-1}s^{-(2i+3)}tt's^{2i+4}\right)\!_{\bullet}(y+1,z) \\
		&\stackrel{\eqref{eq:fixator}}{=} \, (tt')_{\bullet}(y+1,z) \left(s^{-(2i+3)}(tt')^{-1}s^{2i+3}(tt')^{-1}s^{-(2i+3)}tt's^{2i+4}\right)\!_{\bullet}(y+1,z) \\
		&= \, (tt')_{\bullet}(y+1,z) \left((tt')^{-1}s^{2i+3}(tt')^{-1}s^{-(2i+3)}tt's^{2i+4}\right)\!_{\bullet}(y+2i+4,z) \\
		&\stackrel{\eqref{eq:fixator}}{=} \, (tt')_{\bullet}(y+1,z) (tt')_{\bullet}(y+2i+4,z)^{-1} \\
		& \hspace{20mm} \cdot \left(s^{2i+3}(tt')^{-1}s^{-(2i+3)}tt's^{2i+4}\right)\!_{\bullet}(y+2i+4,z) \\
		&= \, (tt')_{\bullet}(y+1,z) (tt')_{\bullet}(y+2i+4,z)^{-1} \left((tt')^{-1}s^{-(2i+3)}tt's^{2i+4}\right)\!_{\bullet}(y+1,z) \\
		&\stackrel{\eqref{eq:fixator}}{=} \, (tt')_{\bullet}(y+1,z) (tt')_{\bullet}(y+2i+4,z)^{-1} (tt')_{\bullet}(y+1,z)^{-1} \\
		& \hspace{20mm} \cdot  \left(s^{-(2i+3)}tt's^{2i+4}\right)\!_{\bullet}(y+1,z) \\
		&= \, (tt')_{\bullet}(y+1,z) (tt')_{\bullet}(y+2i+4,z)^{-1} (tt')_{\bullet}(y+1,z)^{-1} \\
		& \hspace{20mm} \cdot \left(tt's^{2i+4}\right)\!_{\bullet}(y+2i+4,z) \\
		&\stackrel{\eqref{eq:fixator}}{=} \, (tt')_{\bullet}(y+1,z) (tt')_{\bullet}(y+2i+4,z)^{-1} (tt')_{\bullet}(y+1,z)^{-1} (tt')_{\bullet}(y+2i+4,z) \\
		&\stackrel{\eqref{eq:support}}{=} \, \begin{cases}
			\, (tt')_{\bullet}(y+1,z)(tt')_{\bullet}(y+1,z)^{-1} & \text{if } y \in 2\Z, \\
			\, (tt')_{\bullet}(y+2i+4,z)^{-1}(tt')_{\bullet}(y+2i+4,z) & \text{otherwise}
		\end{cases} \\
		&= \, 1 \, = \, (g_{i})_{\bullet}(x) .
\end{align*} That is, $h_{\bullet}(x) = (g_{i})_{\bullet}(x)$. Furthermore, if $y = -1$, then \begin{align*}
	h_{\circ}(x) \, &= \, \left( s_{\circ}^{-1}t_{\circ}s_{\circ}^{-(2i+3)}t_{\circ}^{-1}s_{\circ}^{2i+3}t_{\circ}^{-1}s_{\circ}^{-(2i+3)}t_{\circ}s_{\circ}^{2i+4} \right)\!(-1,z) \\
		& = \, \left( s_{\circ}^{-1}t_{\circ}s_{\circ}^{-(2i+3)}t_{\circ}^{-1}s_{\circ}^{2i+3}t_{\circ}^{-1}s_{\circ}^{-(2i+3)}t_{\circ} \right)\!(2i+3,z) \\
		&= \, \left( s_{\circ}^{-1}t_{\circ}s_{\circ}^{-(2i+3)}t_{\circ}^{-1}s_{\circ}^{2i+3}t_{\circ}^{-1}s_{\circ}^{-(2i+3)} \right)\!(2i+3,z) \\
		&= \, \left( s_{\circ}^{-1}t_{\circ}s_{\circ}^{-(2i+3)}t_{\circ}^{-1}s_{\circ}^{2i+3}t_{\circ}^{-1} \right)\!(0,z) \, = \, \left( s_{\circ}^{-1}t_{\circ}s_{\circ}^{-(2i+3)}t_{\circ}^{-1}s_{\circ}^{2i+3} \right)\!(0,z-1) \\
		& = \, \left( s_{\circ}^{-1}t_{\circ}s_{\circ}^{-(2i+3)}t_{\circ}^{-1} \right)\!(2i+3,z-1) \, = \, \left( s_{\circ}^{-1}t_{\circ}s_{\circ}^{-(2i+3)} \right)\!(2i+3,z-1) \\
		& = \, \left( s_{\circ}^{-1}t_{\circ} \right)\!(0,z-1) \, = \, s_{\circ}^{-1}(0,z) \, = \, (-1,z) \, = \, x \, = \, (g_{i})_{\circ}(x);
\end{align*} if $y = -(2i+4)$, then \begin{align*}
	h_{\circ}(x) \, &= \, \left( s_{\circ}^{-1}t_{\circ}s_{\circ}^{-(2i+3)}t_{\circ}^{-1}s_{\circ}^{2i+3}t_{\circ}^{-1}s_{\circ}^{-(2i+3)}t_{\circ}s_{\circ}^{2i+4} \right)\!(y,z) \\
		&= \, \left( s_{\circ}^{-1}t_{\circ}s_{\circ}^{-(2i+3)}t_{\circ}^{-1}s_{\circ}^{2i+3}t_{\circ}^{-1}s_{\circ}^{-(2i+3)}t_{\circ} \right)\!(0,z) \\
		&= \, \left( s_{\circ}^{-1}t_{\circ}s_{\circ}^{-(2i+3)}t_{\circ}^{-1}s_{\circ}^{2i+3}t_{\circ}^{-1}s_{\circ}^{-(2i+3)} \right)\!(0,z+1) \\
		&= \, \left( s_{\circ}^{-1}t_{\circ}s_{\circ}^{-(2i+3)}t_{\circ}^{-1}s_{\circ}^{2i+3}t_{\circ}^{-1} \right)\!(-(2i+3),z+1) \\
		&= \, \left( s_{\circ}^{-1}t_{\circ}s_{\circ}^{-(2i+3)}t_{\circ}^{-1}s_{\circ}^{2i+3} \right)\!(-(2i+3),z+1) \\
		&= \, \left( s_{\circ}^{-1}t_{\circ}s_{\circ}^{-(2i+3)}t_{\circ}^{-1} \right)\!(0,z+1) \, = \, \left( s_{\circ}^{-1}t_{\circ}s_{\circ}^{-(2i+3)} \right)\!(0,z) \\
		&= \, \left( s_{\circ}^{-1}t_{\circ} \right)\!(-(2i+3),z) \, = \, s_{\circ}^{-1}(-(2i+3),z) \, = \, (y,z) \, = \, x \, = \, (g_{i})_{\circ}(x) ;
\end{align*} and if $y \notin \{ -1, -(2i+4) \}$, then \begin{align*}
	h_{\circ}(x) \, &= \, \left( s_{\circ}^{-1}t_{\circ}s_{\circ}^{-(2i+3)}t_{\circ}^{-1}s_{\circ}^{2i+3}t_{\circ}^{-1}s_{\circ}^{-(2i+3)}t_{\circ}s_{\circ}^{2i+4} \right)\!(y,z) \\
		&= \, \left( s_{\circ}^{-1}t_{\circ}s_{\circ}^{-(2i+3)}t_{\circ}^{-1}s_{\circ}^{2i+3}t_{\circ}^{-1}s_{\circ}^{-(2i+3)}t_{\circ} \right)\!(y+2i+4,z) \\
		&= \, \left( s_{\circ}^{-1}t_{\circ}s_{\circ}^{-(2i+3)}t_{\circ}^{-1}s_{\circ}^{2i+3}t_{\circ}^{-1}s_{\circ}^{-(2i+3)} \right)\!(y+2i+4,z) \\
		&= \, \left( s_{\circ}^{-1}t_{\circ}s_{\circ}^{-(2i+3)}t_{\circ}^{-1}s_{\circ}^{2i+3}t_{\circ}^{-1} \right)\!(y+1,z) \\
		&= \, \left( s_{\circ}^{-1}t_{\circ}s_{\circ}^{-(2i+3)}t_{\circ}^{-1}s_{\circ}^{2i+3} \right)\!(y+1,z) \, = \, \left( s_{\circ}^{-1}t_{\circ}s_{\circ}^{-(2i+3)}t_{\circ}^{-1} \right)\!(y+2i+4,z) \\
		& = \, \left( s_{\circ}^{-1}t_{\circ}s_{\circ}^{-(2i+3)} \right)\!(y+2i+4,z) \, = \, \left( s_{\circ}^{-1}t_{\circ} \right)\!(y+1,z) \, = \, s_{\circ}^{-1}(y+1,z) \\
		& = \, (y,z) \, = \, x \, = \, (g_{i})_{\circ}(x) .
\end{align*} So, $h_{\circ}(x) = (g_{i})_{\circ}(x)$. This shows that $h = (h_{\bullet},h_{\circ}) = ((g_{i})_{\bullet},(g_{i})_{\circ}) = g_{i}$, as intended.
	
\ref{lemma:galvin.2} Let $h \in H^{\N}$. Since $H = (\pi(G(X,\{ x_{0}\}))p\pi(G(X,\{ x_{0}\}))p)^{n}$ by our hypothesis, we find $g \in G(X,\{ x_{0}\})^{\N}$ such that $h_{i} = \prod_{j=0}^{n-1} \pi(g_{2ni+2j})p\pi(g_{2ni+2j+1})p$ for all $i \in \N$. According to~\ref{lemma:galvin.1}, there exist $s \in S$ and $t \in G(X,X\setminus \{x_{0}\})$ such that \begin{displaymath}
	\forall t' \in G(X,\{ x_{0}\}) \ \forall i \in \N \colon \qquad u_{i}(s,tt') \, = \, g_{i} .
\end{displaymath} As $p\pi(G(X,X\setminus \{x_{0}\}))p \subseteq \pi(G(X,\{ x_{0}\}))$ by assumption, there exists $t' \in G(X,\{ x_{0}\})$ such that $p\pi(t)p = \pi(t')$, and we conclude that \begin{align*}
	\pi(g_{i}) \, &= \, \pi(u_{i}(s,tt')) \, = \, u_{i}(\pi(s),\pi(tt')) \, = \, u_{i}(\pi(s),\pi(t)\pi(t')) \\
		& = \, u_{i}(\pi(s),\pi(t)p\pi(t)p) \, = \, u_{i}\!\left(\pi(s),(\pi(t)p)^{2}\right) \, = \, v_{i}(\pi(s),\pi(t)p) 
\end{align*} for all $i \in \N$. Moreover, as every element of $G(X,X\setminus \{x_{0}\})$ commutes with every element of $G(X,\{ x_{0}\})$, we see that \begin{displaymath}
	(\pi(t)p)^{-1}\pi(g_{i})\pi(t)p \, = \, p\pi(t)^{-1}\pi(g_{i})\pi(t)p \, = \, p\pi\!\left( t^{-1}g_{i}t\right)p \, = \, p\pi(g_{i})p
\end{displaymath} whenever $i \in \N$. Consequently, for every $i \in \N$, \begin{align*}
	h_{i} \, &= \, \prod\nolimits_{j=0}^{n-1} \pi(g_{2ni+2j})p\pi(g_{2ni+2j+1})p \\
		&= \, \prod\nolimits_{j=0}^{n-1} \pi(g_{2ni+2j})(\pi(t)p)^{-1}\pi(g_{2ni+2j+1})\pi(t)p \\
		&= \, \prod\nolimits_{j=0}^{n-1} v_{2ni+2j}(\pi(s),\pi(t)p)(\pi(t)p)^{-1}v_{2ni+2j+1}(\pi(s),\pi(t)p)\pi(t)p \\
		&= \, w_{i}\!\left( \pi(s), \pi(t)p \right) .
\end{align*} This shows that $(w_{i})_{i \in \N}$ is universal for $H$. \end{proof}

For later use, in the proof of Lemma~\ref{lemma:generation.by.moietious.involutions}, we record a basic abstract variant of a well-known decomposition trick (see, e.g.,~\cite[Proof of Lemma~1.13]{FarrellWagoner},~\cite[Proof of Theorem~2.7]{MenalMoncasi}, and~\cite[Proof of Theorem~1.1]{tolstykh}).

\begin{lem}\label{lemma:farrell.wagoner} Let $G$ be a group and let $X$ be an infinite set. For every $x \in X$, there exist $u,v \in \I(\Sym(X))$ such that $G(X,\{x\}) \subseteq G(u)G(v)$. \end{lem}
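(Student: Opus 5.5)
The plan is the classical Eilenberg swindle (infinite telescoping) carried out inside the base group $G^{X}$. First I unpack the two sides of the claimed inclusion.

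Since $\Sym(\{x\})$ is trivial, every $g \in G(X,\{x\})$ has $g_{\circ} = 1$. Its coordinate function $g_{\bullet}$ is trivial off $x$, so $g = (f_{a},1)$, where $f_{a}(x) = a \in G$ and $f_{a} = 1$ elsewhere. Elements with trivial $\Sym(X)$-component multiply pointwise in $G(X)$. So it suffices to find involutions $u,v$, independent of $a$, and functions $\phi,\psi \in G^{X}$ with $(\phi,1) \in G(u)$, $(\psi,1) \in G(v)$ and $\phi\psi = f_{a}$ pointwise.

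Because $X$ is infinite, I fix an injective sequence $(x_{k})_{k \in \N}$ in $X$ with $x_{0} = x$. I define $u \in \I(\Sym(X))$ to swap $x_{2k} \leftrightarrow x_{2k+1}$ for all $k \in \N$ and fix everything else. I define $v \in \I(\Sym(X))$ to swap $x_{2k+1} \leftrightarrow x_{2k+2}$ for all $k \in \N$ and fix everything else. In particular $v$ fixes $x_{0}$, and $\Fix(u) = X \setminus \{x_{k} \mid k \in \N\}$.

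Given $a \in G$, I set
\begin{displaymath}
	\phi(x_{2k}) \defeq a, \qquad \phi(x_{2k+1}) \defeq a^{-1}, \qquad \phi \defeq 1 \text{ elsewhere},
\end{displaymath}
and
\begin{displaymath}
	\psi(x_{0}) \defeq 1, \qquad \psi(x_{2k+1}) \defeq a, \qquad \psi(x_{2k+2}) \defeq a^{-1}, \qquad \psi \defeq 1 \text{ elsewhere}.
\end{displaymath}

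The remaining checks are routine.
\begin{itemize}
	\item $\phi$ is trivial on $\Fix(u)$, and $\phi\circ u = \phi^{-1}$, since $u$ swaps the positions carrying $a$ and $a^{-1}$. Hence $(\phi,1) \in G(u)$.
	\item $\psi$ is trivial on $\Fix(v) = \{x_{0}\} \cup (X\setminus\{x_{k} \mid k \in \N\})$, and $\psi \circ v = \psi^{-1}$. Hence $(\psi,1) \in G(v)$.
	\item The pointwise product is $(\phi\psi)(x_{0}) = a\cdot 1 = a$, $(\phi\psi)(x_{2k+1}) = a^{-1}a = 1$, $(\phi\psi)(x_{2k+2}) = a a^{-1} = 1$, and $1$ elsewhere. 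Thus $\phi\psi = f_{a}$.
\end{itemize}

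This gives $(f_{a},1) = (\phi,1)(\psi,1) \in G(u)G(v)$ for every $a \in G$, which is the desired inclusion $G(X,\{x\}) \subseteq G(u)G(v)$. There is no real obstacle. The only point needing care is choosing $u$ and $v$ so that $v$ fixes $x_{0}$ while $u$ moves it; that asymmetry is exactly what lets the telescoping product leave the single value $a$ at $x$.
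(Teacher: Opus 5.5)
Your proof is correct and is essentially the paper's own argument: the paper also fixes an injection $f\colon \N \to X$ with $f(0)=x$, takes $u$ and $v$ to be the involutions swapping $f(2k)\leftrightarrow f(2k+1)$ and $f(2k+1)\leftrightarrow f(2k+2)$ respectively, and writes $g = st$ with exactly your telescoping factors $s \in G(u)$ and $t \in G(v)$. Your remarks that $g_{\circ}=1$ for $g \in G(X,\{x\})$ and that elements with trivial permutation part multiply pointwise are details the paper leaves to the reader.
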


\begin{proof} Let $x \in X$. As $X$ is infinite, we find an injection $f \colon \N \to X$ with $f(0) = x$. Define \begin{displaymath}
	u \colon \, X \, \longrightarrow \, X, \quad y \, \longmapsto \, \begin{cases}
																		\, f\!\left(f^{-1}(y)+1\right) & \text{if } y \in f(2\N), \\
																		\, f\!\left(f^{-1}(y)-1\right) & \text{if } y \in f(2\N+1), \\
																		\, y & \text{otherwise}
																	\end{cases}
\end{displaymath} and \begin{displaymath}
	v \colon \, X \, \longrightarrow \, X, \quad y \, \longmapsto \, \begin{cases}
																		\, f\!\left(f^{-1}(y)+1\right) & \text{if } y \in f(2\N+1), \\
																		\, f\!\left(f^{-1}(y)-1\right) & \text{if } y \in f(2\N+2), \\
																		\, y & \text{otherwise}
																	\end{cases}
\end{displaymath} and observe that $u,v \in \I(\Sym(X))$. Moreover, if $g \in G(X,\{x\})$, then considering $s \defeq (s_{\bullet},1) \in G(u)$ and $t \defeq (t_{\bullet},1) \in G(v)$ given by \begin{displaymath}
	s_{\bullet} \colon \, X \, \longrightarrow \, G, \quad y \, \longmapsto \, \begin{cases}
		\, g_{\bullet}(x) & \text{if } y \in f(2\N), \\
		\, g_{\bullet}(x)^{-1} & \text{if } y \in f(2\N+1), \\
		\, 1 & \text{otherwise}
	\end{cases}
\end{displaymath} and \begin{displaymath}
	t_{\bullet} \colon \, X \, \longrightarrow \, G, \quad y \, \longmapsto \, \begin{cases}
		\, g_{\bullet}(x) & \text{if } y \in f(2\N+1), \\
		\, g_{\bullet}(x)^{-1} & \text{if } y \in f(2\N+2), \\
		\, 1 & \text{otherwise},
	\end{cases}
\end{displaymath} one readily checks that $g = st$. Hence, $G(X,\{x\}) \subseteq G(u)G(v)$. \end{proof}

\section{Strong distortion and cofinality}\label{section:strong.distortion.and.cofinality}

This section revolves around several rigidity and boundedness phenomena in infinite groups. Let $G$ be a group. Then \begin{itemize}
	\item[---\,] $G$ is said to have the \emph{Schreier property}~\cite{LeRouxMann} if every countable subset of $G$ is contained in a finitely generated subgroup of $G$,
	\item[---\,] the \emph{Sierpi\'nski rank}~\cite{MitchellPeresse1,MitchellPeresse2} of $G$ is the least $m \in \N \cup \{ \infty \}$ such that every countable subset of $G$ is contained in an $m$-generated subgroup of $G$,
	\item[---\,] $G$ is called \emph{strongly distorted}~\cite[Remark~A.3]{CalegariFreedman}\footnote{This is called \emph{property $P$} in~\cite{khelif}.} if there exist $m \in \N$ and $w \in \N^{\N}$ such that, for every $g \in G^{\N}$, there exists a subset $S \subseteq G$ such that $\vert S \vert \leq m$ and \begin{displaymath}
		\qquad \forall n \in \N \colon \qquad g_{n} \, \in \, S^{w_{n}} .
	\end{displaymath}
\end{itemize}

\begin{remark}[{\cite[Section~1]{MitchellPeresse1}}, {\cite[Section~1]{LeRouxMann}}]\label{remark:strong.distortion} \begin{enumerate}
	\item\label{remark:strong.distortion.1} Every group admitting a universal sequence is strongly distorted.
	\item\label{remark:strong.distortion.2} Every strongly distorted group has finite Sierpi\'nski rank.
	\item\label{remark:strong.distortion.3} Every group of finite Sierpi\'nski rank has the Schreier property.
\end{enumerate}\end{remark}

The following definitions have been taken from the work of Droste and Goebel~\cite{DrosteGoebel}, but slightly extended to formally cover the trivial cases of finite (resp., finitely generated) groups.

\begin{definition}[cf.~{\cite[Definition~2.1]{DrosteGoebel}}] Let $G$ be a group. \begin{enumerate}
	\item The \emph{cofinality} of $G$, denoted by $\cf(G)$, is defined as follows: if $G$ is finitely generated, then $\cf(G) \defeq 0$; and if $G$ is not finitely generated, then $\cf(G)$ is the least cardinality of a chain $\mathcal{C}$ of proper subgroups of $G$ such that $G = \bigcup \mathcal{C}$.
	\item The \emph{strong cofinality} of $G$, denoted by $\scf(G)$, is defined as follows: if $G$ is finite, then $\scf(G) \defeq 0$; and if $G$ is infinite, then $\scf(G)$ is the smallest cardinality of a chain $\mathcal{C}$ of proper subsets of $G$ such that \begin{itemize}
		\item[---\,] $G = \bigcup \mathcal{C}$,
		\item[---\,] $\forall C \in \mathcal{C} \colon \, C^{-1} = C$,
		\item[---\,] $\forall C \in \mathcal{C} \, \exists C' \in \mathcal{C} \colon \, C^{2} \subseteq C'$.
	\end{itemize}
	\item We say that $G$ has the \emph{Bergman property} if, for every generating set $S$ of $G$, there exists $n \in \N$ such that $G = \left(S \cup S^{-1} \cup \{ 1 \}\right)^{n}$.
\end{enumerate} \end{definition}

\begin{remark}\label{remark:strong.distortion.implies.uscf} Let $G$ be a group. \begin{enumerate}
	\item\label{remark:strong.distortion.implies.uscf.-1} $\scf(G) \ne \aleph_{0}$ if and only if $\cf(G) \ne \aleph_{0}$ and $G$ has the Bergman property. This is established in~\cite[Theorem~2.2]{DrosteHolland} and~\cite[Proofs of Corollaries~2.1, 2.2]{DrosteHolland} (see also~\cite[Proposition~2.2(i)]{DrosteGoebel}).
	\item\label{remark:strong.distortion.implies.uscf.0} $\scf(G) = \aleph_{0}$ or $\scf(G) = \cf(G)$, due to~\cite[Proposition~2.2(ii)]{DrosteGoebel}.
	\item\label{remark:strong.distortion.implies.uscf.1} If $G$ has strong distortion, then $\scf(G) \ne \aleph_{0}$. This is independently due to Khelif~\cite{khelif} and Cornulier~\cite[Remark~A.3]{CalegariFreedman} (see also~\cite[Proposition~4.E.13]{CornulierDeLaHarpeBook} or~\cite[Section~1]{LeRouxMann}).
	\item\label{remark:strong.distortion.implies.uscf.2} If $G$ has the Schreier property, then $\cf(G) \ne \aleph_{0}$. This has been established in~\cite[Section~1]{LeRouxMann} (see also~\cite[Proposition~4.E.9]{CornulierDeLaHarpeBook}).
\end{enumerate} \end{remark}

The remainder of this section is dedicated to a specific rigidity phenomenon resulting from uncountable strong cofinality and concerning homomorphisms taking values in Banach--Lie groups (Proposition~\ref{propostion:automatic.SIN.reps}). Following the notation of~\cite{MorrisPestov}, if $G$ is a group, then we let \begin{displaymath}
	\tfrac{1}{n}U \, \defeq \, \{ g \in G \mid g^{1},\ldots,g^{n} \in U \}
\end{displaymath} whenever $U \subseteq G$ and $n \in \N$. A topological group $G$ is said to be \emph{uniformly free from small subgroups}~\cite[\S2.1, p.~239]{enflo} if \begin{displaymath}
	\exists U \in \Neigh(G) \ \forall V \in \Neigh(G) \ \exists n \in \N \colon \qquad \tfrac{1}{n}U \, \subseteq \, V .
\end{displaymath}

\begin{remark}\label{remark:banach.lie} Every Banach--Lie group is uniformly free from small subgroups, by a result of Morris and Pestov~\cite[Theorem~2.7]{MorrisPestov}, which had been also noted without proof by Enflo~\cite[p.~241, remark after Theorem~2.1.1]{enflo}. \end{remark}

A set $\Phi$ of maps from a uniform space $X$ to another uniform space $Y$ is called \emph{uniformly equicontinuous} if, for every entourage $E$ of $Y$, there exists an entourage $F$ of $X$ such that \begin{displaymath}
	\forall \phi \in \Phi \ \forall (x,y) \in F \colon \quad (\phi(x),\phi(y)) \in E .
\end{displaymath} The reader is referred to~\cite[X, \S2.1]{bourbakitop2} for further background on this concept, while we confine ourselves to recording the following well-known basic observation for later use in the proof of Proposition~\ref{propostion:automatic.SIN.reps}.

\begin{remark}\label{remark:composition.UEC.sets} Let $X$, $Y$, $Z$ be uniform spaces. If both $\Phi \subseteq Y^{X}$ and $\Psi \subseteq Z^{Y}$ are uniformly equicontinuous, then so is $\{ \psi \circ \phi \mid \psi \in \Psi, \, \phi \in \Phi \} \subseteq Z^{X}$. \end{remark}

The set of all homomorphisms from a group $G$ to another group $H$ will be denoted by $\Hom(G,H)$. One readily checks that, for any two topological groups $G$ and $H$, a subset $\Phi \subseteq \Hom(G,H)$ is uniformly equicontinuous if and only if \begin{displaymath}
	\forall V \in \Neigh(H) \ \exists U \in \Neigh(G) \ \forall \phi \in \Phi \colon \qquad \phi(U) \, \subseteq \, V .
\end{displaymath}

\begin{lem}\label{lemma:UEC.sets.in.UNSS.groups} Let $G$ and $H$ be topological groups, let $V \in \Neigh(H)$ be such that \begin{displaymath}
	\forall W \in \Neigh(H) \ \exists n \in \N \colon \qquad \tfrac{1}{n}V \, \subseteq \, W ,
\end{displaymath} and let $U \in \Neigh(G)$. Then \begin{displaymath}
	\{ \pi \in \Hom(G,H) \mid \pi(U) \subseteq V \}
\end{displaymath} is uniformly equicontinuous. \end{lem}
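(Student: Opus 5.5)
We have to show: for every $W \in \Neigh(H)$ there is $U' \in \Neigh(G)$ with $\pi(U') \subseteq W$ for all $\pi \in \Hom(G,H)$ satisfying $\pi(U) \subseteq V$. By the characterization of uniform equicontinuity for sets of homomorphisms recorded just before the lemma, this suffices. The plan is to shrink $U$ so that the first $n$ powers of its elements stay inside $U$, with $n$ chosen from the hypothesis on $V$.

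Fix $W \in \Neigh(H)$. First, the hypothesis on $V$ gives $n \in \N$ with $\tfrac{1}{n}V \subseteq W$.

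Second, since $G$ is a topological group, we choose $U_{0} \in \Neigh(G)$ with $U_{0}^{n} \subseteq U$. This uses continuity of multiplication iterated $n$ times, together with $1 \in U_{0}$, so that $U_{0}^{k} \subseteq U_{0}^{n}$ for $1 \le k \le n$. Then for every $g \in U_{0}$ and every $k \in \{1,\ldots,n\}$ we have $g^{k} \in U_{0}^{k} \subseteq U$, i.e.\ $U_{0} \subseteq \tfrac{1}{n}U$.

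Third, let $\pi \in \Hom(G,H)$ with $\pi(U) \subseteq V$, and let $g \in U_{0}$. Since $\pi$ is a homomorphism, $\pi(g)^{k} = \pi(g^{k}) \in \pi(U) \subseteq V$ for $k = 1,\ldots,n$. Hence $\pi(g) \in \tfrac{1}{n}V \subseteq W$. Thus $\pi(U_{0}) \subseteq W$, where $U_{0}$ depends only on $W$ (through $n$) and not on $\pi$. This is exactly uniform equicontinuity.

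There is no real obstacle here. The only point needing care is that the choice of $U_{0}$ is independent of $\pi$. This holds because $n$ depends only on $V$ and $W$, and $U_{0}$ depends only on $n$ and $U$.
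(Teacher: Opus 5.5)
Your proposal is correct and follows the paper's proof essentially verbatim: choose $n$ with $\tfrac{1}{n}V \subseteq W$, pick a neighborhood $N$ with $N^{n} \subseteq U$ (hence $N \subseteq \tfrac{1}{n}U$), and conclude $\pi(N) \subseteq \tfrac{1}{n}\pi(U) \subseteq \tfrac{1}{n}V \subseteq W$ uniformly in $\pi$. You also correctly note that $1 \in U_{0}$ is what gives $U_{0}^{k} \subseteq U_{0}^{n}$ for $k \leq n$, which the paper leaves implicit.
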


\begin{proof} Let $W \in \Neigh(H)$. By our hypothesis on $V$, there exists $n \in \N$ with $\tfrac{1}{n}V \subseteq W$. Since $G$ is a topological group, we find $N \in \Neigh(G)$ such that $N^{n} \subseteq U$, which readily entails that $N \subseteq \tfrac{1}{n}U$. Now, if $\pi \in \Hom(G,H)$ and $\pi(U) \subseteq V$, then \begin{displaymath}
	\pi(N) \, \subseteq \, \pi\!\left(\tfrac{1}{n}U\right)\! \, \subseteq \, \tfrac{1}{n}\pi(U) \, \subseteq \, \tfrac{1}{n}V \, \subseteq \, W. \qedhere
\end{displaymath} \end{proof}

An action of a group $G$ on a uniform space $X$ will be called \emph{uniformly equicontinuous} if the set consisting of the corresponding maps \begin{displaymath}
	X \, \longrightarrow \, X, \quad x \, \longmapsto \, gx \qquad (g \in G)
\end{displaymath} is uniformly equicontinuous.

\begin{remark}\label{remark:UEC.action.by.conjugation} Let $G$ be a topological group, let $H \leq G$. The following are equivalent. \begin{enumerate}
	\item\label{remark:UEC.action.by.conjugation.1} The action of $H$ on $G$ by conjugation is uniformly equicontinuous.
	\item\label{remark:UEC.action.by.conjugation.2} $\forall U \in \Neigh(G) \colon \ \bigcap_{h \in H} hUh^{-1} \in \Neigh(G)$.
	\item\label{remark:UEC.action.by.conjugation.3} $\forall U \in \Neigh(G) \ \exists V \in \Neigh(G), \, V \subseteq U \ \forall h \in H \colon \ hVh^{-1} = V$.
\end{enumerate} In particular, if~\ref{remark:UEC.action.by.conjugation.1} holds, then $H$ has small invariant neighborhoods with respect to the relative topology inherited from $G$. \end{remark}

\begin{prop}\label{propostion:automatic.SIN.reps} Let $G$ be a group with $\scf(G) \ne \aleph_{0}$, let $H$ be a topological group uniformly free from small subgroups, let $\pi \colon G \to H$ be a homomorphism. Then the action of $\pi(G)$ on $H$ by conjugation is uniformly equicontinuous. In particular, $\pi(G)$ has small invariant neighborhoods with respect to the topology inherited from $H$. \end{prop}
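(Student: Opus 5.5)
Fix $V \in \Neigh(H)$ witnessing that $H$ is uniformly free from small subgroups, i.e., for every $W \in \Neigh(H)$ there is $n \in \N$ with $\tfrac{1}{n}V \subseteq W$. We will show that the set of inner automorphisms $\{ \gamma_{h} \colon H \to H, \, x \mapsto hxh^{-1} \mid h \in \pi(G) \}$ is uniformly equicontinuous. Every such $\gamma_h$ is a continuous homomorphism $H \to H$. By Lemma~\ref{lemma:UEC.sets.in.UNSS.groups}, applied with $G$ replaced by $H$, it therefore suffices to find a single $U \in \Neigh(H)$ with $\gamma_{h}(U) \subseteq V$ for all $h \in \pi(G)$. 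Equivalently, we need $U \subseteq \bigcap_{h \in \pi(G)} h^{-1}Vh$. The statement about small invariant neighborhoods then follows directly from Remark~\ref{remark:UEC.action.by.conjugation}.

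To produce $U$ we use $\scf(G) \ne \aleph_{0}$. Choose a symmetric $V_{0} \in \Neigh(H)$ with $V_{0}^{3} \subseteq V$. For $n \in \N$, let $C_{n}$ be the set of $g \in G$ such that $\pi(g)$ conjugates some fixed symmetric neighbourhood $W_n$ into $V_0$, that is, $\pi(g)W_{n}\pi(g)^{-1} \subseteq V_{0}$. Here we choose a decreasing sequence of symmetric neighbourhoods $W_{n}$ with $W_{n+1}^{3} \subseteq W_{n}$, or use continuity of conjugation by a single element to arrange something similar.

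The first cleaner route is via the Bergman property and uncountable cofinality (Remark~\ref{remark:strong.distortion.implies.uscf}\ref{remark:strong.distortion.implies.uscf.-1}). If $G$ is finite, the claim is trivial, because finite intersections of neighbourhoods are neighbourhoods. Otherwise, for $n \in \N$ set
\begin{displaymath}
	A_{n} \, \defeq \, \{ g \in G \mid \pi(g)^{\pm 1} \tfrac{1}{n}V_{0}\, \pi(g)^{\mp 1} \subseteq V_{0} \} .
\end{displaymath}
Each $A_{n}$ is symmetric and $A_{n} \subseteq A_{n+1}$. Moreover, $G = \bigcup_{n} A_{n}$: for fixed $g$, $\gamma_{\pi(g)}^{-1}(V_{0})$ is a neighbourhood, so it contains $\tfrac{1}{n}V$ for some $n$, and we use $V_0$ in place of $V$ by shrinking appropriately. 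We will also use the key multiplicativity property: conjugation commutes with powers, so $\pi(g)\tfrac{1}{n}V_{0}\pi(g)^{-1} = \tfrac{1}{n}(\pi(g)V_{0}\pi(g)^{-1})$.

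Hence, if $g \in A_{n}$ and $g' \in A_{m}$, one expects $gg' \in A_{k}$ for suitable $k$, using $\tfrac{1}{k}V_{0} \subseteq \tfrac{1}{m}V_{0} \cap \gamma^{-1}(\tfrac1m V_0)$-type inclusions. Concretely, $\gamma_{\pi(g')}(\tfrac{1}{k}V_{0}) = \tfrac{1}{k}\gamma_{\pi(g')}(V_{0})$, and we want this to sit inside $\tfrac{1}{n}V_{0}$. This is exactly the delicate point, since $A_m$ only controls $\tfrac1m V_0$, not $V_0$ itself.

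To repair this, redefine the filtration in terms of $V$ with the hypothesis' index function. Let $A_{n}$ be the set of $g$ such that $\gamma_{\pi(g)^{\pm1}}(\tfrac{1}{n}V) \subseteq \tfrac{1}{1}V$, and iterate: $\tfrac{1}{nm}V \subseteq \tfrac{1}{m}(\tfrac{1}{n}V)$. If $\gamma_{x}(\tfrac1nV)\subseteq V$, then $\gamma_x(\tfrac{1}{nm}V) \subseteq \tfrac1m V$. With this, $A_{n}A_{m} \subseteq A_{nm}$, so the sets $B_{k} \defeq A_{2^{k}}$ form a countable increasing chain of symmetric sets with $B_{k}^{2} \subseteq B_{k+1}$ and union $G$.

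Since $\scf(G) \ne \aleph_{0}$ and $G$ is infinite, not every $B_{k}$ can be proper. For a countable chain this is exactly what $\scf(G)\neq\aleph_0$ forbids, after discarding repetitions; if the chain stabilises below $G$, its union is not $G$, which is impossible. So $G = B_{k}$ for some $k$. Then $U \defeq \tfrac{1}{2^{k}}V$ works, provided it is a neighbourhood of $1$. This holds by the hypothesis applied to the neighbourhood $\bigcap_{j\le 2^k}\{x: x^j\in V\}$ — that is, $\tfrac{1}{n}V \supseteq$ any $N$ with $N^{n}\subseteq V$. Lemma~\ref{lemma:UEC.sets.in.UNSS.groups} then yields uniform equicontinuity.

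The main obstacle is the bookkeeping in $A_{n}A_{m} \subseteq A_{nm}$, namely making the filtration submultiplicative using only the $\tfrac1n$ calculus. This is where uniform freeness from small subgroups is genuinely needed. The exhaustion $G=\bigcup A_n$ uses that hypothesis for $W=\gamma^{-1}(V)$.
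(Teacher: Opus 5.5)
Your final argument (the paragraph beginning ``To repair this'') is correct, and it takes a genuinely different route from the paper.

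\emph{The paper's route.} It invokes Enflo's theorem that a group uniformly free from small subgroups is first countable. It then picks an arbitrary countable chain $\mathcal{V}$ of basic neighbourhoods and works with the sets $H(V,U)=\{h\in H\mid hVh^{-1}\subseteq U\}$. These sets are not closed under products, so $\scf(G)\neq\aleph_{0}$ only yields $\pi(G)\subseteq H(V,U)^{m}$; this step uses the standard reformulation of uncountable strong cofinality for arbitrary exhaustive countable chains. To handle the resulting $m$-fold compositions of conjugations, the paper needs Lemma~\ref{lemma:UEC.sets.in.UNSS.groups} together with Remark~\ref{remark:composition.UEC.sets}.

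\emph{Your route.} You index the filtration by the sets $\tfrac{1}{n}V$ themselves. These already form a countable neighbourhood base, so first countability comes for free. Using $\gamma_{x}(y^{j})=\gamma_{x}(y)^{j}$, you get $A_{n}A_{m}\subseteq A_{nm}$. Hence $(A_{n})_{n\geq 1}$ is directly a chain of symmetric sets as in the definition of $\scf$, and $\scf(G)\neq\aleph_{0}$ gives a \emph{single} $n$ with $A_{n}=G$. That is an explicit neighbourhood $\tfrac{1}{n}V$ conjugated into $V$ by all of $\pi(G)$, after which one application of Lemma~\ref{lemma:UEC.sets.in.UNSS.groups} suffices.

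\emph{Comparison.} Your argument is more self-contained: it needs no external first-countability result, no reformulation of strong cofinality, and no composition remark. It also gives a sharper intermediate statement, namely one uniform neighbourhood rather than an $m$-fold product. The paper's argument is more modular, since it works with any countable chain basis and does not need the submultiplicativity computation.

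\emph{Small corrections.}
\begin{itemize}
\item You claim $B_{k}^{2}\subseteq B_{k+1}$, but in fact $B_{k}^{2}=A_{2^{k}}A_{2^{k}}\subseteq A_{2^{2k}}=B_{2k}$. This is harmless, since the definition only asks for $C^{2}\subseteq C'$ for some member $C'$ of the chain. You could also simply use $(A_{n})_{n}$ with $A_{n}^{2}\subseteq A_{n^{2}}$.
\item That $\tfrac{1}{n}V\in\Neigh(H)$ follows from continuity of multiplication, not from the hypothesis on $V$: any $N\in\Neigh(H)$ with $N^{n}\subseteq V$ lies in $\tfrac{1}{n}V$. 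Your ``that is'' clause gives the right reason.
\item The abandoned first attempts (the sets $V_{0}$ and $W_{n}$, and the Bergman-property route) should be removed from the write-up.
\end{itemize}
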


\begin{proof} As $H$ is uniformly free from small subgroups, there exists $U \in \Neigh(H)$ such that \begin{displaymath}
	\forall V \in \Neigh(H) \ \exists n \in \N \colon \qquad \tfrac{1}{n}U \, \subseteq \, V .
\end{displaymath} Also, being uniformly free from small subgroups, $H$ must be first-countable (cf.~\cite[Theorem~2.1.1]{enflo}). Hence, we find a countable neighborhood basis $\mathcal{V}$ at the neutral element in $H$ such that, moreover, $(\mathcal{V},{\subseteq})$ is a chain. For each $V \in \mathcal{V}$, let \begin{displaymath}
	H(V,U) \, \defeq \, \{ h \in H \mid hVh^{-1} \subseteq U \} .
\end{displaymath} As $H$ is a topological group and $\mathcal{V}$ is a neighborhood basis at the neutral element of $H$, it follows that $H = \bigcup_{V \in \mathcal{V}} H(V,U)$. Since $(\mathcal{V},{\subseteq})$ is a chain, $(\{ H(V,U) \mid V \in \mathcal{V} \},{\subseteq})$ is a chain, too. Now, due to $\scf(G) \ne \aleph_{0}$, there exist $V \in \mathcal{V}$ and $m \in \N$ such that $\pi(G) \subseteq H(V,U)^{m}$. Consequently, the action of $\pi(G)$ on $H$ by conjugation is uniformly equicontinuous by Lemma~\ref{lemma:UEC.sets.in.UNSS.groups} and Remark~\ref{remark:composition.UEC.sets}. The final assertion now follows by Remark~\ref{remark:UEC.action.by.conjugation}. \end{proof}

\begin{prop}\label{proposition:rigidity.for.BL.reps} Let $G$ be a group with $\scf(G) \ne \aleph_{0}$. Let $T \subseteq G$ be such that $G$ has uniformly bounded normal generation over $TT^{-1}$. For every non-trivial homomorphism $\pi \colon G \to H$ to a Hausdorff topological group $H$ uniformly free from small subgroups, the restriction $\pi\vert_{T}$ is injective and $\pi(T)$ is discrete in $H$. \end{prop}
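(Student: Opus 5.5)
The plan is to reduce the statement to Lemma~\ref{lemma:rigidity.for.SIN.reps}, applied to the group $\pi(G)$ with the relative topology inherited from $H$. By Proposition~\ref{propostion:automatic.SIN.reps}, since $\scf(G) \ne \aleph_{0}$ and $H$ is uniformly free from small subgroups, the action of $\pi(G)$ on $H$ by conjugation is uniformly equicontinuous. Consequently, $K \defeq \pi(G)$, equipped with the subspace topology, is a topological group with small invariant neighborhoods. It is Hausdorff because $H$ is.

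Next I consider the corestriction $\pi \colon G \to K$, which is a non-trivial homomorphism into a Hausdorff topological group with small invariant neighborhoods. Lemma~\ref{lemma:rigidity.for.SIN.reps} then applies directly, since $G$ has uniformly bounded normal generation over $TT^{-1}$. It yields that $\pi\vert_{T}$ is injective and that $\pi(T)$ is uniformly discrete in $K$. Concretely, the proof of that lemma produces a conjugation-invariant, symmetric neighborhood $V$ of the identity in $K$ such that $\pi(s)\pi(t)^{-1} \notin V$ for all distinct $s,t \in T$.

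Finally, I transfer discreteness from $K$ to $H$. Write $V = W \cap K$ for some $W \in \Neigh(H)$. For each $t \in T$, the set $W\pi(t)$ is a neighborhood of $\pi(t)$ in $H$. If $\pi(s) \in W\pi(t)$, then $\pi(s)\pi(t)^{-1} \in W \cap K = V$, which forces $s = t$. Hence $\pi(T)$ is discrete in $H$ (indeed uniformly discrete with respect to $W_{\Rsh}$).

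The only point requiring care is that Proposition~\ref{propostion:automatic.SIN.reps} is exactly what supplies the small invariant neighborhoods needed by the lemma. Beyond that, one only has to make sure the neighborhood obtained in $K$ pulls back to one in $H$, which holds by the definition of the subspace topology. I do not expect any genuine obstacle.
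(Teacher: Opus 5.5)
Your proof is correct and follows the paper's argument: the paper's one-line proof combines Proposition~\ref{propostion:automatic.SIN.reps}, which gives small invariant neighborhoods for $\pi(G)$ in the relative topology, with Lemma~\ref{lemma:rigidity.for.SIN.reps} applied to the corestriction $G \to \pi(G)$, exactly as you do. Your explicit transfer of discreteness from $\pi(G)$ to $H$ via $V = W \cap \pi(G)$ just spells out the step the paper leaves implicit.
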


\begin{proof} This follows from Proposition~\ref{propostion:automatic.SIN.reps} and Lemma~\ref{lemma:rigidity.for.SIN.reps}. \end{proof}

\section{Uniform strong cofinality}\label{section:uniform.strong.cofinality}

The following cardinal invariant defined for general groups captures and measures a uniform version of uncountable strong cofinality. The latter property has been studied explicitly in~\cite{MillerThesis,KechrisRosendal,RicardRosendal,rosendal,dowerk}. Given a set $X$, we let $\Pow(X)$ denote the power set of $X$ and we let $\Pfin(X)$ denote the set of all finite subsets of $X$.

\begin{definition}\label{definition:uniform.strong.cofinality} Let $k \in \N_{>0}$. The \emph{$k$-strong cofinality} of a group $G$, denoted by $\scf_{k}(G)$, is defined as follows: if $G$ is finite, then $\scf_{k}(G) \defeq 0$; and if $G$ is infinite, then $\scf_{k}(G)$ is the smallest cardinality of a directed subset $\mathcal{D}$ of $(\Pow(G),{\subseteq})$ such that \begin{displaymath}
	\bigcup \mathcal{D} \, = \, G \, \notin \, \{ D^{k} \mid D \in \mathcal{D} \} .
\end{displaymath} \end{definition}

\begin{remark}\label{remark:uniform.strong.cofinality} Let $G$ be a group and let $k \in \N_{>0}$. \begin{enumerate}
	\item\label{remark:uniform.strong.cofinality.1} If $G$ is infinite, then $\Pfin(G)$ constitutes a directed subset of $(\Pow(G),{\subseteq})$ satisfying $\bigcup \Pfin(G) = G \notin \{ F^{k} \mid F \in \Pfin(G) \}$. Hence, $\scf_{k}(G) \leq \vert G \vert$.
	\item\label{remark:uniform.strong.cofinality.2} If $\mathcal{D}$ is any directed subset of $(\Pow(G),{\subseteq})$ with $G = \bigcup \mathcal{D}$, then \begin{displaymath}
			\qquad \mathcal{D}' \, \defeq \, \! \left. \left\{ D \cap D^{-1} \, \right\vert 1 \in D \in \mathcal{D} \right\}
		\end{displaymath} constitutes a directed subset of $(\Pow(G),{\subseteq})$ with $\vert \mathcal{D}' \vert \leq \vert \mathcal{D} \vert$ and $G = \bigcup \mathcal{D}'$. In particular, if $G$ is infinite, then $\scf_{k}(G)$ is the least cardinality of a directed subset $\mathcal{D}$ of $(\Pow(G),{\subseteq})$ such that \begin{itemize}
			\item[---\,] $\bigcup \mathcal{D} = G \notin \{ D^{k} \mid D \in \mathcal{D} \}$, and
			\item[---\,] $1 \in D = D^{-1}$ for every $D \in \mathcal{D}$.
		\end{itemize}
\end{enumerate} \end{remark}

\begin{lem}\label{lemma:uniform.strong.cofinality} Let $G$ be a group and let $k \in \N_{>0}$. The following hold. \begin{enumerate}
	\item\label{lemma:uniform.strong.cofinality.1} If $G$ is infinite, then $\scf_{k}(G) \geq \aleph_{0}$.
	\item\label{lemma:uniform.strong.cofinality.2} $\scf_{k}(G) \leq \scf_{k+1}(G)$.
	\item\label{lemma:uniform.strong.cofinality.3} $\scf_{k}(G) \leq \scf(G)$.
	\item\label{lemma:uniform.strong.cofinality.4} If $G$ is an infinite homomorphic image of a group $H$, then $\scf_{k}(H) \leq \scf_{k}(G)$.
\end{enumerate} \end{lem}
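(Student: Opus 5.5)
We treat the four items separately; each reduces to manipulating directed families in $(\Pow(G),{\subseteq})$, and by Remark~\ref{remark:uniform.strong.cofinality}\ref{remark:uniform.strong.cofinality.2} we may always assume that our families consist of symmetric sets containing $1$.

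For~\ref{lemma:uniform.strong.cofinality.1}, let $G$ be infinite and let $\mathcal{D}$ be a finite directed family with $\bigcup \mathcal{D} = G$. Directedness of a finite nonempty family yields some $D \in \mathcal{D}$ containing every member, so $D = G$. After the normalization of Remark~\ref{remark:uniform.strong.cofinality}\ref{remark:uniform.strong.cofinality.2} we have $1 \in D$, hence $D^{k} \supseteq D = G$, i.e., $G \in \{ D^{k} \mid D \in \mathcal{D} \}$. Thus no finite family qualifies, and $\scf_{k}(G) \geq \aleph_{0}$.

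For~\ref{lemma:uniform.strong.cofinality.2}, take a witnessing family $\mathcal{D}$ for $\scf_{k+1}(G)$ consisting of sets with $1 \in D$. Then $D^{k} \subseteq D^{k+1} \neq G$ for each $D$, so the same $\mathcal{D}$ witnesses the condition for $\scf_{k}$, whence $\scf_{k}(G) \leq \scf_{k+1}(G)$. The finite case is trivial.

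For~\ref{lemma:uniform.strong.cofinality.3}, take a chain $\mathcal{C}$ realizing $\scf(G)$, so that $C^{-1}=C$ and $C^{2} \subseteq C'$ for some $C' \in \mathcal{C}$. A chain is directed. Iterating the closure property gives, for every $C \in \mathcal{C}$, some $C'' \in \mathcal{C}$ with $C^{k} \subseteq C''$ (use $C^{k} \subseteq C^{2^{m}}$ when $1 \in C$; otherwise argue directly by induction on $k$, absorbing products into larger chain members, which is possible because $\mathcal{C}$ is a chain). Since $C''$ is proper, $C^{k} \neq G$. So $\mathcal{C}$ witnesses $\scf_{k}(G) \leq |\mathcal{C}|$. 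If $G$ is finite, both sides are $0$.

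For~\ref{lemma:uniform.strong.cofinality.4}, let $\phi \colon H \to G$ be surjective with $G$ infinite, so that $H$ is infinite. Given a witnessing family $\mathcal{D}$ for $G$, set $\mathcal{E} \defeq \{ \phi^{-1}(D) \mid D \in \mathcal{D} \}$. This family is directed, its union is $H$, and $|\mathcal{E}| \leq |\mathcal{D}|$. Surjectivity gives $\phi(\phi^{-1}(D)^{k}) = D^{k} \neq G$, so $\phi^{-1}(D)^{k} \neq H$.

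The only mildly delicate point is the bookkeeping in~\ref{lemma:uniform.strong.cofinality.3} when the chain members need not contain $1$. We handle it by induction: each $C^{j}$ lies in some member of the chain, using $C^{j+1} \subseteq C_{j}C \subseteq (\max(C_{j},C))^{2}$.
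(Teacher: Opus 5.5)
Your proposal is correct and follows essentially the same route as the paper in all four items: a finite directed family has a largest member, which must be $G$; for normalized families $D^{k} \subseteq D^{k+1}$ gives monotonicity; a chain witnessing $\scf(G)$ is itself a witness for $\scf_{k}(G)$; and a witnessing family for $G$ pulls back along the surjection. The only difference is cosmetic and lies in item~(3). There your induction $C^{j+1} \subseteq C_{j}C \subseteq \max(C_{j},C)^{2}$ absorbs powers directly into chain members, whereas the paper first discards the members not containing $1$. Your version also makes explicit the iteration that the paper's step $C^{k} \subseteq C^{2k} \subseteq C'$ leaves implicit.
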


\begin{proof} \ref{lemma:uniform.strong.cofinality.1} If $\mathcal{D}$ is a finite directed subset of $(\Pow(G),{\subseteq})$ such that $\bigcup \mathcal{D} = G$, then $G = \bigcup \mathcal{D} \in \mathcal{D}$ and hence $G = G^{k} \in \{ D^{k} \mid D \in \mathcal{D} \}$.

\ref{lemma:uniform.strong.cofinality.2} If $G$ is finite, then $\scf_{k}(G) = 0 = \scf_{k+1}(G)$ by definition. Now, suppose that $G$ is infinite. If $\mathcal{D}$ is a directed subset of $(\Pow(G),{\subseteq})$ such that \begin{itemize}
	\item[---\,] $\bigcup \mathcal{D} = G \notin \{ D^{k+1} \mid D \in \mathcal{D} \}$, and
	\item[---\,] $1 \in D = D^{-1}$ for every $D \in \mathcal{D}$,
\end{itemize} then $D^{k} \subseteq D^{k+1}$ for every $D \in \mathcal{D}$ and hence $G \notin \{ D^{k} \mid D \in \mathcal{D} \}$. In view of Remark~\ref{remark:uniform.strong.cofinality}\ref{remark:uniform.strong.cofinality.2}, this shows that $\scf_{k}(G) \leq \scf_{k+1}(G)$.

\ref{lemma:uniform.strong.cofinality.3} As above, if $G$ is finite, then $\scf_{k}(G) = 0 = \scf(G)$ according to the definitions. Assume that $G$ is infinite. Let $\mathcal{C}$ be a chain of proper subsets of $G$ such that \begin{itemize}
	\item[---\,] $G = \bigcup \mathcal{C}$,
	\item[---\,] $\forall C \in \mathcal{C} \colon \, C^{-1} = C$,
	\item[---\,] $\forall C \in \mathcal{C} \, \exists C' \in \mathcal{C} \colon \, C^{2} \subseteq C'$.
\end{itemize} Moreover, upon replacing $\mathcal{C}$ by $\{ C \in \mathcal{C} \mid 1 \in C\}$, we may and will assume without loss of generality that $1 \in C$ for every $C \in \mathcal{C}$. Then $\mathcal{C}$ is a directed subset of~$(\Pow(G),{\subseteq})$. Furthermore, $G \notin \{ C^{k} \mid C \in \mathcal{C}\}$: otherwise, $G = C^{k}$ for some $C \in \mathcal{C}$, whence by assumption there exists $C' \in \mathcal{C}$ with $C^{k} \subseteq C^{2k} \subseteq C'$, and thus $G = C'$, contradicting another assumption. Hence, $\scf_{k}(G) \leq \vert \mathcal{C} \vert$. This shows that $\scf_{k}(G) \leq \scf(G)$.

\ref{lemma:uniform.strong.cofinality.4} Suppose that $G$ is infinite and let $\pi \colon H \to G$ be a surjective homomorphism from a (necessarily infinite) group $H$. If $\mathcal{D}$ is a directed subset of $(\Pow(G),{\subseteq})$ such that $\bigcup \mathcal{D} = G \notin \{ D^{k} \mid D \in \mathcal{D} \}$, then $\mathcal{D}' \defeq \{ \pi^{-1}(D) \mid D \in \mathcal{D} \}$ constitutes a directed subset of $(\Pow(H),{\subseteq})$ such that $\vert \mathcal{D}' \vert = \vert \mathcal{D} \vert$, \begin{displaymath}
	H \, = \, \pi^{-1}(G) \, = \, \pi^{-1}\!\left( \bigcup \mathcal{D}\right)\! \, = \, \bigcup \left.\! \left\{ \pi^{-1}(D) \right\vert \, D \in \mathcal{D} \right\} \, = \, \bigcup \mathcal{D}',
\end{displaymath} and \begin{displaymath}
	H \, = \, \pi^{-1}(G) \, \stackrel{G\ne D^{k}}{\supsetneq} \, \pi^{-1}\bigl( D^{k}\bigr) \, \supseteq \, \pi^{-1}(D)^{k}
\end{displaymath} for every $D \in \mathcal{D}$. Hence, $\scf_{k}(H) \leq \scf_{k}(G)$. \end{proof}

Our Section~\ref{section:general.criterion} establishes a lower bound on uniform strong cofinalities based on an algebraic interpolation method (Theorem~\ref{theorem:abstract.2}). We conclude the current section by noting an extension of an observation made by Rosendal~\cite[Proposition~1.33]{rosendal}.

\begin{prop}\label{proposition:large.uscf.rigidity} Let $G$ be a group and let $\alpha$ be an infinite cardinal such that $\scf_{k}(G) > \alpha$ for some $k \in \N_{>0}$. Let $\pi \colon G \to H$ be a homomorphism to an $\alpha$-narrow topological group $H$ \begin{itemize}
	\item[---\,] having small invariant neighborhoods, or
	\item[---\,] being uniformly free from small subgroups\footnote{e.g., a Banach--Lie group (cf.~Remark~\ref{remark:banach.lie})}.
\end{itemize} Then $\pi(G)$ is precompact. In particular, if $G$ is minimally almost periodic and $H$ is Hausdorff, then $\pi$ is trivial. \end{prop}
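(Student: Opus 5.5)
The plan is to show that, for every symmetric identity neighborhood $U$ of $H$, finitely many left translates of $U$ cover $\pi(G)$; that is, $\pi(G)$ is precompact in the right uniformity. The hypothesis $\scf_{k}(G) > \alpha$ will be used to obtain one \emph{single} neighborhood $V$ with $\pi(G) \subseteq (\pi(G) \cap V')^{k}$ for some set $V'$ close to $V$. The final assertion then follows formally. If $G$ is minimally almost periodic and $H$ is Hausdorff, then the closure of $\pi(G)$ in the completion of $H$ is a compact Hausdorff group, and $\pi$ composed with the inclusion is a homomorphism into it. Hence $\pi$ is trivial.

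\emph{Step 1: reduction to invariant neighborhoods.} In the second case I first show that $\pi(G)$ has small invariant neighborhoods in the relative topology. We have $\scf(G) \geq \scf_{k}(G) > \alpha \geq \aleph_{0}$ by Lemma~\ref{lemma:uniform.strong.cofinality}\ref{lemma:uniform.strong.cofinality.3}, so $\scf(G) \ne \aleph_{0}$. Proposition~\ref{propostion:automatic.SIN.reps} then shows that conjugation by $\pi(G)$ on $H$ is uniformly equicontinuous. By Remark~\ref{remark:UEC.action.by.conjugation}, every $U \in \Neigh(H)$ contains a $\pi(G)$-conjugation-invariant $V \in \Neigh(H)$. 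In the first case this holds directly. Subspaces of $\alpha$-narrow groups remain $\alpha$-narrow (via Remark~\ref{remark:narrow}\ref{remark:narrow.2}, since uniformly discrete subsets stay uniformly discrete). Replacing $H$ by $K \defeq \pi(G)$, it therefore suffices to treat a dense-image situation where $K$ is $\alpha$-narrow and every neighborhood contains a $K$-invariant symmetric one.

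\emph{Step 2: the directed family.} Fix $U \in \Neigh(H)$ and choose a symmetric $K$-invariant $V$ with $V^{2k} \subseteq U$. By $\alpha$-narrowness there is $A \subseteq K$ with $|A| \leq \alpha$ and $K = AV$; invariance of $V$ lets me place $V$ on either side. For finite $F \subseteq A$, set $D_{F} \defeq \pi^{-1}(FV)$. The family $\{D_{F} \mid F \in \Pfin(A)\}$ is directed, has cardinality at most $\alpha$ (or it is finite, in which case we are already done), and covers $G$. Since $\scf_{k}(G) > \alpha$, some $F$ satisfies $G = D_{F}^{k}$. Using $D_{F}^{k} \subseteq \pi^{-1}$-preimage containment $\pi(D_{F})^{k}\subseteq (FV)^{k}$, we get $K \subseteq (FV)^{k}$.

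\emph{Step 3: finiteness.} Invariance of $V$ gives $(FV)^{k} = F^{k}V^{k}$, because each $V$ can be commuted to the right past elements of $K \supseteq F$. Hence $K \subseteq F^{k}V^{k} \subseteq F^{k}U$ with $F^{k}$ finite, so $K$ is precompact.

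The main obstacle I expect is the bookkeeping in Step 1. One issue is that invariance holds only under conjugation by $\pi(G)$, not all of $H$; this is harmless because $F \subseteq \pi(G)$. The other is confirming that the $\alpha$-narrow translates can be taken with centers in $\pi(G)$. This is handled by applying narrowness to the subspace $\pi(G)$ and shrinking $V$ by a factor of two.
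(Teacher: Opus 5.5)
Your proof is correct and follows essentially the same route as the paper. You pick a conjugation-invariant $V$ with $V^{k}\subseteq U$, cover by at most $\alpha$ translates, apply $\scf_{k}(G)>\alpha$ to the directed family of finite unions to get $\pi(G)\subseteq (FV)^{k}=F^{k}V^{k}$, and reduce the uniformly-free-from-small-subgroups case to the SIN case via Proposition~\ref{propostion:automatic.SIN.reps}. The differences are cosmetic: you pull the family back to $G$ instead of invoking Lemma~\ref{lemma:uniform.strong.cofinality}\ref{lemma:uniform.strong.cofinality.4}, and you keep the centers inside $\pi(G)$ so that $\pi(G)$-invariance of $V$ suffices, whereas the paper applies the SIN case to the topological group $\pi(G)$ itself.
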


\begin{proof} Without loss of generality, we may and will assume that $\pi(G)$ is infinite.
	
First consider the case where $H$ has small invariant neighborhoods. Let $U \in \Neigh(H)$. Since $H$ is a topological group with small invariant neighborhoods, there exists a conjugation-invariant $V \in \Neigh(H)$ such that $V^{k} \subseteq U$. Due to $H$ being $\alpha$-narrow, there exists $T \subseteq H$ such that $H = VT$ and $\vert T \vert \leq \alpha$. Since $\{ \pi(G) \cap VF \mid F \in \Pfin(T) \}$ is a directed subset of $(\Pow(\pi(G)),{\subseteq})$ and $\scf_{k}(\pi(G)) \geq \scf_{k}(G) > \alpha$ by Lemma~\ref{lemma:uniform.strong.cofinality}\ref{lemma:uniform.strong.cofinality.4}, there exists a finite subset $F \subseteq T$ with $\pi(G) \subseteq (\pi(G) \cap VF)^{k}$. As $V$ is conjugation-invariant, we conclude that \begin{displaymath}
	\pi(G) \, \subseteq \, (\pi(G) \cap VF)^{k} \, \subseteq \, (VF)^{k} \, = \, V^{k}F^{k} \, \subseteq \, UF^{k} .
\end{displaymath} Since $F^{k}$ is finite, this shows that $\pi(G)$ is precompact by~\cite[Lemma~3.7.3]{AT} (see also~\cite[Proposition~3.4.1]{AT}).

Suppose now that $H$ is uniformly free from small subgroups. Then $\pi(G)$ has small invariant neighborhoods by Lemma~\ref{lemma:uniform.strong.cofinality}\ref{lemma:uniform.strong.cofinality.3} and Proposition~\ref{propostion:automatic.SIN.reps}. Since $\alpha$-narrowness of $H$ implies $\alpha$-narrowness of $\pi(G)$ according to~\cite[Proposition~5.1.1(a)]{AT}, the statement established in the preceding paragraph entails precompactness of $\pi(G)$.

Finally, if $H$ is Hausdorff, then $\pi(G)$ embeds into a compact Hausdorff topological group by~\cite[Corollary~3.7.17]{AT}, whence minimal almost periodicity of $G$ necessitates triviality of $\pi$. \end{proof}

\section{A general criterion}\label{section:general.criterion}

The purpose of this section is to prove Theorem~\ref{theorem:abstract.2} (resp., Theorem~\ref{theorem:abstract.1}), thus providing a feasible criterion for establishing lower bounds on uniform strong cofinality (resp., strong cofinality). The result is a natural extension of an idea going back to~\cite[Theorem~2.1]{DrosteHollandUlbrich} (see also~\cite[Theorem~2.8]{DrosteGoebel}) and its generalizations in~\cite[Theorem~3.1]{dowerk} and~\cite[Theorem~4.8]{Schneider25}. The following notion is essential.

\begin{definition}\label{definition:full} Let $G$ be a group. We define $\Sub(G)$ to be the set of all subgroups of $G$. A subset $D \subseteq G$ is said to be \emph{full} for a subgroup $H \in \Sub(G)$ if \begin{displaymath}
	\forall h,x \in H \ \exists g \in D \colon \qquad hxh^{-1} = gxg^{-1} .
\end{displaymath} Let $I$ be a set. A subset $D \subseteq G$ is said to be \emph{full} for a family $(H_{i})_{i \in I} \in \Sub(G)^{I}$ if \begin{displaymath}
	\forall h,x \in \prod\nolimits_{i \in I} H_{i} \ \exists g \in D \ \forall i \in I \colon \qquad h_{i}x_{i}h_{i}^{-1} = gx_{i}g^{-1} .
\end{displaymath} \end{definition}

Before moving on to establishing this section's main results, Theorem~\ref{theorem:abstract.2} and Theorem~\ref{theorem:abstract.1}, we note some general observations regarding persistence of fullness.

\begin{remark}\label{remark:full.persistence} Let $G$ be a group, let $D_{0} \subseteq D \subseteq G$, and let $I$ be a set. If $D_{0}$ is full for some element of $\Sub(G)^{I}$, then $D$ is full for the same. \end{remark}

\begin{lem}\label{lemma:full.homomorphic.images} Let $G$ and $H$ be groups, let $\pi \colon G \to H$ be a homomorphism, and let $I$ be a set. If a subset $D \subseteq G$ is full for a family $(G_{i})_{i \in I} \in \Sub(G)^{I}$, then $\pi(D)$ is full for $(\pi(G_{i}))_{i \in I}$. \end{lem}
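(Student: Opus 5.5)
This is a direct unwinding of the definition of fullness, using that $\pi$ is a homomorphism. I will prove the family version; the single-subgroup version is the special case where $I$ is a singleton.

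Fix $h', x' \in \prod_{i \in I} \pi(G_{i})$. We need some $g' \in \pi(D)$ with $h'_{i}x'_{i}h_{i}'^{-1} = g'x'_{i}g'^{-1}$ for every $i \in I$.

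First, lift the data. For each $i \in I$, choose preimages $h_{i}, x_{i} \in G_{i}$ with $\pi(h_{i}) = h'_{i}$ and $\pi(x_{i}) = x'_{i}$. This is possible because $h'_{i}, x'_{i} \in \pi(G_{i})$. If $I$ is infinite, this step uses the axiom of choice, which is harmless here. We obtain $h, x \in \prod_{i \in I} G_{i}$.

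Next, apply fullness of $D$ for $(G_{i})_{i \in I}$. This gives $g \in D$ such that $h_{i}x_{i}h_{i}^{-1} = gx_{i}g^{-1}$ for all $i \in I$.

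Finally, put $g' \defeq \pi(g) \in \pi(D)$ and apply $\pi$ to these identities. Since $\pi$ is a homomorphism,
\begin{displaymath}
	h'_{i}x'_{i}h_{i}'^{-1} \, = \, \pi\bigl(h_{i}x_{i}h_{i}^{-1}\bigr) \, = \, \pi\bigl(gx_{i}g^{-1}\bigr) \, = \, g'x'_{i}g'^{-1} \qquad (i \in I).
\end{displaymath}
This shows that $\pi(D)$ is full for $(\pi(G_{i}))_{i \in I}$.

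There is no real obstacle here; the only point needing care is that the preimages are chosen inside the correct subgroups $G_{i}$.
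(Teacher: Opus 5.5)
Your proof is correct and takes the same route as the paper: apply the fullness of $D$ to the lifted data, then push the resulting identities forward through $\pi$. You make explicit the step of lifting $h',x' \in \prod_{i\in I}\pi(G_{i})$ to $h,x \in \prod_{i\in I}G_{i}$ (using choice), which the paper's proof leaves implicit.
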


\begin{proof} Suppose that $D \subseteq G$ is full for some $(G_{i})_{i \in I} \in \Sub(G)^{I}$. If $h,x \in \prod\nolimits_{i \in I} G_{i}$, then there exists $g \in D$ such that \begin{displaymath}
	\forall i \in I \colon \qquad h_{i}x_{i}h_{i}^{-1}\! \, = \, gx_{i}g^{-1} ,
\end{displaymath} whence \begin{displaymath}
	\pi(h_{i})\pi(x_{i})\pi(h_{i})^{-1}\! \, = \, \pi\!\left( h_{i}x_{i}h_{i}^{-1} \right) \, = \, \pi\!\left( gx_{i}g^{-1} \right) \, = \, \pi(g)\pi(x_{i})\pi(g)^{-1} ,
\end{displaymath} for every $i \in I$. This shows that $\pi(D)$ is full for $(\pi(G_{i}))_{i \in I}$. \end{proof}

\begin{lem}\label{lemma:direct.products.are.full} Let $(G_{i})_{i \in I}$ be a family of groups. Consider the group $G \defeq \prod_{i \in I} G_{i}$ along with its subgroups \begin{displaymath}
	H_{i} \, \defeq \, \{ g \in G \mid \forall j \in I \setminus \{ i \} \colon \, g_{j} = 1 \} \, \cong \, G_{i} \qquad (i \in I) .
\end{displaymath} Then $G$ is full for $(H_{i})_{i \in I}$. \end{lem}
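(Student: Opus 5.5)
The plan is to unwind Definition~\ref{definition:full} directly; the only choice to make is the conjugating element, and it can be built coordinatewise. Fix arbitrary families $h,x \in \prod_{i \in I} H_{i}$. We must produce a single $g \in G$ such that $h_{i}x_{i}h_{i}^{-1} = gx_{i}g^{-1}$ holds for every $i \in I$ simultaneously.

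The natural candidate is the element $g \in G$ defined by $g_{i} \defeq (h_{i})_{i}$ for each $i \in I$. Here $(h_{i})_{i}$ denotes the $i$-th coordinate of $h_{i} \in H_{i} \subseteq G$. So $g$ collects, in each coordinate $i$, the only potentially non-trivial entry of the $i$-th member of the family $h$. This is a legitimate element of $G$ precisely because $G$ is the full (unrestricted) direct product. No finiteness condition on supports is needed.

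Next we verify the identity coordinatewise. Fix $i \in I$ and compare $h_{i}x_{i}h_{i}^{-1}$ with $gx_{i}g^{-1}$ at an arbitrary coordinate $j \in I$.
\begin{itemize}
	\item[---\,] For $j \ne i$, we have $(x_{i})_{j} = 1$ because $x_{i} \in H_{i}$. Hence both sides have $j$-th coordinate $1$: the left side gives $(h_{i})_{j}\,1\,(h_{i})_{j}^{-1} = 1$, and the right side gives $g_{j}\,1\,g_{j}^{-1} = 1$.
	\item[---\,] For $j = i$, the left side has coordinate $(h_{i})_{i}(x_{i})_{i}(h_{i})_{i}^{-1}$. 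The right side has coordinate $g_{i}(x_{i})_{i}g_{i}^{-1}$, which is the same by the choice $g_{i} = (h_{i})_{i}$.
\end{itemize}
Since multiplication in $G$ is coordinatewise, the two elements agree at every coordinate and are therefore equal. This holds for each $i$, so $G$ is full for $(H_{i})_{i \in I}$.

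There is no genuine obstacle here. The only point requiring care is that the conjugator $g$ must be a single element working for all $i$ at once. Independence of the coordinates ensures this, because conjugation by $g$ acts on $H_{i}$ only through $g_{i}$.
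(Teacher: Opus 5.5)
Your proposal is correct and is essentially the paper's own proof. The paper also takes $g \defeq (h_{i,i})_{i \in I}$ (your $g_{i} = (h_{i})_{i}$) and verifies $h_{i}x_{i}h_{i}^{-1} = gx_{i}g^{-1}$ coordinatewise, treating coordinate $i$ separately from the coordinates $j \ne i$, where $x_{i}$ is trivial.
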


\begin{proof} Let $h \in \prod\nolimits_{i \in I} H_{i}$. Consider $g \defeq (h_{i,i})_{i \in I} \in G$. Now, if $x \in \prod\nolimits_{i \in I} H_{i}$ and $i \in I$, then \begin{displaymath}
	h_{i,i}x_{i,i}h_{i,i}^{-1} \, = \, g_{i}x_{i,i}g_{i}^{-1}
\end{displaymath} and \begin{displaymath}
	h_{i,j}x_{i,j}h_{i,j}^{-1} \, = \, 1 \, = \, g_{j}g_{j}^{-1} \, = \, g_{j}x_{i,j}g_{j}^{-1}
\end{displaymath} for every $j \in I\setminus \{ i \}$, that is, $h_{i}x_{i}h_{i}^{-1} = gx_{i}g^{-1}$. \end{proof}

We now prepare for stating and proving the advertised criteria.

\begin{lem}\label{lemma:full.induction} Let $G$ be a group, let $\emptyset \ne \mathcal{D} \subseteq \Pow(G)$, and let $I$ be a set with $\vert I \vert \geq \vert \mathcal{D} \vert$. If $\bigcup \mathcal{D}$ is full for a family $(H_{i})_{i \in I} \in \Sub(G)^{I}$, then there exist $D \in \mathcal{D}$ and $i \in I$ such that $D$ is full for $H_{i}$. \end{lem}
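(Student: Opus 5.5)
We argue by contradiction, using a diagonal choice over an injection $\mathcal{D} \to I$. Suppose that no $D \in \mathcal{D}$ is full for any $H_{i}$ with $i \in I$. Since $\vert I \vert \geq \vert \mathcal{D} \vert$ and $\mathcal{D} \ne \emptyset$, we fix an injection $\iota \colon \mathcal{D} \to I$. For each $D \in \mathcal{D}$, the set $D$ is not full for $H_{\iota(D)}$. Unwinding Definition~\ref{definition:full}, this means there exist $h^{D}, x^{D} \in H_{\iota(D)}$ such that $h^{D}x^{D}(h^{D})^{-1} \ne g x^{D} g^{-1}$ for every $g \in D$.

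Next we assemble these witnesses into families $h, x \in \prod_{i \in I} H_{i}$. For $i \in \iota(\mathcal{D})$, say $i = \iota(D)$, we put $h_{i} \defeq h^{D}$ and $x_{i} \defeq x^{D}$; this is well defined because $\iota$ is injective. For $i \in I \setminus \iota(\mathcal{D})$, we put $h_{i} \defeq x_{i} \defeq 1$.

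Since $\bigcup \mathcal{D}$ is full for $(H_{i})_{i \in I}$, there exists $g \in \bigcup \mathcal{D}$ with $h_{i}x_{i}h_{i}^{-1} = gx_{i}g^{-1}$ for all $i \in I$. Choose $D \in \mathcal{D}$ with $g \in D$. Taking $i \defeq \iota(D)$ gives $h^{D}x^{D}(h^{D})^{-1} = gx^{D}g^{-1}$ with $g \in D$, which contradicts the choice of $h^{D}, x^{D}$. Hence some $D \in \mathcal{D}$ is full for some $H_{i}$.

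There is no real obstacle here. The only points needing care are the injectivity of $\iota$, which is what makes the diagonal family well defined, and the padding by trivial coordinates outside $\iota(\mathcal{D})$. The choice of witnesses for all $D \in \mathcal{D}$ at once uses the axiom of choice, consistent with the set-theoretic setting of the paper.
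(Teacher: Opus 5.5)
Your proof is correct and is essentially the paper's argument: choose non-fullness witnesses coordinatewise, then contradict the fullness of $\bigcup \mathcal{D}$ at the coordinate belonging to a $D \in \mathcal{D}$ that contains $g$. The only cosmetic difference is that the paper uses a surjection $\Delta \colon I \to \mathcal{D}$ and picks witnesses for $\Delta(i)$ at every coordinate $i$, whereas you use an injection $\mathcal{D} \to I$ and pad the unused coordinates with $1$, which works equally well.
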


\begin{proof} First of all, since $\vert I \vert \geq \vert \mathcal{D} \vert > 0$, we find a surjection $\Delta \colon I \to \mathcal{D}$. Now, for contradiction, suppose that no element of $\mathcal{D}$ is full for any member of $(H_{i})_{i \in I}$. Then, in particular, for every $i \in I$ the set $\Delta(i)$ is not full for $H_{i}$. That is, for each $i \in I$, there exist $h_{i},x_{i} \in H_{i}$ such that \begin{displaymath}
	\forall g \in \Delta(i) \colon \quad h_{i}x_{i}h_{i}^{-1} \ne gx_{i}g^{-1} .
\end{displaymath} On the other hand, due to $\bigcup \mathcal{D}$ being full for $(H_{i})_{i \in I}$, we find $g \in \bigcup \mathcal{D}$ such that \begin{displaymath}
	\forall i \in I \colon \quad h_{i}x_{i}h_{i}^{-1} = gx_{i}g^{-1} .
\end{displaymath} Hence, $g \notin \bigcup\nolimits_{i \in I} \Delta(i) = \bigcup \mathcal{D}$, which is the desired contradiction.\end{proof}

\enlargethispage{6mm}

\begin{thm}\label{theorem:abstract.2} Let $G$ be an infinite group, let $m,\ell \in \N_{>0}$, and let $\alpha$ be a cardinal. Suppose that there exists a set $\mathcal{H} \subseteq \Sub(G)$ satisfying the following conditions: \begin{enumerate}[label=\textnormal{(\Alph{enumi})}]
	\item\label{theorem:abstract.2.a} $G$ is full for some element of $\mathcal{H}^{\alpha}$. 
	\item\label{theorem:abstract.2.b} For every $H \in \mathcal{H}$, there exist a finite subset $F \subseteq H$ and $K \in \mathcal{H}$ such that \begin{displaymath}
					\qquad K \, \subseteq \, \Cl_{H}(F)^{m} .
				\end{displaymath}
	\item\label{theorem:abstract.2.c} For every $H \in \mathcal{H}$, there exists a finite subset $F \subseteq G$ such that \begin{displaymath}
					\qquad G \, = \, (H \cup F)^{\ell} .
				\end{displaymath}
\end{enumerate} Then $\scf_{3m\ell}(G) > \alpha$. \end{thm}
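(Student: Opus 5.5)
We argue by contradiction, relying on Lemma~\ref{lemma:full.induction}. Let $\mathcal{D}$ be a directed subset of $(\Pow(G),{\subseteq})$ with $\bigcup \mathcal{D} = G$ and $\vert \mathcal{D} \vert \leq \alpha$. We must find $D \in \mathcal{D}$ with $G = D^{3m\ell}$. By Remark~\ref{remark:uniform.strong.cofinality}\ref{remark:uniform.strong.cofinality.2} we may assume $1 \in D = D^{-1}$ for all $D \in \mathcal{D}$; since $G$ is infinite, the case $\mathcal{D} = \emptyset$ does not arise. By~\ref{theorem:abstract.2.a}, $G = \bigcup \mathcal{D}$ is full for some family $(H_{i})_{i \in \alpha} \in \mathcal{H}^{\alpha}$. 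As $\vert \alpha \vert \geq \vert \mathcal{D} \vert$, Lemma~\ref{lemma:full.induction} yields $D_{0} \in \mathcal{D}$ and $H \defeq H_{i} \in \mathcal{H}$ such that $D_{0}$ is full for $H$. Thus every $H$-conjugate of every element $x \in H$ is of the form $gxg^{-1}$ with $g \in D_{0}$.

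Next we apply~\ref{theorem:abstract.2.b} to $H$, obtaining a finite $F \subseteq H$ and $K \in \mathcal{H}$ with $K \subseteq \Cl_{H}(F)^{m}$. By fullness, $\Cl_{H}(F) \subseteq \{ gfg^{-1} \mid g \in D_{0}, f \in F\} \subseteq D_{0}FD_{0}$, using $D_{0} = D_{0}^{-1}$. Since $\mathcal{D}$ is directed and $F$ is finite, we pick $D_{1} \in \mathcal{D}$ containing $D_{0} \cup F$. Then $D_{0}FD_{0} \subseteq D_{1}^{3}$, and hence $K \subseteq D_{1}^{3m}$.

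Finally we apply~\ref{theorem:abstract.2.c} to $K$, obtaining a finite $F' \subseteq G$ with $G = (K \cup F')^{\ell}$. By directedness again, there is $D \in \mathcal{D}$ with $D_{1} \cup F' \subseteq D$. Since $1 \in D$, we have $F' \subseteq D \subseteq D^{3m}$, and therefore $K \cup F' \subseteq D^{3m}$. This gives $G = (K \cup F')^{\ell} \subseteq D^{3m\ell}$, which contradicts the assumption that no $D^{3m\ell}$ equals $G$. Hence $\scf_{3m\ell}(G) > \alpha$.

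The only delicate point is that~\ref{theorem:abstract.2.c} must be applied to the group $K$ produced by~\ref{theorem:abstract.2.b}, and not to $H$ itself, because fullness controls only conjugates inside $H$. Bookkeeping the exponent $3 = \vert D_{0}FD_{0}\vert$-factor count together with $1 \in D$ gives exactly $3m\ell$. Note also that if $\alpha$ is finite the claim is immediate from Lemma~\ref{lemma:uniform.strong.cofinality}\ref{lemma:uniform.strong.cofinality.1}, though the argument above works regardless.
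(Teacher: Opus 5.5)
Your proof is correct and is essentially the paper's argument: after normalizing $\mathcal{D}$ to symmetric sets containing $1$, Lemma~\ref{lemma:full.induction} gives some $D_{0}$ full for some $H \in \mathcal{H}$, condition~\ref{theorem:abstract.2.b} combined with fullness yields $K \subseteq D_{1}^{3m}$, and condition~\ref{theorem:abstract.2.c} applied to $K$ (using $1 \in D$) gives $G = D^{3m\ell}$. The only blemish is cosmetic: ``$3 = \vert D_{0}FD_{0}\vert$'' reads as a cardinality when you mean the three factors in $D_{0}FD_{0}$, and this does not affect the argument.
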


\begin{proof} In view of Remark~\ref{remark:uniform.strong.cofinality}\ref{remark:uniform.strong.cofinality.2}, let $\mathcal{D}$ be a directed subset of $(\Pow(G),{\subseteq})$ such that \begin{itemize}
	\item[---\,] $\vert \mathcal{D} \vert \leq \alpha$,
	\item[---\,] $G = \bigcup \mathcal{D}$,
	\item[---\,] $1 \in D = D^{-1}$ for every $D \in \mathcal{D}$.
\end{itemize} It now suffices to show that $G = D^{3m\ell}$ for some $D \in \mathcal{D}$. We establish this conclusion in three consecutive claims.

\textit{Claim 0}. There exist $D_{0} \in \mathcal{D}$ and $H_{0} \in \mathcal{H}$ such that $D_{0}$ is full for $H_{0}$.

\textit{Proof of Claim 0}. From~\ref{theorem:abstract.2.a}, we know that $G$ is full for some element of $\mathcal{H}^{\alpha}$. Since $G = \bigcup \mathcal{D}$ and $\alpha \geq \vert \mathcal{D} \vert$, our Lemma~\ref{lemma:full.induction} implies the existence of $D_{0} \in \mathcal{D}$ and $H_{0} \in \mathcal{H}$ such that $D_{0}$ is full for $H_{0}$. $\qed_{\text{Claim\! 0}}$

\textit{Claim 1}. There exist $D_{1} \in \mathcal{D}$ and $H_{1} \in \mathcal{H}$ such that $H_{1} \subseteq D_{1}^{3m}$.

\textit{Proof of Claim 1}. According to~\ref{theorem:abstract.2.b}, we find a finite subset $F_{1} \subseteq H_{0}$ and $H_{1} \in \mathcal{H}$ such that $H_{1} \subseteq \Cl_{H_{0}}(F_{1})^{m}$. As $F_{1}$ is finite and $\mathcal{D}$ is a directed subset of $(\Pow(G),{\subseteq})$ with $G = \bigcup \mathcal{D}$, we find $D_{1} \in \mathcal{D}$ with $D_{0} \cup F_{1} \subseteq D_{1}$. We will show that $H_{1} \subseteq D_{1}^{3m}$. To this end, let $h \in H_{1}$. Then there exist $g_{1},\ldots,g_{m} \in F_{1}$ and $t_{1},\ldots,t_{m} \in H_{0}$ such that $h = t_{1}g_{1}t_{1}^{-1}\cdots t_{m}g_{m}t_{m}^{-1}$. Since $D_{0}$ is full for $H_{0}$, for each $i \in \{ 1,\ldots,m \}$ there exists $s_{i} \in D_{0}$ with $t_{i}g_{i}t_{i}^{-1} = s_{i}g_{i}s_{i}^{-1}$. Now, \begin{align*}
	h \, &= \, t_{1}g_{1}t_{1}^{-1}\cdots t_{m}g_{m}t_{m}^{-1} \\
		& = \, s_{1}g_{1}s_{1}^{-1}\cdots s_{m}g_{m}s_{m}^{-1} \, \in \, \! \left(D_{0}D_{1}D_{0}^{-1}\right)^{m} \, \subseteq \, D_{1}^{3m} . \hfill \qed_{\text{Claim\! 1}}
\end{align*} 

\textit{Claim 2}. There exists $D_{2} \in \mathcal{D}$ such that $G = D_{2}^{3m\ell}$.

\textit{Proof of Claim 2}. By~\ref{theorem:abstract.2.c}, there exists a finite subset $F_{2} \subseteq G$ with $G = (H_{1} \cup F_{2})^{\ell}$. Since $F_{2}$ is finite and $\mathcal{D}$ is a directed subset of $(\Pow(G),{\subseteq})$ with $G = \bigcup \mathcal{D}$, we find some $D_{2} \in \mathcal{D}$ such that $D_{1} \cup F_{2} \subseteq D_{2}$. Then \begin{displaymath}
	G \, = \, (H_{1} \cup F_{2})^{\ell} \, \subseteq \, \left( D_{1}^{3m} \cup F_{2} \right)^{\ell} \, \subseteq \, D_{2}^{3m\ell} . \hfill \qed_{\text{Claim\! 2}}
\end{displaymath}

This shows that $\scf_{3m\ell}(G) > \alpha$. \end{proof}

\begin{thm}\label{theorem:abstract.1} Let $G$ be an infinite group and let $\alpha$ be a cardinal. Suppose that there exists a set $\mathcal{H} \subseteq \Sub(G)$ satisfying the following conditions: \begin{enumerate}[label=\textnormal{(\Alph{enumi})}]
	\item\label{theorem:abstract.1.a} $G$ is full for some element of $\mathcal{H}^{\alpha}$. 
	\item\label{theorem:abstract.1.b} For every $H \in \mathcal{H}$, there exist $m \in \N$, a finite subset $F \subseteq H$ and $K \in \mathcal{H}$ with \begin{displaymath}
				\qquad K \, \subseteq \, \Cl_{H}(F)^{m} .
			\end{displaymath}
	\item\label{theorem:abstract.1.c} For every $H \in \mathcal{H}$, there exist $\ell \in \N$ and a finite subset $F \subseteq G$ such that \begin{displaymath}
				\qquad G \, = \, (H \cup F)^{\ell} .
			\end{displaymath}
\end{enumerate} Then $\scf(G) > \alpha$. \end{thm}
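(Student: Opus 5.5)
The plan is to mirror the proof of Theorem~\ref{theorem:abstract.2}, but with the uniform exponent replaced by exponents chosen along the way. The chain axioms of strong cofinality are what absorb the non-uniform exponents $m$ and $\ell$.

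First I dispose of trivialities. $G$ is infinite, so $\scf(G)$ is defined via chains. Let $\mathcal{C}$ be a chain of proper subsets of $G$ with $G = \bigcup \mathcal{C}$, $C^{-1}=C$ for all $C \in \mathcal{C}$, and for each $C$ some $C' \in \mathcal{C}$ with $C^{2} \subseteq C'$. Assume $\vert \mathcal{C} \vert \leq \alpha$; I aim for a contradiction. As in the proof of Lemma~\ref{lemma:uniform.strong.cofinality}\ref{lemma:uniform.strong.cofinality.3}, I may pass to $\{ C \in \mathcal{C} \mid 1 \in C \}$, so $\mathcal{C}$ is directed and each member contains $1$. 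Iterating the squaring axiom, for every $C \in \mathcal{C}$ and $n \in \N$ there is $C'' \in \mathcal{C}$ with $C^{n} \subseteq C''$. This is the key extra property.

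Next I run the three claims. \emph{Claim 0:} by \ref{theorem:abstract.1.a}, $G = \bigcup \mathcal{C}$ and $\vert \mathcal{C} \vert \leq \alpha$, so Lemma~\ref{lemma:full.induction} gives $C_{0} \in \mathcal{C}$ and $H_{0} \in \mathcal{H}$ such that $C_{0}$ is full for $H_{0}$. If $\alpha = 0$, then $\mathcal{C}$ would be empty, which is impossible since $G \ne \emptyset$; so the lemma's nonemptiness hypothesis holds.

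\emph{Claim 1:} by \ref{theorem:abstract.1.b}, pick $m$, a finite $F_{1} \subseteq H_{0}$ and $H_{1} \in \mathcal{H}$ with $H_{1} \subseteq \Cl_{H_{0}}(F_{1})^{m}$. Since the chain covers $G$ and $F_{1}$ is finite, choose $C_{1} \supseteq C_{0} \cup F_{1}$ in $\mathcal{C}$. Exactly as before, fullness of $C_{0}$ for $H_{0}$ lets me rewrite each $H_{0}$-conjugate of an element of $F_{1}$ as a $C_{0}$-conjugate. Hence $H_{1} \subseteq C_{1}^{3m}$.

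\emph{Claim 2:} by \ref{theorem:abstract.1.c}, pick $\ell$ and a finite $F_{2}$ with $G = (H_{1} \cup F_{2})^{\ell}$. Choose $C_{2} \supseteq C_{1} \cup F_{2}$ in $\mathcal{C}$. Then $G = C_{2}^{3m\ell}$, and by the iterated squaring property $G \subseteq C''$ for some $C'' \in \mathcal{C}$. This contradicts properness of $C''$. Hence every such chain has cardinality $> \alpha$, i.e.\ $\scf(G) > \alpha$.

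An alternative route is a direct argument, since the statement cannot simply be read off Theorem~\ref{theorem:abstract.2}: the exponents there must be uniform. The only point needing care, and the main obstacle, is the iterated squaring step that converts "$G = C^{n}$ for some $n$" into membership of $G$ in the chain. This requires that the chosen sets contain $1$, so that $C^{k} \subseteq C^{n}$ for $k \leq n$, and that $C^{n}$ lies inside some member of $\mathcal{C}$, obtained by induction using $C^{2^{j}} \subseteq C_{(j)}$.
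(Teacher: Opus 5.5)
Your proposal is correct and is essentially the paper's own proof, which consists only of the instruction to revisit the proof of Theorem~\ref{theorem:abstract.2}. You run Claims~0--2 with $m$ and $\ell$ now depending on the chosen subgroups, obtaining $G = C^{n}$ for some member $C$ of the chain (normalized so that every member contains $1$) and some $n \in \N$; you then absorb this non-uniform exponent by iterating the squaring axiom, so that $G$ lies in a single member of the chain, contradicting properness, exactly as in the proof of Lemma~\ref{lemma:uniform.strong.cofinality}\ref{lemma:uniform.strong.cofinality.3}.
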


\begin{proof} This follows by revisiting the proof of Theorem~\ref{theorem:abstract.2}. \end{proof}

Let us conclude this section by discussing an example. The following result is essentially contained in Bergman's work~\cite{bergman}, but with a different proof.

\begin{thm}[cf.~{\cite{bergman}}]\label{theorem:bergman} Let $X$ be an infinite set. Then $\vert X \vert < \scf_{42}(\Sym(X))$. \end{thm}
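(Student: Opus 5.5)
We apply Theorem~\ref{theorem:abstract.2} to $G \defeq \Sym(X)$ with $\alpha \defeq \vert X \vert$ and $3m\ell = 42$. The natural choice is $m = 7$, $\ell = 2$ (or $m=2$, $\ell = 7$; we commit to whichever the estimates give, expecting $m\ell = 14$). Let $\mathcal{H}$ be the set of all subgroups of the form
\begin{displaymath}
	\Sym(X,Y) \, \defeq \, \{ g \in \Sym(X) \mid X \setminus Y \subseteq \Fix(g) \} ,
\end{displaymath}
where $Y \subseteq X$ is a \emph{moiety}, i.e.\ $\vert Y \vert = \vert X \setminus Y \vert = \vert X \vert$.

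For condition~\ref{theorem:abstract.2.a}, we partition $X$ into $\vert X \vert$ many pairwise disjoint moieties $(Y_{i})_{i \in X}$, using $\vert X \times X \vert = \vert X \vert$. Given $h,x \in \prod_{i} \Sym(X,Y_{i})$, the element $g \in \Sym(X)$ acting as $h_{i}$ on each $Y_{i}$ is well defined, since the supports are disjoint. It satisfies $g x_{i} g^{-1} = h_{i} x_{i} h_{i}^{-1}$, because both sides are supported in $Y_{i}$ and agree there. This is the argument of Lemma~\ref{lemma:direct.products.are.full}, so $G$ is full for $(\Sym(X,Y_{i}))_{i \in X} \in \mathcal{H}^{\alpha}$.

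For condition~\ref{theorem:abstract.2.c}, fix a moiety $Y$ and choose $f \in \Sym(X)$ with $f(Y) \supseteq X \setminus Y$; for instance, an involution swapping a moiety $Z \subseteq Y$ with $X \setminus Y$, where $\vert Y \setminus Z \vert = \vert X \vert$ as well. Any $g \in \Sym(X)$ should be a product of boundedly many elements from $\Sym(X,Y) \cup \Sym(X,f(Y))$. The standard argument: every permutation is a product of two permutations each supported on a co-moiety-sized region, after which a conjugate by $f$ moves such a region into $Y$. This gives $G = (H \cup \{ f, f^{-1}\})^{\ell}$ with a small $\ell$. The concrete fact needed is Bergman's lemma that any $g$ is a product $g = ab$ with $a$ supported in $Y$ and $b$ supported in a suitable moiety, arranged so that everything fits within $\ell = 2$ after absorbing $f$. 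If the count does not fit, I would take $\ell = 7$ and $m=2$ instead.

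For condition~\ref{theorem:abstract.2.b}, given $H = \Sym(X,Y) \cong \Sym(Y)$, we need a finite $F \subseteq H$ and a moiety $Y' \subseteq Y$ with $\Sym(X,Y') \subseteq \Cl_{H}(F)^{m}$. Pick $Y' \subseteq Y$ that is a moiety of $Y$ (hence of $X$). By Lemma~\ref{lemma:farrell.wagoner} with trivial $G$-factor, or directly, every permutation of $Y'$ is a product of a few involutions. Moreover, every permutation of $Y$ supported on a moiety of $Y$ is a product of two involutions whose fixed-point sets and orbit structure have full cardinality, so all such involutions are conjugate in $\Sym(Y)$. Taking $F$ to be a single such involution $u$, with $\vert \Fix(u)\vert = \vert \mathrm{supp}(u)\vert = \vert Y \vert$, and tracking the cycle decomposition of elements of $\Sym(Y')$, we get $\Sym(X,Y') \subseteq \Cl_{H}(u)^{m}$.

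The main obstacle is the bookkeeping of the constants, in particular writing an arbitrary permutation supported in $Y'$ as a product of conjugates of one fixed involution with large fixed set and large support. I would first split each cycle structure into two involutions, via the reflection trick on $\Z$-orbits and finite cycles. Then I would adjust the fixed-point and support cardinalities by multiplying with extra conjugates of $u$ supported in $Y\setminus Y'$. This is where the exact value of $m$ is determined, and the target is $m\ell = 14$.
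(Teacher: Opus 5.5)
There is a genuine gap, and it lies in condition (C). Your primary plan $\ell = 2$ (with $m = 7$) fails for every finite $F$. The set $(H \cup F)^{2}$ consists of $H$, the finitely many cosets $Hf$ and $fH = (fHf^{-1})f$ for $f \in F$, and the finitely many points of $F^{2}$. This is a finite union of cosets of subgroups of infinite index in $\Sym(X)$: a left coset $gH$ of $H = \Sym(X,Y)$ is determined by $g\vert_{X \setminus Y}$, and there are infinitely many such restrictions. By B.~H.~Neumann's covering lemma, such a union never exhausts $\Sym(X)$. The same objection applies to your two-factor idea $g = ab$ with $b$ supported in a ``suitable'' moiety. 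If that moiety depends on $g$, you need conjugators outside any fixed finite $F$. If it ranges over finitely many fixed sets, Neumann's lemma applies again.

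The missing ingredient is Galvin's three-factor decomposition (Lemma~2.1 in Galvin's paper, which the paper's proof quotes). Suppose $X = Y_{0} \sqcup Y_{1} \sqcup Y_{2}$ with $\vert Y_{0} \vert = \vert Y_{1} \vert = \vert Y_{2} \vert = \vert X \vert$, and put $A \defeq Y_{0} \cup Y_{1}$, $B \defeq Y_{0} \cup Y_{2}$ and $S(\cdot) \defeq \Sym(X,\cdot)$. Then $\Sym(X) = S(A)S(B)S(A) \cup S(B)S(A)S(B)$. Take $Y = A$ and $Y_{2} = X \setminus Y$, and use exactly the involution $f$ you describe, with $\Fix(f) = Y_{0}$ and $f(Y_{1}) = Y_{2}$. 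Then $S(B) = fS(Y)f$, so $\Sym(X) = S(Y)fS(Y)fS(Y) \cup fS(Y)fS(Y)fS(Y)f \subseteq (S(Y) \cup \{ f \})^{7}$. This gives $\ell = 7$, which is what the paper uses.

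With $\ell = 7$ you need $m = 2$ in (B); any $m\ell \leq 14$ suffices, since $\scf_{k}$ is monotone in $k$. Here your route genuinely differs from the paper's and is sound if carried out correctly. The paper takes $K = H$ and invokes Bergman's Lemma~14, that $\Sym(Y) = \Cl_{\Sym(Y)}(g)^{2}$ for a suitable $g$. You instead shrink to $K = \Sym(X,Y')$ with $Y'$ a moiety of $Y$. Condition (B) permits this, and it makes the argument elementary.

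However, do not ``multiply with extra conjugates of $u$'': that raises $m$ above $2$ and overshoots $42$. Instead, pad the two involutions directly, as follows.
Write $\sigma\vert_{Y'} = ab$ with involutions $a,b$ of $Y'$. Pick a fixed-point-free involution $c$ of some $Z_{1} \subseteq Y \setminus Y'$ with $\vert Z_{1} \vert = \vert (Y \setminus Y') \setminus Z_{1} \vert = \vert Y \vert$. Since $c$ commutes with $a$ and $b$, we have $\sigma = (ac)(bc)$. Both $ac$ and $bc$ have $\vert Y \vert$ many $2$-cycles and $\vert Y \vert$ many fixed points in $Y$, hence are conjugate to $u$ in $\Sym(X,Y)$.

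As written, your proposal commits to neither constant and proves (C) for no explicit $\ell$.
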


\begin{proof} Let $\mathcal{M} \defeq \{ Y \subseteq X \mid \vert Y \vert = \vert X \vert = \vert X\setminus Y \vert \}$. Consider the group $G \defeq \Sym(X)$ along with its subgroups \begin{displaymath}
	S(Y) \, \defeq \, \{ g \in G \mid X \setminus Y \subseteq \Fix(g) \} \qquad (Y \subseteq X) .
\end{displaymath} We verify the conditions specified in Theorem~\ref{theorem:abstract.2} for $\mathcal{H} \defeq \{ S(Y) \mid Y \in \mathcal{M} \}$ with respect to $m \defeq 2$ and $\ell \defeq 7$.

\ref{theorem:abstract.2.a} Since $X$ is infinite, we may invoke the axiom of choice to find a family $(Y_{x})_{x \in X}$ of pairwise disjoint subsets of $X$ such that $X = \bigcupdot_{x \in X} Y_{x}$ and $\vert Y_{x} \vert = \vert X \vert$ for all~$x \in X$. Evidently, $\{ Y_{x} \mid x \in X \} \subseteq \mathcal{M}$ by the Schröder--Bernstein theorem. Consider the group $H \defeq \prod_{x \in X} \Sym(Y_{x})$ along with its subgroups \begin{displaymath}
	H_{x} \, \defeq \, \{ h \in H \mid \forall y \in X \setminus \{ x \} \colon \, h_{y} = 1 \} \qquad (x \in X)
\end{displaymath} and the embedding $\iota \colon H \to G$ defined by \begin{displaymath}
	\iota(h)\vert_{Y_{x}} \, \defeq \, h_{x} \qquad (h \in H, \, x \in X) .
\end{displaymath} Since $H$ is full for $(H_{x})_{x \in X}$ according to Lemma~\ref{lemma:direct.products.are.full} and $\iota$ is a homomorphism, Lemma~\ref{lemma:full.homomorphic.images} asserts that $\iota (H)$ is full for $(\iota(H_{x}))_{x \in X} = (S(Y_{x}))_{x \in X}$, whence $G$ is full for $(S(Y_{x}))_{x \in X} \in \mathcal{H}^{X}$ thanks to Remark~\ref{remark:full.persistence}.	
	
\ref{theorem:abstract.2.b} Let $Y \in \mathcal{M}$. Since $Y$ is infinite, \cite[Lemma~14]{bergman} asserts the existence of some $g \in \Sym(Y)$ such that $\Sym(Y) = \Cl_{\Sym(Y)}(g)^{2}$. As $\Sym(Y) \cong S(Y)$, this entails the desired conclusion.
	
\ref{theorem:abstract.2.c} Let $Y \in \mathcal{M}$. Using the axiom of choice, we find $Y_{0},Y_{1} \subseteq Y$ such that $Y = Y_{0} \cupdot Y_{1}$ and $\vert Y_{0} \vert = \vert Y \vert = \vert Y_{1} \vert$. Moreover, let $Y_{2} \defeq X\setminus Y$. Since $X = Y_{0} \cupdot Y_{1} \cupdot Y_{2}$ and $\vert Y_{1} \vert = \vert Y_{2} \vert$, there exists $g \in \I(\Sym(X))$ such that $\Fix(g) = Y_{0}$ and $g(Y_{1}) = Y_{2}$, which implies that \begin{displaymath}
	S(Y_{0}\cup Y_{2}) \, = \, S(g(Y)) \, = \, gS(Y)g .
\end{displaymath} Using~\cite[Lemma~2.1]{galvin}, we conclude that \begin{align*}
	\Sym(X) \, &= \, S(Y_{0}\cup Y_{1})S(Y_{0}\cup Y_{2})S(Y_{0}\cup Y_{1}) \cup S(Y_{0}\cup Y_{2})S(Y_{0}\cup Y_{1})S(Y_{0}\cup Y_{2}) \\
	&= \, S(Y)S(Y_{0}\cup Y_{2})S(Y) \cup S(Y_{0}\cup Y_{2})S(Y)S(Y_{0}\cup Y_{2}) \\
	&= \, S(Y)gS(Y)gS(Y) \cup gS(Y)gS(Y)gS(Y)g \, \subseteq \, (S(Y) \cup \{ g \})^{7} .
\end{align*}
		
By Theorem~\ref{theorem:abstract.2}, this gives the desired conclusion. \end{proof}

\section{A variation on Kazhdan's property}\label{section:kazhdan}

We briefly turn to the study of unitary representations and exhibit, in Proposition~\ref{proposition:restricted.kazhdan.property} below, a connection between uniform strong cofinality and a weak version of Kazhdan's property~$(T)$ suited for non-finitely generated groups (cf.~Definition~\ref{definition:restricted.property.t}). For convenience, let us agree on the following basic terminology.

\begin{definition} Let $\alpha$ be an infinite cardinal. An \emph{$\alpha$-net} in a set $X$ is a net $(x_{i})_{i \in I}$ in $X$ such that $\vert I \vert \leq \alpha$. \end{definition}

As usual, if $\pi$ is a unitary representation of a group $G$ on a Hilbert space $H$, then the set of \emph{invariant} vectors for $\pi$ will be denoted by \begin{displaymath}
	\Fix(\pi) \, \defeq \, \{ x \in H \mid \forall g \in G \colon \, \pi(g)x = x \} ,
\end{displaymath} and a net $(x_{i})_{i \in I}$ of vectors in $H$ will be called \emph{almost invariant} with respect to $\pi$ if \begin{displaymath}
	\forall g \in G \colon \qquad \lim\nolimits_{i \to I} \Vert x_{i} - \pi(g)x_{i} \Vert \, = \, 0 .
\end{displaymath}

\begin{definition}\label{definition:restricted.property.t} Let $\alpha$ be an infinite cardinal. We say that a group $G$ has \emph{property~$(T_{\alpha})$} if every unitary representation of $G$ admitting an almost invariant $\alpha$-net of unit vectors has an invariant unit vector. \end{definition}

\begin{remark}\label{remark:restricted.property.t} A group $G$ has property $(T)$ if and only if $G$ has property $(T_{\vert G \vert \vee \aleph_{0}})$ if and only if $G$ has property $(T_{\alpha})$ for every infinite cardinal $\alpha$. \end{remark}

A group $G$ is said to have \emph{Serre's property $(FH)$} if every action of $G$ by isometries on a real Hilbert space has a fixed point. Equivalently, a group $G$ possesses Serre's property~$(FH)$ if and only if the first cohomology of every orthogonal representation of $G$ vanishes (see, e.g.,~\cite[Theorem~2.10.4]{KazhdanBook}). We record an obvious extension of a result due to Delorme~\cite[Théorème~V.1]{delorme} (see also~\cite[Theorem~2.12.4(i)]{KazhdanBook}).

\begin{prop} Property $(T_{\aleph_{0}})$ implies Serre's property $(FH)$. \end{prop}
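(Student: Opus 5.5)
We follow Delorme's argument in the form of~\cite[Theorem~2.12.4]{KazhdanBook}, observing that it only ever uses almost invariant sequences, i.e., $\aleph_{0}$-nets. Let $G$ have property $(T_{\aleph_{0}})$ and let $\alpha$ be an affine isometric action of $G$ on a real Hilbert space $H$, with linear part $\pi$ and cocycle $b$, so that $\alpha(g)v = \pi(g)v + b(g)$. We argue by contradiction and assume $\alpha$ has no fixed point. Equivalently, $b$ is unbounded. Indeed, a bounded orbit yields a fixed point, namely the circumcentre of the orbit.

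The first step is Schoenberg's construction. For each $t>0$, the function $g \mapsto \exp(-t\Vert b(g)\Vert^{2})$ is positive definite, since $\Vert b(g^{-1}h)\Vert^{2}$ is a conditionally negative definite kernel. The GNS construction then gives a unitary representation $\sigma_{t}$ of $G$ on a complex Hilbert space $H_{t}$ with a unit cyclic vector $\xi_{t}$ such that
\begin{displaymath}
	\langle \sigma_{t}(g)\xi_{t},\xi_{t}\rangle \, = \, \exp\!\left(-t\Vert b(g)\Vert^{2}\right) .
\end{displaymath}
Concretely, one may take $\sigma_{t}$ to be a suitable subrepresentation of the (complexified) Fock-space representation built from $\alpha$.

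Next, form the direct sum $\sigma \defeq \bigoplus_{n \geq 1} \sigma_{1/n}$ and the sequence of unit vectors $(\xi_{1/n})_{n \geq 1}$, each viewed in its own summand. This sequence is almost invariant for $\sigma$, because
\begin{displaymath}
	\Vert \sigma(g)\xi_{1/n} - \xi_{1/n}\Vert^{2} \, = \, 2 - 2\exp\!\left(-\Vert b(g)\Vert^{2}/n\right) \, \longrightarrow \, 0
\end{displaymath}
for each fixed $g$. Since the index set is $\N$, this is an almost invariant $\aleph_{0}$-net, and property $(T_{\aleph_{0}})$ therefore provides a non-zero invariant vector for $\sigma$.

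The main obstacle is deriving a contradiction from this invariant vector, and we plan to handle it through the standard spectral-gap reformulation. The point to check is that property $(T_{\aleph_{0}})$ still yields, for any representation $\sigma$ with invariant subspace $\Fix(\sigma)$, the statement that $\sigma$ restricted to $\Fix(\sigma)^{\perp}$ has no almost invariant sequence of unit vectors. This holds because that restriction is itself a unitary representation without invariant unit vectors. Now let $P$ be the projection onto $\Fix(\sigma)$. If $\Vert P\xi_{1/n}\Vert$ does not tend to $1$, then after passing to a subsequence and normalizing, the components $(1-P)\xi_{1/n}$ form an almost invariant sequence in $\Fix(\sigma)^{\perp}$, which is impossible. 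Hence $\Vert P\xi_{1/n}\Vert \to 1$. On the other hand, $P\xi_{1/n}$ lies in $\Fix(\sigma_{1/n})$, and $\xi_{1/n}$ is cyclic for $\sigma_{1/n}$.

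It remains to show that $\Vert P\xi_{1/n}\Vert$ is bounded away from $1$ uniformly in $n$, and this is the key computation. Write $\eta \defeq P\xi_{1/n}$. For every $g \in G$,
\begin{displaymath}
	\langle \sigma(g)\xi_{1/n},\xi_{1/n}\rangle \, = \, \Vert \eta\Vert^{2} + \langle \sigma(g)(\xi_{1/n}-\eta),\xi_{1/n}-\eta\rangle \, \geq \, 2\Vert \eta\Vert^{2}-1 .
\end{displaymath}
Since $b$ is unbounded, the infimum over $g$ of the left-hand side $\exp(-\Vert b(g)\Vert^{2}/n)$ equals $0$. Therefore $2\Vert P\xi_{1/n}\Vert^{2} - 1 \leq 0$, that is, $\Vert P\xi_{1/n}\Vert^{2} \leq 1/2$ for every $n$. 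This contradicts $\Vert P\xi_{1/n}\Vert \to 1$, so $b$ is bounded, $\alpha$ has a fixed point, and $G$ has Serre's property $(FH)$.
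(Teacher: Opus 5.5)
Your proof is correct. Its skeleton is the Delorme argument from \cite[Theorem~2.12.4(i)]{KazhdanBook} to which the paper defers, including the one observation the paper relies on: the almost invariant unit vectors $(\xi_{1/n})_{n \geq 1}$ of $\sigma = \bigoplus_{n}\sigma_{1/n}$ form an $\aleph_{0}$-net.

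Where you depart from it is the endgame. Applied verbatim, the cited argument can only use what the definition of $(T_{\aleph_{0}})$ provides, namely a non-zero invariant vector of $\sigma$ and hence of some $\sigma_{1/n}$. It therefore has to show that $\sigma_{t}$ has no non-zero invariant vectors at all when $b$ is unbounded, for instance via cyclicity of $\xi_{t}$ and the weak convergence $\sigma_{t}(g_{k})\xi_{t} \to 0$ along any sequence with $\Vert b(g_{k}) \Vert \to \infty$.

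You instead upgrade $(T_{\aleph_{0}})$ to $\dist(\xi_{1/n},\Fix(\sigma)) \to 0$. This is (1)$\Rightarrow$(2) of Proposition~\ref{proposition:restricted.kazhdan.property.characterization}, and you reprove it correctly by passing to a subsequence and normalising. After that, the elementary bound $\Vert P\xi_{1/n} \Vert^{2} \leq 1/2$, obtained from $\inf_{g} e^{-\Vert b(g) \Vert^{2}/n} = 0$, already gives the contradiction.

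So you trade a lemma about the specific representations $\sigma_{t}$ for a general spectral-gap reformulation of $(T_{\aleph_{0}})$. As a consequence, several things in your write-up are never used and can be dropped: the remarks on cyclicity and on the Fock space, and the intermediate claim that $\sigma$ has a non-zero invariant vector.

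One small point: the paper defines $(FH)$ via actions by isometries. Restricting to affine isometric actions therefore tacitly uses the Mazur--Ulam theorem, which deserves a sentence.
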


\begin{proof} The argument given in~\cite[Theorem~2.12.4(i)]{KazhdanBook} (using~\cite[Proposition~2.11.2]{KazhdanBook}) applies verbatim. \end{proof}

To point out a characterization in terms of positive-definite functions, let us recall that a \emph{positive-definite} function on a group $G$ is a map $f \colon G \to \C$ such that \begin{displaymath}
	\sum\nolimits_{i,j=1}^{n} c_{i}\overline{c_{j}} f\!\left(g_{j}^{-1}g_{i}\right)\! \, \geq \, 0 
\end{displaymath} for all $n \in \N$, $c_{1},\ldots,c_{n} \in \C$ and $g_{1},\ldots,g_{n} \in G$. We refer to~\cite[Appendix~C.4]{KazhdanBook} for further background on this class of functions.

\begin{lem}[{\cite[Proposition~C.4.2]{KazhdanBook}}]\label{lemma:positive.definite.functions} Let $G$ be a group and let $f \colon G \to \C$ be positive-definite. Then, for all $g,h \in G$, \begin{enumerate}
	\item\label{lemma:positive.definite.functions.0} $\overline{f(g)} = f\!\left( g^{-1}\right)$,
	\item\label{lemma:positive.definite.functions.1} $\vert f(g) \vert \leq f(1)$, 
	\item\label{lemma:positive.definite.functions.2} $\vert f(g) - f(h) \vert^{2} \leq 2f(1)\!\left(f(1)-\Re f\!\left(g^{-1}h\right)\right)$.
\end{enumerate} \end{lem}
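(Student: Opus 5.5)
The plan is to reduce everything to the Gelfand--Naimark--Segal (GNS) representation. Let $H_f$ be the Hilbert space, $\pi_f$ the unitary representation and $\xi \in H_f$ the cyclic vector, chosen so that $f(g) = \langle \pi_f(g)\xi,\xi\rangle$ for all $g \in G$ (cf.~\cite[Appendix~C.4]{KazhdanBook}). If one prefers not to invoke GNS, the same inequalities can be read off directly from positive semi-definiteness of the Gram-type matrices $\bigl(f(g_{j}^{-1}g_{i})\bigr)_{i,j}$ for $n=2$ and $n=3$.

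\ref{lemma:positive.definite.functions.0} Take $n=2$ with $g_{1}=1$ and $g_{2}=g$. The quadratic form $\sum_{i,j} c_{i}\overline{c_{j}} f(g_{j}^{-1}g_{i})$ is real for all $c \in \C^{2}$. Choosing $c=(1,1)$ and $c=(1,i)$ forces $f(g)+f(g^{-1})$ and $i\bigl(f(g)-f(g^{-1})\bigr)$ to be real. This yields $\overline{f(g)} = f(g^{-1})$, and also $f(1) \geq 0$ (from $n=1$).

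\ref{lemma:positive.definite.functions.1} The $2\times 2$ matrix
\begin{displaymath}
	\begin{pmatrix} f(1) & f(g^{-1}) \\ f(g) & f(1) \end{pmatrix}
\end{displaymath}
is Hermitian by \ref{lemma:positive.definite.functions.0} and positive semi-definite. Hence its determinant $f(1)^{2}-\vert f(g)\vert^{2}$ is nonnegative, which gives $\vert f(g)\vert \leq f(1)$.

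\ref{lemma:positive.definite.functions.2} In the GNS picture, write
\begin{displaymath}
	f(g)-f(h) \, = \, \langle \xi, \pi_f(g)^{*}\xi - \pi_f(h)^{*}\xi\rangle .
\end{displaymath}
By Cauchy--Schwarz, $\vert f(g)-f(h)\vert^{2} \leq \Vert\xi\Vert^{2}\, \Vert \pi_f(g^{-1})\xi - \pi_f(h^{-1})\xi\Vert^{2}$, where $\Vert\xi\Vert^{2}=f(1)$. Expanding the second norm gives
\begin{displaymath}
	2f(1) - 2\Re\langle \pi_f(g^{-1})\xi,\pi_f(h^{-1})\xi\rangle \, = \, 2f(1) - 2\Re f(hg^{-1}) .
\end{displaymath}
To obtain exactly $g^{-1}h$ as in the statement, I would run the same argument with $\pi_f(g)\xi$ in place of $\pi_f(g^{-1})\xi$, i.e.\ using $\overline{f(g)}=\langle \xi,\pi_f(g)\xi\rangle$ together with \ref{lemma:positive.definite.functions.0}. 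That version leads to $\Vert\pi_f(g)\xi-\pi_f(h)\xi\Vert^{2} = 2f(1)-2\Re f(g^{-1}h)$.

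The main obstacle is purely bookkeeping: one has to track which side the inverse falls on so that the resulting argument is $g^{-1}h$ rather than $hg^{-1}$, and this is exactly where \ref{lemma:positive.definite.functions.0} enters. Otherwise the lemma is the standard citation \cite[Proposition~C.4.2]{KazhdanBook}.
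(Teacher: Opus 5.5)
Your proposal is correct. The $2\times 2$ positivity argument gives (1) and (2). In (3), you rightly observe that your first Cauchy--Schwarz estimate yields a different (also valid) bound, with $\Re f(hg^{-1})$. Pairing $\xi$ with $\pi_f(g)\xi-\pi_f(h)\xi$ instead gives exactly $2f(1)\bigl(f(1)-\Re f(g^{-1}h)\bigr)$, because $\Re\langle \pi_f(g)\xi,\pi_f(h)\xi\rangle=\Re f(h^{-1}g)=\Re f(g^{-1}h)$ by (1).

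The paper gives no proof of this lemma and simply cites \cite[Proposition~C.4.2]{KazhdanBook}. The standard argument behind that citation is the same in substance as yours. The main difference is that it works directly with the positive matrices $\bigl(f(g_j^{-1}g_i)\bigr)_{i,j}$ for $(1,g)$ and $(1,g,h)$ rather than passing through the GNS representation. That keeps it self-contained, and it is precisely the fallback route you mention.
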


The following result is just a straightforward adaptation of a well-known characterization of Kazhdan's property $(T)$ (cf.~\cite[5.a.11 Théorème, p.~61]{DeLaHarpeValette}).

\begin{prop}\label{proposition:restricted.kazhdan.property.characterization} Let $\alpha$ be an infinite cardinal and let $G$ be a group. Then the following are equivalent. \begin{enumerate}
	\item\label{proposition:restricted.kazhdan.property.characterization.1} $G$ has property $(T_{\alpha})$
	\item\label{proposition:restricted.kazhdan.property.characterization.2} For every unitary representation $\pi$ of $G$ and every almost invariant $\alpha$-net $(x_{i})_{i \in I}$ of unit vectors for $\pi$, \begin{displaymath}
			\qquad \lim\nolimits_{i \to I} \dist(x_{i},\Fix(\pi)) \, = \, 0 .
		\end{displaymath}
	\item\label{proposition:restricted.kazhdan.property.characterization.3} Every $\alpha$-net of positive-definite functions on $G$ converging pointwise to $1$ converges uniformly to $1$.
\end{enumerate} \end{prop}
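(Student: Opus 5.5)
The plan is to run the classical argument for Kazhdan's property $(T)$ (cf.~\cite[5.a.11 Théorème]{DeLaHarpeValette}) with every net restricted to index sets of cardinality at most $\alpha$. The proof goes through the cycle \ref{proposition:restricted.kazhdan.property.characterization.1}$\Rightarrow$\ref{proposition:restricted.kazhdan.property.characterization.2}$\Rightarrow$\ref{proposition:restricted.kazhdan.property.characterization.3}$\Rightarrow$\ref{proposition:restricted.kazhdan.property.characterization.1}, using Lemma~\ref{lemma:positive.definite.functions} and the GNS construction.

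For \ref{proposition:restricted.kazhdan.property.characterization.1}$\Rightarrow$\ref{proposition:restricted.kazhdan.property.characterization.2}, suppose \ref{proposition:restricted.kazhdan.property.characterization.2} fails for $\pi$ on $H$ and an almost invariant $\alpha$-net $(x_{i})_{i \in I}$ of unit vectors.
\begin{itemize}
	\item[---\,] Passing to a cofinal subset $J \subseteq I$ (still of size $\leq \alpha$, directed with the induced order, since the set of $i$ with $\dist(x_{i},\Fix(\pi)) \geq \epsilon$ is cofinal), we may assume $\dist(x_{i},\Fix(\pi)) \geq \epsilon$ for all $i$ and some fixed $\epsilon > 0$.
	\item[---\,] Let $P$ be the orthogonal projection onto $\Fix(\pi)^{\perp}$. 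The subspace $\Fix(\pi)^{\perp}$ is invariant, and $P$ commutes with $\pi(G)$.
	\item[---\,] Put $y_{i} \defeq Px_{i}/\Vert Px_{i}\Vert$. Then $\Vert Px_{i} \Vert \geq \epsilon$ and $\Vert y_{i} - \pi(g)y_{i}\Vert \leq \epsilon^{-1}\Vert x_{i}-\pi(g)x_{i}\Vert \to 0$.
\end{itemize}
So the subrepresentation on $\Fix(\pi)^{\perp}$ has an almost invariant $\alpha$-net of unit vectors but no nonzero invariant vector. This contradicts $(T_{\alpha})$.

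For \ref{proposition:restricted.kazhdan.property.characterization.2}$\Rightarrow$\ref{proposition:restricted.kazhdan.property.characterization.3}, let $(f_{i})_{i\in I}$ be an $\alpha$-net of positive-definite functions with $f_{i} \to 1$ pointwise.
\begin{itemize}
	\item[---\,] Since $f_{i}(1)\to 1$, we may discard indices (again cofinally) and rescale so that $f_{i}(1) = 1$. Rescaling by $f_{i}(1)^{-1}\to 1$ does not affect uniform convergence, thanks to Lemma~\ref{lemma:positive.definite.functions}\ref{lemma:positive.definite.functions.1}.
	\item[---\,] By GNS, $f_{i}(g) = \langle \pi_{i}(g)\xi_{i},\xi_{i}\rangle$ for a cyclic unit vector $\xi_{i}$ in $H_{i}$.
	\item[---\,] Set $\pi \defeq \bigoplus_{i}\pi_{i}$ and let $x_{i}$ be $\xi_{i}$ sitting in the $i$-th summand. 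Then $\Vert x_{i}-\pi(g)x_{i}\Vert^{2} = 2-2\Re f_{i}(g) \to 0$, so $(x_{i})$ is an almost invariant $\alpha$-net of unit vectors.
	\item[---\,] By \ref{proposition:restricted.kazhdan.property.characterization.2}, $\dist(x_{i},\Fix(\pi)) \to 0$. Since $\Fix(\pi)=\bigoplus_{i}\Fix(\pi_{i})$, this gives $\Vert \xi_{i}-\eta_{i}\Vert =: \delta_{i}\to 0$ for the projections $\eta_{i}\in\Fix(\pi_{i})$.
	\item[---\,] Then $\Vert \pi_{i}(g)\xi_{i}-\xi_{i}\Vert \leq 2\delta_{i}$ uniformly in $g$, whence $\sup_{g}\vert f_{i}(g)-1\vert \leq 2\delta_{i}\to 0$.
\end{itemize}

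For \ref{proposition:restricted.kazhdan.property.characterization.3}$\Rightarrow$\ref{proposition:restricted.kazhdan.property.characterization.1}, take $\pi$ with an almost invariant $\alpha$-net $(x_{i})$ of unit vectors and set $f_{i}(g)\defeq\langle \pi(g)x_{i},x_{i}\rangle$. These are positive-definite and converge to $1$ pointwise, hence uniformly by \ref{proposition:restricted.kazhdan.property.characterization.3}. Pick $i$ with $\Re f_{i}(g) > 1/2$ for all $g$. Then $\Vert\pi(g)x_{i}-x_{i}\Vert^{2} < 1$ for every $g$. The unique point $c$ of minimal norm in the closed convex hull of $\pi(G)x_{i}$ is $G$-invariant. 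It is nonzero, since $\Re\langle y,x_{i}\rangle \geq 1/2$ for every $y$ in that hull. Normalising $c$ yields an invariant unit vector.

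The main obstacle is the bookkeeping in \ref{proposition:restricted.kazhdan.property.characterization.1}$\Rightarrow$\ref{proposition:restricted.kazhdan.property.characterization.2}: one has to make sure that passing to cofinal subnets preserves both the $\alpha$-bound and directedness. The GNS and convexity steps are standard.
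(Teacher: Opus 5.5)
Your proposal is correct and follows the paper's proof essentially step by step. It uses the same cycle of implications, the same passage to a cofinal index set with the renormalised projection onto $\Fix(\pi)^{\perp}$ for \ref{proposition:restricted.kazhdan.property.characterization.1}$\Rightarrow$\ref{proposition:restricted.kazhdan.property.characterization.2}, and the same normalisation $f_{i}(1)=1$, GNS realisation and direct-sum construction for \ref{proposition:restricted.kazhdan.property.characterization.2}$\Rightarrow$\ref{proposition:restricted.kazhdan.property.characterization.3}. The only difference is in \ref{proposition:restricted.kazhdan.property.characterization.3}$\Rightarrow$\ref{proposition:restricted.kazhdan.property.characterization.1}: you carry out the minimal-norm-point argument inline, whereas the paper bounds $\sup_{g \in G}\Vert x_{i}-\pi(g)x_{i}\Vert < \sqrt{2}$ and cites \cite[Proposition~1.1.5]{KazhdanBook}, which is exactly that argument.
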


\begin{proof} We include the standard argument only for the sake of completeness.
	
\ref{proposition:restricted.kazhdan.property.characterization.1}$\Longrightarrow$\ref{proposition:restricted.kazhdan.property.characterization.2}. Let $(x_{i})_{i \in I}$ be an almost invariant $\alpha$-net of unit vectors for a unitary representation $\pi$ of $G$ on a Hilbert space $H$. Consider the $\pi$-invariant subspaces $H_{0} \defeq \Fix(\pi)$ and $H_{1} \defeq H_{0}^{\perp}$ along with the orthogonal projection $p \colon H \to H_{0}$ and the quantity \begin{displaymath}
	\epsilon \, \defeq \, \limsup\nolimits_{i \to I} \dist(x_{i},H_{0}) \, = \, \limsup\nolimits_{i \to I} \Vert x_{i} - px_{i} \Vert \, \in \, \R_{\geq 0} .
\end{displaymath} For contradiction, suppose that $\epsilon > 0$. Then there exists a cofinal subset $J \subseteq I$ such that $\Vert x_{i} - px_{i} \Vert \geq \tfrac{\epsilon}{2}$ for each $i \in J$. The $\alpha$-net of unit vectors \begin{displaymath}
	y_{i} \, \defeq \, \tfrac{1}{\Vert x_{i}-px_{i} \Vert}(x_{i}-px_{i}) \, \in \, H_{1} \qquad (i \in J)
\end{displaymath} is almost invariant for the unitary representation $\pi_{1} \colon G \to \U(H_{1}), \, g \mapsto \pi(g)\vert_{H_{1}}$: indeed, if $g \in G$, then \begin{align*}
	\Vert y_{i} - \pi_{1}(g)y_{i} \Vert^{2} \, = \, \tfrac{\Vert x_{i}-\pi(g)x_{i} \Vert^{2}}{\Vert x_{i}-px_{i} \Vert^{2}} \, \leq \, \tfrac{4}{\epsilon^{2}}\Vert x_{i}-\pi(g)x_{i} \Vert^{2} \, \stackrel{i \to J}{\longrightarrow} \, 0 .
\end{align*} Since $G$ has property $(T_{\alpha})$, this necessitates that $\pi_{1}$ admits an invariant unit vector, contradicting the fact that $H_{0} \perp H_{1}$. Hence, $\epsilon = 0$ as desired.
	
\ref{proposition:restricted.kazhdan.property.characterization.2}$\Longrightarrow$\ref{proposition:restricted.kazhdan.property.characterization.3}. Let $(f_{i})_{i \in I}$ be an $\alpha$-net of positive-definite functions on $G$ converging pointwise to $1$. In particular, $f_{i}(1) \to 1$ as $i \to I$. Hence, upon replacing $(f_{i})_{i \in I}$ by $(f_{i}(1)^{-1}f_{i})_{i \in J}$ for the cofinal subset $J \defeq \{ i \in I \mid \vert 1-f_{i}(1) \vert \leq 1/2 \} \subseteq I$, we may and will assume without loss of generality that $f_{i}(1) = 1$ for all $i \in I$. According to~\cite[Theorem~C.4.10]{KazhdanBook}, for each $i \in I$, we find a unitary representation $\pi_{i}$ of $G$ on a Hilbert space $H_{i}$ and a unit vector $x_{i} \in H_{i}$ such that $f_{i}(g) = \langle x_{i},\pi_{i}(g)x_{i} \rangle$ for all $g \in G$. Considering the Hilbert space direct sum $H \defeq \bigoplus_{i \in I} H_{i}$ and the unitary representation $\pi \colon G \to \U(H), \, g \mapsto \bigoplus_{i \in I} \pi_{i}(g)$, we may view $(H_{i})_{i \in I}$ as subspaces of $H$ and $(\pi_{i})_{i \in I}$ as subrepresentations of $\pi$. With this identification, $(x_{i})_{i \in I}$ constitutes an almost invariant $\alpha$-net of unit vectors for $\pi$: for every $g \in G$, \begin{displaymath}
	\Vert x_{i}-\pi(g)x_{i} \Vert^{2} \, = \, \Vert x_{i}-\pi_{i}(g)x_{i} \Vert^{2} \, = \, 2-2\Re \langle x_{i},\pi_{i}(g)x_{i} \rangle \, = \, 2(1-\Re f_{i}(g)) \, \stackrel{i \to I}{\longrightarrow} \, 0 .
\end{displaymath} Thus, by~\ref{proposition:restricted.kazhdan.property.characterization.2}, there exists a net $(y_{i})_{i \in I} \in \Fix(\pi)^{I}$ such that $\lim_{i \to I} \Vert x_{i} - y_{i} \Vert = 0$. Since \begin{align*}
	&\vert 1 - f_{i}(g) \vert \, = \vert \langle x_{i},x_{i} \rangle - \langle x_{i},\pi(g)x_{i} \rangle \vert \, = \, \vert \langle x_{i},x_{i}-\pi(g)x_{i} \rangle \vert \, \leq \, \Vert x_{i}-\pi(g)x_{i} \Vert \\
	&\qquad \leq \, \Vert x_{i}-y_{i} \Vert + \Vert y_{i}-\pi(g)x_{i} \Vert \, = \, \Vert x_{i}-y_{i} \Vert + \Vert \pi(g)(y_{i}-x_{i}) \Vert \, = \, 2 \Vert x_{i}-y_{i} \Vert
\end{align*} for all $g \in G$ and $i \in I$, it follows that \begin{displaymath}
	\sup\nolimits_{g \in G} \vert 1-f_{i}(g) \vert \, \leq \, 2\Vert x_{i} - y_{i} \Vert \, \stackrel{i \to I}{\longrightarrow} \, 0 ,
\end{displaymath} i.e., the net $(f_{i})_{i \in I}$ converges uniformly to $1$.
	
\ref{proposition:restricted.kazhdan.property.characterization.3}$\Longrightarrow$\ref{proposition:restricted.kazhdan.property.characterization.1}. Let $(x_{i})_{i \in I}$ be an almost invariant $\alpha$-net of unit vectors for a unitary representation $\pi$ of $G$ on a Hilbert space $H$. For each $i \in I$, \begin{displaymath}
	f_{i} \colon \, G \, \longrightarrow \, \C, \quad g \, \longmapsto \, \langle x_{i},\pi(g)x_{i} \rangle	
\end{displaymath} constitutes a positive-definite function on $G$ (see, e.g.,~\cite[Proposition~C.4.3]{KazhdanBook}). We observe that the net $(f_{i})_{i \in I}$ converges pointwise to $1$: indeed, if $g \in G$, then \begin{displaymath}
	\vert 1 - f_{i}(g) \vert \, = \vert \langle x_{i},x_{i} \rangle - \langle x_{i},\pi(g)x_{i} \rangle \vert \, = \, \vert \langle x_{i},x_{i}-\pi(g)x_{i} \rangle \vert \, \leq \, \Vert x_{i}-\pi(g)x_{i} \Vert \, \stackrel{i \to I}{\longrightarrow} \, 0 .
\end{displaymath} Hence, due to~\ref{proposition:restricted.kazhdan.property.characterization.3}, the net $(f_{i})_{i \in I}$ converges uniformly to $1$. In particular, there exists $i \in I$ such that $\sup_{g \in G} \vert 1 - f_{i}(g) \vert < \tfrac{1}{\sqrt{2}}$. Since \begin{displaymath}
	\Vert x_{i}-\pi(g)x_{i} \Vert^{2} \, = \, 2-2\Re \langle x_{i},\pi(g)x_{i} \rangle \, = \, 2(1-\Re f_{i}(g)) \, \leq \, 2\vert 1-f_{i}(g) \vert
\end{displaymath} for every $g \in G$, we conclude that \begin{displaymath}
	\sup\nolimits_{g \in G} \Vert x_{i}-\pi(g)x_{i} \Vert \, \leq \, \sqrt{2\sup\nolimits_{g \in G} \vert 1-f_{i}(g) \vert} \, < \, 2^{1/4} \, < \, \sqrt{2} .
\end{displaymath} By~\cite[Proposition~1.1.5]{KazhdanBook}, this implies that $\pi$ admits an invariant unit vector. \end{proof}

We turn to connections with (uniform strong) cofinality. Our first observation is inspired by the fact that property~$(T)$ implies finite generation~\cite[Theorem~1.3.1]{KazhdanBook}.

\begin{prop} Let $\alpha$ be an infinite cardinal and let $G$ be a group. If $G$ has property $(T_{\alpha})$, then either $G$ has property $(T)$, or $\cf(G) > \alpha$. \end{prop}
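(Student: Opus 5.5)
We prove the contrapositive in the form: if $G$ has property $(T_{\alpha})$ and $\cf(G) \leq \alpha$, then $G$ has property $(T)$. There are two cases.

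\emph{Case 1: $G$ is finitely generated.} We show that $(T_{\alpha})$ already yields $(T)$, using Remark~\ref{remark:restricted.property.t}. The key point is that for finitely generated $G$ almost invariance can be witnessed by a sequence. Let $S$ be a finite generating set and let $\pi$ be a unitary representation with an almost invariant net $(x_{i})_{i \in I}$ of unit vectors. For each $n \in \N$ choose $i_{n} \in I$ with $\Vert x_{i_{n}} - \pi(s)x_{i_{n}} \Vert \leq 1/n$ for all $s \in S$. Since every $g \in G$ is a word in $S \cup S^{-1}$, the triangle inequality and unitarity give $\Vert x_{i_{n}} - \pi(g)x_{i_{n}} \Vert \to 0$ for each $g$. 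Thus $(x_{i_{n}})_{n \in \N}$ is an almost invariant $\aleph_{0}$-net, hence an $\alpha$-net. By $(T_{\alpha})$, $\pi$ has an invariant unit vector, so $G$ has $(T)$.

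\emph{Case 2: $G$ is not finitely generated and $\cf(G) \leq \alpha$.} Here we derive a contradiction with $(T_{\alpha})$. Take a chain $\mathcal{C}$ of proper subgroups with $\bigcup \mathcal{C} = G$ and $\vert \mathcal{C} \vert \leq \alpha$. Direct $\mathcal{C}$ by inclusion and consider the quasi-regular representations $\pi_{H}$ on $\ell^{2}(G/H)$ for $H \in \mathcal{C}$, together with their direct sum $\pi \defeq \bigoplus_{H \in \mathcal{C}} \pi_{H}$. Let $x_{H} \defeq \delta_{H} \in \ell^{2}(G/H) \subseteq \bigoplus \ell^{2}(G/H')$.

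For fixed $g$, there is $H_{0} \in \mathcal{C}$ with $g \in H_{0}$. For every $H \supseteq H_{0}$ we get $\pi(g)x_{H} = x_{H}$, so the net $(x_{H})_{H \in \mathcal{C}}$ is almost invariant, and it is an $\alpha$-net. By $(T_{\alpha})$, $\pi$ admits an invariant unit vector $\xi = (\xi_{H})_{H}$, and some component $\xi_{H} \neq 0$ is $G$-invariant in $\ell^{2}(G/H)$. An invariant $\ell^{2}$-function on $G/H$ is constant, so $G/H$ must be finite.

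This is the main obstacle: we must rule out finite index. A proper subgroup of finite index in a group that is not finitely generated is itself not finitely generated, so finiteness of $G/H$ does not immediately contradict anything. We handle this by first discarding members of $\mathcal{C}$ of finite index. If some $H \in \mathcal{C}$ has finite index, then the members containing $H$ are intermediate subgroups, of which there are finitely many. A chain with union $G$ would then contain $G$ itself, contradicting properness. Hence no member of $\mathcal{C}$ has finite index, every $G/H$ is infinite, and there is no nonzero invariant vector in $\ell^{2}(G/H)$. This contradicts the invariant unit vector found above, so Case 2 cannot occur.

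Combining the two cases: if $\cf(G) \leq \alpha$, Case 2 is excluded, so $G$ is finitely generated and has $(T)$ by Case 1. Hence either $G$ has $(T)$ or $\cf(G) > \alpha$.
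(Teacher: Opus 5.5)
Your proof is correct. In the finitely generated case it matches the paper up to presentation: you extract an almost invariant sequence by hand, whereas the paper notes $\vert G \vert \leq \aleph_{0} \leq \alpha$ and cites Remark~\ref{remark:restricted.property.t}.

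In the non-finitely generated case your route differs. The paper observes that the characteristic functions $(\chi_{C})_{C \in \mathcal{C}}$ form an $\alpha$-net of positive-definite functions converging pointwise but not uniformly to $1$. This contradicts the equivalence \ref{proposition:restricted.kazhdan.property.characterization.1}$\Leftrightarrow$\ref{proposition:restricted.kazhdan.property.characterization.3} of Proposition~\ref{proposition:restricted.kazhdan.property.characterization}. Since $\chi_{C}(g) = \langle \delta_{C}, \pi_{C}(g)\delta_{C} \rangle$ is a matrix coefficient of your quasi-regular representation on $\ell^{2}(G/C)$, both proofs use the same object. The difference is which form of $(T_{\alpha})$ is invoked.

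The characterization carries the stronger conclusion that an almost invariant $\alpha$-net of unit vectors must approach $\Fix(\pi)$, equivalently that the associated positive-definite functions converge to $1$ uniformly. Because $\sup_{g \in G} \vert 1 - \chi_{C}(g) \vert = 1$ for every proper $C$, the paper gets its contradiction regardless of the index of $C$. You use only the bare definition, namely existence of some invariant unit vector, which is weaker. So you genuinely need the extra step excluding members of finite index. Your argument handles this correctly: above a finite-index $H$ there are only finitely many intermediate subgroups, so the chain has a largest member, and that member would equal $G$. Your route is more self-contained, since it avoids the GNS-based characterization; the paper's is shorter once that characterization is available.

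One small addition: the paper also records that the two alternatives are mutually exclusive, since property $(T)$ forces finite generation and hence $\cf(G) = 0$. You may want to include this line if the ``either/or'' is read exclusively.
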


\begin{proof} First of all, let us observe that the two alternative conclusions are indeed mutually exclusive: if $G$ has property~$(T)$, then $G$ must be finitely generated (see, e.g.,~\cite[Theorem~1.3.1]{KazhdanBook}), whence $\cf(G) = 0$ by definition.
	
Suppose now that $G$ has property $(T_{\alpha})$. If $G$ is finitely generated, then $\vert G \vert \leq \aleph_{0} \leq \alpha$ and so $G$ has property $(T_{\vert G \vert \vee \aleph_{0}})$, thus $G$ has property $(T)$ by Remark~\ref{remark:restricted.property.t}. On the other hand, if $G$ is not finitely generated and $\cf(G) \leq \alpha$, then there exists a chain $\mathcal{C}$ of proper subgroups of $G$ such that $G = \bigcup \mathcal{C}$ and $\vert \mathcal{C} \vert \leq \alpha$, wherefore the corresponding family of characteristic functions $(\chi_{C})_{C \in \mathcal{C}}$ constitutes an $\alpha$-net of positive-definite functions on $G$ converging to $1$ pointwise, but not converging to $1$ uniformly, which contradicts $G$ having property~$(T_{\alpha})$ according to Proposition~\ref{proposition:restricted.kazhdan.property.characterization}. \end{proof}

We arrive at this section's main insight.

\begin{prop}\label{proposition:restricted.kazhdan.property} Let $\alpha$ be an infinite cardinal and let $G$ be a group. If $\scf_{k}(G) > \alpha$ for some $k \in \N_{>0}$, then $G$ has property $(T_{\alpha})$, but not property $(T)$. \end{prop}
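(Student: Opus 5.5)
We prove two things: that $G$ fails property $(T)$, and that $G$ has property $(T_{\alpha})$.

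\emph{Failure of $(T)$.} Since $\scf_{k}(G) > \alpha \geq \aleph_{0}$, the group $G$ is infinite; if $G$ were finite, then $\scf_{k}(G) = 0$ by definition. Suppose $G$ were finitely generated, say by a finite symmetric set $F \ni 1$. Then the chain $\{ F^{n} \mid n \in \N \}$ is a countable directed subset of $(\Pow(G),{\subseteq})$ with union $G$. Hence $\scf_{k}(G) > \aleph_{0}$ forces $G = (F^{n})^{k}$ for some $n$, so $G$ is finite, a contradiction. Thus $G$ is not finitely generated, and therefore lacks property $(T)$ by~\cite[Theorem~1.3.1]{KazhdanBook}.

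\emph{Property $(T_{\alpha})$.} We verify condition~\ref{proposition:restricted.kazhdan.property.characterization.3} of Proposition~\ref{proposition:restricted.kazhdan.property.characterization}. Let $(f_{i})_{i \in I}$ be an $\alpha$-net of positive-definite functions converging pointwise to $1$. As in the proof of that proposition, after passing to a cofinal subset and normalising we may assume $f_{i}(1) = 1$. Fix $\epsilon > 0$ and choose $\delta > 0$ to be specified below. For each $i \in I$ put
\begin{displaymath}
	D_{i} \, \defeq \, \left\{ g \in G \left\vert \, \forall j \geq i \colon \ \Re f_{j}(g) > 1-\delta \right\} . \right.
\end{displaymath}
Then $\mathcal{D} \defeq \{ D_{i} \mid i \in I \}$ is directed, since $D_{i} \subseteq D_{i'}$ whenever $i \leq i'$, and $\vert \mathcal{D} \vert \leq \vert I \vert \leq \alpha$. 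Moreover $\bigcup \mathcal{D} = G$ by pointwise convergence. Because $\scf_{k}(G) > \alpha$, there is some $i_{0}$ with $G = D_{i_{0}}^{k}$.

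The key step, and the only real estimate, is a subadditivity bound for $\sqrt{1-\Re f}$. Realise $f_{j}(g) = \langle x_{j},\pi_{j}(g)x_{j} \rangle$ with a unit vector $x_{j}$, via~\cite[Theorem~C.4.10]{KazhdanBook}. Then $\Vert x_{j}-\pi_{j}(g)x_{j} \Vert = \sqrt{2(1-\Re f_{j}(g))}$. Since $\pi_{j}$ is unitary, the triangle inequality gives
\begin{displaymath}
	\Vert x_{j}-\pi_{j}(g_{1}\cdots g_{k})x_{j} \Vert \, \leq \, \sum\nolimits_{r=1}^{k} \Vert x_{j}-\pi_{j}(g_{r})x_{j} \Vert .
\end{displaymath}
Hence for $g = g_{1}\cdots g_{k}$ with all $g_{r} \in D_{i_{0}}$ and any $j \geq i_{0}$,
\begin{displaymath}
	\sqrt{2(1-\Re f_{j}(g))} \, < \, k\sqrt{2\delta} .
\end{displaymath}
This yields $1 - \Re f_{j}(g) < k^{2}\delta$ uniformly in $g \in G$, for all $j \geq i_{0}$.

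Finally, $\vert 1-f_{j}(g) \vert \leq \Vert x_{j}-\pi_{j}(g)x_{j} \Vert < k\sqrt{2\delta}$. Choosing $\delta$ with $k\sqrt{2\delta} < \epsilon$ gives uniform convergence of $(f_{j})$ to $1$. Proposition~\ref{proposition:restricted.kazhdan.property.characterization} then yields property $(T_{\alpha})$.
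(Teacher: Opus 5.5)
Your proof is correct and follows the paper's strategy for $(T_{\alpha})$: a monotone family $D_{i}$ of at most $\alpha$ sets covering $G$, the hypothesis $\scf_{k}(G) > \alpha$ forcing $G = D_{i_{0}}^{k}$, and a telescoping estimate over the $k$ factors that yields uniform convergence via Proposition~\ref{proposition:restricted.kazhdan.property.characterization}. The differences are minor. You normalise and run the estimate through GNS vectors rather than through the inequality $\vert f(g)-f(h)\vert^{2} \leq 2f(1)(f(1)-\Re f(g^{-1}h))$. You also rule out finite generation directly, by applying the definition of $\scf_{k}$ to the countable chain $(F^{n})_{n \in \N}$, instead of going through $\scf_{k}(G) \leq \scf(G)$ and the Bergman property; this is slightly more self-contained.
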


\begin{proof} Suppose that $\scf_{k}(G) > \alpha$ for some $k \in \N_{>0}$. In order to verify condition~\ref{proposition:restricted.kazhdan.property.characterization.3} of Proposition~\ref{proposition:restricted.kazhdan.property.characterization}, let $(f_{i})_{i \in I}$ be an $\alpha$-net of positive-definite functions on $G$ converging pointwise to $1$. To deduce uniform convergence, let $\epsilon \in (0,1]$. Define $\delta \defeq \tfrac{\epsilon^{2}}{8(k+1)^{2}}$. Then \begin{align*}
	D \colon \, I \, &\longrightarrow \, \Pow(G) \\
	i \, &\longmapsto \, \{ g \in G \mid \forall j \in I \colon \, i \leq j \, \Longrightarrow \, \max\{\vert 1-f_{j}(1) \vert, \vert 1 - f_{j}(g) \vert\} \leq \delta \}
\end{align*} is a monotone map from $(I,{\leq})$ to $(\Pow(G),{\subseteq})$. Hence, $\mathcal{D} \defeq \{ D(i) \mid i \in I \}$ is a directed subset of $(\Pow(G),{\subseteq})$. Since $G = \bigcup \mathcal{D}$ by pointwise convergence of $(f_{i})_{i \in I}$ to $1$ and $\vert \mathcal{D} \vert \leq \vert I \vert \leq \alpha < \scf_{k}(G)$, there exists $i_{0} \in I$ such that $G = D(i_{0})^{k}$. Consequently, if $i \in I$ and $i_{0} \leq i$, then for every $g \in G$ there exist $h_{1},\ldots,h_{k} \in D(i_{0})$ such that $g = h_{1}\cdots h_{k}$, thus \begin{align*}
	\vert f_{i}(1) - f_{i}(g) \vert \, &\leq \, \sum\nolimits_{\ell = 1}^{k} \vert f_{i}(h_{1}\cdots h_{\ell-1}) - f_{i}(h_{1}\cdots h_{\ell}) \vert \\
	&\stackrel{\ref{lemma:positive.definite.functions}\ref{lemma:positive.definite.functions.0}+\ref{lemma:positive.definite.functions}\ref{lemma:positive.definite.functions.2}}{\leq} \, \sum\nolimits_{\ell = 1}^{k} \sqrt{2f_{i}(1)(f_{i}(1) - \Re f_{i}(h_{\ell}))} \, \leq \, k\sqrt{8\delta}
\end{align*} and therefore \begin{displaymath}
	\vert 1-f_{i}(g) \vert \, \leq \, \vert 1-f_{i}(1) \vert + \vert f_{i}(1) -f_{i}(g) \vert \, \leq \, (k+1)\sqrt{8\delta} \, = \, \epsilon ,
\end{displaymath} which means that $\Vert 1-f_{i} \Vert_{\infty} \leq \epsilon$. This shows that $(f_{i})_{i \in I}$ converges uniformly to~$1$. Using Proposition~\ref{proposition:restricted.kazhdan.property.characterization}, we conclude that $G$ has property $(T_{\alpha})$. On the other hand, our hypothesis entails that $G$ is infinite and has the Bergman property due to Lemma~\ref{lemma:uniform.strong.cofinality}\ref{lemma:uniform.strong.cofinality.3} and Remark~\ref{remark:strong.distortion.implies.uscf}\ref{remark:strong.distortion.implies.uscf.-1}. Since property~$(T)$ implies finite generation (see, e.g.,~\cite[Theorem~1.3.1]{KazhdanBook}) and finitely generated Bergman groups are necessarily finite, $G$ thus fails to have property $(T)$. \end{proof}

\begin{remark} We record an alternative argument for the first part of the proof of Proposition~\ref{proposition:restricted.kazhdan.property}, avoiding the application of Proposition~\ref{proposition:restricted.kazhdan.property.characterization} and appealing to~\cite[Proposition~1.1.5]{KazhdanBook} instead. Suppose that $\scf_{k}(G) > \alpha$ for some $k \in \N_{>0}$. Let $(x_{i})_{i \in I}$ be an almost invariant $\alpha$-net of unit vectors for a unitary representation $\pi$ of $G$ on a Hilbert space $H$. Then \begin{displaymath}
	D \colon \, I \, \longrightarrow \, \Pow(G), \quad i \, \longmapsto \, \left\{ g \in G \left\vert \, \forall j \in I \colon \, i \leq j \, \Longrightarrow \, \Vert x_{j} - \pi(g)x_{j} \Vert \leq \tfrac{1}{k\sqrt{2}} \right\} \right.
\end{displaymath} is a monotone map from $(I,{\leq})$ to $(\Pow(G),{\subseteq})$. Hence, $\mathcal{D} \defeq \{ D(i) \mid i \in I \}$ is a directed subset of $(\Pow(G),{\subseteq})$. Since $G = \bigcup \mathcal{D}$ by assumption and $\vert \mathcal{D} \vert \leq \vert I \vert \leq \alpha < \scf_{k}(G)$, there exists $i \in I$ such that $G = D(i)^{k}$. Consequently, for every $g \in G$, there exist $h_{1},\ldots,h_{k} \in D(i)$ such that $g = h_{1}\cdots h_{k}$, and it follows that \begin{align*}
	\Vert x_{i} - \pi(g)x_{i} \Vert \, &= \, \Vert x_{i} - \pi(h_{1}\cdots h_{k})x_{i} \Vert \, \leq \, \sum\nolimits_{\ell = 1}^{k} \Vert \pi(h_{1}\cdots h_{\ell-1})x_{i} - \pi(h_{1}\cdots h_{\ell})x_{i} \Vert \\
		&= \, \sum\nolimits_{\ell = 1}^{k} \Vert x_{i} - \pi(h_{\ell})x_{i} \Vert \, \leq \, \tfrac{1}{\sqrt{2}} \, = \, \tfrac{\sqrt{2}}{2} \, < \, \sqrt{2} .
\end{align*} By~\cite[Proposition~1.1.5]{KazhdanBook}, this necessitates that $\pi$ admits an invariant unit vector. This shows that $G$ has property $(T_{\alpha})$. \end{remark}

\begin{cor} If $X$ is an infinite set, then $\Sym(X)$ has property $(T_{\vert X \vert})$, but not property $(T)$. \end{cor}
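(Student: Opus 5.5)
This follows by combining Theorem~\ref{theorem:bergman} with Proposition~\ref{proposition:restricted.kazhdan.property}, applied with $\alpha \defeq \vert X \vert$ and $k \defeq 42$. Since $X$ is infinite, $\alpha$ is an infinite cardinal, so the hypotheses of Proposition~\ref{proposition:restricted.kazhdan.property} make sense for it.

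First, Theorem~\ref{theorem:bergman} gives $\vert X \vert < \scf_{42}(\Sym(X))$, which is exactly the hypothesis $\scf_{k}(G) > \alpha$ for $G \defeq \Sym(X)$. Proposition~\ref{proposition:restricted.kazhdan.property} then yields both conclusions at once: $\Sym(X)$ has property $(T_{\vert X \vert})$, and $\Sym(X)$ fails property $(T)$. No further verification is needed, because $G$ is infinite and this is already built into that proposition's argument.

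There is no real obstacle. The substantive input is the Bergman-type bound of Theorem~\ref{theorem:bergman}, which the paper has already proved via Theorem~\ref{theorem:abstract.2}. The only point to check is that the cardinal index matches: the net size allowed in property $(T_{\alpha})$ is $\alpha = \vert X \vert$, which is strictly below $\scf_{42}(\Sym(X))$, as Proposition~\ref{proposition:restricted.kazhdan.property} requires.
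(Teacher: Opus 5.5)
Your proposal is correct and is essentially the paper's own proof: the paper likewise obtains the corollary directly from Theorem~\ref{theorem:bergman} and Proposition~\ref{proposition:restricted.kazhdan.property}, instantiated with $\alpha = \vert X \vert$ and $k = 42$. The infiniteness of $\Sym(X)$ needed to rule out property $(T)$ is indeed already handled inside that proposition, since its hypothesis $\scf_{42}(\Sym(X)) > \vert X \vert$ forces the group to be infinite.
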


\begin{proof} This follows from Theorem~\ref{theorem:bergman} and Proposition~\ref{proposition:restricted.kazhdan.property}. \end{proof}

\section{Idempotents and their conjugacy classes}\label{section:idempotents}

This preliminary section recollects some basic well-known material from ring theory, more precisely, concerning orbits of idempotents under the natural conjugation action by the corresponding unit group.

For a start, let us briefly clarify some bits of notation. Let $R$ be a unital ring. Then we consider its \emph{center} \begin{displaymath}
	\cent(R) \, \defeq \, \{ a \in R \mid \forall b \in R \colon \, ab = ba \} ,
\end{displaymath} its \emph{unit group} \begin{displaymath}
	\GL(R) \, \defeq \, \{ a \in R \mid \exists b \in R \colon \, ab = ba = 1\} ,
\end{displaymath} as well as the corresponding set \begin{displaymath}
	\I(R) \, \defeq \, \left\{ a \in R \left\vert \, a^{2} = 1 \right\} \! \right. \, = \, \I(\GL(R))
\end{displaymath} of \emph{involutions}. Furthermore, let us recall that the set \begin{displaymath}
	\E(R) \, \defeq \, \{ e \in R \mid ee = e \}
\end{displaymath} of \emph{idempotents} of $R$ is partially ordered by the relation \begin{displaymath}
	e \leq f \ \, :\Longleftrightarrow \ \, ef = fe = e \qquad (e,f \in \E(R)) .
\end{displaymath} Two elements $e,f \in \E(R)$ are called \emph{orthogonal} and we write $e \perp f$ if $ef=fe=0$. Given a set $I$ and a family $(e_{i})_{i \in I} \in \E(R)^{I}$, we define \begin{displaymath}
	(e_{i})_{i \in I}\perp \quad :\Longleftrightarrow \quad \forall i,j \in I \colon \ i \ne j \, \Longrightarrow \, {e_{i}} \perp {e_{j}} .
\end{displaymath}

The endomorphism ring of a module $M$ will be denoted by $\End(M)$ and the corresponding automorphism group by $\GL(M) \defeq \GL(\End(M))$.

\begin{remark}\label{remark:cayley} Let $R$ be a unital ring and let $e \in \E(R)$. \begin{enumerate}
	\item\label{remark:cayley.1} The map \begin{displaymath}
					\qquad eRe \, \longrightarrow \, \End(eR_{R}), \quad a \, \longmapsto \, [b \mapsto ab]
			\end{displaymath} constitutes a ring isomorphism and, in turn, restricts to a group isomorphism from $\GL(eRe)$ to $\GL(eR_{R})$.
	\item\label{remark:cayley.2} $S \defeq \{ a \in R \mid ae = ea \}$ is a unital subring of $R$ and the map $S \to eRe, \, a \mapsto ae$ constitutes a unital ring homomorphism, thus restricts to a group homomorphism from $\GL(S)$ to $\GL(eRe)$.
\end{enumerate} \end{remark}

\begin{lem}[{\cite[Lemma~5.2]{BernardSchneider}}]\label{lemma:corner.group} Let $R$ be a unital ring and let $e \in \E(R)$. Then \begin{displaymath}
	\Gamma_{R}(e) \, \defeq \, \GL(eRe) + 1-e \, = \, \GL(R) \cap (eRe + 1-e)
\end{displaymath} is a subgroup of $\GL(R)$ and \begin{displaymath}
	\GL(eRe) \, \longrightarrow \, \Gamma_{R}(e) , \quad a \, \longmapsto \, a+1-e
\end{displaymath} is an isomorphism. Moreover, the following hold. \begin{enumerate}
	\item\label{lemma:corner.group.1} If $f \in \E(R)$ and $e \leq f$, then $\Gamma_{R}(e) \leq \Gamma_{R}(f)$.
	\item\label{lemma:corner.group.2} If $f \in \E(R)$ and $e \perp f$, then $ab = ba$ for all $a \in \Gamma_{R}(e)$ and $b \in \Gamma_{R}(f)$.
	\item\label{lemma:corner.group.3} If $g \in \GL(R)$, then $\Gamma_{R}\!\left( geg^{-1}\right) = g\Gamma_{R}(e)g^{-1}$.
\end{enumerate} \end{lem}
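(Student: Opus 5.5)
The plan is to work directly with the Peirce decomposition relative to $e$, writing $e' \defeq 1-e$ throughout. The first step is to prove the equality $\GL(eRe) + 1-e = \GL(R) \cap (eRe + 1-e)$.

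For the inclusion from left to right, let $a \in \GL(eRe)$ have inverse $b$ in $eRe$, so that $ab = ba = e$. Since $ae' = e'a = 0$ and $be' = e'b = 0$, expanding gives
\begin{displaymath}
	(a+e')(b+e') \, = \, ab + e' \, = \, e + e' \, = \, 1 ,
\end{displaymath}
and likewise $(b+e')(a+e') = 1$. Hence $a+e' \in \GL(R)$.

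For the converse, take $x = a + e' \in \GL(R)$ with $a \in eRe$, and let $y = x^{-1}$. Then $ex = a = xe$, so $x$ commutes with $e$, and therefore so does $y$. By Remark~\ref{remark:cayley}\ref{remark:cayley.2}, the map $a \mapsto ae$ is a unital ring homomorphism from the commutant of $e$ to $eRe$. It sends $x$ to $a$, so $a$ is invertible in $eRe$ with inverse $ye$.

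The second step is to show that this set is a subgroup and that $\phi \colon a \mapsto a+e'$ is an isomorphism. The Peirce computation above, applied to general $a,b \in eRe$, gives
\begin{displaymath}
	(a+e')(b+e') \, = \, ab + e' .
\end{displaymath}
Thus $\phi$ is multiplicative and sends $e$ to $1$. It is injective, since $a = (a+e')e$, and it is surjective onto $\Gamma_{R}(e)$ by definition. Therefore its image is a subgroup of $\GL(R)$ and $\phi$ is a group isomorphism.

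Items \ref{lemma:corner.group.1} through \ref{lemma:corner.group.3} then follow quickly.

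\ref{lemma:corner.group.1} Suppose $e \leq f$ and $a \in \GL(eRe)$. Since $fe = ef = e$, we have $a \in fRf$. Moreover, $a + (f-e)$ is invertible in $fRf$ with inverse $b + (f-e)$; this uses $e \perp f-e$, and is the same computation as before carried out inside the ring $fRf$. Consequently
\begin{displaymath}
	a + 1-e \, = \, (a + f - e) + 1 - f \, \in \, \Gamma_{R}(f) .
\end{displaymath}

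\ref{lemma:corner.group.2} Suppose $e \perp f$, and write elements as $a+1-e$ and $b+1-f$ with $a \in eRe$ and $b \in fRf$. Orthogonality gives $ab = ba = 0$, $af = fa = 0$ and $eb = be = 0$. Expanding both products, each equals $a + b + 1 - e - f$, so they coincide.

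\ref{lemma:corner.group.3} Conjugation by $g$ is a ring automorphism of $R$. It maps $eRe$ onto $(geg^{-1})R(geg^{-1})$, maps $1-e$ to $1-geg^{-1}$, and preserves $\GL(R)$. Applying it to the intersection description $\GL(R) \cap (eRe + 1-e)$ gives $g\Gamma_{R}(e)g^{-1} = \Gamma_{R}(geg^{-1})$.

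No step is a real obstacle here. The only place needing care is the converse inclusion in the first step, namely showing that $a$ is a unit of $eRe$ and not merely of $R$; this is handled by noting that $x$ commutes with $e$ and invoking Remark~\ref{remark:cayley}\ref{remark:cayley.2}.
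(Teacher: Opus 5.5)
Your proof is correct and complete. The paper gives no argument for this lemma; it imports it directly from \cite[Lemma~5.2]{BernardSchneider}, so there is no internal proof to compare your route against.

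Your direct verification covers everything the citation stands in for. It rests on the identity $(a+1-e)(b+1-e) = ab+1-e$ for $a,b \in eRe$. For the reverse inclusion you pass to the commutant of $e$ via Remark~\ref{remark:cayley}\ref{remark:cayley.2}. That commutant device is the same one the paper later uses to produce elements of $\Gamma_{R}(e)$ together with their inverses, for instance in Lemma~\ref{lemma:splits.over.moieties}.

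One small simplification is available for item~\ref{lemma:corner.group.1}: you need not exhibit an inverse inside $fRf$. Since $a+1-e = (a+f-e)+1-f$ and $a+f-e \in fRf$ whenever $a \in eRe$ and $e \leq f$, you already have $eRe+1-e \subseteq fRf+1-f$. Your intersection description $\Gamma_{R}(e) = \GL(R) \cap (eRe+1-e)$ then gives $\Gamma_{R}(e) \subseteq \Gamma_{R}(f)$ immediately.
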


We recollect some well-known basic facts concerning the conjugacy of idempotents.

\begin{remark}[see, e.g.,~{\cite[Theorem~14, p.~21]{KaplanskyBook}} or~{\cite[Part~1, Chapter~1, \S1, Exercise~5A, pp.~8--9]{BerberianBook}}]\label{remark:algebraic.equivalence.of.idempotents} Let $R$ be a unital ring and let $e,f \in \E(R)$. The following are equivalent. \begin{enumerate}
	\item\label{remark:algebraic.equivalence.of.idempotents.1} ${eR_{R}} \cong {fR_{R}}$.
	\item\label{remark:algebraic.equivalence.of.idempotents.2} There exist $a,b \in R$ such that $ab = e$ and $ba = f$.
	\item\label{remark:algebraic.equivalence.of.idempotents.3} There exist $a \in eRf$ and $b \in fRe$ such that $ab = e$ and $ba = f$.
\end{enumerate} \end{remark}

\begin{lem}\label{lemma:ideal.isomorphism} Let $R$ be a unital ring, let $e_{0},e_{1},f \in \E(R)$ with $e_{0},e_{1} \leq f$, and let $S \defeq fRf$. Then ${e_{0}R_{R}} \cong {e_{1}R_{R}}$ if and only if ${e_{0}S_{S}} \cong {e_{1}S_{S}}$. \end{lem}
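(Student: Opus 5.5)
We reduce both directions to the characterization in Remark~\ref{remark:algebraic.equivalence.of.idempotents}, applied once in $R$ and once in $S = fRf$. Note first that $S$ is a unital ring with identity $f$, and that $e_{0},e_{1} \in \E(S)$: since $e_{i} \leq f$, we have $e_{i} = fe_{i}f \in S$. Hence the remark may be used for the pair $e_{0},e_{1}$ in either ring.

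($\Rightarrow$) Suppose ${e_{0}R_{R}} \cong {e_{1}R_{R}}$. By Remark~\ref{remark:algebraic.equivalence.of.idempotents}\ref{remark:algebraic.equivalence.of.idempotents.3}, applied in $R$, there exist $a \in e_{0}Re_{1}$ and $b \in e_{1}Re_{0}$ with $ab = e_{0}$ and $ba = e_{1}$. Because $e_{0} = fe_{0}$ and $e_{1} = e_{1}f$, we get
\begin{displaymath}
	a \, = \, e_{0}ae_{1} \, = \, fe_{0}ae_{1}f \, \in \, fRf \, = \, S ,
\end{displaymath}
and likewise $b \in S$. More precisely, $a \in e_{0}Se_{1}$ and $b \in e_{1}Se_{0}$. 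Condition~\ref{remark:algebraic.equivalence.of.idempotents.3} of the remark therefore holds in $S$, which gives ${e_{0}S_{S}} \cong {e_{1}S_{S}}$.

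($\Leftarrow$) Suppose ${e_{0}S_{S}} \cong {e_{1}S_{S}}$. The remark applied in $S$ yields $a,b \in S \subseteq R$ with $ab = e_{0}$ and $ba = e_{1}$. This is condition~\ref{remark:algebraic.equivalence.of.idempotents.2} in $R$, so the remark gives ${e_{0}R_{R}} \cong {e_{1}R_{R}}$.

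There is no real obstacle in this argument. The only point needing care is the direction ($\Rightarrow$): one must use the corner form~\ref{remark:algebraic.equivalence.of.idempotents.3} of the remark, rather than merely~\ref{remark:algebraic.equivalence.of.idempotents.2}, to guarantee that the witnesses $a$ and $b$ actually lie in $S$.
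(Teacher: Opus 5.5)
Your proof is correct and is essentially the paper's argument. The paper observes that $e_{0}Re_{1} = e_{0}Se_{1}$ and $e_{1}Re_{0} = e_{1}Se_{0}$ and then applies the corner characterization of Remark~\ref{remark:algebraic.equivalence.of.idempotents} in both $R$ and $S$; using condition~\ref{remark:algebraic.equivalence.of.idempotents.2} for the reverse direction, as you do, is an equally valid minor variant.
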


\begin{proof} Since $e_{0}Re_{1} = e_{0}Se_{1}$ and $e_{1}Re_{0} = e_{1}Se_{0}$, this follows from Remark~\ref{remark:algebraic.equivalence.of.idempotents}. \end{proof}

\begin{lem}\label{lemma:conjugacy.of.idempotents} Let $R$ be a unital ring and let $e,f \in \E(R)$. The following are equivalent. \begin{enumerate}
		\item\label{lemma:conjugacy.of.idempotents.1} ${eR_{R}} \cong {fR_{R}}$ and ${(1-e)R_{R}} \cong {(1-f)R_{R}}$.
		\item\label{lemma:conjugacy.of.idempotents.2} There exists $g \in \GL(R)$ such that $geg^{-1} = f$.
\end{enumerate} \end{lem}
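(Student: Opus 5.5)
We prove both implications directly, using Remark~\ref{remark:algebraic.equivalence.of.idempotents} to turn the module isomorphisms into concrete ring elements.

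For \ref{lemma:conjugacy.of.idempotents.1}$\Longrightarrow$\ref{lemma:conjugacy.of.idempotents.2}, apply Remark~\ref{remark:algebraic.equivalence.of.idempotents}\ref{remark:algebraic.equivalence.of.idempotents.3} twice.
\begin{itemize}
	\item[---\,] From ${eR_{R}} \cong {fR_{R}}$ we get $a \in eRf$ and $b \in fRe$ with $ab = e$ and $ba = f$.
	\item[---\,] From ${(1-e)R_{R}} \cong {(1-f)R_{R}}$ we get $c \in (1-e)R(1-f)$ and $d \in (1-f)R(1-e)$ with $cd = 1-e$ and $dc = 1-f$.
\end{itemize}
Put $g \defeq b + d$ and $h \defeq a + c$. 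The membership conditions force the cross terms to vanish: $bc = b e (1-e) c = 0$ and $da = 0$, and likewise $ad = 0$ and $cb = 0$. Hence
\begin{displaymath}
	gh \, = \, ba + dc \, = \, f + 1 - f \, = \, 1, \qquad hg \, = \, ab + cd \, = \, 1 .
\end{displaymath}
So $g \in \GL(R)$ with $g^{-1} = h$. Next, $ge = (b+d)e = be + de = b$, using $b \in fRe$ and $d(1-e) = d$, so $de = 0$. Therefore
\begin{displaymath}
	geg^{-1} \, = \, b(a+c) \, = \, ba + bc \, = \, f .
\end{displaymath}

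For \ref{lemma:conjugacy.of.idempotents.2}$\Longrightarrow$\ref{lemma:conjugacy.of.idempotents.1}, let $g \in \GL(R)$ with $geg^{-1} = f$. Set $a \defeq eg^{-1}$ and $b \defeq ge$. Then $ab = eg^{-1}ge = e$ and $ba = geeg^{-1} = f$. By Remark~\ref{remark:algebraic.equivalence.of.idempotents}\ref{remark:algebraic.equivalence.of.idempotents.2}, this gives ${eR_{R}} \cong {fR_{R}}$. Since also $g(1-e)g^{-1} = 1-f$, the same argument applied to $1-e$ and $1-f$ gives ${(1-e)R_{R}} \cong {(1-f)R_{R}}$.

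The only step needing care is checking that the cross terms vanish in the first implication. This follows purely from the corner memberships guaranteed by Remark~\ref{remark:algebraic.equivalence.of.idempotents}\ref{remark:algebraic.equivalence.of.idempotents.3}, and is exactly why that form of the remark is used rather than \ref{remark:algebraic.equivalence.of.idempotents.2}.
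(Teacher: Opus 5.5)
Your proof is correct and follows essentially the same approach as the paper. The reverse implication is identical. For the forward implication, the paper takes $u,v,u',v'$ from Remark~\ref{remark:algebraic.equivalence.of.idempotents}\ref{remark:algebraic.equivalence.of.idempotents.2} and sets $g \defeq ve + v'(1-e)$ and $h \defeq uf + u'(1-f)$; these are exactly your corner elements $b+d$ and $a+c$, which you obtain directly from item~\ref{remark:algebraic.equivalence.of.idempotents.3}.
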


\begin{proof} \ref{lemma:conjugacy.of.idempotents.2}$\Longrightarrow$\ref{lemma:conjugacy.of.idempotents.1}. If $g \in \GL(R)$ is such that $geg^{-1} = f$, then \begin{align*}
	eg^{-1}ge \, &= \, ee \, = \, e, \\
	geeg^{-1} \, &= \, geg^{-1} \, = \, f , \\
	(1-e)g^{-1}g(1-e) \, &= \, (1-e)(1-e) \, = \, 1-e, \\
	g(1-e)(1-e)g^{-1} \, &= \, g(1-e)g^{-1} \, = \, 1-geg^{-1} \, = \, 1-f ,
\end{align*} wherefore ${eR_{R}} \cong {fR_{R}}$ and ${(1-e)R_{R}} \cong {(1-f)R_{R}}$ by Remark~\ref{remark:algebraic.equivalence.of.idempotents}.
	
\ref{lemma:conjugacy.of.idempotents.1}$\Longrightarrow$\ref{lemma:conjugacy.of.idempotents.2}. Suppose that ${eR_{R}} \cong {fR_{R}}$ and ${(1-e)R_{R}} \cong {(1-f)R_{R}}$. By Remark~\ref{remark:algebraic.equivalence.of.idempotents}, we find $u,v,u',v' \in R$ such that $uv = e$, $vu = f$, $u'v' = 1-e$, and $v'u' = 1-f$. Considering $g \defeq ve + v'(1-e)$ and $h \defeq uf + u'(1-f)$, we note that \begin{align*}
	hg \, &= \, (uf + u'(1-f))(ve + v'(1-e)) \\
		&= \, ufve + ufv'(1-e) + u'(1-f)ve + u'(1-f)v'(1-e) \\
		&= \, uvuve + ufv'u'v' + u'v'u'vuv + u'v'u'v'(1-e) \\
		&= \, e + uf(1-f)v' + u'(1-f)fv + 1-e \, = \, e+1-e \, = \, 1 , \\
		gh \, &= \, (ve + v'(1-e))(uf + u'(1-f)) \\
		&= \, veuf + veu'(1-f) + v'(1-e)uf + v'(1-e)u'(1-f) \\
		&= \, vuvuf + vuvu'v'u' + v'(1-e)uvu + v'u'v'u'(1-f) \\
		&= \, f + ve(1-e)u' + v'(1-e)eu + 1-f \, = \, f+1-f \, = \, 1 ,
\end{align*} whence $g,h \in \GL(R)$ and $g^{-1} = h$. Furthermore, \begin{align*}
	geg^{-1} \, &= \, geh \, = \, (ve + v'(1-e))e(uf + u'(1-f)) \, = \, ve(uf + u'(1-f)) \\
		& = \, veuf + veu'(1-f) \, = \, vuvuf + vuvu'v'u' \, = \, f + ve(1-e)u' \, = \, f . \qedhere
\end{align*} \end{proof}

\begin{lem}\label{lemma:conjugacy.of.commuting.idempotents} Let $R$ be a unital ring, let $e \in \E(R)$ and $f \in \{ geg^{-1} \mid g \in \GL(R) \}$. If $e \perp f$, then there exists $g \in \I(\Gamma_{R}(e+f))$ such that $geg = f$. \end{lem}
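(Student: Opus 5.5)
Since $f$ is conjugate to $e$, Lemma~\ref{lemma:conjugacy.of.idempotents} gives ${eR_{R}} \cong {fR_{R}}$. By Remark~\ref{remark:algebraic.equivalence.of.idempotents}\ref{remark:algebraic.equivalence.of.idempotents.3} we therefore fix $a \in eRf$ and $b \in fRe$ with $ab = e$ and $ba = f$. The plan is to use the explicit ``swap'' element
\begin{displaymath}
	g \, \defeq \, a + b + 1 - e - f .
\end{displaymath}
We then check three things: that $g$ is an involution, that it lies in $\Gamma_{R}(e+f)$, and that $geg = f$.

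First, $e \perp f$ makes $p \defeq e+f$ an idempotent, with $ep = pe = e$ and $fp = pf = f$. Since $a = eaf$ and $b = fbe$, we get $a,b \in pRp$. So $g = (a+b) + 1 - p$ with $a + b \in pRp$.

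Next we compute $g^{2}$, using the products of $a$, $b$ and $1-p$. From $e \perp f$ we get $a^{2} = eaf\,eaf = 0$ and likewise $b^{2} = 0$. Moreover $a(1-p) = (1-p)a = 0$, and similarly for $b$. Hence
\begin{displaymath}
	g^{2} \, = \, (a+b)^{2} + (1-p) \, = \, ab + ba + 1 - p \, = \, e + f + 1 - e - f \, = \, 1 .
\end{displaymath}
Thus $g \in \I(R)$, and in particular $g \in \GL(R)$. By Lemma~\ref{lemma:corner.group}, $g \in \GL(R) \cap (pRp + 1-p) = \Gamma_{R}(p)$, so $g \in \I(\Gamma_{R}(e+f))$.

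Finally, $geg = (a+b+1-p)\,e\,(a+b+1-p)$. We have $ae = eafe = 0$, $be = b$ and $(1-p)e = 0$, so the left factor gives $ge = b$. Then $bg = b(a+b+1-p) = ba + b^{2} + b(1-p) = f$, and hence $geg = f$.

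The only point requiring care is choosing $a \in eRf$ and $b \in fRe$ rather than arbitrary elements. This is exactly what makes the cross terms vanish via orthogonality, and it is provided by Remark~\ref{remark:algebraic.equivalence.of.idempotents}\ref{remark:algebraic.equivalence.of.idempotents.3}. Everything else is routine bookkeeping.
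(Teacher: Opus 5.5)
Your proof is correct and is essentially the paper's argument. The paper takes a unit $g$ with $geg^{-1}=f$ and uses $u = ge + g^{-1}f + 1-(e+f)$; since $ge = fge \in fRe$ and $g^{-1}f = eg^{-1}f \in eRf$ satisfy $(g^{-1}f)(ge) = e$ and $(ge)(g^{-1}f) = f$, this is exactly your swap element $a+b+1-(e+f)$ with $a = g^{-1}f$ and $b = ge$. Your detour through ${eR_{R}} \cong {fR_{R}}$ and Remark~\ref{remark:algebraic.equivalence.of.idempotents} only repackages these elements, and it incidentally shows that for orthogonal $e$ and $f$ the isomorphism ${eR_{R}} \cong {fR_{R}}$ alone suffices.
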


\begin{proof} Suppose that $e \perp f$. Choose any $g \in \GL(R)$ such that $geg^{-1} = f$. Considering \begin{displaymath}
		u \, \defeq \, ge + g^{-1}f + 1-(e+f) ,
	\end{displaymath} we observe that \begin{align*}
		u^{2} \, &= \, gege + geg^{-1}f + ge(1-(e+f)) \\
		& \qquad + g^{-1}fge + g^{-1}fg^{-1}f + g^{-1}f(1-(e+f)) \\
		& \qquad + (1-(e+f))ge + (1-(e+f))g^{-1}f + 1-(e+f) \\
		&= \, gefg + ff + ge(1-e)(1-(e+f)) \\
		& \qquad + ee + g^{-1}feg^{-1} + g^{-1}f(1-f)(1-(e+f)) \\
		& \qquad + (1-(e+f))(1-f)fg + (1-(e+f))(1-e)eg^{-1} + 1-(e+f) \\
		&= \, f + e + 1-(e+f) \, = \, 1
	\end{align*} and \begin{align*}
		u(1-(e+f)) \, &= \, 1-(e+f) \\
		&= \, (1-(e+f))\!\left( fg + eg^{-1} + 1-(e+f) \right) \, = \, (1-(e+f))u ,
	\end{align*} wherefore $u \in \I(\Gamma_{R}(e+f))$. Furthermore, \begin{align*}
		ueu \, &= \, ge\!\left(ge + g^{-1}f + 1-(e+f)\right) \, = \, gege + geg^{-1}f + ge(1-(e+f)) \\
		& = \, gefg + ff + ge(1-e)(1-(e+f)) \, = \, f . \qedhere
\end{align*} \end{proof}

\section{Injectivity and regularity}\label{section:strcuture.theory}

A right module $E$ over a ring $R$ is called \emph{injective} if, for every pair of right $R$-modules $A \leq B$ and every homomorphism $\phi \colon A \to E$, there exists a homomorphism $\psi \colon B \to E$ such that ${\psi\vert_{A}} = \phi$. A ring $R$ is called \emph{right self-injective} if the right $R$-module $R_{R}$ is injective.

\begin{remark}\label{remark:summands.of.injective.modules} Any direct summand of an injective module is injective (see, e.g.,~\cite[Proposition~18.2]{AndersonFuller}). In particular, if $R$ is a right self-injective unital ring, then $eR_{R}$ is injective for every $e \in \E(R)$. \end{remark}

Given two right modules $A$ and $B$ over the same ring, we write $A \lesssim B$ if $A$ is isomorphic to a submodule of $B$.

\begin{thm}[{\cite[Theorem~10.14]{GoodearlBook}}]\label{theorem:schroeder.bernstein} Let $R$ be a unital ring and let $A$ and $B$ be two injective right $R$-modules. If $A \lesssim B$ and $B \lesssim A$, then $A \cong B$. \end{thm}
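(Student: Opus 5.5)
The plan is to run the classical Schröder--Bernstein ``back-and-forth'' decomposition inside the injective modules. Injectivity is what lets us split off the pieces we need as direct summands.

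Fix monomorphisms $\phi \colon A \to B$ and $\psi \colon B \to A$. Then $\theta \defeq \psi\phi \colon A \to A$ is a monomorphism, and $A \supseteq \psi(B) \supseteq \theta(A)$ with $\psi(B) \cong B$. So it suffices to prove the following.

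\emph{Claim.} If $A$ is injective, $C$ is a submodule with $C$ injective, and $\theta(A) \subseteq C \subseteq A$ for some monomorphism $\theta \colon A \to A$, then $C \cong A$.

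Indeed, take $C = \psi(B)$, which is injective because it is isomorphic to $B$.

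To prove the Claim, first note that $C$ is injective and hence a direct summand: $A = C \oplus D$. Then $\theta(A) = \theta(C) \oplus \theta(D)$, and $\theta(D)$ is injective, being isomorphic to $D$, which is a summand of $A$ (Remark~\ref{remark:summands.of.injective.modules}). Iterating yields pairwise independent submodules
\begin{displaymath}
	D, \ \theta(D), \ \theta^{2}(D), \ \ldots \ \subseteq \ A,
\end{displaymath}
with $\theta^{n}(D) \subseteq \theta^{n}(C)$, and their sum $\bigoplus_{n \geq 0} \theta^{n}(D)$ is direct. The set-theoretic proof would now send $\bigoplus_{n} \theta^{n}(D)$ to $\bigoplus_{n} \theta^{n+1}(D)$ via $\theta$ and act as the identity on a complement. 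For modules, however, the complement is not canonical, and a countable direct sum of injectives need not be injective.

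The fix is to pass to injective hulls inside $A$. Let $E$ be a maximal essential extension of $\bigoplus_{n \geq 0} \theta^{n}(D)$ inside $A$. Since $A$ is injective, $E$ is an injective hull and a direct summand: $A = E \oplus F$. Similarly, let $E'$ be an injective hull of $\bigoplus_{n \geq 1} \theta^{n}(D)$ inside $C$; this is possible because $C$ is injective and $\theta(D) \subseteq \theta(A) \subseteq C$. The map $\theta$ restricts to an isomorphism $\bigoplus_{n \geq 0} \theta^{n}(D) \to \bigoplus_{n \geq 1} \theta^{n}(D)$, and isomorphisms of essential submodules extend to isomorphisms of injective hulls, so $E \cong E'$.

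It then remains to show that $C = E' \oplus F'$ with $F' \cong F$. I would try to arrange the choices so that $E \cap C = E'$ and $A = D \oplus C$, which would give $E = D \oplus E'$ and then $F \cong A/E \cong C/E'$. Concretely: pick $E'$ first as a hull inside $C$, set $E \defeq D \oplus E'$ (which is injective and essential over the full sum), choose a complement $F'$ of $E'$ in $C$, and then note $A = D \oplus C = D \oplus E' \oplus F' = E \oplus F'$. Taking $F = F'$ gives
\begin{displaymath}
	A \, = \, E \oplus F' \, \cong \, E' \oplus F' \, = \, C,
\end{displaymath}
which proves the Claim.

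The main obstacle is exactly this step: keeping the complements compatible. In particular, one must check that $D \oplus E'$ really is an essential extension of $\bigoplus_{n \geq 0} \theta^{n}(D)$, which holds because $E'$ is essential over $\bigoplus_{n \geq 1} \theta^{n}(D)$. One must also confirm that the hull of $\bigoplus_{n \geq 0} \theta^{n}(D)$ is isomorphic, via an extension of $\theta$, to $E'$. This uses only injectivity together with the essentiality argument just described (the extension of $\theta$ is injective since its kernel meets the essential submodule trivially, and its image is an injective essential extension, hence all of $E'$).
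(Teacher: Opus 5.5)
Your argument is correct and is Bumby's classical proof, which is the standard argument behind the cited Goodearl, Theorem~10.14 (the paper itself gives no proof, only the citation). It splits $A = C \oplus D$ and takes an injective hull $E'$ of $\bigoplus_{n \geq 1}\theta^{n}(D)$ inside $C$. Then $D \oplus E'$ is an injective hull of $\bigoplus_{n \geq 0}\theta^{n}(D)$, hence isomorphic to $E'$ via an extension of $\theta$, and $D$ is absorbed to give $A = D \oplus E' \oplus F' \cong E' \oplus F' = C$.

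The only slip is the stated inclusion $\theta^{n}(D) \subseteq \theta^{n}(C)$, which is false: in fact $\theta^{n}(D) \cap \theta^{n}(C) = \theta^{n}(D \cap C) = 0$. What you need is $\theta^{n}(D) \subseteq \theta(A) \subseteq C$ for $n \geq 1$; together with $D \cap C = 0$ and injectivity of $\theta$, this gives that the sum $\sum_{n \geq 0}\theta^{n}(D)$ is direct.
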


A unital ring $R$ is called \emph{(von Neumann) regular}~\cite[II.II, Definition~2.2, p.~70]{VonNeumannBook} if \begin{displaymath}
	\forall a \in R \ \exists b \in R \colon \quad aba = a .
\end{displaymath}

\begin{remark}[{\cite[II.II, Theorem~2.2, p.~70]{VonNeumannBook}}]\label{remark:regular} Let $R$ be a unital ring. The following are equivalent. \begin{itemize}
	\item[---\,] $R$ is regular.
	\item[---\,] For every $a \in R$, there exists $e \in \E(R)$ such that $aR=eR$.
	\item[---\,] For every $a \in R$, there exists $e \in \E(R)$ such that $Ra=Re$.
\end{itemize} \end{remark}

We recollect some terminology concerning the structure theory of regular, right self-injective rings from~\cite[Chapter~10]{GoodearlBook}. A unital ring $R$ is called \emph{directly finite} if \begin{displaymath}
	\forall a,b \in R \colon \quad ab=1 \ \Longrightarrow \ ba=1 .
\end{displaymath} Given a regular ring $R$, an idempotent $e \in \E(R)$ is called \begin{itemize}
	\item[---\,] \emph{directly finite} if the ring $eRe$ is directly finite, 
	\item[---\,] \emph{abelian} if $\E(eRe) \subseteq \cent(eRe)$,
	\item[---\,] \emph{faithful} if $0$ is the only central idempotent in $R$ orthogonal to $e$.
\end{itemize} A regular, right self-injective ring $R$ is said to be of \begin{itemize}
	\item[---\,] \emph{Type I} if $R$ contains a faithful abelian idempotent, 
	\item[---\,] \emph{Type II} if $R$ contains a faithful directly finite idempotent, but $R$ contains no non-zero abelian idempotents,
	\item[---\,] \emph{Type III} if $R$ contains no non-zero directly finite idempotents.
\end{itemize} A regular, right self-injective ring $R$ is called \emph{purely infinite} if $R$ contains no non-zero directly finite central idempotents.

\begin{exmpl}[{\cite[Chapter~10, p.~116]{GoodearlBook}}]\label{example:rings} \begin{enumerate}
	\item\label{example:rings.1} If $V$ is any infinite-dimensional vector space over a division ring, then $\End(V)$ constitutes a purely infinite, regular, right self-injective ring of Type~I.
	\item\label{example:rings.2} If $S$ is any simple, right self-injective, unit-regular ring of Type II (such as given in~\cite[Example~10.7]{GoodearlBook}), then the endomorphism ring of the \emph{injective envelope}\footnote{cf.~\cite[Theorem~18.10]{AndersonFuller}} of the right $S$-module $S_{S}^{\aleph_{0}}$ is a purely infinite, regular, right self-injective ring of Type~II. %Alternative references: \cite[1, \S3D, (3.31)~Definition, p.~75]{lam}, \cite[\S1.B, Definition on p.~22]{GoodearlRingTheory}, \cite[\S4.2, Proposition~10, p.~92]{lambek} and~\cite[\S4.2, Proposition~9, p.~91]{lambek}.
	\item\label{example:rings.3} Any regular, right self-injective ring of Type~III (see~\cite[Example~10.11]{GoodearlBook} for a concrete instance) is purely infinite.
\end{enumerate} \end{exmpl}

A submodule $N$ of a module $M$ is called \emph{essential} and we write $N \less M$ if $N$ has non-zero intersection with every non-zero submodule of $M$.

\begin{thm}[{\cite[Theorem~10.16]{GoodearlBook}}]\label{theorem:purely.infinite} Let $R$ be a regular, right self-injective ring. The following are equivalent. \begin{itemize}
	\item[---\,] $R$ is purely infinite.
	\item[---\,] ${R_{R}^{\oplus n}} \lesssim {R_{R}}$ for some integer $n \geq 2$.
	\item[---\,] ${R_{R}^{\oplus n}} \cong {R_{R}}$ for every positive integer $n$.
	\item[---\,] $R_{R}^{\oplus \aleph_{0}}$ is isomorphic to an essential submodule of $R_{R}$.
\end{itemize} \end{thm}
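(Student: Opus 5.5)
The plan is to prove the cycle $(\mathrm{iii})\Rightarrow(\mathrm{iv})\Rightarrow(\mathrm{ii})\Rightarrow(\mathrm{i})\Rightarrow(\mathrm{iii})$, using injective hulls together with the Schröder--Bernstein type Theorem~\ref{theorem:schroeder.bernstein} throughout.

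\emph{$(\mathrm{iii})\Rightarrow(\mathrm{iv})$.} From ${R_{R}} \cong {R_{R}^{\oplus 2}}$ we get, by Remark~\ref{remark:algebraic.equivalence.of.idempotents}, a decomposition $1 = e_{1} + f_{1}$ into orthogonal idempotents with $e_{1}R \cong R \cong f_{1}R$. Iterating inside $f_{1}Rf_{1}$ (using Lemma~\ref{lemma:ideal.isomorphism}) yields pairwise orthogonal idempotents $(e_{n})_{n}$ with $e_{n}R \cong R$. Hence $M \defeq \bigoplus_{n} e_{n}R \cong R_{R}^{\oplus \aleph_{0}}$ is a submodule of $R_{R}$. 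Since $R_{R}$ is injective, $M$ has an injective hull $E$ inside $R$, which is a direct summand $gR$. Then $E \lesssim R$. Conversely $R \cong e_{1}R \subseteq E$, so $R \lesssim E$. Both modules are injective (Remark~\ref{remark:summands.of.injective.modules}), so Theorem~\ref{theorem:schroeder.bernstein} gives $E \cong R$. Transporting $M \less E$ along this isomorphism realizes $R_{R}^{\oplus\aleph_{0}}$ as an essential submodule of $R_{R}$.

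\emph{$(\mathrm{iv})\Rightarrow(\mathrm{ii})$.} This is immediate: $R^{\oplus 2} \subseteq R^{\oplus \aleph_{0}} \lesssim R$.

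\emph{$(\mathrm{ii})\Rightarrow(\mathrm{i})$.} Let $c$ be a directly finite central idempotent. Multiplying an embedding $R^{\oplus n}\lesssim R$ by $c$ gives $(cR)^{\oplus 2} \lesssim cR$ over the ring $cR = cRc$. By injectivity, the image of $cR \oplus cR$ is contained in a summand of $cR$. This makes $cR$ isomorphic to a direct summand $A$ with a complement $B \supseteq$ a copy of $cR$. Direct finiteness of $cRc$ forbids $cR$ from being isomorphic to a proper direct summand of itself, forcing $B = 0$. Hence $cR = 0$, i.e.\ $c=0$.

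\emph{$(\mathrm{i})\Rightarrow(\mathrm{iii})$.} It suffices to show $R \cong R^{\oplus 2}$, since then $R^{\oplus n}\cong R$ by induction. Take, by Zorn's lemma, a maximal orthogonal family of pairs of idempotents $(p_{j},q_{j})_{j}$, with the $p_{j}+q_{j}$ pairwise orthogonal and $p_{j}R \cong q_{j}R$. Pass to injective hulls in $R$ to get idempotents $p,q$ with $p\perp q$ and $pR \cong qR$, using that hulls of isomorphic direct sums are isomorphic. By maximality, $1-p-q$ contains no nonzero subidempotent pair of this kind, so $(1-p-q)R$ should be ``directly finite and abelian-like''. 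At this point pure infiniteness must be used: a nonzero part that is not directly finite contains $ab=1\neq ba$, and then $x\defeq 1-ba$, $bx$, $b^{2}x,\dots$ produce infinitely many orthogonal isomorphic idempotents. One then concludes $R \cong pR\oplus pR \oplus r R$ with $rR \lesssim pR$. Absorbing $r$ requires a further infinite family in $pR$, obtained again from the failure of direct finiteness. Theorem~\ref{theorem:schroeder.bernstein} then yields $R \cong pR\oplus pR$ and $pR\cong R$.

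I expect this last step to be the main obstacle. The difficulty is passing from the \emph{central} hypothesis in $(\mathrm{i})$ (no nonzero directly finite central idempotent) to a global halving. This needs the central cover of the directly finite remainder. If that cover were proper, its complement would be a nonzero directly finite central summand, which is excluded. I would first try to make this precise using the type theory in \cite[Chapter~10]{GoodearlBook}.
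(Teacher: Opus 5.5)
Your implications (iii)$\Rightarrow$(iv)$\Rightarrow$(ii)$\Rightarrow$(i) are fine. In (iii)$\Rightarrow$(iv), the injective hull of $\bigoplus_{n}e_{n}R$ inside $R$ together with Theorem~\ref{theorem:schroeder.bernstein} gives (iv). In (ii)$\Rightarrow$(i), the image of $cR\oplus cR$ is injective, hence a summand, so $cR=A\oplus A'\oplus C$ with $A\cong cR\cong A'$; direct finiteness of $cRc\cong\End(cR_{R})$ then forces $A'=0$. (The paper gives no argument of its own here; it quotes Goodearl's Theorem~10.16, reformulated via injective envelopes.)

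The genuine gap is (i)$\Rightarrow$(iii). This is the substantive direction, and your sketch does not establish it, for two reasons.
\begin{itemize}
\item[---\,] You assert $rR\lesssim pR$ without argument. It needs general comparability, i.e.\ a central idempotent $c$ with $c\,rR\lesssim c\,pR$ and $(1-c)pR\lesssim(1-c)rR$. It also needs a proof that $1-c$ is directly finite, so that (i) forces $c=1$. With your pairs this means showing that $(1-c)R\lesssim((1-c)rR)^{\oplus 3}$, with $(1-c)r$ abelian, is directly finite (e.g.\ via unit-regularity of $\M_{3}$ over a strongly regular ring). None of this appears in the proposal.
\item[---\,] More seriously, grant $R\cong pR\oplus pR\oplus rR$ with $rR\lesssim pR$. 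You then only get $(pR)^{\oplus 2}\lesssim R\lesssim(pR)^{\oplus 3}$. But $pR\cong R$ requires $R\lesssim pR$, which is an absorption property of $pR$ of exactly the kind you are trying to prove. The $b^{n}(1-ba)$ construction yields $\aleph_{0}$ copies of some nonzero $(1-ba)R$, with no size control relative to $rR$, so it cannot absorb $r$. Maximal families of \emph{pairs} do not by themselves produce a self-absorbing piece.
\end{itemize}

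A repair is to run Zorn on independent families of infinite sequences $(Y_{i,n})_{n\in\N}$, $i\in I$, of nonzero submodules of $R_{R}$ with $Y_{i,n}\cong Y_{i,0}$. Put $Y\defeq\bigoplus_{i}Y_{i,0}$, let $E\leq R_{R}$ be an injective hull of $\bigoplus_{i,n}Y_{i,n}\cong Y^{\oplus\aleph_{0}}$, and write $R=E\oplus F$.
\begin{itemize}
\item[---\,] \emph{$F$ is directly finite.} Otherwise $F=F_{1}\oplus G$ with $F_{1}\cong F$ and $G\neq 0$. Iterating puts $G^{\oplus\aleph_{0}}$ into $F$, which contradicts maximality.
\item[---\,] \emph{$F\lesssim E$.} General comparability gives a central $c$ with $cF\lesssim cE$ and $(1-c)E\lesssim(1-c)F$. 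If $Y_{i,0}(1-c)\neq 0$ for some $i$, the image of $\bigoplus_{n}Y_{i,n}(1-c)\subseteq(1-c)E$ in $F$ would enlarge the family. Otherwise $(1-c)E=0$ by essentiality, so $(1-c)R=(1-c)F$, being a summand of $F$, is directly finite, and $c=1$ by (i).
\item[---\,] \emph{Conclusion.} Hence $E\lesssim R=E\oplus F\lesssim E\oplus E\cong E$. The last isomorphism holds because $Y^{\oplus\aleph_{0}}\oplus Y^{\oplus\aleph_{0}}\cong Y^{\oplus\aleph_{0}}$ and injective hulls are unique up to isomorphism. Theorem~\ref{theorem:schroeder.bernstein} now gives $R\cong E\cong E\oplus E\cong R\oplus R$.
\end{itemize}
The one external input is general comparability for regular right self-injective rings. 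That is precisely where the central-cover issue you flagged actually gets resolved.
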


\begin{proof} This is an equivalent reformulation of~\cite[Theorem~10.16]{GoodearlBook} using~\cite[Theorem~18.10]{AndersonFuller}. \end{proof} %Alternative references: \cite[1, \S3D, (3.30)~Theorem, p.~75]{lam}, \cite[\S1.B, Proposition~1.11, p.~22]{GoodearlRingTheory}, \cite[\S4.2, Proposition~10, p.~92]{lambek} and~\cite[\S4.2, Proposition~9, p.~91]{lambek}.

The reader is referred to~\cite{AraGoodearlPardo,ArandaPinoGoodearlPereraSilesMolina} for notions of pure infiniteness beyond the class of regular, right self-injective rings.

\section{Matrix decomposition}\label{section:matrix.decomposition}

This section revisits and refines a result of Menal and Moncasi~\cite[Theorem~2.2]{MenalMoncasi} concerning the decomposition of unit groups of matrix rings. Our central objective is the proof of Lemma~\ref{lemma:matrix.groups} below.

In order to clarify some general notation and terminology, let $R$ be a unital ring and let $n \in \N$. Then we let $\M_{n}(R)$ denote the unital ring of $n \times n$ matrices with entries in $R$, and we define $\GL_{n}(R) \defeq \GL(\M_{n}(R))$. Moreover, $s \in \M_{n}(R)$ is called a \emph{family of matrix units} if $s_{11} + \ldots + s_{nn} = 1$ and \begin{displaymath}
	s_{ij}s_{k \ell} \, = \, \begin{cases}
		\, s_{i\ell} & \text{if } j = k , \\
		\, 0 & \text{otherwise}
	\end{cases}
\end{displaymath} for all $i,j,k, \ell \in \{ 1,\ldots,n \}$.

\begin{remark}\label{remark:matrix.unit.isomorphism} Let $R$ be a unital ring, let $n \in \N_{>0}$, let $s \in \M_{n}(R)$ be a family of matrix units for $R$, and let $S \defeq s_{11}Rs_{11}$. Then \begin{displaymath}
	\M_{n}(S) \, \longrightarrow \, R , \quad a \, \longmapsto \, \sum\nolimits_{i,j=1}^{n} s_{i1}a_{ij}s_{1j}
\end{displaymath} constitutes a (necessarily unital) ring isomorphism (see, e.g.,~\cite[II.III, Proof of Theorem~3.3, pp.~99--100]{VonNeumannBook}) and, in particular, restricts to a group isomorphism from $\GL_{n}(S) = \GL(\M_{n}(S))$ to $\GL(R)$.\end{remark}

\begin{lem}\label{lemma:matrix.units} Let $R$ be a unital ring, let $n \in \N_{>0}$ and let $e_{1},\ldots,e_{n} \in \E(R)$ be such that $(e_{1},\ldots,e_{n})\perp$, $e_{1} + \ldots + e_{n} = 1$, and $e_{i} \in \{ ue_{1}u^{-1} \mid u \in \GL(R) \}$ for each $i \in \{2,\ldots,n \}$. Then there exists a family of matrix units $s \in \M_{n}(R)$ for $R$ such that $s_{ii} = e_{i}$ for each $i \in \{ 1,\ldots,n \}$. \end{lem}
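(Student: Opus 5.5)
The plan is to build the matrix units directly from the conjugating units, using Remark~\ref{remark:algebraic.equivalence.of.idempotents} to obtain "partial isometries" between $e_{1}$ and each $e_{i}$.

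First, for each $i \in \{2,\ldots,n\}$ fix $u_{i} \in \GL(R)$ with $u_{i}e_{1}u_{i}^{-1} = e_{i}$. Put
\begin{displaymath}
	a_{i} \, \defeq \, e_{i}u_{i}e_{1} \, = \, u_{i}e_{1}, \qquad b_{i} \, \defeq \, e_{1}u_{i}^{-1}e_{i} \, = \, e_{1}u_{i}^{-1} ,
\end{displaymath}
where the simplifications use $e_{i}u_{i} = u_{i}e_{1}$. Then $a_{i} \in e_{i}Re_{1}$ and $b_{i} \in e_{1}Re_{i}$. A direct computation gives $b_{i}a_{i} = e_{1}u_{i}^{-1}u_{i}e_{1} = e_{1}$ and $a_{i}b_{i} = u_{i}e_{1}u_{i}^{-1} = e_{i}$. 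For $i = 1$, set $a_{1} \defeq b_{1} \defeq e_{1}$, so that the same relations hold trivially.

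Next, define
\begin{displaymath}
	s_{ij} \, \defeq \, a_{i}b_{j} \, \in \, e_{i}Re_{j} \qquad (i,j \in \{1,\ldots,n\}) .
\end{displaymath}
Then $s_{ii} = a_{i}b_{i} = e_{i}$, and hence $\sum_{i} s_{ii} = \sum_{i} e_{i} = 1$. For the multiplication rule, compute
\begin{displaymath}
	s_{ij}s_{k\ell} \, = \, a_{i}b_{j}a_{k}b_{\ell} .
\end{displaymath}
If $j \neq k$, then $b_{j}a_{k} = b_{j}e_{j}e_{k}a_{k} = 0$, because $(e_{1},\ldots,e_{n})\perp$. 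If $j = k$, then $b_{j}a_{j} = e_{1}$, and since $a_{i}e_{1} = a_{i}$ we get $s_{ij}s_{j\ell} = a_{i}e_{1}b_{\ell} = a_{i}b_{\ell} = s_{i\ell}$. Thus $s$ is a family of matrix units with $s_{ii} = e_{i}$, as required.

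No genuine obstacle is expected. The only point needing care is checking that $a_{i}$ and $b_{i}$ really lie in the corner spaces $e_{i}Re_{1}$ and $e_{1}Re_{i}$, since this is what makes the cross terms vanish; it follows from the conjugation identity $e_{i}u_{i} = u_{i}e_{1}$.
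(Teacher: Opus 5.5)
Your proof is correct and is essentially the paper's argument: taking $u_{1} = 1$, your $s_{ij} = a_{i}b_{j} = u_{i}e_{1}u_{j}^{-1} = u_{i}u_{j}^{-1}e_{j}$ is exactly the family of matrix units the paper defines. Factoring through $a_{i} \in e_{i}Re_{1}$ and $b_{j} \in e_{1}Re_{j}$ only makes the verification of the multiplication rule more transparent.
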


\begin{proof} By assumption, we find $u_{2},\ldots,u_{n} \in \GL(R)$ such that $u_{i}e_{1}u_{i}^{-1} = e_{i}$ for each $i \in \{ 2,\ldots,n \}$. Let $u_{1} \defeq 1$. Then $s \in \M_{n}(R)$ defined by \begin{displaymath}
	s_{ij} \, \defeq \, u_{i}u_{j}^{-1}e_{j} \qquad (i,j \in \{ 1,\ldots,n \})
\end{displaymath} is a family of matrix units for $R$: indeed, if $i,j,k,\ell \in \{ 1,\ldots,n \}$ and $k \ne j$, then \begin{displaymath}
	s_{ij}s_{j\ell} \, = \, u_{i}u_{j}^{-1}e_{j}u_{j}u_{\ell}^{-1}e_{\ell} \, = \, u_{i}e_{1}u_{\ell}^{-1}e_{\ell} \, = \, u_{i}u_{\ell}^{-1}e_{\ell}e_{\ell} \, = \, u_{i}u_{\ell}^{-1}e_{\ell} \, = \, s_{i\ell}
\end{displaymath} and \begin{displaymath}
	s_{ij}s_{k\ell} \, = \, u_{i}u_{j}^{-1}e_{j}u_{k}u_{\ell}^{-1}e_{\ell} \, = \, u_{i}u_{j}^{-1}e_{j}u_{k}e_{1}u_{\ell}^{-1} \, = \, u_{i}u_{j}^{-1}e_{j}e_{k}u_{k}u_{\ell}^{-1} \, = \, 0 .
\end{displaymath} Evidently, $s_{ii} = e_{i}$ for each $i \in \{ 1,\ldots,n \}$. \end{proof}

We now turn to the work of Menal and Moncasi~\cite{MenalMoncasi}.

\begin{lem}[{\cite[Proof of Theorem~2.2, pp.~305--306]{MenalMoncasi}}]\label{lemma:menal.moncasi} Let $R$ be a purely infinite, regular, right self-injective ring. For every $n \in \N$ and for all $a_{1},\ldots,a_{n} \in R$ such that $a_{1}R + \ldots + a_{n}R = R$, there exist $e_{1},\ldots,e_{n} \in \E(\cent(R))$ such that \begin{itemize}
	\item[---\,] $(e_{1},\ldots,e_{n}) \perp$,
	\item[---\,] $e_{1}R + \ldots + e_{n}R = R$,
	\item[---\,] and $e_{i} \in \{ xa_{i}y \mid x,y \in R \}$ for each $i \in \{ 1,\ldots,n \}$.
\end{itemize} \end{lem}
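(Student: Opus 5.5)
The plan is to reduce the statement to comparability theory for injective modules over a regular, right self-injective ring (Goodearl, \emph{Von Neumann regular rings}, Chapters~9--10). By Remark~\ref{remark:regular}, for each $i$ pick $f_{i} \in \E(R)$ with $a_{i}R = f_{i}R$, and write $f_{i} = a_{i}b_{i}$ for some $b_{i} \in R$. Each $f_{i}R_{R}$ is injective by Remark~\ref{remark:summands.of.injective.modules}. The hypothesis gives an epimorphism $\bigoplus_{i} f_{i}R \to R$, which splits since $R_{R}$ is projective. Hence $R_{R} \lesssim \bigoplus_{i=1}^{n} f_{i}R_{R}$, and the same holds after multiplying by any central idempotent. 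Since the $e_{i}$ we seek are orthogonal central idempotents, the condition $e_{1}R+\ldots+e_{n}R = R$ just says $e_{1}+\ldots+e_{n}=1$.

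\emph{Step 1 (sorting by size).} I would use generalized comparability: for idempotents $f,g$ there is a central idempotent $z$ with $zfR \lesssim zgR$ and $(1-z)gR \lesssim (1-z)fR$. Iterating this over all pairs $(i,j)$, refining at each stage by the central idempotents obtained, I would produce orthogonal central idempotents $z_{1},\ldots,z_{n}$ with $\sum_{i} z_{i} = 1$ such that $z_{i}f_{j}R \lesssim z_{i}f_{i}R$ for all $j$. On the piece $z_{i}$ the idempotent $f_{i}$ is then the largest, so
\begin{displaymath}
	z_{i}R \, \lesssim \, \bigoplus\nolimits_{j=1}^{n} z_{i}f_{j}R \, \lesssim \, (z_{i}f_{i}R)^{\oplus n} .
\end{displaymath}

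\emph{Step 2 (removing the factor $n$), which I expect to be the main obstacle.} I want $z_{i}R \lesssim z_{i}f_{i}R$, and this is where pure infiniteness enters. I would first split $z_{i}f_{i}$ by a central idempotent $w$ into a part where $wz_{i}f_{i}R$ is purely infinite, so that $wz_{i}f_{i}R \cong (wz_{i}f_{i}R)^{\oplus n}$, and a part where $(1-w)z_{i}f_{i}R$ is directly finite. On the directly finite part, I would invoke the fact that finite direct sums of directly finite nonsingular injective modules are directly finite (Goodearl, Chapter~10). Since $(1-w)z_{i}R$ embeds into $((1-w)z_{i}f_{i}R)^{\oplus n}$, it would then be directly finite as a direct summand. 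But $(1-w)z_{i}$ is a central idempotent, so pure infiniteness of $R$ forces $(1-w)z_{i} = 0$. Hence $z_{i}R \lesssim z_{i}f_{i}R$. The delicate point is to cite precisely the needed statements on the directly finite/purely infinite decomposition of an injective module and on direct finiteness of finite sums.

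\emph{Step 3 (algebraic conclusion).} The image of $z_{i}R$ in $f_{i}R$ is injective, hence a direct summand, so it equals $gR$ for some idempotent $g$ with $gR \subseteq f_{i}R$, i.e.\ $f_{i}g = g$. By Remark~\ref{remark:algebraic.equivalence.of.idempotents} there are $u \in z_{i}Rg$ and $v \in gRz_{i}$ with $uv = z_{i}$. Then
\begin{displaymath}
	z_{i} \, = \, ugv \, = \, uf_{i}gv \, = \, u a_{i} (b_{i} g v) ,
\end{displaymath}
so $e_{i} \defeq z_{i}$ lies in $\{ xa_{i}y \mid x,y \in R\}$. Moreover $(e_{1},\ldots,e_{n}) \perp$ and $e_{1}+\ldots+e_{n} = 1$, as required.
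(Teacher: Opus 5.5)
Your proof is correct. The paper gives no argument for this lemma; it imports it from Menal and Moncasi (proof of their Theorem~2.2), so there is nothing internal to compare against. What you give is a self-contained derivation from standard facts in Goodearl's book, and each step holds up:
\begin{itemize}
\item General comparability, together with transitivity of $\lesssim$ and its compatibility with central idempotents, produces the partition $z_{1}+\cdots+z_{n}=1$ with $z_{i}R \lesssim (z_{i}f_{i}R)^{\oplus n}$.
\item Pure infiniteness of $R$ removes the directly finite part of $z_{i}f_{i}R$, which gives $z_{i}R \lesssim z_{i}f_{i}R$.
\item Step~3 correctly turns this into $z_{i} = u a_{i}(b_{i}gv)$.
\end{itemize}
Step~1 is genuinely needed, not just convenient. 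The purely infinite part of $f_{i}R$ on its own need not contain a copy of $R$; consider a projection of countable rank in $\End(V)$ with $\dim V = \aleph_{1}$. So the bound by $n$ copies is exactly what makes Step~2 work.

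The two points you flag each need one sentence.
\begin{itemize}
\item \emph{Central splitting.} Apply the decomposition of a regular right self-injective ring into a directly finite and a purely infinite factor to the corner $S \defeq z_{i}f_{i}Rz_{i}f_{i}$. This gives a central idempotent $c$ of $S$. Since $cR(z_{i}f_{i}-c) = cS(z_{i}f_{i}-c) = 0$, the central covers in $R$ of $c$ and of $z_{i}f_{i}-c$ are orthogonal. Hence you may take $w$ to be the central cover of $c$, and then $wz_{i}f_{i} = c$.
\item \emph{Direct finiteness.} You only need $n$ copies of a single module $A \defeq (1-w)z_{i}f_{i}R$. We have $\End(A^{\oplus n}) \cong \M_{n}(\End(A))$, and $\End(A)$ is a directly finite regular right self-injective ring, hence unit-regular. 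Matrix rings over unit-regular rings are unit-regular, hence directly finite.
\end{itemize}
With these two sentences added, the argument is complete.
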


\begin{remark}\label{remark:menal.moncasi} Let $R$ be a unital ring. If $a,b \in R$ and $ab = 1$, then \begin{displaymath}
	\begin{pmatrix} a & 0 \\ 1-ba & b \end{pmatrix} \begin{pmatrix} b & 1-ba \\ 0 & a \end{pmatrix} = \begin{pmatrix} 1 & 0 \\ 0 & 1 \end{pmatrix} \, = \, \begin{pmatrix} b & 1-ba \\ 0 & a \end{pmatrix}\begin{pmatrix} a & 0 \\ 1-ba & b \end{pmatrix}
\end{displaymath} and therefore $\begin{pmatrix} b & 1-ba \\ 0 & a \end{pmatrix} \in \GL_{2}(R)$. \end{remark}

\begin{lem}[cf.~{\cite[Proof of Theorem~2.2]{MenalMoncasi}}]\label{lemma:matrix.groups} Let $R$ be a unital ring.  Consider the subgroups \begin{displaymath}
	G_{1} \defeq \left. \! \left\{ \begin{pmatrix} a & 0 \\ 0 & 1 \end{pmatrix} \, \right\vert a \in \GL_{2}(R) \right\}, \quad G_{2} \defeq \left. \! \left\{ \begin{pmatrix} 1 & 0 \\ 0 & a \end{pmatrix} \, \right\vert a \in \GL_{2}(R) \right\} 
\end{displaymath} of $\GL_{3}(R)$. Then the following hold. \begin{enumerate}
	\item\label{lemma:matrix.groups.1} If $a \in \GL_{3}(R)$ and $1 \in \{ xa_{2i}y \mid x,y \in R \}$ for some $i \in \{ 1,2,3 \}$, then \begin{displaymath}
			\qquad a \, \in \, (G_{1}G_{2})^{3}G_{1} \cup (G_{2}G_{1})^{3}G_{2} .
		\end{displaymath}
	\item\label{lemma:matrix.groups.2} If $R$ is a purely infinite, regular, right self-injective ring, then \begin{displaymath}
			\qquad \GL_{3}(R) \, = \, (G_{1}G_{2})^{8} .
		\end{displaymath}
\end{enumerate} \end{lem}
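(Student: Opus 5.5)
The plan is to prove \ref{lemma:matrix.groups.1} by an explicit sequence of row and column operations. Left multiplication by an element of $G_{1}$ (resp.\ $G_{2}$) acts by an invertible $2\times 2$ matrix on rows $1,2$ (resp.\ $2,3$), and right multiplication acts analogously on columns. Part~\ref{lemma:matrix.groups.2} will then follow by splitting $R$ along central idempotents via Lemma~\ref{lemma:menal.moncasi}.

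For \ref{lemma:matrix.groups.1}, the steps are as follows.
\begin{enumerate}
	\item If $i\neq 2$, first move the entry $a_{2i}$ into position $(2,2)$ by a column transposition. The transposition of columns $1,2$ lies in $G_{1}$, and that of columns $2,3$ lies in $G_{2}$. So assume $x a_{22} y = 1$.
	\item Put $u \defeq x a_{22}$, so that $uy=1$. By Remark~\ref{remark:menal.moncasi}, $\bigl(\begin{smallmatrix} y & 1-yu \\ 0 & u\end{smallmatrix}\bigr) \in \GL_{2}(R)$. Right multiplication by the corresponding element of $G_{2}$ (on columns $2,3$) turns the $(2,2)$ entry into $a_{22}y$.
	\item Since $x(a_{22}y)=1$, Remark~\ref{remark:menal.moncasi} again gives $\bigl(\begin{smallmatrix} a_{22}y & 1-a_{22}yx \\ 0 & x\end{smallmatrix}\bigr) \in \GL_{2}(R)$. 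Left multiplication by the corresponding element of $G_{1}$ (on rows $1,2$) makes the new $(2,2)$ entry $x a_{22} y = 1$.
	\item With a $1$ at $(2,2)$, clear the rest of row $2$ and column $2$ by elementary transvections. Those involving index $1$ lie in $G_{1}$, and those involving index $3$ lie in $G_{2}$.
	\item The result has $(2,2)$ entry $1$, zeros elsewhere in row and column $2$, and an invertible $2\times 2$ block on indices $\{1,3\}$. Write this block, by the standard trick of conjugating with the transposition of indices $2$ and $3$ (an element of $G_{2}$), as a product of at most three alternating factors from $G_{1}$ and $G_{2}$.
	\item Merge adjacent factors from the same subgroup and count, to land in $(G_{1}G_{2})^{3}G_{1} \cup (G_{2}G_{1})^{3}G_{2}$. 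Which alternative occurs depends on whether the initial column transposition was in $G_{1}$ or $G_{2}$.
\end{enumerate}
The main obstacle I expect is this bookkeeping: arranging that the initial permutation, the two Menal--Moncasi factors, the clearing transvections and the final $\{1,3\}$-block all merge into at most seven alternating factors. I would try placing the clearing transvections adjacent to factors from the same subgroup so that they are absorbed.

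For \ref{lemma:matrix.groups.2}, let $a \in \GL_{3}(R)$. Comparing the $(2,2)$ entries of $aa^{-1}=1$ gives $a_{21}R + a_{22}R + a_{23}R = R$. Lemma~\ref{lemma:menal.moncasi} then yields pairwise orthogonal central idempotents $e_{1},e_{2},e_{3}$ with $e_{1}R+e_{2}R+e_{3}R=R$, hence $e_{1}+e_{2}+e_{3}=1$, and with $e_{i} = x_{i}a_{2i}y_{i}$. Thus $R \cong \prod_{i} e_{i}R$, and accordingly $\GL_{3}(R) \cong \prod_{i} \GL_{3}(e_{i}R)$, with $G_{1},G_{2}$ decomposing compatibly. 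In the ring $e_{i}R$ we have $e_{i} = (e_{i}x_{i})(e_{i}a_{2i})(e_{i}y_{i})$, so \ref{lemma:matrix.groups.1} applies to the component $e_{i}a$. Each component therefore lies in $(G_{1}G_{2})^{3}G_{1} \cup (G_{2}G_{1})^{3}G_{2}$ of the corresponding corner group. Padding with identity factors, both alternatives are contained in $(G_{1}G_{2})^{4} \subseteq (G_{1}G_{2})^{8}$. Since products in $\prod_{i}\GL_{3}(e_{i}R)$ are computed componentwise, we can choose the factors independently in each component and reassemble them into a single word, giving $a \in (G_{1}G_{2})^{8}$.
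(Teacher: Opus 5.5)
Your proposal is correct and is essentially the paper's argument. It uses the same unimodular factors from Remark~\ref{remark:menal.moncasi}, clearing by elementary transvections, conjugation by a coordinate transposition, and the identical central-idempotent reduction in part~(2). The only difference is that you normalize the $(2,2)$ entry and reduce $i\neq 2$ by a column transposition, whereas the paper normalizes the $(2,1)$ entry using $G_{1}$ alone and handles $i=2,3$ by passing to $au$ and $waw$.

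The bookkeeping you were worried about does close. Let $p$ be the initial column transposition, $h_{2}\in G_{2}$ and $h_{1}\in G_{1}$ the two Menal--Moncasi factors, and $b=t_{3}t_{1}\,(h_{1}aph_{2})\,s_{1}s_{3}$ with $t_{1},s_{1}\in G_{1}$ and $t_{3},s_{3}\in G_{2}$. Write $b=vb'v$ with $v\in G_{2}$ the transposition of indices $2,3$ and $b'\in G_{1}$. Then $a=(h_{1}^{-1}t_{1}^{-1})(t_{3}^{-1}v)\,b'\,(vs_{3}^{-1})\,s_{1}^{-1}\,h_{2}^{-1}p^{-1}\in(G_{1}G_{2})^{3}G_{1}$ in all three cases, since $p\in G_{1}\cup G_{2}$. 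So only the first alternative is ever needed, and, as your padding remark already shows, part~(2) even yields $\GL_{3}(R)=(G_{1}G_{2})^{4}$.
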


\begin{proof} \ref{lemma:matrix.groups.1} The argument makes use of the involutions \begin{align*}
	u &\defeq \begin{pmatrix} 0 & 1 & 0 \\ 1 & 0 & 0 \\ 0 & 0 & 1 \end{pmatrix} \in \I(G_{1}) , \qquad v \defeq \begin{pmatrix} 1 & 0 & 0 \\ 0 & 0 & 1 \\ 0 & 1 & 0 \end{pmatrix} \in \I(G_{2}) , \\
	w &\defeq \begin{pmatrix} 0 & 0 & 1 \\ 0 & 1 & 0 \\ 1 & 0 & 0 \end{pmatrix} \in \I(\GL_{3}(R))
\end{align*} and proceeds by case analysis.
	
\textit{Case 1}: $1 \in \{ xa_{21}y \mid x,y \in R \}$. Let $x,y \in R$ be such that $xa_{21}y = 1$. Considering \begin{displaymath}
	g_{1} \defeq \begin{pmatrix} a_{21}y & 1-a_{21}yx & 0 \\ 0 & x & 0 \\ 0 & 0 & 1 \end{pmatrix} \stackrel{\ref{remark:menal.moncasi}}{\in} G_{1}, \qquad g_{2} \defeq \begin{pmatrix} y & 1-yxa_{21} & 0 \\ 0 & xa_{21} & 0 \\ 0 & 0 & 1 \end{pmatrix} \stackrel{\ref{remark:menal.moncasi}}{\in} G_{1} ,
\end{displaymath} we see that $(g_{1}ag_{2})_{21} = 1$. It follows that \begin{displaymath}
	ug_{1}ag_{2} \, = \, \begin{pmatrix} 1 & \begin{matrix} s_{12} & s_{13} \end{matrix} \\ \begin{matrix} s_{21} \\ s_{31} \end{matrix} & s' \end{pmatrix}
\end{displaymath} for suitable $s_{12},s_{13},s_{21},s_{31} \in R$ and $s' \in R^{2 \times 2}$. Using \begin{displaymath}
	g_{3} \defeq \begin{pmatrix} 1 & 0 & 0 \\ -s_{21} & 1 & 0 \\ 0 & 0 & 1 \end{pmatrix} \in G_{1}, \qquad g_{4} \defeq \begin{pmatrix} 1 & -s_{12} & 0 \\ 0 & 1 & 0 \\ 0 & 0 & 1 \end{pmatrix} \in G_{1} ,
\end{displaymath} we obtain that \begin{displaymath}
	g_{3}ug_{1}ag_{2}g_{4} \, = \, \begin{pmatrix} 1 & \begin{matrix} 0 & t_{13} \end{matrix} \\ \begin{matrix} 0 \\ t_{31} \end{matrix} & t' \end{pmatrix}
\end{displaymath} for suitable $t_{13},t_{31} \in R$ and $t' \in R^{2 \times 2}$. Thus, letting \begin{displaymath}
	g_{5} \defeq \begin{pmatrix} 1 & 0 & 0 \\ -t_{31} & 1 & 0 \\ 0 & 0 & 1 \end{pmatrix} \in G_{1}, \qquad g_{6} \defeq \begin{pmatrix} 1 & -t_{13} & 0 \\ 0 & 1 & 0 \\ 0 & 0 & 1 \end{pmatrix} \in G_{1} ,
\end{displaymath} one readily computes that \begin{displaymath}
	g_{5}vg_{3}ug_{1}ag_{2}g_{4}vg_{6} \, = \, \begin{pmatrix} 1 & \begin{matrix} 0 & 0 \end{matrix} \\ \begin{matrix} 0 \\ 0 \end{matrix} & h \end{pmatrix}
\end{displaymath} for some $h \in R^{2 \times 2}$. Since $g_{5}vg_{3}ug_{1}ag_{2}g_{4}vg_{6} \in \GL_{3}(R)$, we conclude that $h \in \GL_{2}(R)$, whence $g_{5}vg_{3}ug_{1}ag_{2}g_{4}vg_{6} \in G_{2}$. Consequently, \begin{displaymath}
	a \, \in \, (g_{3}ug_{1})^{-1}vg_{5}^{-1}G_{2}g_{6}^{-1}v(g_{2}g_{4})^{-1} \, \subseteq \, G_{1}G_{2}G_{1}G_{2}G_{1}G_{2}G_{1} \, = \, (G_{1}G_{2})^{3}G_{1} .
\end{displaymath}
	
\textit{Case 2}: $1 \in \{ xa_{22}y \mid x,y \in R \}$. Since $(au)_{21} = a_{22}$ and thus $au \in (G_{1}G_{2})^{3}G_{1}$ as established in Case~1, we see that \begin{displaymath}
	a \, \in \, (G_{1}G_{2})^{3}G_{1}u \, = \, (G_{1}G_{2})^{3}G_{1} .
\end{displaymath}
	
\textit{Case 3}: $1 \in \{ xa_{23}y \mid x,y \in R \}$. Note that $wG_{1} = G_{2}w$. Since $(waw)_{21} = a_{23}$ and therefore $waw \in (G_{1}G_{2})^{3}G_{1}$ due to Case~1, we conclude that \begin{displaymath}
	a \, \in \, w(G_{1}G_{2})^{3}G_{1}w \, = \, (G_{2}G_{1})^{3}G_{2}ww \, = \, (G_{2}G_{1})^{3}G_{2} \, = \, G_{2}(G_{1}G_{2})^{3} .
\end{displaymath}
	
\ref{lemma:matrix.groups.2} Suppose now that $R$ is a purely infinite, regular, right self-injective ring. Consider any $a \in \GL_{3}(R)$. Then $a_{21}R + a_{22}R + a_{23}R = R$, whence Lemma~\ref{lemma:menal.moncasi} asserts the existence of pairwise orthogonal elements $e_{1},e_{2},e_{3} \in \E(\cent(R))$ such that $e_{1}R + e_{2}R + e_{3}R = R$ and $e_{i} \in \{ xa_{2i}y \mid x,y \in R \}$ for each $i \in \{ 1,2,3 \}$. For each $i \in \{ 1,2,3 \}$, let us consider the unital ring $S_{i} \defeq e_{i}Re_{i}$ and the unital ring homomorphism $\pi_{i} \colon R \to S_{i}, \, x \mapsto e_{i}x$. Then \begin{displaymath}
	\phi \colon \, \M_{3}(R) \, \longrightarrow \, \prod\nolimits_{i=1}^{3} \M_{3}(S_{i}), \quad a \, \longmapsto \, (\pi_{1} \circ a, \pi_{2} \circ a, \pi_{3} \circ a)
\end{displaymath} is a (necessarily unital) ring isomorphism, thus restricts to a well-defined group isomorphism from $\GL(\M_{3}(R)) = \GL_{3}(R)$ to $\GL\bigl(\prod\nolimits_{i=1}^{3} \M_{3}(S_{i})\bigr) = \prod\nolimits_{i=1}^{3} \GL_{3}(S_{i})$. Moreover, letting \begin{displaymath}
	G_{i,1} \defeq \left. \! \left\{ \begin{pmatrix} a & 0 \\ 0 & 1 \end{pmatrix} \, \right\vert a \in \GL_{2}(S_{i}) \right\}, \quad G_{i,2} \defeq \left. \! \left\{ \begin{pmatrix} 1 & 0 \\ 0 & a \end{pmatrix} \, \right\vert a \in \GL_{2}(S_{i}) \right\} 
\end{displaymath} for each $i \in \{ 1,2,3 \}$, we observe that \begin{equation}\label{eq:range}
	\forall j \in \{ 1,2 \} \colon \qquad \phi(G_{j}) \, = \, \prod\nolimits_{i=1}^{3} G_{i,j} .
\end{equation} For each $i \in \{ 1,2,3 \}$, since $e_{i} \in \{ xa_{2i}y \mid x,y \in R \}$ and hence \begin{displaymath}
	e_{i} \, = \, \pi_{i}(e_{i}) \, \in \, \pi_{i}(\{ xa_{2i}y \mid x,y \in R \}) \, = \, \{ x\pi_{i}(a_{2i})y \mid x,y \in S_{i} \} \, = \, S_{i}(\pi_{i} \circ a)_{2i}S_{i} ,
\end{displaymath} item~\ref{lemma:matrix.groups.1} implies that ${\pi_{i} \circ a} \in (G_{i,1}G_{i,2})^{8}$. It follows that \begin{align*}
	\phi(a) \, \in \, \prod\nolimits_{i=1}^{3} (G_{i,1}G_{i,2})^{8} \, &= \, \left( \! \left(\prod\nolimits_{i=1}^{3} G_{i,1} \right)\! \left( \prod\nolimits_{i=1}^{3} G_{i,2}\right) \! \right)^{8} \\
	& \stackrel{\eqref{eq:range}}{=} \, (\phi(G_{1})\phi(G_{2}))^{8} \, = \, \phi\!\left( (G_{1}G_{2})^{8} \right) 
\end{align*} and so $a \in (G_{1}G_{2})^{8}$, as desired. \end{proof}

\section{Moieties}\label{section:moieties}

The following definition, which is fundamental to all our ring-theoretic results, is inspired by the corresponding set-theoretic notion defined and used for the study of infinite permutation groups in~\cite{neumann,MacPhersonNeumann,thomas,bergman,DrosteGoebel}, as well as its linear analogue explored in~\cite{macpherson,tolstykh}.

\begin{definition} Let $R$ be a unital ring. A \emph{moiety} in $R$ is an element $e \in \E(R)$ such that ${eR_{R}} \cong {R_{R}} \cong {(1-e)R_{R}}$. The set of all moieties in $R$ is denoted by $\Emo(R)$. \end{definition}

Any two moieties in a unital ring are conjugate:

\begin{lem}\label{lemma:conjugacy.of.moieties} Let $R$ be a unital ring and let $e,f \in \Emo(R)$. \begin{enumerate}
	\item\label{lemma:conjugacy.of.moieties.1} There exists $g \in \GL(R)$ such that $geg^{-1} = f$.
	\item\label{lemma:conjugacy.of.moieties.2} If $e \perp f$, then there exists $g \in \I(\Gamma_{R}(e+f))$ such that $geg = f$.
\end{enumerate} \end{lem}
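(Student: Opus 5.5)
Both parts reduce to the two conjugacy lemmas of Section~\ref{section:idempotents}, so the plan is simply to check their hypotheses for moieties.

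For~\ref{lemma:conjugacy.of.moieties.1}, I will verify condition~\ref{lemma:conjugacy.of.idempotents.1} of Lemma~\ref{lemma:conjugacy.of.idempotents} for the pair $e,f$. By definition of a moiety, ${eR_{R}} \cong {R_{R}}$ and ${fR_{R}} \cong {R_{R}}$, hence ${eR_{R}} \cong {fR_{R}}$ by transitivity of isomorphism. Likewise ${(1-e)R_{R}} \cong {R_{R}} \cong {(1-f)R_{R}}$. Lemma~\ref{lemma:conjugacy.of.idempotents} then yields some $g \in \GL(R)$ with $geg^{-1} = f$.

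For~\ref{lemma:conjugacy.of.moieties.2}, part~\ref{lemma:conjugacy.of.moieties.1} shows that $f \in \{ geg^{-1} \mid g \in \GL(R) \}$. Since $e \perp f$ is assumed, Lemma~\ref{lemma:conjugacy.of.commuting.idempotents} applies directly and produces $g \in \I(\Gamma_{R}(e+f))$ with $geg = f$.

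There is no genuine obstacle here. The only point to note is that no regularity or self-injectivity of $R$ is needed, because both cited lemmas hold for arbitrary unital rings.
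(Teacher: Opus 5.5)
Your proposal is correct and follows the paper's proof exactly: part~(1) comes from Lemma~\ref{lemma:conjugacy.of.idempotents} once you note that $eR_{R} \cong R_{R} \cong fR_{R}$ and $(1-e)R_{R} \cong R_{R} \cong (1-f)R_{R}$. Part~(2) then follows from part~(1) together with Lemma~\ref{lemma:conjugacy.of.commuting.idempotents}, and your remark that both lemmas hold for arbitrary unital rings is also right.
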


\begin{proof} While~\ref{lemma:conjugacy.of.moieties.1} is an immediate consequence of Lemma~\ref{lemma:conjugacy.of.idempotents}, item~\ref{lemma:conjugacy.of.moieties.2} follows from~\ref{lemma:conjugacy.of.moieties.1} and Lemma~\ref{lemma:conjugacy.of.commuting.idempotents}. \end{proof}

The existence of a moiety characterizes the purely infinite ones among all regular, right self-injective rings, as the following consequence of Theorem~\ref{theorem:purely.infinite} shows.

\begin{prop}\label{proposition:pure.infinity.via.moieties} Let $R$ be a regular, right self-injective ring. The following are equivalent. \begin{enumerate}
	\item\label{proposition:pure.infinity.via.moieties.1} $R$ is purely infinite.
	\item\label{proposition:pure.infinity.via.moieties.2} $\Emo(R) \ne \emptyset$.
	\item\label{proposition:pure.infinity.via.moieties.3} For every $n \in \N_{>1}$, there exist $e_{1},\ldots,e_{n} \in \Emo(R)$ pairwise orthogonal such that $\sum_{i=1}^{n} e_{i} = 1$.
\end{enumerate} \end{prop}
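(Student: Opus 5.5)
The plan is to prove the cycle \ref{proposition:pure.infinity.via.moieties.1}$\Rightarrow$\ref{proposition:pure.infinity.via.moieties.3}$\Rightarrow$\ref{proposition:pure.infinity.via.moieties.2}$\Rightarrow$\ref{proposition:pure.infinity.via.moieties.1}, using Theorem~\ref{theorem:purely.infinite} throughout. The implication \ref{proposition:pure.infinity.via.moieties.3}$\Rightarrow$\ref{proposition:pure.infinity.via.moieties.2} is trivial: take $n=2$, which yields at least one moiety.

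For \ref{proposition:pure.infinity.via.moieties.1}$\Rightarrow$\ref{proposition:pure.infinity.via.moieties.3}, fix $n \geq 2$. By Theorem~\ref{theorem:purely.infinite}, there is an isomorphism $R_{R}\cong R_{R}^{\oplus n}$. Pulling back the standard decomposition of $R_{R}^{\oplus n}$ into its $n$ coordinate summands gives a decomposition $R_{R} = A_{1}\oplus\cdots\oplus A_{n}$ with $A_{i}\cong R_{R}$ for each $i$. Decompositions of $R_{R}$ into direct summands correspond to pairwise orthogonal idempotents $e_{1},\ldots,e_{n}$ with $\sum_i e_{i}=1$ and $e_{i}R=A_{i}$; one takes $e_i$ to be the $i$-th component of $1$. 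Hence $e_{i}R_{R}\cong R_{R}$ for each $i$. Moreover,
\[
(1-e_{i})R_{R}=\bigoplus_{j\ne i}e_{j}R_{R}\cong R_{R}^{\oplus (n-1)}\cong R_{R},
\]
where the last isomorphism is again Theorem~\ref{theorem:purely.infinite} (for $n-1\geq 1$). So every $e_{i}$ is a moiety, as required.

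For \ref{proposition:pure.infinity.via.moieties.2}$\Rightarrow$\ref{proposition:pure.infinity.via.moieties.1}, let $e\in\Emo(R)$. Then
\[
R_{R}=eR\oplus(1-e)R\cong R_{R}\oplus R_{R}=R_{R}^{\oplus 2},
\]
so $R_{R}^{\oplus 2}\lesssim R_{R}$. The second condition of Theorem~\ref{theorem:purely.infinite} therefore holds with $n=2$, and $R$ is purely infinite.

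No step is a real obstacle. The only point requiring care is the routine correspondence between direct sum decompositions of $R_{R}$ and complete families of orthogonal idempotents, together with the observation that $(1-e_i)R$ is exactly the complementary sum $\bigoplus_{j\neq i} e_jR$.
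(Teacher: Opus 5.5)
Your proof is correct and runs the same cycle as the paper: for \ref{proposition:pure.infinity.via.moieties.1}$\Rightarrow$\ref{proposition:pure.infinity.via.moieties.3} you transport the coordinate projections of $R_{R}^{\oplus n}$ to $R$, and for \ref{proposition:pure.infinity.via.moieties.2}$\Rightarrow$\ref{proposition:pure.infinity.via.moieties.1} you use $R_{R}\oplus R_{R}\cong R_{R}$, exactly as the paper does. The only deviation is how you get $(1-e_{i})R_{R}\cong R_{R}$: you write $(1-e_{i})R=\bigoplus_{j\ne i}e_{j}R\cong R_{R}^{\oplus(n-1)}\cong R_{R}$ and cite Theorem~\ref{theorem:purely.infinite}, whereas the paper sandwiches $R_{R}\cong e_{j}R_{R}\leq(1-e_{i})R_{R}\leq R_{R}$ and invokes Theorem~\ref{theorem:schroeder.bernstein}; both work, and yours avoids the Schr\"oder--Bernstein step.
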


\begin{proof} \ref{proposition:pure.infinity.via.moieties.1}$\Longrightarrow$\ref{proposition:pure.infinity.via.moieties.3}. Consider the ring isomorphism $\iota \colon R \to \End(R_{R}), \, a \mapsto [b \mapsto ab]$ from Remark~\ref{remark:cayley}\ref{remark:cayley.1}. Let $n \in \N_{>1}$. Since $R$ is purely infinite, Theorem~\ref{theorem:purely.infinite} asserts the existence of an isomorphism $\pi \colon {R_{R}^{\oplus n}} \to {R_{R}}$. For each $i \in \{ 1,\ldots,n \}$, considering the idempotent endomorphism \begin{displaymath}
	f_{i} \colon \,  {R_{R}^{\oplus n}} \, \longrightarrow \, {R_{R}^{\oplus n}}, \quad (x_{1},\ldots,x_{n}) \, \longmapsto \, (0,\ldots,0,x_{i},0,\ldots,0)
\end{displaymath} along with the corresponding element \begin{displaymath}
	e_{i} \, \defeq \, \iota^{-1}\!\left(\pi \circ {f_{i}} \circ {\pi^{-1}}\right) \, \in \, \E(R),
\end{displaymath} we see that \begin{displaymath}
	{e_{i}R_{R}} \, \cong \, {\left( \pi \circ {f_{i}} \circ {\pi^{-1}}\right)\!(R)_{R}} \, \cong \, {\pi\!\left(f_{i}\!\left(R^{\oplus n}\right)\right)_{R}} \, \cong \, {f_{i}\!\left(R^{\oplus n}\right)_{R}} \, \cong \, {R_{R}} .
\end{displaymath} Moreover, straightforward calculations reveal that $(f_{1},\ldots,f_{n}) \perp$ and $\sum_{i=1}^{n} f_{i} = 1$, which entails that $(e_{1},\ldots,e_{n}) \perp$ and $\sum_{i=1}^{n} e_{i} = 1$. Finally, for each $i \in \{ 1,\ldots,n \}$, since $n > 1$ and \begin{displaymath}
	{R_{R}} \, \cong \, {e_{j}R}_{R} \, \stackrel{{e_{i}}\perp {e_{j}}}{\leq} \, {(1-e_{i})R_{R}} \, \leq \, {R_{R}}
\end{displaymath} for any $j \in \{ 1,\ldots,n \} \setminus \{ i \}$, an application of Remark~\ref{remark:summands.of.injective.modules} and Theorem~\ref{theorem:schroeder.bernstein} yields that ${R_{R}} \cong {(1-e_{i})R_{R}}$, wherefore $e_{i} \in \Emo(R)$.
	
\ref{proposition:pure.infinity.via.moieties.3}$\Longrightarrow$\ref{proposition:pure.infinity.via.moieties.2}. This is trivial.

\ref{proposition:pure.infinity.via.moieties.2}$\Longrightarrow$\ref{proposition:pure.infinity.via.moieties.1}. If there exists some $e \in \Emo(R)$, then ${R_{R}} \oplus {R_{R}} \cong {eR_{R}} \oplus {(1-e)R_{R}} = {R_{R}}$ and hence $R$ is purely infinite by Theorem~\ref{theorem:purely.infinite}. \end{proof}

For later use, we record an obvious comparison criterion.

\begin{lem}\label{lemma:schroeder.bernstein.moieties} Let $R$ be a right self-injective unital ring. If $e_{0},e_{1} \in \Emo(R)$ and $f \in \E(R)$ are such that $e_{0} \leq f \leq e_{1}$, then $f \in \Emo(R)$. \end{lem}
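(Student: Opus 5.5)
We have to show that $fR_{R} \cong R_{R}$ and $(1-f)R_{R} \cong R_{R}$. The plan is to squeeze each module between two copies of $R_{R}$ and then invoke the Schröder--Bernstein property for injective modules (Theorem~\ref{theorem:schroeder.bernstein}). Injectivity is available throughout: by Remark~\ref{remark:summands.of.injective.modules}, $eR_{R}$ is injective for every idempotent $e$, in particular for $f$ and $1-f$.

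\emph{First, $fR_{R} \cong R_{R}$.} Since $e_{0} \leq f$, we have $e_{0} = fe_{0}$, so $e_{0}R = fe_{0}R \subseteq fR$. Hence $R_{R} \cong e_{0}R_{R} \leq fR_{R}$, using that $e_{0}$ is a moiety. This gives $R_{R} \lesssim fR_{R}$, and trivially $fR_{R} \leq R_{R}$. Both modules are injective, so Theorem~\ref{theorem:schroeder.bernstein} yields $fR_{R} \cong R_{R}$.

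\emph{Second, $(1-f)R_{R} \cong R_{R}$.} Since $f \leq e_{1}$, we get $(1-e_{1}) \leq (1-f)$: indeed $(1-e_{1})(1-f) = 1 - f - e_{1} + e_{1}f = 1-e_{1}$, and symmetrically $(1-f)(1-e_{1}) = 1-e_{1}$, because $e_{1}f = fe_{1} = f$. Therefore $(1-e_{1})R \subseteq (1-f)R$. Since $e_{1}$ is a moiety, this gives $R_{R} \cong (1-e_{1})R_{R} \lesssim (1-f)R_{R} \leq R_{R}$. Applying Theorem~\ref{theorem:schroeder.bernstein} again, to the injective modules $R_{R}$ and $(1-f)R_{R}$, we obtain $(1-f)R_{R} \cong R_{R}$.

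Together the two steps show $f \in \Emo(R)$. There is no real obstacle; the only points requiring care are the order-reversal $f \leq e_{1} \Rightarrow 1-e_{1} \leq 1-f$ and the injectivity of the relevant summands.
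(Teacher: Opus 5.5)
Your proof is correct and follows essentially the same route as the paper: you squeeze $fR_{R}$ and $(1-f)R_{R}$ between copies of $R_{R}$ via $e_{0}R \subseteq fR$ and $(1-e_{1})R \subseteq (1-f)R$, then apply Remark~\ref{remark:summands.of.injective.modules} and Theorem~\ref{theorem:schroeder.bernstein}. The only cosmetic difference is that the paper writes the upper bounds as $fR_{R} \leq e_{1}R_{R} \cong R_{R}$ and $(1-f)R_{R} \leq (1-e_{0})R_{R} \cong R_{R}$, whereas you use the trivial inclusion into $R_{R}$, which is enough.
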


\begin{proof} Let $e_{0},e_{1} \in \Emo(R)$ and $f \in \E(R)$ be such that $e_{0} \leq f \leq e_{1}$. Then \begin{align*}
	&{R_{R}} \, \cong \, {e_{0}R_{R}} \, \leq \, {fR_{R}} \, \leq \, {e_{1}R_{R}} \, \cong \, {R_{R}} , \\
	&{R_{R}} \, \cong \, {(1-e_{1})R_{R}} \, \leq \, {(1-f)R_{R}} \, \leq \, {(1-e_{0})R_{R}} \, \cong \, {R_{R}},
\end{align*} so ${fR_{R}} \cong {R_{R}} \cong {(1-f)R_{R}}$ by Remark~\ref{remark:summands.of.injective.modules} and Theorem~\ref{theorem:schroeder.bernstein}. Thus, $f \in \Emo(R)$. \end{proof}

The following decomposition lemma will be used to facilitate the application of Lemma~\ref{lemma:matrix.groups} in the proof of Lemma~\ref{lemma:decomposition.into.moieties}.

\begin{lem}\label{lemma:splitting.moieties} Let $R$ be a right self-injective unital ring. For every $e \in \Emo(R)$, there exist $e_{1},e_{2} \in \Emo(R)$ such that $e_{1} \perp e_{2}$ and $e_{1} + e_{2} = e$. \end{lem}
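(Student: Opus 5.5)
The plan is to transport the problem into the corner ring $S \defeq eRe$ and split $e$ there. Since $e \in \Emo(R)$, we have ${eR_{R}} \cong {R_{R}}$. By Remark~\ref{remark:algebraic.equivalence.of.idempotents}, there are $a \in eR$ and $b \in Re$ with $ab = e$ and $ba = 1$. I would use these to produce idempotents below $e$ as follows. Since $(1-e)R_{R} \cong R_{R}$, there are $c \in (1-e)R$ and $d \in R(1-e)$ with $cd = 1-e$ and $dc = 1$. Now put
\begin{displaymath}
	e_{1} \, \defeq \, a e b, \qquad e_{2} \, \defeq \, a(1-e)b .
\end{displaymath}
That is, we conjugate the decomposition $1 = e + (1-e)$ of $R$ into the corner $eRe$ via the isomorphism $R_{R} \cong eR_{R}$.

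First I verify the algebraic identities, using $ba = 1$. We have $e_{1}^{2} = aebaeb = ae^{2}b = e_{1}$, and likewise $e_{2}^{2} = e_{2}$. Also $e_{1}e_{2} = aeba(1-e)b = ae(1-e)b = 0$, and similarly $e_{2}e_{1} = 0$. Finally $e_{1}+e_{2} = ab = e$, and in particular $e_{1},e_{2} \leq e$.

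Next I show that $e_{1}$ and $e_{2}$ are moieties. For $e_{1}$, the elements $ae \in e_{1}Re$ and $eb$ satisfy $(ae)(eb) = e_{1}$ and $(eb)(ae) = ebae = e$. Hence $e_{1}R_{R} \cong eR_{R} \cong R_{R}$ by Remark~\ref{remark:algebraic.equivalence.of.idempotents}. The same computation with $1-e$ in place of $e$ gives $e_{2}R_{R} \cong (1-e)R_{R} \cong R_{R}$. For the complements, note that $1-e_{1} = e_{2} + (1-e) \geq e_{2}$. Thus $R_{R} \cong e_{2}R_{R} \leq (1-e_{1})R_{R} \leq R_{R}$. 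Since $(1-e_{1})R_{R}$ is injective by Remark~\ref{remark:summands.of.injective.modules}, and $R_{R}$ is injective by hypothesis, Theorem~\ref{theorem:schroeder.bernstein} yields $(1-e_{1})R_{R} \cong R_{R}$. The symmetric argument, using $1-e_{2} \geq e_{1}$, gives $(1-e_{2})R_{R} \cong R_{R}$.

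The only point requiring any care is the Schröder--Bernstein step: it needs injectivity of both modules involved, and this is exactly why right self-injectivity is assumed. Everything else is routine manipulation of the identities $ab = e$, $ba = 1$ and $a = ea$, $b = be$.
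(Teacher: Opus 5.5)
Your proof is correct and is essentially the paper's argument. Your $e_{1} = aeb = aabb$ is the paper's $bbaa$ with the roles of $a$ and $b$ swapped, your $e_{2} = a(1-e)b = e-e_{1}$ is the paper's $e_{2}$, and the complements are handled by the same Schr\"oder--Bernstein step; the only difference is that you witness $R_{R} \lesssim (1-e_{i})R_{R}$ via $e_{3-i} \leq 1-e_{i}$, where the paper uses $1-e \leq 1-e_{i}$. The elements $c,d$ you introduce are never used and can be dropped, since you invoke $(1-e)R_{R} \cong R_{R}$ directly by transitivity.
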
 

\begin{proof} Let $e \in \Emo(R)$. In particular, ${R_{R}} \cong {eR_{R}}$ and so Remark~\ref{remark:algebraic.equivalence.of.idempotents} asserts the existence of $a,b \in R$ with $ab=1$ and $ba = e$. Considering $e_{1} \defeq bbaa$, we see that \begin{align*}
	e_{1}e_{1} \, &= \, bbaabbaa \, = \, bbabaa \, = \, bbaa \, = \, e_{1} , \\
	ee_{1} \, &= \, babbaa \, = \, bbaa \, = \, e_{1} , \\
	e_{1}e \, &= \, bbaaba \, = \, bbaa \, = \, e_{1} ,
\end{align*} i.e., $e_{1} \in \E(R)$ and $e_{1} \leq e$. Note that $aabb = ab = 1$ and $bbaa = e_{1}$, whence ${R_{R}} \cong {e_{1}R_{R}}$ by Remark~\ref{remark:algebraic.equivalence.of.idempotents}. From ${R_{R}} \cong {(1-e)R_{R}} \leq {(1-e_{1})R_{R}} \leq {R_{R}}$, we infer using Remark~\ref{remark:summands.of.injective.modules} and Theorem~\ref{theorem:schroeder.bernstein} that ${R_{R}} \cong {(1-e_{1})R_{R}}$. Therefore, $e_{1} \in \Emo(R)$. Furthermore, concerning $e_{2} \defeq e-e_{1} \in \E(R)$, we observe that $e_{1} \perp e_{2} \leq e$ and $e = e_{1}+e_{2}$. Since ${R_{R}} \cong {(1-e)R_{R}}$, according to Remark~\ref{remark:algebraic.equivalence.of.idempotents} we find $a',b' \in R$ such that $a'b'=1$ and $b'a' = 1-e$. In turn, $a'abb' = a'b' = 1$ and \begin{displaymath}
	bb'a'a \, = \, b(1-e)a \, = \, ba - bea \, = \, e - bbaa \, = \, e - e_{1} \, = \, e_{2} ,
\end{displaymath} thus ${R_{R}} \cong {e_{2}R_{R}}$ due to Remark~\ref{remark:algebraic.equivalence.of.idempotents}. Appealing to Remark~\ref{remark:summands.of.injective.modules} and Theorem~\ref{theorem:schroeder.bernstein} again, we deduce from ${R_{R}} \cong {(1-e)R_{R}} \leq {(1-e_{2})R_{R}} \leq {R_{R}}$ that ${R_{R}} \cong {(1-e_{2})R_{R}}$. Consequently, $e_{2} \in \Emo(R)$ as well. \end{proof}

We arrive at the two main insights of the current section.

\begin{lem}\label{lemma:decomposition.into.moieties} Let $R$ be a regular, right self-injective ring, and let $e_{1},e_{2},e_{3} \in \Emo(R)$ be pairwise orthogonal with $e_{1} + e_{2} + e_{3} = 1$. Then \begin{displaymath}
	\GL(R) \, = \, (\Gamma_{R}(e_{1}+e_{2})\Gamma_{R}(e_{2}+e_{3}))^{8} .
\end{displaymath} \end{lem}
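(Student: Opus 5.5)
The plan is to transport Lemma~\ref{lemma:matrix.groups}\ref{lemma:matrix.groups.2} to $R$ through a matrix-unit isomorphism. First, $R$ is purely infinite by Proposition~\ref{proposition:pure.infinity.via.moieties}, since $\Emo(R)\neq\emptyset$. By Lemma~\ref{lemma:conjugacy.of.moieties}\ref{lemma:conjugacy.of.moieties.1}, the three moieties $e_{1},e_{2},e_{3}$ are conjugate under $\GL(R)$. They are pairwise orthogonal and sum to $1$, so Lemma~\ref{lemma:matrix.units} provides a family of matrix units $s\in\M_{3}(R)$ with $s_{ii}=e_{i}$.

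Next I set $S\defeq e_{1}Re_{1}$. Remark~\ref{remark:matrix.unit.isomorphism} gives a ring isomorphism $\Phi\colon\M_{3}(S)\to R$, $a\mapsto\sum_{i,j}s_{i1}a_{ij}s_{1j}$, and hence a group isomorphism $\GL_{3}(S)\to\GL(R)$. Under $\Phi$, the diagonal matrix units $E_{ii}$ go to $e_{i}$. So the corner $(E_{11}+E_{22})\M_{3}(S)(E_{11}+E_{22})$ maps onto $(e_{1}+e_{2})R(e_{1}+e_{2})$, and similarly for the indices $2,3$. Since $\Phi$ is a unital ring isomorphism, the subgroup $G_{1}=\Gamma_{\M_{3}(S)}(E_{11}+E_{22})$ of Lemma~\ref{lemma:matrix.groups} maps onto $\Gamma_{R}(e_{1}+e_{2})$, by the description $\Gamma=\GL\cap(fRf+1-f)$ in Lemma~\ref{lemma:corner.group}. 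Likewise $G_{2}=\Gamma_{\M_{3}(S)}(E_{22}+E_{33})$ maps onto $\Gamma_{R}(e_{2}+e_{3})$. Consequently, it suffices to show that $\GL_{3}(S)=(G_{1}G_{2})^{8}$.

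To apply Lemma~\ref{lemma:matrix.groups}\ref{lemma:matrix.groups.2}, I need $S$ to be a purely infinite, regular, right self-injective ring. This is the only step with real content. Regularity and right self-injectivity pass to corners $eRe$ of a regular, right self-injective ring. For injectivity, $eRe\cong\End(eR_{R})$ by Remark~\ref{remark:cayley}, and $eR_{R}$ is injective by Remark~\ref{remark:summands.of.injective.modules}; one may alternatively cite the standard facts in~\cite{GoodearlBook}. A cleaner route avoids any citation: since $e_{1}R_{R}\cong R_{R}$, Remark~\ref{remark:algebraic.equivalence.of.idempotents} gives $a,b$ with $ab=1$ and $ba=e_{1}$, and then $x\mapsto bxa$ is a ring isomorphism $R\to e_{1}Re_{1}$. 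Its inverse is $y\mapsto ayb$, and multiplicativity holds because $bxa\,bya=bx(ab)ya=bxya$. Thus $S\cong R$, so $S$ inherits all three properties from $R$, pure infiniteness included. With this in hand, Lemma~\ref{lemma:matrix.groups}\ref{lemma:matrix.groups.2} yields $\GL_{3}(S)=(G_{1}G_{2})^{8}$, and applying $\Phi$ finishes the proof.

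The main bookkeeping obstacle is checking that $\Phi$ sends $G_{1}$ exactly onto $\Gamma_{R}(e_{1}+e_{2})$. This follows from $\Phi(E_{ii})=s_{i1}s_{1i}=s_{ii}=e_{i}$ together with unitality of $\Phi$, which give $\Phi(fMf+1-f)=\Phi(f)R\Phi(f)+1-\Phi(f)$ for every idempotent $f$ of $M\defeq\M_{3}(S)$.
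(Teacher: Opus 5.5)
Your proposal is correct and follows the paper's proof essentially step for step. You obtain matrix units with $s_{ii}=e_{i}$ via Lemma~\ref{lemma:conjugacy.of.moieties}\ref{lemma:conjugacy.of.moieties.1} and Lemma~\ref{lemma:matrix.units}, use the isomorphism $\M_{3}(S)\cong R$ with $S=e_{1}Re_{1}\cong R$, and transport Lemma~\ref{lemma:matrix.groups}\ref{lemma:matrix.groups.2} through the images $\Gamma_{R}(e_{1}+e_{2})$ and $\Gamma_{R}(e_{2}+e_{3})$. Your explicit map $x\mapsto bxa$ is a concrete form of the paper's chain $R\cong\End(R_{R})\cong\End(e_{1}R_{R})\cong e_{1}Re_{1}$, and your check of where $G_{1},G_{2}$ go fills in a step the paper only asserts.
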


\begin{proof} Note that $R$ is purely infinite according to Proposition~\ref{proposition:pure.infinity.via.moieties}. By Lemma~\ref{lemma:conjugacy.of.moieties}\ref{lemma:conjugacy.of.moieties.1} and Lemma~\ref{lemma:matrix.units}, we find a family of matrix units $s \in \M_{3}(R)$ for $R$ such that $s_{ii} = e_{i}$ for each $i \in \{ 1,2,3 \}$. Since $R_{R} \cong e_{1}R_{R}$, we see that \begin{displaymath}
	R \, \stackrel{\ref{remark:cayley}\ref{remark:cayley.1}}{\cong} \, \End(R_{R}) \, \cong \, \End(e_{1}R_{R}) \, \stackrel{\ref{remark:cayley}\ref{remark:cayley.1}}{\cong} \, e_{1}Re_{1} \, \eqdef \, S .
\end{displaymath} In particular, $S$ is a purely infinite, regular, right self-injective ring. By Remark~\ref{remark:matrix.unit.isomorphism}, \begin{displaymath}
	\phi \colon \, \M_{3}(S) \, \longrightarrow \, R, \quad a \, \longmapsto \, \sum\nolimits_{i,j=1}^{3} s_{i1}a_{ij}s_{1j}
\end{displaymath} is a (necessarily unital) ring isomorphism, thus restricts to a  group isomorphism from $\GL(\M_{3}(S)) = \GL_{3}(S)$ to $\GL(R)$. Considering \begin{displaymath}
	G_{1} \defeq \left. \! \left\{ \begin{pmatrix} a & 0 \\ 0 & 1 \end{pmatrix} \, \right\vert a \in \GL_{2}(S) \right\}, \quad G_{2} \defeq \left. \! \left\{ \begin{pmatrix} 1 & 0 \\ 0 & a \end{pmatrix} \, \right\vert a \in \GL_{2}(S) \right\} ,
\end{displaymath} we observe that $\phi(G_{1}) = \Gamma_{R}(e_{1}+e_{2})$ and $\phi(G_{2}) = \Gamma_{R}(e_{2}+e_{3})$. Hence, \begin{align*}
	\GL(R) \, &= \, \phi(\GL_{3}(S)) \, \stackrel{\ref{lemma:matrix.groups}\ref{lemma:matrix.groups.2}}{=} \, \phi\!\left((G_{1}G_{2})^{8}\right) \\
		& = \, (\phi(G_{1})\phi(G_{2}))^{8} \, = \, (\Gamma_{R}(e_{1}+e_{2})\Gamma_{R}(e_{2}+e_{3}))^{8} . \qedhere
\end{align*} \end{proof}

\begin{lem}\label{lemma:bounded.generation} Let $R$ be a regular, right self-injective ring. For every $e \in \Emo(R)$, there exists $g \in \I(R)$ such that $1-e \leq geg$ and \begin{displaymath}
	\GL(R) \, = \, (\Gamma_{R}(e)g\Gamma_{R}(e)g)^{8} .
\end{displaymath} \end{lem}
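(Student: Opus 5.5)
The plan is to reduce the statement to Lemma~\ref{lemma:decomposition.into.moieties} by splitting $e$ into two moieties and choosing an involution that swaps one of them with $1-e$. By Lemma~\ref{lemma:splitting.moieties}, we find $e_{1},e_{2} \in \Emo(R)$ with $e_{1} \perp e_{2}$ and $e_{1}+e_{2} = e$. Put $e_{3} \defeq 1-e$; this is a moiety since $e$ is one. Then $e_{1},e_{2},e_{3}$ are pairwise orthogonal moieties summing to $1$, so Lemma~\ref{lemma:decomposition.into.moieties} yields
\begin{displaymath}
	\GL(R) \, = \, (\Gamma_{R}(e_{1}+e_{2})\Gamma_{R}(e_{2}+e_{3}))^{8} \, = \, (\Gamma_{R}(e)\Gamma_{R}(e_{2}+e_{3}))^{8} .
\end{displaymath}

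Next we choose the involution. Since $e_{1} \perp e_{3}$ are moieties, Lemma~\ref{lemma:conjugacy.of.moieties}\ref{lemma:conjugacy.of.moieties.2} provides $g \in \I(\Gamma_{R}(e_{1}+e_{3}))$ with $ge_{1}g = e_{3}$. Because $g$ is an involution, also $ge_{3}g = e_{1}$. Moreover, $g \in \Gamma_{R}(e_{1}+e_{3})$ and $e_{2} \perp e_{1}+e_{3}$, so Lemma~\ref{lemma:corner.group}\ref{lemma:corner.group.2} applied to $e_{2}+\ldots$ (more concretely, writing $g = a + 1-(e_{1}+e_{3})$ with $a \in (e_{1}+e_{3})R(e_{1}+e_{3})$) gives $ge_{2} = e_{2} = e_{2}g$. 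Hence $geg = g(e_{1}+e_{2})g = e_{3}+e_{2}$. In particular $1-e = e_{3} \leq e_{2}+e_{3} = geg$, which is the first assertion.

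Finally, Lemma~\ref{lemma:corner.group}\ref{lemma:corner.group.3} gives $\Gamma_{R}(e_{2}+e_{3}) = \Gamma_{R}(geg) = g\Gamma_{R}(e)g$. Substituting this into the display above yields
\begin{displaymath}
	\GL(R) \, = \, (\Gamma_{R}(e)g\Gamma_{R}(e)g)^{8} ,
\end{displaymath}
as claimed.

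All steps are direct applications of earlier results. The only point needing a small check is that $g$ commutes with $e_{2}$. This follows from $g \in eRe$-type corner form: $g-(1-(e_{1}+e_{3}))$ lies in $(e_{1}+e_{3})R(e_{1}+e_{3})$, which is annihilated on both sides by $e_{2}$, and $1-(e_{1}+e_{3}) = e_{2}$.
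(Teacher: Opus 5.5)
Your proof is correct and follows the paper's argument essentially verbatim. You split $e = e_{1}+e_{2}$ into moieties, set $e_{3} = 1-e$, and take an involution $g \in \Gamma_{R}(e_{1}+e_{3})$ with $ge_{1}g = e_{3}$. You then get $geg = e_{2}+e_{3}$, which gives $g\Gamma_{R}(e)g = \Gamma_{R}(e_{2}+e_{3})$, and you substitute this into $\GL(R) = (\Gamma_{R}(e_{1}+e_{2})\Gamma_{R}(e_{2}+e_{3}))^{8}$. Your direct check that $g$ fixes $e_{2} = 1-(e_{1}+e_{3})$ on both sides is exactly the paper's computation $g(1-(e_{1}+e_{3}))g = 1-(e_{1}+e_{3})$; the garbled parenthetical appeal to the commutation part of the corner-group lemma is unnecessary.
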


\begin{proof} Let $e \in \Emo(R)$. By Lemma~\ref{lemma:splitting.moieties}, there exist $e_{1},e_{2} \in \Emo(R)$ with $e_{1} \perp e_{2}$ and $e_{1} + e_{2} = e$. Note that $e_{3} \defeq 1-e \in \Emo(R)$ and $e_{1} \perp e_{3}$. Hence, by Lemma~\ref{lemma:conjugacy.of.moieties}\ref{lemma:conjugacy.of.moieties.2}, we find $g \in \I(\Gamma_{R}(e_{1}+e_{3}))$ such that $e_{3} = ge_{1}g \leq geg$. It follows that \begin{displaymath}
	geg \, = \, ge_{1}g + ge_{2}g \, = \, e_{3} + g(1-(e_{1}+e_{3}))g \, = \, e_{3} + 1-(e_{1}+e_{3}) \, = \, 1-e_{1} \, = \, e_{2}+e_{3}
\end{displaymath} and, in turn, \begin{displaymath}
	g\Gamma_{R}(e)g \, \stackrel{\ref{lemma:corner.group}\ref{lemma:corner.group.3}}{=} \, \Gamma_{R}(geg) \, = \, \Gamma_{R}(e_{2}+e_{3}) .
\end{displaymath} Since $e_{1} + e_{2} + e_{3} = 1$, we conclude that \begin{displaymath}
	\GL(R) \, \stackrel{\ref{lemma:decomposition.into.moieties}}{=} \, (\Gamma_{R}(e_{1}+e_{2})\Gamma_{R}(e_{2}+e_{3}))^{8} \, = \, (\Gamma_{R}(e)g\Gamma_{R}(e)g)^{8} . \qedhere
\end{displaymath} \end{proof}

We introduce three special classes of units defined by their interaction with moieties.

\begin{definition}\label{definition:moietious.involutions} If $R$ is a unital ring, then we let \begin{align*}
	\IM(R) \, &\defeq \, \{ g \in \I(R) \mid \exists e \in \E(R)\colon \, {R_{R}} \cong {eR_{R}}, \, geg = 1-e \} , \\
	\IME(R) \, &\defeq \, \{ g \in \I(R) \mid \exists e \in \E(R) \colon \, {R_{R}} \cong {eR_{R}} , \, e \perp geg, \, g \in \Gamma_{R}(e+geg) \} , \\
	\GLM(R) \, &\defeq \, \left\{ g \in \GL(R) \left\vert \, \exists e \in \E(R) \colon \, {R_{R}} \cong {eR_{R}}, \, e \perp geg^{-1} \right\} . \right.
\end{align*} \end{definition}

\begin{remark}\label{remark:moietious.units} $\IM(R) \subseteq \IME(R) \subseteq \GLM(R)$ for any unital ring $R$. \end{remark}

\begin{lem}\label{lemma:characterization.of.ime} Let $R$ be a right self-injective unital ring. Then \begin{displaymath}
	\IME(R) \, = \, \bigcup \{ \IM(eRe)+1-e \mid e \in \E(R), \, {R_{R}} \cong {eR_{R}} \} .
\end{displaymath} \end{lem}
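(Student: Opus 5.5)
We will prove the two inclusions separately. The main tool is the translation between the corner ring $fRf$ and $R$, via Lemma~\ref{lemma:corner.group} and Lemma~\ref{lemma:ideal.isomorphism}, applied with $f \defeq e+geg$.

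For ``$\subseteq$'', let $g \in \IME(R)$ and pick $e \in \E(R)$ with ${R_{R}} \cong {eR_{R}}$, $e \perp geg$ and $g \in \Gamma_{R}(f)$, where $f \defeq e+geg$. Since $e \perp geg$, $f$ is an idempotent with $e,geg \leq f$. First we check that ${R_{R}} \cong {fR_{R}}$. The inclusion $eR \leq fR \leq R$ gives ${R_{R}} \cong {eR_{R}} \lesssim {fR_{R}} \lesssim {R_{R}}$, and both modules are injective by Remark~\ref{remark:summands.of.injective.modules}, so Theorem~\ref{theorem:schroeder.bernstein} yields the isomorphism. Next, $g \in \Gamma_{R}(f) = \GL(fRf)+1-f$, so $g = h + 1-f$ with $h \in \GL(fRf)$. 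Since $g^{2}=1$ and the decomposition is orthogonal, $h^{2} = f$, i.e.\ $h \in \I(fRf)$. Also $heh = geg = f-e$, because $e \leq f$ forces $ge = he$ and $eg = eh$. It remains to see ${fRf} \cong {e(fRf)_{fRf}}$ as right $fRf$-modules, where $f$ is the unit of $fRf$. As ${eR_{R}} \cong {R_{R}} \cong {fR_{R}}$ and $e,f \leq f$, Lemma~\ref{lemma:ideal.isomorphism} gives ${e(fRf)} \cong {f(fRf)}$. Hence $h \in \IM(fRf)$, and $g = h+1-f$ lies in the right-hand side.

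For ``$\supseteq$'', let $f \in \E(R)$ with ${R_{R}} \cong {fR_{R}}$ and $h \in \IM(fRf)$, witnessed by $e \in \E(fRf)$ with $e(fRf) \cong fRf$ and $heh = f-e$. Put $g \defeq h+1-f$. Then $g^{2} = h^{2} + 1-f = 1$, so $g \in \I(R) \cap \Gamma_{R}(f)$. From $e \leq f$ we get $geg = heh = f-e$, which is orthogonal to $e$, and $e+geg = f$, so $g \in \Gamma_{R}(e+geg)$. Finally, Lemma~\ref{lemma:ideal.isomorphism} turns $e(fRf) \cong f(fRf)$ into ${eR_{R}} \cong {fR_{R}} \cong {R_{R}}$. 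Therefore $g \in \IME(R)$.

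The only points needing care are the Schröder--Bernstein step showing ${fR_{R}} \cong {R_{R}}$, which is where right self-injectivity is used, and the bookkeeping identities $ge = he$ and $eg = eh$ for $e \leq f$. Both are routine.
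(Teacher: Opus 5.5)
Your proof is correct and follows the paper's argument essentially step for step. It uses the same choice $f \defeq e+geg$, the same Schr\"oder--Bernstein step to obtain ${R_{R}} \cong {fR_{R}}$, and the same appeal to Lemma~\ref{lemma:ideal.isomorphism} to move the condition ${R_{R}} \cong {eR_{R}}$ between $R$ and the corner ring $fRf$. The only difference is cosmetic and occurs in the inclusion ``$\supseteq$''. There you observe directly that $e+geg=f$, so $g=h+1-f\in\Gamma_{R}(e+geg)$ is immediate. The paper, with the roles of $e$ and $f$ interchanged, instead verifies $g\in\Gamma_{R}(f+gfg)$ by explicitly computing $g(1-f-gfg)=1-f-gfg$.
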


\begin{proof} ($\supseteq$): Let $g \in \IM(eRe)+1-e$ for some $e \in \E(R)$ with ${R_{R}} \cong {eR_{R}}$. In particular, $g \in \I(R)$ by Lemma~\ref{lemma:corner.group}. Let $h \in \IM(eRe)$ be such that $g = h+1-e$. Considering the unital ring $S \defeq eRe$, we find some $f \in \E(S)$ such that ${S_{S}} \cong {fS_{S}}$ and $hfh = e-f$. Since ${eS_{S}} = {S_{S}}$, the former implies that \begin{displaymath}
	{R_{R}} \, \cong \, {eR_{R}} \, \stackrel{\ref{lemma:ideal.isomorphism}}{\cong} \, {fR_{R}}
\end{displaymath} and hence ${R_{R}} \cong {fR_{R}}$. Furthermore, \begin{align}
	gf \, &= \, (h+1-e)f \, \stackrel{f \leq e}{=} \, hf, \label{eq:detail.1} \\
	fg \, &= \, f(h+1-e) \, \stackrel{f \leq e}{=} \, fh \, = \, h(e-f) , \label{eq:detail.2} \\
	gfg \, &\stackrel{\eqref{eq:detail.1}}{=} \, hfg \, \stackrel{\eqref{eq:detail.2}}{=} \, hfh \, = \, e-f . \label{eq:detail.3}
\end{align} Using~\eqref{eq:detail.3}, we conclude that $gfg = e-f \perp f$. Moreover, \begin{align*}
	g(1-f-gfg) \, &= \, g-gf-fg \, \stackrel{\eqref{eq:detail.1}+\eqref{eq:detail.2}}{=} \, g - hf - h(e-f) \\
	& = \, g-he \, = \, g-h \, = \, 1-e \, \stackrel{\eqref{eq:detail.3}}{=} \, 1-f-gfg
\end{align*} and thus also \begin{displaymath}
	(1-f-gfg)g \, = \, g-fg-gf \, = \, g(1-f-gfg) \, = \, 1-f-gfg .
\end{displaymath} Hence, $g \in \Gamma_{R}(f+gfg)$ and therefore $g \in \IME(R)$.
	
($\subseteq$): Let $g \in \I(R)$ and $e \in \E(R)$ be such that ${R_{R}} \cong {eR_{R}}$, $e \perp geg$ and $g \in \Gamma_{R}(f)$ for $f \defeq e+geg$. Then \begin{displaymath}
	{R_{R}} \, \cong \, {eR_{R}} \, \leq \, {fR_{R}} \, \leq \, {R_{R}}	
\end{displaymath} and therefore ${R_{R}} \cong {fR_{R}}$ due to Remark~\ref{remark:summands.of.injective.modules} and Theorem~\ref{theorem:schroeder.bernstein}. Consider the unital ring $S \defeq fRf$. From Lemma~\ref{lemma:corner.group}, we know that $h \defeq gf \in \I(S)$. Also, $e,geg \in \E(S)$ and $heh = geg = f-e$. Moreover, using Lemma~\ref{lemma:ideal.isomorphism}, from ${fR_{R}} \cong {R_{R}} \cong {eR_{R}}$ we infer that ${S_{S}} = {fS_{S}} \cong {eS_{S}}$. Hence, $h \in \IM(S)$ and thus \begin{displaymath}
	g \, = \, gf + 1 - f \, = \, h+1-f \, \in \, \IM(S)+1-f \, \subseteq \, \IME(R) . \qedhere
\end{displaymath} \end{proof}

\begin{prop}\label{proposition:purely.infinite.involutions} Let $R$ be a regular, right self-injective ring. Then the following are equivalent. \begin{enumerate}
	\item\label{proposition:purely.infinite.involutions.1} $R$ is purely infinite.
	\item\label{proposition:purely.infinite.involutions.2} $\IM(R) \ne \emptyset$. 
	\item\label{proposition:purely.infinite.involutions.3} $\IME(R) \ne \emptyset$. 
	\item\label{proposition:purely.infinite.involutions.4} $\GLM(R) \ne \emptyset$.
\end{enumerate} \end{prop}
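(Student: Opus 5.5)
We prove the cycle of implications \ref{proposition:purely.infinite.involutions.1}$\Longrightarrow$\ref{proposition:purely.infinite.involutions.2}$\Longrightarrow$\ref{proposition:purely.infinite.involutions.3}$\Longrightarrow$\ref{proposition:purely.infinite.involutions.4}$\Longrightarrow$\ref{proposition:purely.infinite.involutions.1}. The two middle implications are immediate from Remark~\ref{remark:moietious.units}, since $\IM(R) \subseteq \IME(R) \subseteq \GLM(R)$.

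For \ref{proposition:purely.infinite.involutions.1}$\Longrightarrow$\ref{proposition:purely.infinite.involutions.2}, suppose $R$ is purely infinite. By Proposition~\ref{proposition:pure.infinity.via.moieties} there is a moiety $e \in \Emo(R)$, and then $1-e \in \Emo(R)$ as well, with $e \perp 1-e$. Lemma~\ref{lemma:conjugacy.of.moieties}\ref{lemma:conjugacy.of.moieties.2} yields $g \in \I(\Gamma_{R}(e+(1-e))) = \I(\GL(R)) = \I(R)$ with $geg = 1-e$. Since ${R_{R}} \cong {eR_{R}}$, this says precisely that $g \in \IM(R)$.

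For \ref{proposition:purely.infinite.involutions.4}$\Longrightarrow$\ref{proposition:purely.infinite.involutions.1}, let $g \in \GLM(R)$ and pick $e \in \E(R)$ with ${R_{R}} \cong {eR_{R}}$ and $e \perp f \defeq geg^{-1}$. By Lemma~\ref{lemma:conjugacy.of.idempotents}, ${fR_{R}} \cong {eR_{R}} \cong {R_{R}}$. Because $e \perp f$, the sum $e+f$ is an idempotent and ${(e+f)R_{R}} = {eR_{R}} \oplus {fR_{R}}$. Hence
\begin{displaymath}
	{R_{R}^{\oplus 2}} \, \cong \, {eR_{R}} \oplus {fR_{R}} \, = \, {(e+f)R_{R}} \, \leq \, {R_{R}} ,
\end{displaymath}
that is, ${R_{R}^{\oplus 2}} \lesssim {R_{R}}$. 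Theorem~\ref{theorem:purely.infinite} then gives that $R$ is purely infinite.

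The only step requiring care is checking that $eR \cap fR = 0$ and $eR + fR = (e+f)R$ when $e \perp f$, which is a routine computation. Thus no real obstacle arises, since all the substance is contained in Theorem~\ref{theorem:purely.infinite} and Lemma~\ref{lemma:conjugacy.of.moieties}.
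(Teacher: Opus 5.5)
Your proof is correct and follows the paper's argument: (1)$\Rightarrow$(2) is identical, and the only difference is in (4)$\Rightarrow$(1). There you feed $eR \oplus geg^{-1}R = (e+geg^{-1})R$ directly into the criterion $R_{R}^{\oplus 2} \lesssim R_{R}$ of Theorem~\ref{theorem:purely.infinite}, whereas the paper uses $geg^{-1}R \leq (1-e)R$ together with Theorem~\ref{theorem:schroeder.bernstein} to show $e \in \Emo(R)$ and then invokes Proposition~\ref{proposition:pure.infinity.via.moieties}. Your route is marginally shorter, while the paper's yields the by-product that $e$ itself is a moiety, which it reuses in the proof of Lemma~\ref{lemma:generating.moietious.involutions}.
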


\begin{proof} \ref{proposition:purely.infinite.involutions.1}$\Longrightarrow$\ref{proposition:purely.infinite.involutions.2}. Suppose that $R$ is purely infinite. According to Proposition~\ref{proposition:pure.infinity.via.moieties}, we find some $e \in \Emo(R)$. Then also $1-e \in \Emo(R)$, and so Lemma~\ref{lemma:conjugacy.of.moieties}\ref{lemma:conjugacy.of.moieties.2} asserts the existence of $g \in \I(R)$ such that $geg = 1-e$. In turn, $g \in \IM(R)$.
	
\ref{proposition:purely.infinite.involutions.2}$\Longrightarrow$\ref{proposition:purely.infinite.involutions.3}$\Longrightarrow$\ref{proposition:purely.infinite.involutions.4}. In view of Remark~\ref{remark:moietious.units}, these implications are obvious.
	
\ref{proposition:purely.infinite.involutions.4}$\Longrightarrow$\ref{proposition:purely.infinite.involutions.1}. Assume that $\GLM(R) \ne \emptyset$. Then there exist $g \in \GL(R)$ and $e \in \E(R)$ such that ${R_{R}} \cong {eR_{R}}$ and $e \perp geg^{-1}$. Then \begin{displaymath}
	{R_{R}} \, \cong \, {eR_{R}} \, \cong \, {geR_{R}} \, = \, {geg^{-1}R_{R}} \, \leq \, {(1-e)R_{R}} \, \leq \, {R_{R}}
\end{displaymath} and hence ${R_{R}} \cong {(1-e)R_{R}}$ by Remark~\ref{remark:summands.of.injective.modules} and Theorem~\ref{theorem:schroeder.bernstein}, wherefore $e \in \Emo(R)$. Thus, $R$ is purely infinite by Proposition~\ref{proposition:pure.infinity.via.moieties}. \end{proof}

The remainder of this section collects some basic observations concerning conjugacy classes of the most restrictive type of involutions introduced in Definition~\ref{definition:moietious.involutions}. A more comprehensive study, in particular with regard to bounded normal generation, is the subject of Section~\ref{section:bng}.

\begin{lem}\label{lemma:conjugacy.of.moietious.involutions} Let $R$ be a unital ring. For any two elements $g,h \in \IM(R)$, there exists $w \in \GL(R)$ such that $wgw^{-1} = h$. \end{lem}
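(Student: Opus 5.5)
Given $g,h \in \IM(R)$, I will build the conjugating unit from the decompositions of $R_R$ that the involutions induce. Choose $e \in \E(R)$ with $R_R \cong eR_R$ and $geg = 1-e$, and choose $f \in \E(R)$ with $R_R \cong fR_R$ and $hfh = 1-f$.

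First observe that $e$ and $f$ are moieties. Indeed, $g(1-e)g = 1-geg = e$. Since left multiplication by $g$ is an automorphism of $R_R$, it maps $eR$ isomorphically onto $geR = gegR = (1-e)R$. Hence $(1-e)R_R \cong eR_R \cong R_R$, and likewise for $f$. In particular $e,f \in \Emo(R)$. This requires no self-injectivity, which matches the hypothesis "unital ring".

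Next, pick $a \in eRf$ and $b \in fRe$ with $ab = e$ and $ba = f$, by Remark~\ref{remark:algebraic.equivalence.of.idempotents}, using $eR_R \cong R_R \cong fR_R$. I then transport this isomorphism to the complements by means of the involutions. Set $a' \defeq gah \in (1-e)R(1-f)$ and $b' \defeq hbg \in (1-f)R(1-e)$. Then $a'b' = gabg = geg = 1-e$ and $b'a' = hbah = hfh = 1-f$. Define
\begin{displaymath}
	w \, \defeq \, ba + b'a' \text{-type combination, concretely } w \defeq be + b'(1-e) \, = \, b + hbg ,
\end{displaymath}
as in the proof of Lemma~\ref{lemma:conjugacy.of.idempotents}. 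That lemma shows that $w$ is a unit with inverse $a + a' = a + gah$, and that $wew^{-1} = f$.

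It remains to check that $wgw^{-1} = h$, i.e.\ $wg = hw$. Compute $wg = bg + hbgg = bg + hb$ and $hw = hb + hhbg = hb + bg$. These agree, so $wgw^{-1} = h$. The only subtle point is choosing $a',b'$ as the $g,h$-conjugates of $a,b$ rather than arbitrary isomorphisms of the complements. Once that choice is made, the intertwining relation $wg = hw$ is immediate. I expect no genuine obstacle beyond confirming that $a' \in (1-e)R(1-f)$. This holds because $g a h = g e a f h = (1-e) g a h (1-f)$, using $ge = (1-e)g$ and $fh = h(1-f)$.
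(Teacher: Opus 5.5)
Your proof is correct, and it reaches what is in substance the paper's conjugator by a more direct route.

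The paper proceeds in two steps. It first invokes Lemma~\ref{lemma:conjugacy.of.moieties}\ref{lemma:conjugacy.of.moieties.1} to get an arbitrary unit $u$ with $ueu^{-1}=f$. This reduces the problem to two involutions, $g$ and $u^{-1}hu$, that both swap $e$ and $1-e$. It then corrects inside the corner group $\Gamma_{R}(e)$ by $v \defeq u^{-1}huge+1-e$, and checks $vgv^{-1}=u^{-1}hu$ using the identity $u^{-1}hue=(1-e)u^{-1}hu$. Unwinding with $(1-e)g=ge$ gives $w=uv=c+hcg$, where $c\defeq u(1-e)\in(1-f)R(1-e)$. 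This is exactly your formula $b+hbg$, with the roles of $e$ and $1-e$ (and of $f$ and $1-f$) interchanged.

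Your version writes this element down at once. It needs only one partial isomorphism $eR_R\cong fR_R$ and obtains the complementary one by transport along $g$ and $h$. It gives the inverse $a+gah$ explicitly and makes $wg=hw$ a one-line check. The paper's version instead reuses its conjugacy-of-moieties and corner-group machinery.

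Three small remarks:
\begin{itemize}
\item Delete the fragment ``$ba+b'a'$-type combination'' in your display. That sum equals $f+(1-f)=1$; your actual definition is $w=b+hbg$.
\item The identity $w=be+b'(1-e)=b+b'$ uses $b'(1-e)=b'$. This holds because $hbg(1-e)=hbeg=hbg$, the mirror of your check for $a'$.
\item Your first paragraph showing $e,f\in\Emo(R)$ is never used. Your construction needs only $eR_R\cong fR_R$, which follows directly from $R_R\cong eR_R$ and $R_R\cong fR_R$. It is the paper's route that needs the moiety property, in order to apply Lemma~\ref{lemma:conjugacy.of.moieties}.
\end{itemize}
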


\begin{proof} Let $g,h \in \IM(R)$. Then there exist $e,f \in \Emo(R)$ such that $geg = 1-e$ and $hfh = 1-f$. By Lemma~\ref{lemma:conjugacy.of.moieties}\ref{lemma:conjugacy.of.moieties.1}, there exists $u \in \GL(R)$ such that $ueu^{-1} = f$. Note that $u^{-1}hu \in \I(R)$ and \begin{align}\label{eq:transformer}
	u^{-1}hue \, &=\, u^{-1}hfu \, = \, u^{-1}(1-f)hu \, = \, \left(u^{-1}-u^{-1}f\right)\!hu \nonumber \\
		&= \, \left(u^{-1}-eu^{-1}\right)\!hu \, = \, (1-e)u^{-1}hu .
\end{align} In turn, \begin{displaymath}
	u^{-1}huge \, = \, u^{-1}hu(1-e)g \, \stackrel{\eqref{eq:transformer}}{=} \, eu^{-1}hug
\end{displaymath} and therefore, by Remark~\ref{remark:cayley}\ref{remark:cayley.2} and Lemma~\ref{lemma:corner.group}, \begin{displaymath}
	v \, \defeq \, u^{-1}huge + 1-e \, \in \, \Gamma_{R}(e) \, \subseteq \, \GL(R)
\end{displaymath} with $v^{-1} = gu^{-1}hue + 1-e$. We conclude that \begin{align*}
	vgv^{-1} \, &= \, \left( u^{-1}huge + 1-e \right) \! g \! \left( gu^{-1}hue + 1-e\right) \\
		& = \, \left( u^{-1}huge + 1-e \right) \! \left( u^{-1}hue + g(1-e) \right) \\
		& \stackrel{\eqref{eq:transformer}}{=} \, \left( u^{-1}huge + 1-e \right) \! \left( (1-e)u^{-1}hu + eg \right) \\
		& = \, u^{-1}hugeg + (1-e)u^{-1}hu \, \stackrel{\eqref{eq:transformer}}{=} \, u^{-1}hugg(1-e) + u^{-1}hue \\
		& = \, u^{-1}hu(1-e) + u^{-1}hue \, = \, u^{-1}hu
\end{align*} and hence, for $w \defeq uv \in \GL(R)$, \begin{displaymath}
	wgw^{-1} \, = \, uvgv^{-1}u^{-1} \, = \, uu^{-1}huu^{-1} \, = \, h . \qedhere
\end{displaymath} \end{proof}

\begin{lem}\label{lemma:splits.over.moieties} Let $R$ be a unital ring. Moreover, let $e \in \E(R)$ and $g \in \I(R)$ be such that $geg = 1-e$. Then \begin{displaymath}
	\left. \left\{ ae + ga^{-1}g(1-e) \, \right\vert a \in \GL(R), \, ae = ea \right\} \, \subseteq \, \Cl_{\GL(R)}(g)^{2} .
\end{displaymath} \end{lem}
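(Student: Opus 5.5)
The plan is to write the element as a commutator-like product $(bgb^{-1})\,g$ for a suitable unit $b \in \Gamma_{R}(e)$. Such a product obviously lies in $\Cl_{\GL(R)}(g)^{2}$. So fix $a \in \GL(R)$ with $ae = ea$, and put
\begin{displaymath}
	b \, \defeq \, ae + 1-e .
\end{displaymath}
By Remark~\ref{remark:cayley}\ref{remark:cayley.2}, $ae = eae$ is a unit of $eRe$ with inverse $a^{-1}e$, because $a^{-1}$ also commutes with $e$. Lemma~\ref{lemma:corner.group} then gives $b \in \Gamma_{R}(e) \subseteq \GL(R)$ with $b^{-1} = a^{-1}e + 1-e$.

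Next I compute $gb^{-1}g$, using $g^{2} = 1$, $geg = 1-e$ and hence $g(1-e)g = e$. This gives
\begin{displaymath}
	gb^{-1}g \, = \, ga^{-1}eg + g(1-e)g \, = \, ga^{-1}g\,(geg) + e \, = \, ga^{-1}g(1-e) + e .
\end{displaymath}
I also note that $ga^{-1}g$ commutes with $geg = 1-e$, since $a^{-1}$ commutes with $e$. Consequently $ga^{-1}g(1-e) = (1-e)ga^{-1}g(1-e)$, so this term lives in the corner $(1-e)R(1-e)$.

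Finally I multiply out
\begin{displaymath}
	bgb^{-1}g \, = \, (ae + 1-e)\bigl(e + ga^{-1}g(1-e)\bigr) .
\end{displaymath}
The cross terms vanish: $ae\,ga^{-1}g(1-e) = ae(1-e)ga^{-1}g(1-e) = 0$ and $(1-e)e = 0$. What remains is $ae + (1-e)ga^{-1}g(1-e) = ae + ga^{-1}g(1-e)$. Hence the given element equals $(bgb^{-1})\,g \in \Cl_{\GL(R)}(g)^{2}$.

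The only delicate point is the bookkeeping of which corner each term lives in. This is exactly where the commutation of $a$ with $e$, transported by $g$ to commutation of $ga^{-1}g$ with $1-e$, is used.
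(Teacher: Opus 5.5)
Your proof is correct and follows the paper's argument. The paper also takes $b = ae + 1-e \in \Gamma_{R}(e)$ and shows that the given element equals $bgb^{-1}g$; the only difference is bookkeeping, since the paper evaluates $bgb^{-1}g$ against $e$ and $1-e$ separately, whereas you first compute $gb^{-1}g = e + ga^{-1}g(1-e)$ and then multiply by $b$.
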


\begin{proof} If $a \in \GL(R)$ with $ae = ea$, then $b \defeq ae + 1-e \in \Gamma_{R}(e)$ and $b^{-1} = a^{-1}e+1-e$ by Remark~\ref{remark:cayley}\ref{remark:cayley.2} and Lemma~\ref{lemma:corner.group}, thus \begin{displaymath}
	bgb^{-1}ge \, = \, bgb^{-1}(1-e)g \, = \, bg(1-e)g \, = \, begg \, = \, ae
\end{displaymath} and \begin{align*}
	bgb^{-1}g(1-e) \, &= \, bgb^{-1}eg \, = \, bga^{-1}eg \, = \, bgea^{-1}g \, = \, b(1-e)ga^{-1}g \\
		& = \, (1-e)ga^{-1}g \, = \, gea^{-1}g \, = \, ga^{-1}eg \, = \, ga^{-1}g(1-e) ,
\end{align*} wherefore \begin{displaymath}
	ae + ga^{-1}g(1-e) \, = \, bgb^{-1}ge + bgb^{-1}g(1-e) \, = \, bgb^{-1}g \, \in \, \Cl_{\GL(R)}(g)^{2} . \qedhere
\end{displaymath}\end{proof}

\begin{lem}\label{lemma:splits.over.moieties.addon} Let $R$ be a right self-injective unital ring, let $e_{1},e_{2},e_{3} \in \Emo(R)$ be such that $(e_{1},e_{2},e_{3})\perp$ and $e_{1} + e_{2} + e_{3} = 1$, and let $g \in \I(R)$ be such that $ge_{1}g = e_{2}$. Then there exists $h \in \IM(R)$ such that \begin{displaymath}
	\left. \left\{ ae_{1} + ga^{-1}ge_{2} + e_{3} \, \right\vert a \in \GL(R), \, ae_{1} = e_{1}a \right\} \, \subseteq \, \Cl_{\GL(R)}(h)^{2} .
\end{displaymath} \end{lem}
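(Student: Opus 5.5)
The plan is to reduce to Lemma~\ref{lemma:splits.over.moieties}. To do so, I will build a single involution $h \in \IM(R)$ that acts as $g$ on the corner $e_{1}+e_{2}$ and as an auxiliary involution on $e_{3}$. That auxiliary involution swaps two halves of $e_{3}$.

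First, by Lemma~\ref{lemma:splitting.moieties} I write $e_{3} = f_{1} + f_{2}$ with $f_{1},f_{2} \in \Emo(R)$ and $f_{1} \perp f_{2}$. By Lemma~\ref{lemma:conjugacy.of.moieties}\ref{lemma:conjugacy.of.moieties.2} there is $k \in \I(\Gamma_{R}(e_{3}))$ with $kf_{1}k = f_{2}$. Next I check that $g$ commutes with $e_{1}+e_{2}$ and with $e_{3}$. Indeed, $ge_{1}g = e_{2}$ gives $ge_{2}g = e_{1}$, so $g(e_{1}+e_{2})g = e_{1}+e_{2}$, and hence also $ge_{3}g = e_{3}$. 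Then I define
\begin{displaymath}
	h \, \defeq \, g(e_{1}+e_{2}) + ke_{3} .
\end{displaymath}
This is an involution, since the two summands live in orthogonal corners that both $g$ and $k$ respect.

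Put $e \defeq e_{1} + f_{1}$. A direct computation gives $heh = e_{2} + f_{2} = 1-e$. Moreover $e_{1} \leq e \leq 1-e_{2}$, and both $e_{1}$ and $1-e_{2}$ are moieties. So $e \in \Emo(R)$ by Lemma~\ref{lemma:schroeder.bernstein.moieties}, and in particular ${R_{R}} \cong {eR_{R}}$. Thus $h \in \IM(R)$.

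Now let $a \in \GL(R)$ with $ae_{1} = e_{1}a$, and set $a' \defeq ae_{1} + 1-e_{1}$. This lies in $\Gamma_{R}(e_{1})$ and satisfies $a'^{-1} = a^{-1}e_{1} + 1 - e_{1}$. Since $a'$ acts as the identity on $1-e_{1} \geq f_{1}$, it commutes with $e$. Lemma~\ref{lemma:splits.over.moieties} therefore yields
\begin{displaymath}
	a'e + ha'^{-1}h(1-e) \, \in \, \Cl_{\GL(R)}(h)^{2} .
\end{displaymath}

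It remains to compute this element, which is the only genuinely delicate step. Clearly $a'e = ae_{1} + f_{1}$. For the second term I proceed in three stages:
\begin{itemize}
	\item[---\,] First, $h(e_{2}+f_{2}) = e_{1}g + f_{1}k$.
	\item[---\,] Applying $a'^{-1}$ on the left gives $a^{-1}e_{1}g + f_{1}k$.
	\item[---\,] Multiplying by $h$ on the left, I use that $a^{-1}$ commutes with $e_{1}$, so that $e_{3}a^{-1}e_{1} = 0$ and $(e_{1}+e_{2})a^{-1}e_{1} = a^{-1}e_{1}$. This gives $ga^{-1}e_{1}g + kf_{1}k = ga^{-1}ge_{2} + f_{2}$.
\end{itemize}
Summing the two terms gives $ae_{1} + ga^{-1}ge_{2} + f_{1} + f_{2} = ae_{1} + ga^{-1}ge_{2} + e_{3}$, which is exactly the element required in the statement.
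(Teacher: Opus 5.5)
Your proof is correct and follows the paper's route. You split $e_{3}$ into two orthogonal moieties, glue $g$ on $e_{1}+e_{2}$ with a swapping involution on $e_{3}$ to obtain $h \in \IM(R)$ witnessed by the moiety $e_{1}+f_{1}$ (via Lemma~\ref{lemma:schroeder.bernstein.moieties}), and apply Lemma~\ref{lemma:splits.over.moieties} to $ae_{1}+1-e_{1}$. The only cosmetic difference is that you take the auxiliary involution in $\I(\Gamma_{R}(e_{3}))$, while the paper only uses $g'e_{1}'g' = e_{2}'$; both suffice.
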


\begin{proof} By Lemma~\ref{lemma:splitting.moieties}, we find $e_{1}',e_{2}' \in \Emo(R)$ such that $e_{1}' \perp e_{2}'$ and $e_{3} = e_{1}' + e_{2}'$. Then $(e_{1},e_{2},e_{1}',e_{2}')\perp$ and $e_{1} + e_{2} + e_{1}' + e_{2}' = 1$. Consider $f \defeq e_{1}+e_{1}' \in \E(R)$. Since $e_{1} \leq f \leq 1-e_{2}$, we see that $f \in \Emo(R)$ by Lemma~\ref{lemma:schroeder.bernstein.moieties}. Furthermore, according to Lemma~\ref{lemma:conjugacy.of.moieties}\ref{lemma:conjugacy.of.moieties.2}, there exists $g' \in \I(R)$ such that $g'e_{1}'g' = e_{2}'$. Define \begin{displaymath}
	h \, \defeq \, g(e_{1}+e_{2}) + g'(e_{1}'+e_{2}') \, = \, ge_{1} + ge_{2} + g'e_{1}' + g'e_{2}' \, = \, e_{2}g + e_{1}g + e_{2}'g' + e_{1}'g'	
\end{displaymath} and observe that \begin{align*}
	h^{2} \, &= \, (ge_{1} + ge_{2} + g'e_{1}' + g'e_{2}')(e_{2}g + e_{1}g + e_{2}'g' + e_{1}'g') \\
	& = \, ge_{1}g + ge_{2}g + g'e_{1}'g' + g'e_{2}'g' \, = \, e_{2}+e_{1}+e_{2}'+e_{1}' \, = \, 1,
\end{align*} thus $h \in \I(R)$. Moreover, \begin{align*}
	hfh \, &= \, (ge_{1} + ge_{2} + g'e_{1}' + g'e_{2}')(e_{1}+e_{1}')(e_{2}g + e_{1}g + e_{2}'g' + e_{1}'g') \\
	& = \, ge_{1}g + g'e_{1}'g' \, = \, e_{2}+e_{2}' \, = \, 1-f 
\end{align*} and hence $h \in \IM(R)$. Now, if $a \in \GL(R)$ and $ae_{1} = e_{1}a$, then \begin{displaymath}
	b \, \defeq \, ae_{1} + 1-e_{1} \, \in \, \Gamma_{R}(e_{1})
\end{displaymath} and $b^{-1} = a^{-1}e_{1}+1-e_{1}$ by Remark~\ref{remark:cayley}\ref{remark:cayley.2} and Lemma~\ref{lemma:corner.group}, moreover \begin{align*}
	&e_{1}' \, = \, (1-e_{1})e_{1}' \, = \, b(1-e_{1})e_{1}' \, = \, be_{1}', \\
	&ga^{-1}ge_{2} \, = \, ga^{-1}e_{1}g \, = \, ge_{1}b^{-1}g \, = \, he_{1}b^{-1}g \, = \, hb^{-1}e_{1}g \, = \, hb^{-1}ge_{2} \, = \, hb^{-1}he_{2}, \\
	&e_{2}' \, = \, g'e_{1}'g' \, = \, he_{1}'g' \, = \, hb^{-1}e_{1}'g' \, = \, hb^{-1}g'e_{2}' \, = \, hb^{-1}he_{2}',
\end{align*} and consequently \begin{align*}
	ae_{1} + ga^{-1}ge_{2} + e_{3} \, &= \, ae_{1} + ga^{-1}ge_{2} + e_{1}' + e_{2}' \, = \, be_{1} + be_{1}' + hb^{-1}he_{2} + hb^{-1}he_{2}' \\
	& = \, bf + hb^{-1}h(1-f) \, \stackrel{\ref{lemma:splits.over.moieties}}{\in} \, \Cl_{\GL(R)}(h)^{2} . \qedhere
\end{align*} \end{proof}

\section{Essential embeddings}\label{section:essential.embeddings}

The main objective of this section is the proof of (the slightly technical) Lemma~\ref{lemma:essential.embedding}, which in combination with Lemma~\ref{lemma:galvin} immediately leads to Theorem~\ref{theorem:universal.sequence} and which moreover serves as the basis for a number of structural results presented in subsequent sections. We start off with two basic preparatory lemmata. Given a unital ring $R$, let us consider the set \begin{displaymath}
	\lat(R) \, \defeq \, \{ aR \mid a \in R \}
\end{displaymath} of principal right ideals of $R$.

\begin{lem}\label{lemma:essential.inclusion} Let $R$ be a regular ring. \begin{enumerate}
	\item\label{lemma:essential.inclusion.1} If $I,J \in \lat(R)$ and $I \less J_{R}$, then $I = J$.
	\item\label{lemma:essential.inclusion.2} Let $a,b \in R$. If there exists $I \less R_{R}$ with $ax = bx$ for all $x \in I$, then $a = b$.
\end{enumerate} \end{lem}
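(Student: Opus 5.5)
The plan is to use the description of principal right ideals in a regular ring as direct summands (Remark~\ref{remark:regular}): every $I \in \lat(R)$ is of the form $I = eR$ for some $e \in \E(R)$, and then $R_{R} = eR \oplus (1-e)R$. Both items then reduce to the observation that a direct summand can only be essential if its complement vanishes.

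For~\ref{lemma:essential.inclusion.1}, let $I,J \in \lat(R)$ with $I \less J_{R}$. Write $I = eR$ with $e \in \E(R)$. Since $I \subseteq J$, the modular law applied to $R_{R} = eR \oplus (1-e)R$ gives
\begin{displaymath}
	J \, = \, eR \oplus \bigl((1-e)R \cap J\bigr) ,
\end{displaymath}
because every $x \in J$ satisfies $x = ex + (1-e)x$ with $ex \in I \subseteq J$, whence $(1-e)x = x - ex \in J$. Now $K \defeq (1-e)R \cap J$ is a submodule of $J$ satisfying $K \cap I = 0$. Essentiality of $I$ in $J$ therefore forces $K = 0$, and hence $J = I$. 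Note that only the idempotent generator of $I$ is needed here; $J$ need not be written as $fR$.

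For~\ref{lemma:essential.inclusion.2}, put $c \defeq a-b$, so that $cI = 0$ for some $I \less R_{R}$. By Remark~\ref{remark:regular}, the left ideal $Rc$ equals $Rf$ for some $f \in \E(R)$. Then $cx = 0$ holds if and only if $fx = 0$: indeed $f \in Rc$ and $c \in Rf$. Hence the right annihilator of $c$ is
\begin{displaymath}
	\{ x \in R \mid cx = 0 \} \, = \, \{ x \in R \mid fx = 0 \} \, = \, (1-f)R .
\end{displaymath}
This annihilator contains $I$, so $(1-f)R$ is essential in $R_{R}$, being a submodule containing an essential one. It is, however, a direct summand with complement $fR$, and $fR \cap (1-f)R = 0$. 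Essentiality therefore gives $fR = 0$, so $f = 0$, and consequently $c \in Rf = 0$, i.e.\ $a = b$.

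An alternative route for~\ref{lemma:essential.inclusion.2} would apply~\ref{lemma:essential.inclusion.1} to $I' = (1-f)R$ and $J = R$, which again yields $(1-f)R = R$. There is no real obstacle in either item. The only point requiring care is the choice of the left-ideal version of regularity in~\ref{lemma:essential.inclusion.2}, since that is what identifies the right annihilator of $c$ as a principal right ideal generated by an idempotent.
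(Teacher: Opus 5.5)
Your proof is correct. Part~(2) matches the paper's argument. The paper also writes $R(a-b)=Re$, identifies $\{x \in R \mid (a-b)x=0\}$ with $(1-e)R$, and then applies~(1) with $J=R$; that is your ``alternative route'', and your direct version simply inlines this step.

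The real difference is in~(1). The paper gets a complement of $I$ inside $J$ by citing von Neumann: $\lat(R)$ is a complemented modular lattice, hence relatively complemented, so there is some $I' \in \lat(R)$ with $I \oplus I' = J$. You instead build the complement by hand as $(1-e)R \cap J$, using an idempotent generator $e$ of $I$ and the modular law.

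What each route buys:
\begin{itemize}
\item Your route is self-contained and slightly more general. It never uses that $J$ is principal, so it shows that if $I \in \lat(R)$ and $I \less J_{R}$ for an arbitrary submodule $J \leq R_{R}$, then $I = J$.
\item The paper's version is shorter but relies on an external citation. It also genuinely needs $J \in \lat(R)$, since the complement it produces lives in the lattice $\lat(R)$.
\end{itemize}
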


\begin{proof} \ref{lemma:essential.inclusion.1} Let $I,J \in \lat(R)$ with $I \less J_{R}$. Thanks to~\cite[II.II, Theorem~2.4, p.~72]{VonNeumannBook} and~\cite[I.I, Theorem~1.3, p.~5]{VonNeumannBook} (see also~\cite[VIII.1, Theorem~1]{BirkhoffBook}), there exists $I' \in \lat(R)$ such that $I \oplus {I'} = J$. Since $I \less J_{R}$, it follows that $I' = \{ 0 \}$ and hence $I = J$.

\ref{lemma:essential.inclusion.2} By Remark~\ref{remark:regular}, we find $e \in \E(R)$ such that $R(a-b)= Re$. In turn, \begin{align*}
	J \, &\defeq \, \{ x \in R \mid ax = bx \} \, = \, \{ x \in R \mid (a-b)x = 0\} \, = \, \{ x \in R \mid ex = 0 \} \\
	& = \, \{ x \in R \mid (1-e)x = x \} \, = \, (1-e)R \, \in \, \lat(R).
\end{align*} Thus, if $J \less {R_{R}}$, then $J=R$ by~\ref{lemma:essential.inclusion.1}. This gives the desired conclusion. \end{proof}

\begin{lem}\label{lemma:essential.new} Let $R$ be a regular, right self-injective ring and let $I \less R_{R}$. Then, for each $a \in \End(I_{R})$, there exists a unique $\bar{a} \in R$ such that \begin{displaymath}
	\forall x \in I \colon \qquad \bar{a}x \, = \, a(x) .
\end{displaymath} The resulting map $\phi \colon \End(I_{R}) \to R, \, a \mapsto \bar{a}$ is a unital ring embedding. \end{lem}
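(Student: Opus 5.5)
The plan is to extend an endomorphism of $I$ to one of $R_R$ using self-injectivity, represent that endomorphism by left multiplication, and get uniqueness and the ring axioms from Lemma~\ref{lemma:essential.inclusion}\ref{lemma:essential.inclusion.2}.

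\emph{Existence.} Let $a \in \End(I_{R})$. Regard $a$ as a homomorphism $I \to R_{R}$ by composing with the inclusion $I \leq R_{R}$. Since $R_{R}$ is injective, this map extends to some $\psi \in \End(R_{R})$. By Remark~\ref{remark:cayley}\ref{remark:cayley.1}, applied with $e = 1$, $\psi$ is left multiplication by $\bar{a} \defeq \psi(1)$. Hence $\bar{a}x = \psi(x) = a(x)$ for all $x \in I$.

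\emph{Uniqueness.} Suppose $\bar{a}$ and $\bar{a}'$ both satisfy the defining property. Then $\bar{a}x = \bar{a}'x$ for all $x \in I$. Because $I \less R_{R}$, Lemma~\ref{lemma:essential.inclusion}\ref{lemma:essential.inclusion.2} gives $\bar{a} = \bar{a}'$. So $\phi$ is well defined.

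\emph{Ring homomorphism.} Take $a,b \in \End(I_{R})$. For $x \in I$ we have $(\bar{a}+\bar{b})x = a(x)+b(x)$. Also $b(x) \in I$, so $\bar{a}\bar{b}x = \bar{a}\,b(x) = a(b(x))$. Finally $1 \cdot x = \mathrm{id}_{I}(x)$. By uniqueness, $\phi(a+b) = \phi(a)+\phi(b)$, $\phi(a \circ b) = \phi(a)\phi(b)$, and $\phi(\mathrm{id}_{I}) = 1$.

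\emph{Injectivity.} If $\bar{a} = 0$, then $a(x) = \bar{a}x = 0$ for every $x \in I$, so $a = 0$.

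None of these steps is a real obstacle. The only substantive input is uniqueness, and that is exactly Lemma~\ref{lemma:essential.inclusion}\ref{lemma:essential.inclusion.2}, which is where regularity enters. One point needs care in the multiplicativity step: $\bar{a}$ may only be applied to $b(x)$ through $a$ because $b(x)$ lies in $I$, which holds since $b$ maps $I$ into $I$.
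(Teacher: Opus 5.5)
Your proof is correct and follows the paper's argument essentially step for step. Both extend $a$ to an endomorphism of $R_{R}$ by self-injectivity, represent it as left multiplication via Remark~\ref{remark:cayley}\ref{remark:cayley.1}, get uniqueness from Lemma~\ref{lemma:essential.inclusion}\ref{lemma:essential.inclusion.2}, and check additivity, multiplicativity (using $b(x) \in I$), unitality and injectivity by evaluating on elements of $I$.
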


\begin{proof} Let $a \in \End(I_{R})$. Since $R_{R}$ is self-injective, there exists $\phi_{a} \in \End(R_{R})$ such that ${\phi_{a}}(x) = a(x)$ for all $x \in I$. According to Remark~\ref{remark:cayley}\ref{remark:cayley.1}, we find $\bar{a} \in R$ such that \begin{displaymath}
	\forall x \in R \colon \qquad \phi_{a}(x) \, = \, \bar{a}x .
\end{displaymath} We conclude that $\bar{a}x = \phi_{a}(x) = a(x)$ for every $x \in I$. Moreover, if $b \in R$ is such that $bx = a(x)$ for each $x \in I$, then $\bar{a}x = a(x) = bx$ for every $x \in I$, whence $\bar{a} = b$ by Lemma~\ref{lemma:essential.inclusion}\ref{lemma:essential.inclusion.2}. This proves the first part.
	
For the second part, note that $1_{R}x = x = 1_{\End(I_{R})}(x)$ for all $x \in I$, so $\overline{1_{\End(I_{R})}} = 1_{R}$. Furthermore, if $a,b \in \End(I_{R})$, then \begin{align*}
	&(\bar{a}+\bar{b})x \, = \, \bar{a}x + \bar{b}x \, = \, a(x) + b(x) \, = \, (a+b)(x) , \\
	&\bar{a}\bar{b}x \, = \, \bar{a}b(x) \, = \, a(b(x)) \, = \, (ab)(x) 
\end{align*} for all $x \in I$, so that $\overline{a+b} = \bar{a}+\bar{b}$ and $\overline{ab} = \bar{a}\bar{b}$. Finally, if $a \in \End(I_{R})$ and $\bar{a} = 0$, then $a(x) = \bar{a}x = 0$ for all $x \in I$, hence $a=0$. \end{proof}

We turn to the central insight of this section.

\begin{lem}\label{lemma:essential.embedding} Let $R$ be a regular, right self-injective ring, and define $G \defeq \GL(R)$. Let $X$ be a set with $\vert X \vert > 1$, let $M_{R} \defeq R_{R}^{\oplus X}$, and let $\eta \colon M_{R} \to R_{R}$ be an embedding such that $\eta(M) \less R_{R}$. Consider the unital ring embedding\footnote{cf.~Lemma~\ref{lemma:essential.new}} \begin{displaymath}
	\phi \colon \, \End(M_{R}) \, \longrightarrow \, R, \quad a \, \longmapsto \, \overline{\eta \circ a \circ {\eta^{-1}}} .
\end{displaymath} For each $Y \subseteq X$, consider $p_{Y} \in \End(M_{R})$ defined by \begin{displaymath}
	p_{Y}(z)(x) \, \defeq \, \begin{cases}
			\, z(x) & \text{if } x \in Y , \\
			\, 0 & \text{otherwise}
		\end{cases} \qquad (z \in M, \, x \in X)
\end{displaymath} and let $e_{Y} \defeq \phi(p_{Y})$. Consider the homomorphism\footnote{Note that $\psi$ is an embedding if and only if $R$ is non-zero.} $\psi \colon G(X) \to \GL(M_{R})$ given by \begin{displaymath}
	\psi(g)z \, \defeq \, {g_{\bullet}} \cdot {\left(z \circ {g_{\circ}^{-1}}\right)} \qquad (g \in G(X), \, z \in M) 
\end{displaymath} and the homomorphism $\iota \defeq {\phi\vert_{\GL(M_{R})}} \circ \psi \colon G(X) \to G$. Then the following hold. \begin{enumerate}
	\item\label{lemma:essential.embedding.1} $e_{Y} \in \Emo(R)$ for every proper, non-empty subset $Y \subseteq X$.
	\item\label{lemma:essential.embedding.3} $\iota(G(X,Y)) \subseteq \Gamma_{R}(e_{Y})$ for every $Y \subseteq X$.
	\item\label{lemma:essential.embedding.2} $\iota(G(X,\{x\})) = \Gamma_{R}(e_{\{x\}})$ for every $x \in X$.
	\item\label{lemma:essential.embedding.4} If $g \in \Sym(X)\setminus \{ 1\}$, then $\iota(1,g) \in \GLM(R)$.
	\item\label{lemma:essential.embedding.6} If $u \in \I(\Sym(X))\setminus \{ 1 \}$, then $\iota(1,u) \in \IME(R)$ and $\iota(G(u)) \subseteq \Cl_{G}(h)^{2}$ for every $h \in \IM(R)$.
	\item\label{lemma:essential.embedding.5} For every $x \in X$, there exists $h \in \I(G)$ such that \begin{displaymath}
							\qquad G \, = \, (\iota(G(X,\{x\}))h\iota(G(X,\{x\}))h)^{8}
					\end{displaymath} and $h\iota(G(X,X\setminus \{x\}))h \subseteq \iota(G(X,\{x\}))$.
	\item\label{lemma:essential.embedding.7} If $X$ is infinite, then $\iota(G(X,\{x\})) \subseteq \Cl_{G}(h)^{4}$ for all $x \in X$ and $h \in \IM(R)$.
\end{enumerate} \end{lem}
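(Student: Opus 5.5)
The plan is to transport everything through the unital ring embedding $\phi$ of Lemma~\ref{lemma:essential.new}. The basic observation is that $(p_{Y})_{Y}$ behaves like a Boolean algebra of idempotents in $\End(M_{R})$: $p_{Y}p_{Z} = p_{Y\cap Z}$, $p_{X\setminus Y} = 1-p_{Y}$, and $\psi(g)p_{Y}\psi(g)^{-1} = p_{g_{\circ}(Y)}$. Hence the same relations hold for the $e_{Y}$; in particular $1-e_{Y} = e_{X\setminus Y}$ and $\iota(g)e_{Y}\iota(g)^{-1} = e_{g_{\circ}(Y)}$.

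\emph{Items (1)--(3).} For (1), I would first show that $\eta(p_{Y}M) \less e_{Y}R_{R}$. For $0 \ne r \in e_{Y}R$, essentiality of $\eta(M)$ gives $0 \ne ry = \eta(z)$ for some $y \in R$, and then $\eta(z) = e_{Y}\eta(z) = \eta(p_{Y}z)$. For $y_{0} \in Y$ this yields $R_{R} \cong \eta(R\delta_{y_{0}}) \lesssim e_{Y}R_{R} \lesssim R_{R}$. Since $e_{Y}R$ is injective by Remark~\ref{remark:summands.of.injective.modules}, Theorem~\ref{theorem:schroeder.bernstein} gives $e_{Y}R_{R} \cong R_{R}$. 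Applying the same argument to $X\setminus Y$ gives $(1-e_{Y})R_{R} \cong R_{R}$. Item (2) is immediate: $\psi(G(X,Y))$ commutes with $p_{Y}$ and acts as the identity on $p_{X\setminus Y}M$, so $\psi(G(X,Y)) \subseteq p_{Y}\End(M)p_{Y} + 1-p_{Y}$; now apply $\phi$.

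For the reverse inclusion in (3), I expect this to be the main technical point. Put $c \defeq \eta(\delta_{x})$. Then $cR = \eta(R\delta_{x})$ is essential in $e_{\{x\}}R$ by the argument above, so $cR = e_{\{x\}}R$ by Lemma~\ref{lemma:essential.inclusion}\ref{lemma:essential.inclusion.1}. Moreover, $r \mapsto cr$ is an isomorphism $R_{R} \to e_{\{x\}}R_{R}$ because $\eta$ is injective. Given $a \in \Gamma_{R}(e_{\{x\}})$, there is therefore a unique $r \in R$ with $ac = cr$, and $r$ is a unit since $a^{-1}$ supplies the inverse. Let $g \in G(X,\{x\})$ be the element with $g_{\bullet}(x) = r$. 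Then $\iota(g)$ and $a$ agree on $\eta(M)$: on $\eta(R\delta_{x})$ both act by $c s \mapsto crs$, and on $\eta(p_{X\setminus\{x\}}M)$ both act as the identity. Lemma~\ref{lemma:essential.inclusion}\ref{lemma:essential.inclusion.2} then gives $\iota(g) = a$.

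\emph{Items (4) and (5).} For (4), pick $x$ with $g(x) \ne x$. Then $\iota(1,g)e_{\{x\}}\iota(1,g)^{-1} = e_{\{g(x)\}} \perp e_{\{x\}}$, and $e_{\{x\}}R_{R} \cong R_{R}$ as in (1). For (5), choose $Y \subseteq X$ containing exactly one point of each $2$-cycle of $u$, so that $Y \cap u(Y) = \emptyset$. Then $\iota(1,u)e_{Y}\iota(1,u) = e_{u(Y)} \perp e_{Y}$, and by (2) we have $\iota(1,u) \in \Gamma_{R}(e_{Y \cup u(Y)})$; thus $\iota(1,u) \in \IME(R)$. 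Now take $g \in G(u)$ and put $k \defeq \iota(1,u)$. From the defining condition $g_{\bullet}\circ u = g_{\bullet}^{-1}$ one checks that $\iota(g) = ae_{Y} + ka^{-1}ke_{u(Y)} + e_{\Fix(u)}$ for some unit $a$ commuting with $e_{Y}$. There are two cases.
\begin{itemize}
\item If $\Fix(u) \ne \emptyset$, then $e_{Y}, e_{u(Y)}, e_{\Fix(u)}$ are moieties by (1), and Lemma~\ref{lemma:splits.over.moieties.addon} places $\iota(g)$ in $\Cl_{G}(h')^{2}$ for some $h' \in \IM(R)$.
\item If $\Fix(u) = \emptyset$, then $k$ itself lies in $\IM(R)$, and Lemma~\ref{lemma:splits.over.moieties} gives $\iota(g) \in \Cl_{G}(k)^{2}$.
\end{itemize}
In either case Lemma~\ref{lemma:conjugacy.of.moietious.involutions} shows that $\Cl_{G}(h')^{2} = \Cl_{G}(h)^{2}$ for any $h \in \IM(R)$.

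\emph{Items (6) and (7).} For (6), use $\vert X \vert > 1$ and (1) to see that $e \defeq e_{\{x\}}$ is a moiety. Lemma~\ref{lemma:bounded.generation} then supplies $h \in \I(R)$ with $1-e \le heh$ and $G = (\Gamma_{R}(e)h\Gamma_{R}(e)h)^{8}$, and (3) rewrites $\Gamma_{R}(e)$ as $\iota(G(X,\{x\}))$. Furthermore, $h(1-e)h = 1-heh \le e$, so Lemma~\ref{lemma:corner.group} together with (2) yields
\[
h\iota(G(X,X\setminus\{x\}))h \subseteq \Gamma_{R}(h(1-e)h) \subseteq \Gamma_{R}(e) = \iota(G(X,\{x\})).
\]
Finally, for (7), Lemma~\ref{lemma:farrell.wagoner} gives $G(X,\{x\}) \subseteq G(u)G(v)$ with $u,v$ nontrivial involutions, and (5) applied to each factor gives $\iota(G(X,\{x\})) \subseteq \Cl_{G}(h)^{2}\Cl_{G}(h)^{2} = \Cl_{G}(h)^{4}$.
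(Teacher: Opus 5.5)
Your proposal is correct and follows the paper's proof item by item. This includes the choice of $Y$ with $Y \cap u(Y) = \emptyset$ in (5) and the use of Lemma~\ref{lemma:bounded.generation} and Lemma~\ref{lemma:farrell.wagoner} in (6) and (7). The only deviations are minor streamlinings: in (3) you read off the unit $r$ directly from $ac = cr$ with $c = \eta(\delta_{x})$ and conclude $a = \iota(g)$ by Lemma~\ref{lemma:essential.inclusion}\ref{lemma:essential.inclusion.2}, whereas the paper first lifts $a$ to some $b \in \GL(M_{R})$; and in (5) you spell out the case distinction on whether $\Fix(u)$ is empty, which the paper compresses into a single sentence.
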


\begin{proof} First of all, for every $Y \subseteq X$, one readily checks that $p_{Y} \in \E(\End(M_{R}))$ and thus $e_{Y} \in \E(R)$ due to $\phi$ being a ring homomorphism. Moreover, for every $Y \subseteq X$, \begin{equation}\label{eq:orthogonality}
	e_{X\setminus Y} \, = \, \phi(p_{X\setminus Y}) \, = \, \phi(1-p_{Y}) \, = \, 1-\phi(p_{Y}) \, = \, 1-e_{Y} .	
\end{equation} We also note that, if $Y \subseteq X$, then \begin{displaymath}
	\eta(p_{Y}(z)) \, = \, \left(\eta \circ {p_{Y}} \circ \eta^{-1}\right)\!(\eta(z)) \, \stackrel{\ref{lemma:essential.new}}{=} \, \phi(p_{Y})\eta(z) \, = \, e_{Y}\eta(z) \, \in \, e_{Y}R
\end{displaymath} for every $z \in M$. That is, \begin{equation}\label{eq:essential.equality.0}
	\forall Y \subseteq X \colon \qquad \eta(p_{Y}(M)) \, \subseteq \, e_{Y}R .
\end{equation}

\ref{lemma:essential.embedding.1} We claim that \begin{equation}\label{eq:essential.equality.1}
	\forall x \in X \colon \qquad \eta(p_{\{x\}}(M)) \, = \, e_{\{x\}}R .
\end{equation} To prove this, let $x \in X$. Considering the element $\hat{x} \in M$ defined by \begin{displaymath}
	\hat{x}(y) \, \defeq \, \begin{cases}
			\, 1 & \text{if } y = x, \\
			\, 0 & \text{if } y \in X \setminus \{x\} ,
		\end{cases}
\end{displaymath} we observe that $p_{\{x\}}(M) = \hat{x}R$ and hence \begin{displaymath}
	\eta(p_{\{x\}}(M)) \, = \, \eta(\hat{x}R) \, = \, \eta(\hat{x})R \, \in \, \lat(R) .
\end{displaymath} Furthermore, using~\eqref{eq:orthogonality}, we see that \begin{displaymath}
	\eta(p_{\{x\}}(M)) \oplus \eta(p_{X\setminus \{x\}}(M)) \, = \, \eta(M) \, \less \, {R_{R}} \, = \, {e_{\{x\}}R_{R}} \oplus {e_{X\setminus \{x\}}R_{R}} 
\end{displaymath} and so $\eta(p_{\{x\}}(M)) \less {e_{\{x\}}R_{R}}$ by~\eqref{eq:essential.equality.0} and~\cite[Proposition~5.20(2)]{AndersonFuller}. Therefore, $\eta(p_{\{x\}}(M)) = e_{\{x\}}R$ according to Lemma~\ref{lemma:essential.inclusion}\ref{lemma:essential.inclusion.1}. This proves~\eqref{eq:essential.equality.1}. It follows that \begin{displaymath}
	{e_{\{x\}}R_{R}} \, \stackrel{\eqref{eq:essential.equality.1}}{=} \, {\eta(p_{\{x\}}(M))_{R}} \, \cong \, {p_{\{x\}}(M)_{R}} \, \cong \, {R_{R}} 
\end{displaymath} for every $x \in X$. In turn, if $\emptyset \ne Y \subsetneq X$, then both \begin{displaymath}
	{R_{R}} \, \lesssim \, {e_{Y}R}_{R} \, \leq \, {R_{R}}
\end{displaymath} and \begin{displaymath}
	{R_{R}} \, \lesssim \, {e_{X\setminus Y}R}_{R} \, \stackrel{\eqref{eq:orthogonality}}{=} \, {(1-e_{Y})R}_{R} \, \leq \, {R_{R}} ,
\end{displaymath} thus Remark~\ref{remark:summands.of.injective.modules} and Theorem~\ref{theorem:schroeder.bernstein} imply that ${R_{R}} \cong {e_{Y}R}_{R}$ and ${R_{R}} \cong {(1-e_{Y})R}_{R}$, wherefore $e_{Y} \in \Emo(R)$.
	
\ref{lemma:essential.embedding.3} Let $Y \subseteq X$. If $g \in G(X,Y)$, then \begin{align*}
	&({\psi(g)} \circ (1-p_{Y}))(z) \, = \, ({\psi(g)} \circ {p_{X\setminus Y}})(z) \, = \, {g_{\bullet}} \cdot {\left({p_{X\setminus Y}(z)} \circ {g_{\circ}^{-1}}\right)} \, = \, p_{X\setminus Y}(z) \\
	&\qquad \qquad = \, p_{X\setminus Y}\!\left( {g_{\bullet}} \cdot {\left(z \circ {g_{\circ}^{-1}}\right)} \right) \, = \, ({p_{X\setminus Y}} \circ {\psi(g)})(z) \, = \, ((1-p_{Y}) \circ {\psi(g)})(z) 
\end{align*} for all $z \in M$, that is, $\psi(g) \circ (1-p_{Y}) = 1-p_{Y} = (1-p_{Y}) \circ \psi(g)$, which implies that \begin{displaymath}
	\psi(g) \, = \, p_{Y} \circ \psi(g) \circ p_{Y} + 1-p_{Y} \, \stackrel{\ref{lemma:corner.group}}{\in} \, \Gamma_{\End(M_{R})}(p_{Y})
\end{displaymath} and thus \begin{displaymath}
	\iota(g) \, = \, \phi(\psi(g)) \, \in \, \phi(\Gamma_{\End(M_{R})}(p_{Y})) \, \subseteq \, \Gamma_{R}(\phi(p_{Y})) \, = \, \Gamma_{R}(e_{Y}) .
\end{displaymath}
	
\ref{lemma:essential.embedding.2} Let $x \in X$. Then $\iota(G(X,\{x\})) \subseteq \Gamma_{R}(e_{\{x\}})$ thanks to~\ref{lemma:essential.embedding.3}. To prove the converse inclusion, let $a \in \Gamma_{R}(e_{\{x\}})$. We deduce that \begin{displaymath}
	a\eta(p_{\{x\}}(M)) \, \stackrel{\eqref{eq:essential.equality.1}}{=} \, ae_{\{x\}}R \, = \, e_{\{x\}}aR \, = \, e_{\{x\}}R \, \stackrel{\eqref{eq:essential.equality.1}}{=} \, \eta(p_{\{x\}}(M)) 
\end{displaymath} and \begin{align*}
	&a\eta(p_{X \setminus \{x\}}(M)) \, \stackrel{\eqref{eq:essential.equality.0}}{=} \, ae_{X\setminus \{x\}}\eta(p_{X \setminus \{x\}}(M)) \, \stackrel{\eqref{eq:orthogonality}}{=} \, a(1-e_{\{x\}})\eta(p_{X \setminus \{x\}}(M)) \\
	& \qquad = \, (1-e_{\{x\}})\eta(p_{X \setminus \{x\}}(M)) \, \stackrel{\eqref{eq:orthogonality}}{=} \, e_{X\setminus \{x\}}\eta(p_{X \setminus \{x\}}(M)) \, \stackrel{\eqref{eq:essential.equality.0}}{=} \, \eta(p_{X \setminus \{x\}}(M)) .
\end{align*} Hence, \begin{align*}
	a\eta(M) \, &= \, a\eta(p_{\{x\}}(M)) \oplus a\eta(p_{X \setminus \{x\}}(M)) \, = \, \eta(p_{\{x\}}(M)) \oplus \eta(p_{X \setminus \{x\}}(M)) \, = \, \eta(M) 
\end{align*} and so we find $b \in \GL(M_{R})$ such that $\phi(b) = a$. From \begin{displaymath}
	\phi(b) \, = \, a \, \in \, \Gamma_{R}(e_{\{x\}}) \cap \im(\phi) \, = \, \Gamma_{R}(\phi(p_{\{x\}})) \cap \im(\phi) \, = \, \phi(\Gamma_{\End(M_{R})}(p_{\{x\}})) ,
\end{displaymath} we infer that $b \in \Gamma_{\End(M_{R})}(p_{\{x\}})$. In particular, $b(p_{\{x\}}(M)) = p_{\{x\}}(b(M)) = p_{\{x\}}(M)$. As $p_{\{x\}}(M)_{R} \to R_{R}, \, z \mapsto z(x)$ is an isomorphism, Remark~\ref{remark:cayley}\ref{remark:cayley.1} asserts the existence of $c \in \GL(R)$ such that $b(z)(x) = cz(x)$ for all $z \in p_{\{x\}}(M)$. We conclude that \begin{equation}\label{eq:local.isomorphism.1}
	b(z)(x) \, = \, p_{\{x\}}(b(z))(x) \, = \, b(p_{\{x\}}(z))(x) \, = \, cp_{\{x\}}(z)(x) \, = \, cz(x) 
\end{equation} for all $z \in M$. On the other hand, if $y \in X \setminus \{ x\}$, then \begin{equation}\label{eq:local.isomorphism.2}
	b(z)(y) \, = \, (1-p_{\{x\}})(b(z))(y) \, = \, (1-p_{\{x\}})(z)(y) \, = \, z(y)
\end{equation} for every $z \in M$. Considering the element $g \defeq (g_{\bullet},1) \in G(X,\{x\})$ defined by \begin{displaymath}
	g_{\bullet}(y) \, \defeq \, \begin{cases}
			\, c & \text{if } y = x, \\
			\, 1 & \text{if } y \in X \setminus \{ x \} ,
		\end{cases}
\end{displaymath} from~\eqref{eq:local.isomorphism.1} and~\eqref{eq:local.isomorphism.2} we deduce that $b(z) = {g_{\bullet}} \cdot z = \psi(g)(z)$ for all $z \in M$, i.e., $b = \psi(g)$. Therefore, \begin{displaymath}
	a \, = \, \phi(b) \, = \, \phi(\psi(g)) \, = \, \iota(g) \, \in \, \iota(G(X,\{x\})) .
\end{displaymath}
	
\ref{lemma:essential.embedding.4} If $g \in \Sym(X)\setminus \{ 1\}$, then there exists $x \in X$ such that $g(x) \ne x$, and a straightforward calculation shows that ${\psi(1,g)} \circ {p_{\{x\}}} \circ {\psi(1,g)}^{-1} = p_{\{g(x)\}}$, whence \begin{displaymath}
	\iota(1,g) e_{\{x\}} \iota(1,g)^{-1} \, = \, \phi\!\left({\psi(1,g)} \circ {p_{\{x\}}} \circ {\psi(1,g)}^{-1}\right)\! \, = \, \phi(p_{\{g(x)\}}) \, = \, e_{\{g(x)\}} \, \perp \, e_{\{x\}} 
\end{displaymath} and so $\iota(1,g) \in \GLM(R)$ by~\ref{lemma:essential.embedding.1}.

\ref{lemma:essential.embedding.6} Let $u \in \I(\Sym(X))\setminus \{ 1\}$. Then $Y_{3} \defeq \Fix(u)$ is a proper subset of $X$. As $u$ is an involution, we find $Y_{1},Y_{2} \subseteq X$ such that $Y_{1} \cupdot Y_{2} = X\setminus Y_{3}$ and $u(Y_{1}) = Y_{2}$. Note that $(p_{Y_{1}},p_{Y_{2}},p_{Y_{3}}) \perp$ and $p_{Y_{1}} + p_{Y_{2}} + p_{Y_{3}} = 1$, which entails that $(e_{Y_{1}},e_{Y_{2}},e_{Y_{3}}) \perp$ and $e_{Y_{1}} + e_{Y_{2}} + e_{Y_{3}} = 1$. Necessarily, both $Y_{1}$ and $Y_{2}$ are proper, non-empty subsets of $X$, whence $e_{Y_{1}},e_{Y_{2}} \in \Emo(R)$ by~\ref{lemma:essential.embedding.1}. Moreover, either $Y_{3} = \emptyset$ and so $e_{Y_{3}} = 0$, or $Y_{3} \ne \emptyset$ and thus $e_{Y_{3}} \in \Emo(R)$ by~\ref{lemma:essential.embedding.1}. Note that $\iota(1,u) \in \I(R)$. One readily checks that $\psi(1,u) \circ p_{Y_{1}} \circ \psi(1,u) = p_{Y_{2}}$ and thus \begin{displaymath}
	\iota(1,u)e_{Y_{1}}\iota(1,u) \, = \, \phi(\psi(1,u) \circ p_{Y_{1}} \circ \psi(1,u)) \, = \, \phi(p_{Y_{2}}) \, = \, e_{Y_{2}} .
\end{displaymath} Also, $\psi(1,u) \circ p_{Y_{3}} = p_{Y_{3}} = p_{Y_{3}} \circ \psi(1,u)$ and hence $\iota(1,u)e_{Y_{3}} = e_{Y_{3}} = e_{Y_{3}}\iota(1,u)$, wherefore \begin{displaymath}
	\iota(1,u) \, \in \, \Gamma_{R}(1-e_{Y_{3}}) \, = \, \Gamma_{R}(e_{Y_{1}} + e_{Y_{2}}) \, = \, \Gamma_{R}(e_{Y_{1}} + \iota(1,u)e_{Y_{1}}\iota(1,u)) .
\end{displaymath} In particular, $\iota(1,u) \in \IME(R)$. Furthermore, the combination of Lemma~\ref{lemma:splits.over.moieties} and Lemma~\ref{lemma:splits.over.moieties.addon} with Lemma~\ref{lemma:conjugacy.of.moietious.involutions} implies that, for every $h \in \IM(R)$, \begin{equation}\label{eq:conjugation.square}
	\left. \left\{ ae_{Y_{1}} + \iota(1,u)a^{-1}\iota(1,u)e_{Y_{2}} + e_{Y_{3}} \, \right\vert a \in \GL(R), \, ae_{Y_{1}} = e_{Y_{1}}a \right\} \, \subseteq \, \Cl_{G}(h)^{2} .
\end{equation} To deduce the remaining claim, let $g \in G(u)$. Then $g_{\bullet}\vert_{Y_{3}} = 1$ and so $\psi(g) \circ p_{Y_{3}} = p_{Y_{3}}$, while also ${{g_{\bullet}} \circ u} = {g_{\bullet}^{-1}}$ and thus $\psi(g) = \psi(1,u) \circ \psi(g)^{-1} \circ \psi(1,u)$, wherefore \begin{align}
	\psi(g) \, &= \, (\psi(g) \circ p_{Y_{1}}) + (\psi(g) \circ p_{Y_{2}}) + (\psi(g) \circ p_{Y_{3}}) \notag \\
	& = \, (\psi(g) \circ p_{Y_{1}}) + \left(\psi(1,u) \circ \psi(g)^{-1} \circ \psi(1,u) \circ p_{Y_{2}}\right) + p_{Y_{3}} . \label{eq:decomposition}
\end{align} Moreover, from $g_{\circ} = 1$, we infer that $\psi(g) \circ p_{Y} = p_{Y} \circ \psi(g)$ and hence $\iota(g)e_{Y} = e_{Y}\iota(g)$ for every $Y \subseteq X$. Consequently, \begin{displaymath}
	\iota(g) \, \stackrel{\eqref{eq:decomposition}}{=} \, \iota(g)e_{Y_{1}} + \iota(1,u)\iota(g)^{-1}\iota(1,u)e_{Y_{2}} + e_{Y_{3}} \, \stackrel{\eqref{eq:conjugation.square}}{\in} \, \Cl_{G}(h)^{2} 
\end{displaymath} for every $h \in \IM(R)$, as desired.

\ref{lemma:essential.embedding.5} Let $x \in X$. By~\ref{lemma:essential.embedding.1} and Lemma~\ref{lemma:bounded.generation}, there exists $h \in \I(G)$ with $1-e_{\{ x\}} \leq he_{\{ x\}}h$ and $G = (\Gamma_{R}(e_{\{ x\}})h\Gamma_{R}(e_{\{ x\}})h)^{8}$. We conclude that \begin{align*}
	h\iota(G(X,X\setminus \{x\}))h \, &\stackrel{\ref{lemma:essential.embedding.3}}{\subseteq} \, h\Gamma_{R}(e_{X\setminus \{ x\}})h \, \stackrel{\eqref{eq:orthogonality}}{=} \, h\Gamma_{R}(1-e_{\{x\}})h \, \stackrel{\ref{lemma:corner.group}\ref{lemma:corner.group.3}}{=} \, \Gamma_{R}(h(1-e_{\{x\}})h) \\
	& \stackrel{\ref{lemma:corner.group}\ref{lemma:corner.group.1}}{\subseteq} \, \Gamma_{R}(e_{\{x\}}) \, \stackrel{\ref{lemma:essential.embedding.2}}{=} \, \iota(G(X,\{x\})) .
\end{align*}

\ref{lemma:essential.embedding.7} Suppose that $X$ is infinite. Let $x \in X$. According to Lemma~\ref{lemma:farrell.wagoner}, there exist $u,v \in \I(\Sym(X))$ such that $G(X,\{x\}) \subseteq G(u)G(v)$. Hence, for every $h \in \IM(R)$, \begin{displaymath} 
	\iota(G(X,\{x\})) \, \subseteq \, \iota(G(u)G(v)) \, = \, \iota(G(u))\iota(G(v)) \, \stackrel{\ref{lemma:essential.embedding.6}}{\subseteq} \, \Cl_{G}(h)^{4} . \qedhere
\end{displaymath} \end{proof}

We arrive at our first main result about unit groups of purely infinite rings.

\begin{thm}\label{theorem:universal.sequence} Let $R$ be a purely infinite, regular, right self-injective ring. Then $\GL(R)$ admits a universal sequence. In particular, $\GL(R)$ has strong distortion. \end{thm}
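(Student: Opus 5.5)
The plan is to combine Lemma~\ref{lemma:galvin}\ref{lemma:galvin.2} with Lemma~\ref{lemma:essential.embedding}\ref{lemma:essential.embedding.5}, applied with $X \defeq \N$ and $G \defeq \GL(R)$. The case $R = \{0\}$ is trivial, since the trivial group admits a universal sequence. So assume $R \neq \{0\}$.

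First, we need an embedding $\eta \colon R_{R}^{\oplus \N} \to R_{R}$ with essential image. Since $R$ is purely infinite, regular and right self-injective, Theorem~\ref{theorem:purely.infinite} asserts that $R_{R}^{\oplus \aleph_{0}}$ is isomorphic to an essential submodule of $R_{R}$. Composing such an isomorphism with the inclusion gives $\eta$ with $\eta(M) \less R_{R}$ for $M_{R} \defeq R_{R}^{\oplus \N}$. Since $\vert \N \vert > 1$, Lemma~\ref{lemma:essential.embedding} now applies and yields the homomorphism $\iota \colon G(\N) \to G$.

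Next, fix $x_{0} \in \N$. By Lemma~\ref{lemma:essential.embedding}\ref{lemma:essential.embedding.5}, there is $h \in \I(G)$ with
\begin{displaymath}
	G \, = \, (\iota(G(\N,\{x_{0}\}))h\iota(G(\N,\{x_{0}\}))h)^{8}
\end{displaymath}
and $h\iota(G(\N,\N\setminus\{x_{0}\}))h \subseteq \iota(G(\N,\{x_{0}\}))$. These are exactly the hypotheses of Lemma~\ref{lemma:galvin}\ref{lemma:galvin.2}, with $H \defeq G$, $\pi \defeq \iota$, $p \defeq h$ and $n \defeq 8$; note that $\N$ is infinite, as Lemma~\ref{lemma:galvin} requires. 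That lemma then provides the explicit universal sequence $(w_{i})_{i \in \N}$ in $\free(\{a,b\}) \cong \free(2)$.

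Finally, strong distortion follows from Remark~\ref{remark:strong.distortion}\ref{remark:strong.distortion.1}. With all the prior lemmas available there is essentially no obstacle. The only point needing care is making sure the essential-embedding hypothesis of Lemma~\ref{lemma:essential.embedding} is correctly extracted from Theorem~\ref{theorem:purely.infinite}, and that $R \neq \{0\}$ is handled separately or is harmless.
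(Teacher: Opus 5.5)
Your proposal is correct and follows the paper's own proof exactly: take the essential embedding of $R_{R}^{\oplus \aleph_{0}}$ from Theorem~\ref{theorem:purely.infinite}, feed Lemma~\ref{lemma:essential.embedding}\ref{lemma:essential.embedding.5} (with $n = 8$) into Lemma~\ref{lemma:galvin}\ref{lemma:galvin.2}, and conclude strong distortion via Remark~\ref{remark:strong.distortion}\ref{remark:strong.distortion.1}. Treating $R = \{0\}$ separately is harmless; the paper does not need to, because the argument goes through for the zero ring as well.
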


\begin{proof} This follows immediately from Theorem~\ref{theorem:purely.infinite}, Lemma~\ref{lemma:essential.embedding}\ref{lemma:essential.embedding.5}, Lemma~\ref{lemma:galvin}\ref{lemma:galvin.2}, and Remark~\ref{remark:strong.distortion}\ref{remark:strong.distortion.1}. \end{proof}

\begin{cor}\label{corollary:scf.for.rings} Let $R$ be a purely infinite, regular, right self-injective ring. Then $\scf(\GL(R)) \ne \aleph_{0}$. In particular, $\GL(R)$ has the Bergman property. \end{cor}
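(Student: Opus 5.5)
This is a formal consequence of Theorem~\ref{theorem:universal.sequence} combined with the chain of implications collected in Section~\ref{section:strong.distortion.and.cofinality}. By Theorem~\ref{theorem:universal.sequence}, $\GL(R)$ admits a universal sequence and is therefore strongly distorted. Remark~\ref{remark:strong.distortion.implies.uscf}\ref{remark:strong.distortion.implies.uscf.1}, due to Khelif and Cornulier, then immediately gives $\scf(\GL(R)) \ne \aleph_{0}$. This settles the first assertion with no further ring-theoretic input.

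For the Bergman property we invoke Remark~\ref{remark:strong.distortion.implies.uscf}\ref{remark:strong.distortion.implies.uscf.-1}. It states that $\scf(G) \ne \aleph_{0}$ holds if and only if $\cf(G) \ne \aleph_{0}$ and $G$ has the Bergman property. Applying the forward direction to $G = \GL(R)$ with the inequality just obtained yields that $\GL(R)$ has the Bergman property, and incidentally also $\cf(\GL(R)) \ne \aleph_{0}$.

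As an alternative route to the latter fact, Remark~\ref{remark:strong.distortion} gives the chain from strong distortion to finite Sierpi\'nski rank to the Schreier property. Remark~\ref{remark:strong.distortion.implies.uscf}\ref{remark:strong.distortion.implies.uscf.2} then converts the Schreier property into $\cf(\GL(R)) \ne \aleph_{0}$.

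There is no genuine obstacle here. The only point to watch is degenerate cases such as $R = 0$, where $\GL(R)$ is trivial. The definitions in Section~\ref{section:strong.distortion.and.cofinality} set $\scf = 0$ for finite groups, so the inequality $\scf \ne \aleph_{0}$ still holds. A trivial group trivially has the Bergman property. Hence the cited remarks apply uniformly and no case split is needed.
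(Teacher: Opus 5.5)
Your proposal is correct and follows the paper's proof: Theorem~\ref{theorem:universal.sequence} gives strong distortion, Remark~\ref{remark:strong.distortion.implies.uscf}\ref{remark:strong.distortion.implies.uscf.1} then yields $\scf(\GL(R)) \ne \aleph_{0}$, and Remark~\ref{remark:strong.distortion.implies.uscf}\ref{remark:strong.distortion.implies.uscf.-1} gives the Bergman property. Your alternative route to $\cf(\GL(R)) \ne \aleph_{0}$ and your remark on the case $R = \{0\}$ are correct but not needed.
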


\begin{proof} The first assertion is due to Theorem~\ref{theorem:universal.sequence} and Remark~\ref{remark:strong.distortion.implies.uscf}\ref{remark:strong.distortion.implies.uscf.1}, while the final conclusion then follows by Remark~\ref{remark:strong.distortion.implies.uscf}\ref{remark:strong.distortion.implies.uscf.-1}. \end{proof}

\section{Bounded normal generation}\label{section:bng}

This section directly pertains to matters of bounded normal generation. The principal results are Theorem~\ref{theorem:generation.by.moietious.involutions} and Theorem~\ref{theorem:generation.by.moietious.units}. As immediate consequences, we deduce finite involution width (Corollary~\ref{corollary:generation.by.involutions}) and verbal simplicity (Theorem~\ref{theorem:verbally.elliptic}) of the considered groups.

\begin{lem}\label{lemma:generation.by.moietious.involutions} Let $R$ be a regular, right self-injective ring. If $e \in \Emo(R)$, then $\Gamma_{R}(e) \subseteq \Cl_{\GL(R)}(g)^{4}$ for every $g \in \IM(R)$. \end{lem}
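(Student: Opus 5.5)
The plan is to reduce the statement to Lemma~\ref{lemma:essential.embedding}\ref{lemma:essential.embedding.7}. That item says $\iota(G(X,\{x\})) \subseteq \Cl_{G}(h)^{4}$ for every $h \in \IM(R)$ whenever $X$ is infinite. Item~\ref{lemma:essential.embedding.2} identifies $\iota(G(X,\{x\}))$ with $\Gamma_{R}(e_{\{x\}})$. So it suffices to produce an essential embedding $\eta \colon R_{R}^{\oplus \aleph_{0}} \to R_{R}$ with $e_{\{x\}} = e$ for some point $x$, or at least with $e_{\{x\}}$ conjugate to $e$ in $\GL(R)$. The conjugate version is enough because $\Cl_{\GL(R)}(g)^{4}$ is invariant under conjugation and Lemma~\ref{lemma:corner.group}\ref{lemma:corner.group.3} gives $\Gamma_{R}(wew^{-1}) = w\Gamma_{R}(e)w^{-1}$. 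Note first that $R$ is purely infinite by Proposition~\ref{proposition:pure.infinity.via.moieties}, since $e \in \Emo(R)$.

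\emph{Step 1.} Take $X \defeq \N$. By Theorem~\ref{theorem:purely.infinite}, there is an embedding $\eta \colon M_{R} = R_{R}^{\oplus \N} \to R_{R}$ with essential image. Lemma~\ref{lemma:essential.embedding} then yields idempotents $e_{Y}$, and by item~\ref{lemma:essential.embedding.1} the element $f \defeq e_{\{0\}}$ is a moiety. Item~\ref{lemma:essential.embedding.7} gives
\begin{displaymath}
	\Gamma_{R}(f) \, \stackrel{\ref{lemma:essential.embedding}\ref{lemma:essential.embedding.2}}{=} \, \iota(G(\N,\{0\})) \, \subseteq \, \Cl_{\GL(R)}(g)^{4}
\end{displaymath}
for every $g \in \IM(R)$.

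\emph{Step 2.} Both $e$ and $f$ lie in $\Emo(R)$, so Lemma~\ref{lemma:conjugacy.of.moieties}\ref{lemma:conjugacy.of.moieties.1} provides $w \in \GL(R)$ with $wfw^{-1} = e$. Then
\begin{displaymath}
	\Gamma_{R}(e) \, = \, w\Gamma_{R}(f)w^{-1} \, \subseteq \, w\,\Cl_{\GL(R)}(g)^{4}\,w^{-1} \, = \, \Cl_{\GL(R)}(g)^{4},
\end{displaymath}
which is the claim.

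The main obstacle lies in Step~1: one must check that item~\ref{lemma:essential.embedding.7} genuinely applies to an arbitrary $g \in \IM(R)$, and not only to involutions coming from the embedding. This is already handled inside item~\ref{lemma:essential.embedding.6}, which invokes Lemma~\ref{lemma:conjugacy.of.moietious.involutions} (all elements of $\IM(R)$ are conjugate), so no further work should be needed. One must also confirm that the essential embedding of $R_{R}^{\oplus\aleph_{0}}$ exists for a purely infinite $R$; this is exactly the last equivalent condition in Theorem~\ref{theorem:purely.infinite}.
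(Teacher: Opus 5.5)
Your proof is correct and follows the paper's route: take $X=\N$ and combine Lemma~\ref{lemma:essential.embedding}\ref{lemma:essential.embedding.2} with Lemma~\ref{lemma:essential.embedding}\ref{lemma:essential.embedding.7}. The one difference is in how $e_{\{0\}}$ is matched with $e$. The paper builds $\eta$ from an isomorphism $R_{R}\to eR_{R}$ together with an essential embedding into $(1-e)R_{R}$, so that $e_{\{0\}}=e$ exactly (verified via Lemma~\ref{lemma:essential.inclusion}\ref{lemma:essential.inclusion.2}); you instead take an arbitrary essential embedding and pass to $e$ using Lemma~\ref{lemma:conjugacy.of.moieties}\ref{lemma:conjugacy.of.moieties.1} and conjugation-invariance, which is equally valid and is the same trick the paper uses in Lemma~\ref{lemma:generation.by.moietious.involutions.extended}.
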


\begin{proof} Let $e \in \Emo(R)$. Note that $R$ must be purely infinite due to Proposition~\ref{proposition:pure.infinity.via.moieties}. Fix any isomorphism $\eta_{0}\colon R_{R} \to eR_{R}$. Since $(1-e)R_{R} \cong R_{R}$, Theorem~\ref{theorem:purely.infinite} asserts the existence of an embedding $\eta_{1} \colon R_{R}^{\oplus \N\setminus \{0\}} \to (1-e)R_{R}$ with $\im(\eta_{1}) \less (1-e)R_{R}$. Then \begin{displaymath}
	\eta \colon \, R_{R}^{\oplus \N} \, \longrightarrow \, R_{R}, \quad z \, \longmapsto \, \eta_{0}(z(0)) + \eta_{1}\!\left(z\vert_{\N\setminus \{0\}}\right)
\end{displaymath} is an embedding and \begin{equation}\label{eq:fat}
	\im (\eta) \, = \, \im(\eta_{0}) \oplus \im(\eta_{1}) \, = \, eR \oplus \im(\eta_{1}) \, \less \, eR_{R} \oplus (1-e)R_{R} \, = \, R_{R}
\end{equation} by~\cite[Proposition~5.20(2)]{AndersonFuller}. As in Lemma~\ref{lemma:essential.embedding}, let $G \defeq \GL(R)$ and $M_{R} \defeq R_{R}^{\oplus \N}$, and consider the unital ring embedding\footnote{cf.~Lemma~\ref{lemma:essential.new}} \begin{displaymath}
	\phi \colon \, \End(M_{R}) \, \longrightarrow \, R, \quad a \, \longmapsto \, \overline{\eta \circ a \circ {\eta^{-1}}} ,
\end{displaymath} the group homomorphism $\psi \colon G(\N) \to \GL(M_{R})$ given by \begin{displaymath}
	\psi(g)z \, \defeq \, {g_{\bullet}} \cdot {\left(z \circ {g_{\circ}^{-1}}\right)} \qquad (g \in G(\N), \, z \in M) ,
\end{displaymath} the resulting homomorphism $\iota \defeq {\phi\vert_{\GL(M_{R})}} \circ \psi \colon G(\N) \to G$, and the idempotent endomorphism $p \colon M_{R} \to M_{R}$ defined by \begin{displaymath}
	p(z)(y) \, \defeq \, \begin{cases}
		\, z(0) & \text{if } y = 0 , \\
		\, 0 & \text{if } y \in \N \setminus \{ 0 \}
	\end{cases} \qquad (z \in M, \, y \in \N) .
\end{displaymath} We observe that \begin{displaymath}
	\phi(p)\eta(z) \, = \, \eta(p(z)) \, = \, \eta_{0}(z(0)) \, = \, e\eta(z)
\end{displaymath} for all $z \in M$. By~\eqref{eq:fat} and Lemma~\ref{lemma:essential.inclusion}\ref{lemma:essential.inclusion.2}, this implies that $\phi(p) = e$. We conclude that, for every $g \in \IM(R)$, \begin{displaymath}
	\Gamma_{R}(e) \, = \, \Gamma_{R}(\phi(p)) \, \stackrel{\ref{lemma:essential.embedding}\ref{lemma:essential.embedding.2}}{=} \, \iota(G(\N,\{0\})) \, \stackrel{\ref{lemma:essential.embedding}\ref{lemma:essential.embedding.7}}{\subseteq} \, \Cl_{G}(g)^{4} \, = \, \Cl_{\GL(R)}(g)^{4} . \qedhere	
\end{displaymath} \end{proof}

\begin{lem}\label{lemma:generation.by.moietious.involutions.extended} Let $R$ be a regular, right self-injective ring. If $e \in \Emo(R)$, then $\Gamma_{R}(e) \subseteq \Cl_{\GL(R)}(g)^{4}$ for every $g \in \IME(R)$. \end{lem}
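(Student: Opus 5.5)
The plan is to reduce to Lemma~\ref{lemma:generation.by.moietious.involutions}. By Lemma~\ref{lemma:characterization.of.ime}, any $g \in \IME(R)$ has the form $g = h + 1 - f$ with $f \in \E(R)$, ${R_{R}} \cong {fR_{R}}$ and $h \in \IM(fRf)$. Put $S \defeq fRf$. Then $S \cong \End(fR_{R}) \cong \End(R_{R}) \cong R$ by Remark~\ref{remark:cayley}\ref{remark:cayley.1}, so $S$ is again regular and right self-injective. Fixing a moiety $e \in \Emo(R)$, the strategy is to find a moiety $e' \in \Emo(S)$ of $S$ which is conjugate to $e$ in $\GL(R)$.

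First I check that elements of $\Emo(S)$ are moieties of $R$. Take $e' \in \Emo(S)$. By Lemma~\ref{lemma:ideal.isomorphism}, $e'R_{R} \cong fR_{R} \cong R_{R}$. Also ${(f-e')R_{R}} \cong {fR_{R}}$, and $f - e' \leq 1-e'$, so ${R_{R}} \lesssim {(1-e')R_{R}} \leq {R_{R}}$. By Remark~\ref{remark:summands.of.injective.modules} and Theorem~\ref{theorem:schroeder.bernstein} this gives ${(1-e')R_{R}} \cong {R_{R}}$. Hence $e' \in \Emo(R)$, and Lemma~\ref{lemma:conjugacy.of.moieties}\ref{lemma:conjugacy.of.moieties.1} yields $w \in \GL(R)$ with $wew^{-1} = e'$. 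By Lemma~\ref{lemma:corner.group}\ref{lemma:corner.group.3}, $w\Gamma_{R}(e)w^{-1} = \Gamma_{R}(e')$. Since $\Cl_{\GL(R)}(g)^{4}$ is conjugation invariant, it suffices to prove $\Gamma_{R}(e') \subseteq \Cl_{\GL(R)}(g)^{4}$ for a single $e' \in \Emo(S)$. Such an $e'$ exists by Proposition~\ref{proposition:purely.infinite.involutions}, because $\IM(S) \ne \emptyset$ makes $S$ purely infinite, and then Proposition~\ref{proposition:pure.infinity.via.moieties} gives $\Emo(S) \ne \emptyset$.

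Now apply Lemma~\ref{lemma:generation.by.moietious.involutions} inside $S$ to get $\Gamma_{S}(e') \subseteq \Cl_{\GL(S)}(h)^{4}$. Next transport this along the embedding $\GL(S) \to \Gamma_{R}(f)$, $a \mapsto a + 1 - f$, from Lemma~\ref{lemma:corner.group}. Under this map $h$ goes to $g$, and conjugates in $\GL(S)$ go to conjugates in $\Gamma_{R}(f) \leq \GL(R)$. So the image of $\Cl_{\GL(S)}(h)^{4}$ lies in $\Cl_{\GL(R)}(g)^{4}$.

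The main point to verify is that the image of $\Gamma_{S}(e')$ is all of $\Gamma_{R}(e')$. An element of $\Gamma_{S}(e')$ has the form $a + f - e'$ with $a \in \GL(e'Se')$, and it maps to $a + 1 - e'$. Since $e'Se' = e'Re'$, these images are exactly the elements of $\GL(e'Re') + 1 - e' = \Gamma_{R}(e')$. Combining the steps gives $\Gamma_{R}(e') \subseteq \Cl_{\GL(R)}(g)^{4}$, and conjugating by $w^{-1}$ gives $\Gamma_{R}(e) \subseteq \Cl_{\GL(R)}(g)^{4}$, as claimed.
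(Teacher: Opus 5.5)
Your proof is correct and follows essentially the same route as the paper. Both arguments write $g = h + 1 - f$ via Lemma~\ref{lemma:characterization.of.ime}, apply Lemma~\ref{lemma:generation.by.moietious.involutions} inside $S = fRf$ to some $e' \in \Emo(S)$, and transport along $a \mapsto a + 1 - f$ using $\Gamma_{S}(e') + 1 - f = \Gamma_{R}(e')$. They then check $e' \in \Emo(R)$ and conjugate $e'$ to $e$. The only cosmetic difference is how pure infiniteness of $S$ is obtained: you get it from $\IM(S) \neq \emptyset$, whereas the paper derives it from $S \cong R$ with $R$ purely infinite because $\Emo(R) \neq \emptyset$.
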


\begin{proof} Suppose that $e \in \Emo(R)$. Necessarily, $R$ must be purely infinite according to Proposition~\ref{proposition:pure.infinity.via.moieties}. Let $g \in \IME(R)$. According to Lemma~\ref{lemma:characterization.of.ime}, there exist $f \in \E(R)$ and $g' \in \IM(fRf)$ such that ${R_{R}} \cong {fR_{R}}$ and $g = g'+1-f$. Then \begin{displaymath}
	R \, \stackrel{\ref{remark:cayley}\ref{remark:cayley.1}}{\cong} \, \End(R_{R}) \, \cong \, \End(fR_{R}) \, \stackrel{\ref{remark:cayley}\ref{remark:cayley.1}}{\cong} \, fRf \, \eqdef \, S .
\end{displaymath} In particular, $S$ is a purely infinite, regular, right self-injective ring. According to Proposition~\ref{proposition:pure.infinity.via.moieties}, we find some $f' \in \Emo(S)$. Using the group isomorphism \begin{displaymath}
	\iota \colon \, \GL(S) \, \longrightarrow \, \Gamma_{R}(f), \quad a \, \longmapsto \, a +1-f
\end{displaymath} from Lemma~\ref{lemma:corner.group}, we conclude that \begin{align*}
	\Gamma_{R}(f') \, &= \, \GL(f'Rf') + 1-f' \, = \, \GL(f'Sf') + f-f' + 1-f \, = \, \Gamma_{S}(f') + 1-f \\
	& = \, \iota(\Gamma_{S}(f')) \, \stackrel{\ref{lemma:generation.by.moietious.involutions}}{\subseteq} \, \iota\!\left(\Cl_{\GL(S)}(g')^{4}\right) \, = \, \Cl_{\Gamma_{R}(f)}(g)^{4} \, \subseteq \, \Cl_{\GL(R)}(g)^{4} .
\end{align*} Moreover, as $f' \in \Emo(S)$, we know that ${f'S_{S}} \cong {S_{S}} \cong {(f-f')S_{S}}$. While the former isomorphism entails that \begin{displaymath}
	{R_{R}} \, \cong \, {fR_{R}} \, \stackrel{\ref{lemma:ideal.isomorphism}}{\cong} \, {f'R_{R}}  ,
\end{displaymath} the latter implies that \begin{displaymath} 
	{R_{R}} \, \cong \, {fR_{R}} \, \stackrel{\ref{lemma:ideal.isomorphism}}{\cong} \, {(f-f')R_{R}} \, \leq \, {(1-f')R_{R}} \, \leq \, {R_{R}} 
\end{displaymath} and thus $R_{R} \cong (1-f')R_{R}$ by Remark~\ref{remark:summands.of.injective.modules} and Theorem~\ref{theorem:schroeder.bernstein}. Therefore, $f' \in \Emo(R)$. Consequently, if $e \in \Emo(R)$, then Lemma~\ref{lemma:conjugacy.of.moieties}\ref{lemma:conjugacy.of.moieties.1} asserts the existence of some $h \in \GL(R)$ such that $e = hf'h^{-1}$, whence \begin{displaymath}
	\Gamma_{R}(e) \, \stackrel{\ref{lemma:corner.group}\ref{lemma:corner.group.3}}{=} \, {h\Gamma_{R}(f')h^{-1}} \, \subseteq \, h\Cl_{\GL(R)}(g)^{4}h^{-1} \, = \, \Cl_{\GL(R)}(g)^{4} . \qedhere
\end{displaymath} \end{proof}

\begin{thm}\label{theorem:generation.by.moietious.involutions} Let $R$ be a regular, right self-injective ring. For every $g \in \IME(R)$, \begin{displaymath}
	\GL(R) \, = \, \Cl_{\GL(R)}(g)^{64} .
\end{displaymath} \end{thm}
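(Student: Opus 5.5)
The plan is to combine Lemma~\ref{lemma:bounded.generation} with Lemma~\ref{lemma:generation.by.moietious.involutions.extended}. Let $g \in \IME(R)$. By Proposition~\ref{proposition:purely.infinite.involutions}, $R$ is purely infinite, so Proposition~\ref{proposition:pure.infinity.via.moieties} provides some $e \in \Emo(R)$. Lemma~\ref{lemma:bounded.generation} then yields an involution $h \in \I(R)$ such that
\begin{displaymath}
	\GL(R) \, = \, (\Gamma_{R}(e)h\Gamma_{R}(e)h)^{8} .
\end{displaymath}
Since $h$ is an involution, $h\Gamma_{R}(e)h = \Gamma_{R}(heh)$ by Lemma~\ref{lemma:corner.group}\ref{lemma:corner.group.3}. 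Moreover, $heh \in \Emo(R)$, because conjugation preserves the relevant isomorphism types of $eR_{R}$ and $(1-e)R_{R}$ (Lemma~\ref{lemma:conjugacy.of.idempotents}). This rewrites the decomposition as
\begin{displaymath}
	\GL(R) \, = \, (\Gamma_{R}(e)\Gamma_{R}(heh))^{8} ,
\end{displaymath}
a product of $16$ factors, each of the form $\Gamma_{R}(f)$ with $f \in \Emo(R)$.

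Next I apply Lemma~\ref{lemma:generation.by.moietious.involutions.extended} to each factor. It gives $\Gamma_{R}(f) \subseteq \Cl_{\GL(R)}(g)^{4}$ for every $f \in \Emo(R)$, and in particular for $f = e$ and $f = heh$. Hence
\begin{displaymath}
	\GL(R) \, \subseteq \, \left(\Cl_{\GL(R)}(g)^{4}\Cl_{\GL(R)}(g)^{4}\right)^{8} \, = \, \Cl_{\GL(R)}(g)^{64} ,
\end{displaymath}
and the reverse inclusion is trivial. The count matches the claimed exponent: $16 \cdot 4 = 64$.

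There is no real obstacle, since the work is already done in Lemmas~\ref{lemma:bounded.generation} and~\ref{lemma:generation.by.moietious.involutions.extended}. The only points to check are that the conjugated corner group $h\Gamma_{R}(e)h$ is again the corner group of a moiety, and that the hypothesis of Lemma~\ref{lemma:generation.by.moietious.involutions.extended} applies to an arbitrary moiety rather than a fixed one. Both hold as indicated.
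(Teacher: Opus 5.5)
Your proof is correct and is essentially the paper's argument. The paper applies Lemma~\ref{lemma:decomposition.into.moieties} directly to three pairwise orthogonal moieties $e_{1},e_{2},e_{3}$ with $e_{1}+e_{2}+e_{3}=1$, obtaining $\GL(R) = (\Gamma_{R}(e_{1}+e_{2})\Gamma_{R}(e_{2}+e_{3}))^{8}$, and bounds each of the $16$ factors by $\Cl_{\GL(R)}(g)^{4}$ via Lemma~\ref{lemma:generation.by.moietious.involutions.extended}; your detour through Lemma~\ref{lemma:bounded.generation} is the same decomposition, since in that lemma's proof $e = e_{1}+e_{2}$ and $heh = e_{2}+e_{3}$.
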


\begin{proof} Due to Proposition~\ref{proposition:purely.infinite.involutions}, we may and will assume without loss of generality that $R$ is purely infinite. By Proposition~\ref{proposition:pure.infinity.via.moieties}, there exist $e_{1},e_{2},e_{3} \in \Emo(R)$ pairwise orthogonal such that $e_{1} + e_{2} + e_{3} = 1$. Then $e_{1}+e_{2} = 1-e_{3}$ and $e_{2}+e_{3} = 1-e_{1}$ are moieties in $R$, too. For every $g \in \IME(R)$, we conclude that \begin{displaymath}
	\GL(R) \, \stackrel{\ref{lemma:decomposition.into.moieties}}{=} \, (\Gamma_{R}(e_{1}+e_{2})\Gamma_{R}(e_{2}+e_{3}))^{8} \, \stackrel{\ref{lemma:generation.by.moietious.involutions.extended}}{\subseteq} \, \Cl_{\GL(R)}(g)^{64} . \qedhere
\end{displaymath} \end{proof}

\begin{cor}\label{corollary:generation.by.moietious.involutions} Let $R$ be a purely infinite, regular, right self-injective ring. Then there exists $g \in \I(R)$ such that $\GL(R) = \Cl_{\GL(R)}(g)^{64}$. \end{cor}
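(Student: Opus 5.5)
This corollary follows by combining Theorem~\ref{theorem:generation.by.moietious.involutions} with the existence of an element of $\IME(R)$, which is itself an involution. Concretely, since $R$ is purely infinite, regular and right self-injective, Proposition~\ref{proposition:purely.infinite.involutions} (the implication from purely infinite to $\IM(R) \ne \emptyset$) yields some $g \in \IM(R)$. To recall how such a $g$ arises: Proposition~\ref{proposition:pure.infinity.via.moieties} gives a moiety $e \in \Emo(R)$. Then $1-e$ is also a moiety orthogonal to $e$. Lemma~\ref{lemma:conjugacy.of.moieties}\ref{lemma:conjugacy.of.moieties.2} therefore provides $g \in \I(\Gamma_{R}(1)) = \I(R)$ with $geg = 1-e$. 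As $R_{R} \cong eR_{R}$, this $g$ lies in $\IM(R)$ by Definition~\ref{definition:moietious.involutions}.

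By Remark~\ref{remark:moietious.units}, $\IM(R) \subseteq \IME(R)$, so $g \in \IME(R)$. Theorem~\ref{theorem:generation.by.moietious.involutions} then gives $\GL(R) = \Cl_{\GL(R)}(g)^{64}$. Finally, $g \in \IM(R) \subseteq \I(R)$ by definition, which is exactly the required conclusion.

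There is no real obstacle here, since all the work is contained in Theorem~\ref{theorem:generation.by.moietious.involutions}. The only thing to check is that the element produced is genuinely an involution of $R$, and this is built into the definition of $\IM(R)$.
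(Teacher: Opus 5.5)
Your proposal is correct and follows essentially the same route as the paper: it obtains an element of $\IM(R)\subseteq\IME(R)$ via Proposition~\ref{proposition:purely.infinite.involutions}, which is an involution by definition, and then applies Theorem~\ref{theorem:generation.by.moietious.involutions}. Your unpacking of how $g$ arises from a moiety through Lemma~\ref{lemma:conjugacy.of.moieties}\ref{lemma:conjugacy.of.moieties.2} (using $\Gamma_{R}(1)=\GL(R)$) is accurate but not needed beyond citing the proposition.
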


\begin{proof} This is a direct consequence of Proposition~\ref{proposition:purely.infinite.involutions} and Theorem~\ref{theorem:generation.by.moietious.involutions}. \end{proof}

\begin{cor}\label{corollary:generation.by.involutions} If $R$ is a purely infinite, regular, right self-injective ring, then \begin{displaymath}
	\GL(R) \, = \, \I(R)^{64} .
\end{displaymath} \end{cor}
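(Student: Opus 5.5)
This is an immediate consequence of Corollary~\ref{corollary:generation.by.moietious.involutions}, so the plan is a short reduction. First, Corollary~\ref{corollary:generation.by.moietious.involutions} supplies some $g \in \I(R)$ with $\GL(R) = \Cl_{\GL(R)}(g)^{64}$. Concretely, Proposition~\ref{proposition:purely.infinite.involutions} yields an element $g \in \IM(R) \subseteq \IME(R)$, and Theorem~\ref{theorem:generation.by.moietious.involutions} then applies to it.

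Second, the conjugacy class of an involution consists of involutions. For any $h \in \GL(R)$,
\begin{displaymath}
	\left(hgh^{-1}\right)^{2} \, = \, hg^{2}h^{-1} \, = \, 1 ,
\end{displaymath}
so $\Cl_{\GL(R)}(g) \subseteq \I(R)$. Taking $64$-fold products gives
\begin{displaymath}
	\GL(R) \, = \, \Cl_{\GL(R)}(g)^{64} \, \subseteq \, \I(R)^{64} \, \subseteq \, \GL(R) ,
\end{displaymath}
where the last inclusion holds because $\I(R) = \I(\GL(R)) \subseteq \GL(R)$ and $\GL(R)$ is closed under products. Hence all three sets coincide, which proves the claim.

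No step poses a real obstacle. The only point to check is that $\I(R)$ is taken inside $\GL(R)$, so that the reverse inclusion holds. This is immediate from the definition $\I(R) = \{ a \in R \mid a^{2} = 1 \}$, since any such $a$ is its own inverse.
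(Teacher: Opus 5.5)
Your proof is correct and follows the paper's own route: the paper deduces this statement directly from Corollary~\ref{corollary:generation.by.moietious.involutions}, which provides $g \in \I(R)$ with $\GL(R) = \Cl_{\GL(R)}(g)^{64}$. You then add the two routine checks the paper leaves implicit, namely $\Cl_{\GL(R)}(g) \subseteq \I(R)$ and $\I(R) \subseteq \GL(R)$.
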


\begin{proof} This follows immediately from Corollary~\ref{corollary:generation.by.moietious.involutions}. \end{proof}

We advance beyond involutions using the following lemma.

\begin{lem}\label{lemma:generating.moietious.involutions} Let $R$ be a regular, right self-injective ring. If $g \in \GLM(R)$, then \begin{displaymath}
	\IME(R) \cap \Cl_{\GL(R)}\!\left( \left\{ g,g^{-1}\right\}\right)^{2} \, \ne \, \emptyset .
\end{displaymath} \end{lem}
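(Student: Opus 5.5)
The plan is to realise a suitable involution as a commutator $aga^{-1}g^{-1}$, where $a$ is an involution supported on the corner $eRe$. Such a commutator lies in $\Cl_{\GL(R)}(g)\,\Cl_{\GL(R)}(g^{-1})\subseteq \Cl_{\GL(R)}(\{g,g^{-1}\})^{2}$.

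Fix $g\in\GLM(R)$ and $e\in\E(R)$ with ${R_{R}}\cong {eR_{R}}$ and $e\perp f$, where $f\defeq geg^{-1}$. First I record the size of the corners:
\begin{itemize}
\item[---\,] As in the proof of Proposition~\ref{proposition:purely.infinite.involutions}, $e\in\Emo(R)$ and $R$ is purely infinite.
\item[---\,] By Remark~\ref{remark:cayley}\ref{remark:cayley.1}, $eRe\cong\End(eR_{R})\cong\End(R_{R})\cong R$, so $eRe$ is again purely infinite, regular and right self-injective.
\item[---\,] From ${R_{R}}\cong {eR_{R}}\leq {(e+f)R_{R}}\leq {R_{R}}$, Remark~\ref{remark:summands.of.injective.modules} and Theorem~\ref{theorem:schroeder.bernstein} give ${R_{R}}\cong {(e+f)R_{R}}$.
\end{itemize}
By Proposition~\ref{proposition:purely.infinite.involutions} applied to $eRe$, pick $h\in\IM(eRe)$. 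This gives $e_{0}\in\E(eRe)$ with ${eRe_{eRe}}\cong {e_{0}(eRe)_{eRe}}$ and $he_{0}h=e-e_{0}$. Set $a\defeq h+1-e\in\I(\Gamma_{R}(e))$, using Lemma~\ref{lemma:corner.group}.

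Now put $k\defeq aga^{-1}g^{-1}=a\cdot(gag^{-1})$. The factor $gag^{-1}$ lies in $\Gamma_{R}(f)$ by Lemma~\ref{lemma:corner.group}\ref{lemma:corner.group.3}. Since $e\perp f$, it commutes with $a$ by Lemma~\ref{lemma:corner.group}\ref{lemma:corner.group.2}. Hence $k$ is a product of two commuting involutions, so $k\in\I(R)$. Moreover $k=h'+1-(e+f)$ with
\begin{displaymath}
	h' \, \defeq \, h+ghg^{-1} \, \in \, (e+f)R(e+f) .
\end{displaymath}
To see this, expand $a=h+(1-e)$ and $gag^{-1}=ghg^{-1}+(1-f)$, and use the relations $h=ehe$, $ghg^{-1}=f(ghg^{-1})f$ and $ef=fe=0$.

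By Lemma~\ref{lemma:characterization.of.ime}, it then suffices to show $h'\in\IM(S)$ for $S\defeq(e+f)R(e+f)$.

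\begin{itemize}
\item[---\,] \emph{Candidate idempotent.} Take $e_{0}'\defeq e_{0}+ge_{0}g^{-1}\in\E(S)$; it is idempotent because $e_{0}\leq e$ and $ge_{0}g^{-1}\leq f$ are orthogonal.
\item[---\,] \emph{Conjugation identity.} Using the same orthogonality, the cross terms vanish, giving $h'e_{0}'h'=(e-e_{0})+g(e-e_{0})g^{-1}=(e+f)-e_{0}'$.
\item[---\,] \emph{Size of $e_{0}'$.} By Lemma~\ref{lemma:ideal.isomorphism}, ${e_{0}R_{R}}\cong {eR_{R}}\cong {R_{R}}$. Hence ${R_{R}}\lesssim {e_{0}'R_{R}}\leq {R_{R}}$, and Theorem~\ref{theorem:schroeder.bernstein} yields ${e_{0}'R_{R}}\cong {R_{R}}\cong {(e+f)R_{R}}$. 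Lemma~\ref{lemma:ideal.isomorphism} then transfers this to ${e_{0}'S_{S}}\cong {S_{S}}$.
\end{itemize}
Therefore $h'\in\IM(S)$, so $k\in\IME(R)$, which proves the lemma.

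I expect the main obstacle to be choosing the right element; I would first try exactly this commutator with an $\IM$-involution of the corner $eRe$. After that, the verification consists of routine orthogonality bookkeeping plus the Schröder--Bernstein step for the sizes of the idempotents.
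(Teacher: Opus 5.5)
Your proposal is correct and follows the paper's argument: the paper also takes an involution $h \in \I(\Gamma_{R}(e))$ swapping $e_{0}$ and $e-e_{0}$, forms the same commutator $hghg^{-1}$ of two commuting involutions, and uses the same idempotent $e_{0}+ge_{0}g^{-1}$. The differences are cosmetic. The paper obtains $h$ from Lemma~\ref{lemma:splitting.moieties} and Lemma~\ref{lemma:conjugacy.of.moieties}\ref{lemma:conjugacy.of.moieties.2} rather than from $\IM(eRe)$, and it checks the definition of $\IME(R)$ directly instead of passing through Lemma~\ref{lemma:characterization.of.ime}.
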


\begin{proof} Let $g \in \GLM(R)$. Choose any $e \in \E(R)$ such that ${R_{R}} \cong {eR_{R}}$ and $e \perp geg^{-1}$. Then \begin{displaymath}
	{R_{R}} \, \cong \, {eR_{R}} \, \cong \, {geR_{R}} \, = \, {geg^{-1}R_{R}} \, \leq \, {(1-e)R_{R}} \, \leq \, {R_{R}}
\end{displaymath} and hence ${R_{R}} \cong {(1-e)R_{R}}$ by Remark~\ref{remark:summands.of.injective.modules} and Theorem~\ref{theorem:schroeder.bernstein}, wherefore $e \in \Emo(R)$. By Lemma~\ref{lemma:splitting.moieties}, there exist $e_{0},e_{1} \in \Emo(R)$ such that ${e_{0}} \perp {e_{1}}$ and $e = e_{0} + e_{1}$. Since $e \perp geg^{-1}$, we know that \begin{displaymath}
	f_{0} \, \defeq \, e_{0} + ge_{0}g^{-1}, \qquad f_{1} \, \defeq \, e_{1} + ge_{1}g^{-1}
\end{displaymath} are elements of $\E(R)$ with ${f_{0}} \perp {f_{1}}$ and $f_{0} + f_{1} = e+geg^{-1}$. Also, we observe that ${R_{R}} \cong {e_{0}R_{R}} \leq {f_{0}R_{R}} \leq {R_{R}}$ and thus ${R_{R}} \cong {f_{0}R_{R}}$ by Remark~\ref{remark:summands.of.injective.modules} and Theorem~\ref{theorem:schroeder.bernstein}. Furthermore, according to Lemma~\ref{lemma:conjugacy.of.moieties}\ref{lemma:conjugacy.of.moieties.2}, we find $h \in \I(\Gamma_{R}(e))$ such that $he_{0}h = e_{1}$. As $h \in \Gamma_{R}(e)$ and therefore \begin{displaymath}
	ghg^{-1} \, \in \, g\Gamma_{R}(e)g^{-1} \, \stackrel{\ref{lemma:corner.group}\ref{lemma:corner.group.3}}{=} \, \Gamma_{R}\!\left( geg^{-1}\right) 
\end{displaymath} and moreover $e \perp geg^{-1}$, we see that \begin{equation}\label{eq:commute}
	hghg^{-1} \, \stackrel{\ref{lemma:corner.group}\ref{lemma:corner.group.2}}{=} \, ghg^{-1}h .
\end{equation} Since $h \in \I(R)$ and thus $ghg^{-1} \in \I(R)$, we infer from~\eqref{eq:commute} that $t \defeq hghg^{-1} \in \I(R)$. We also observe that \begin{align*}
	te_{0}t \, &\stackrel{\eqref{eq:commute}}{=} \, ghg^{-1}he_{0}hghg^{-1} \, = \, ghg^{-1}e_{1}ghg^{-1} \, \stackrel{e_{1} \perp geg^{-1}}{=} \, e_{1} , \\
	tge_{0}g^{-1}t \, &\stackrel{\eqref{eq:commute}}{=} \, hghg^{-1}ge_{0}g^{-1}ghg^{-1}h \, = \, hghe_{0}hg^{-1}h \, = \, hge_{1}g^{-1}h \stackrel{e \perp ge_{1}g^{-1}}{=} \, ge_{1}g^{-1} ,
\end{align*} which entails that \begin{equation}\label{eq:conjugate}
	tf_{0}t \, = \, te_{0}t + tge_{0}g^{-1}t \, = \, e_{1} + ge_{1}g^{-1} \, = \, f_{1} .
\end{equation} From~\eqref{eq:conjugate} and ${f_{0}} \perp {f_{1}}$, we infer that $f_{0} \perp tf_{0}t$. Moreover, \begin{displaymath}
	t \, \in \, \Gamma_{R}(e)\Gamma_{R}\!\left( geg^{-1}\right) \, \stackrel{\ref{lemma:corner.group}\ref{lemma:corner.group.1}}{\subseteq} \, \Gamma_{R}\!\left(e + geg^{-1}\right) \, = \, \Gamma_{R}(f_{0}+f_{1}) \, \stackrel{\eqref{eq:conjugate}}{=} \, \Gamma_{R}(f_{0}+tf_{0}t).
\end{displaymath} Hence, $t \in \IME(R)$. Finally, \begin{displaymath}
	t \, = \, ghg^{-1}h \, = \, ghg^{-1}h^{-1} \, \in \, \Cl_{\GL(R)}\!\left( \left\{ g,g^{-1}\right\}\right)^{2} 
\end{displaymath} and so $t \in \IME(R) \cap \Cl_{\GL(R)}\!\left( \left\{ g,g^{-1}\right\}\right)^{2}$. Hence, $\IME(R) \cap \Cl_{\GL(R)}\!\left( \left\{ g,g^{-1}\right\}\right)^{2} \ne \emptyset$. \end{proof}

\begin{thm}\label{theorem:generation.by.moietious.units} Let $R$ be a regular, right self-injective ring. For every $g \in \GLM(R)$, \begin{displaymath}
	\GL(R) \, = \, \Cl_{\GL(R)}\!\left( \left\{ g,g^{-1}\right\}\right)^{128} .
\end{displaymath} \end{thm}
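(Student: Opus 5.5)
The plan is to combine Lemma~\ref{lemma:generating.moietious.involutions} with Theorem~\ref{theorem:generation.by.moietious.involutions}. Let $g \in \GLM(R)$ and abbreviate $C \defeq \Cl_{\GL(R)}(\{g,g^{-1}\})$. First, Lemma~\ref{lemma:generating.moietious.involutions} provides some $t \in \IME(R) \cap C^{2}$. Applying Theorem~\ref{theorem:generation.by.moietious.involutions} to $t$ then gives
\begin{displaymath}
	\GL(R) \, = \, \Cl_{\GL(R)}(t)^{64} .
\end{displaymath}

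Next we check that conjugating $t$ stays inside $C^{2}$. Since $C$ is closed under conjugation by elements of $\GL(R)$, so is $C^{2}$: if $t = c_{1}c_{2}$ with $c_{1},c_{2} \in C$, then $utu^{-1} = (uc_{1}u^{-1})(uc_{2}u^{-1}) \in C^{2}$ for every $u \in \GL(R)$. Hence $\Cl_{\GL(R)}(t) \subseteq C^{2}$, and therefore
\begin{displaymath}
	\GL(R) \, = \, \Cl_{\GL(R)}(t)^{64} \, \subseteq \, \left(C^{2}\right)^{64} \, = \, C^{128} \, \subseteq \, \GL(R) ,
\end{displaymath}
which yields the asserted equality.

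No real obstacle remains, because the substantive work is already contained in the two cited results. The only points to confirm are the conjugation-invariance of $C^{2}$ and that the conventions match. Theorem~\ref{theorem:generation.by.moietious.involutions} is stated for an arbitrary regular, right self-injective ring, and Lemma~\ref{lemma:generating.moietious.involutions} requires exactly $g \in \GLM(R)$. The exponent count $2 \cdot 64 = 128$ matches the statement.
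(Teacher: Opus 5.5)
Your proposal is correct and matches the paper's proof: you obtain an element of $\IME(R) \cap \Cl_{\GL(R)}(\{g,g^{-1}\})^{2}$ from Lemma~\ref{lemma:generating.moietious.involutions} and then apply Theorem~\ref{theorem:generation.by.moietious.involutions}. Your explicit check that $\Cl_{\GL(R)}(\{g,g^{-1}\})^{2}$ is conjugation-invariant supplies the step the paper leaves implicit in the inclusion $\Cl_{\GL(R)}(h)^{64} \subseteq \Cl_{\GL(R)}(\{g,g^{-1}\})^{128}$.
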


\begin{proof} Let $g \in \GLM(R)$. By Lemma~\ref{lemma:generating.moietious.involutions}, we find $h \in \IME(R) \cap \Cl_{\GL(R)}\!\left( \left\{ g,g^{-1}\right\}\right)^{2}$. Hence, \begin{displaymath}
	\GL(R) \, \stackrel{\ref{theorem:generation.by.moietious.involutions}}{=} \, \Cl_{\GL(R)}(h)^{64} \, \subseteq \, \Cl_{\GL(R)}\!\left( \left\{ g,g^{-1}\right\}\right)^{128} .\qedhere
\end{displaymath} \end{proof}

We conclude this section by deducing verbal simplicity and verbal ellipticity of the considered groups. Our argument combines Theorem~\ref{theorem:generation.by.moietious.involutions} with the following result due to Larsen and Shalev, established in both~\cite{LarsenShalev08} and~\cite{LarsenShalev09} via independent methods.

\begin{thm}[{\cite[Theorem~1.19]{LarsenShalev08}}; {\cite[Theorem~1.5]{LarsenShalev09}}]\label{theorem:larsen.shalev} Let $w \in \free(\N)\setminus \{ 1 \}$. Then there exists $n \in \N_{>0}$ such that \begin{displaymath}
	\forall m \in \N_{\geq n} \colon \qquad w(\Alt(m))^{2} \, = \, \Alt(m) .
\end{displaymath} \end{thm}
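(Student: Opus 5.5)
The theorem is quoted from Larsen and Shalev, and the paper only uses it as a black box. If I had to reconstruct a proof, I would split it into two independent ingredients. The first is a covering statement: every conjugacy class $C$ of $\Alt(m)$ consisting of permutations with few cycles satisfies $C^{2}=\Alt(m)$. The second is a construction showing that $w(\Alt(m))$ contains such a permutation once $m$ is large. Since $w(\Alt(m))$ is invariant under conjugation, the two ingredients give $w(\Alt(m))^{2}\supseteq C^{2}=\Alt(m)$ for every $m\geq n$.

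For the covering step I would use the character-theoretic criterion. A target $g$ lies in $C^{2}$ if and only if
\begin{displaymath}
	\sum\nolimits_{\chi}\chi(c)^{2}\overline{\chi(g)}/\chi(1)\neq 0 ,
\end{displaymath}
where $\chi$ runs over the irreducible characters and $c\in C$. It therefore suffices that $\sum_{\chi\neq 1}\vert\chi(c)\vert^{2}/\chi(1)<1$ up to the usual correction, since $\vert\chi(g)\vert\leq\chi(1)$. For $c$ with a bounded number $k$ of cycles, Murnaghan--Nakayama forces $\vert\chi(c)\vert$ to be tiny. Only characters whose Young diagrams admit a rim-hook decomposition with $k$ hooks contribute, and then $\vert\chi(c)\vert$ is bounded in terms of $k$. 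The degrees $\chi(1)$ grow rapidly away from the trivial and sign characters, so the sum tends to $0$. One then passes from $\Sym(m)$ to $\Alt(m)$ by handling split classes and split characters separately. Alternatively, for $m$-cycles or products of two cycles one can cite the classical results of Bertram and Dvir on products of long cycles directly.

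The main obstacle is the construction step: producing, for every large $m$, a value of $w$ in $\Alt(m)$ with boundedly many cycles. My plan is to first fix a finite symmetric group in which $w$ is not a law. This uses residual finiteness of $\free(\N)$, together with the fact that a nontrivial word is not a law in all large $\Sym(k)$, by embedding a finite quotient detecting $w$.

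From there I would extract, for a cofinal family of lengths, an evaluation of $w$ that acts as a single long cycle on a large block. I would try evaluations on cyclic-by-cyclic subgroups, such as $\Z/p\rtimes\Z/(p-1)\leq\Sym(p)$ for primes $p$, where nontrivial elements of the normal subgroup $\Z/p$ are $p$-cycles. Non-triviality of $w$ on these groups for suitable large $p$ would come from the fact that metabelian groups of this shape together satisfy no nontrivial common law beyond the derived-length obstructions, with a separate argument if $w$ lies in the second derived subgroup.

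Finally I would assemble long cycles of coprime lengths $p_{1},\ldots,p_{k}$ (primes) on disjoint blocks, taking block-diagonal evaluations, so that the resulting value has $k$ cycles plus few fixed points. A prime-gap (Bertrand-type) argument lets me fill any large $m$ with a bounded number of such blocks and a bounded remainder, while keeping everything even. I expect this step to require the most care, specifically in handling words lying deep in the derived series, where Larsen and Shalev use finer tools such as Borel-type dominance of word maps over $\mathrm{PSL}_{2}$ and embeddings into $\Alt(m)$.
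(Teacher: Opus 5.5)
The paper does not prove this statement; it quotes it from Larsen--Shalev. Your reconstruction has the right overall shape (a covering step plus a construction step), but there is a genuine gap exactly where the theorem is hard.

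\textbf{The construction step fails for deep words.} Suppose $w$ lies in the second derived subgroup $\free(\N)''$, for example $w=[[x_{1},x_{2}],[x_{3},x_{4}]]$. Then $w$ is a law in every metabelian group, so every evaluation in $\Z/p\rtimes\Z/(p-1)$, or in any metabelian block group, is trivial. More generally, any $w\in\free(\N)^{(d)}$ vanishes identically on every solvable group of derived length at most $d$. Hence no construction from block groups of bounded derived length can work for all $w$.

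The ``separate argument'' you defer is therefore the actual content of the theorem. One needs non-solvable subgroups of $\Alt(m)$, e.g.\ $\mathrm{PSL}_{2}(p)$ acting on the $p+1$ points of the projective line. One also needs an algebro-geometric input (dominance of word maps on $\mathrm{SL}_{2}$ in the spirit of Borel, together with Lang--Weil) guaranteeing that $w(\mathrm{PSL}_{2}(p))$ contains suitable semisimple elements for all large $p$. Such elements act with two long cycles, not one.

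Your metabelian argument does handle $w\notin\free(\N)''$, provided you restrict inputs to $\mathbb{F}_{p}\rtimes(\mathbb{F}_{p}^{\times})^{2}\leq\Alt(p)$, since $\Z/p\rtimes\Z/(p-1)\not\leq\Alt(p)$. But that is the easy part.

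\textbf{The covering step is incorrect as written.} From $\vert\chi(g)\vert\leq\chi(1)$ you only get
\[
\sum_{\chi\neq1}\vert\chi(c)\vert^{2}\vert\chi(g)\vert/\chi(1)\;\leq\;\sum_{\chi\neq1}\vert\chi(c)\vert^{2}\;=\;\vert C_{\Sym(m)}(c)\vert-1
\]
by column orthogonality. This equals $m-1$ for an $m$-cycle and is never small. The quantity $\sum_{\chi\neq1}\vert\chi(c)\vert^{2}/\chi(1)$ does not bound the error term.

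In fact no absolute estimate uniform in $g$ can work. For $c$ an $m$-cycle, all hook characters take the values $\pm1$ at $c$. For $g$ a $3$-cycle, $\chi(g)/\chi(1)\to1$ for $\chi^{(m-1,1)}$, $\chi^{(m-2,1,1)}$ and their sign twists. So the non-trivial terms have total modulus larger than the contribution of the trivial and sign characters, and $g\in C^{2}$ holds only because of positivity. You need either sign information combined with sharp bounds for targets with few fixed points, or a combinatorial covering theorem.

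For the words your construction does handle, Bertram's theorem suffices: every element of $\Alt(m)$ is a product of two $l$-cycles for $3m/4\leq l\leq m$. Apply it with a single prime cycle of length $p\in[3m/4,m]$. This also makes the block assembly unnecessary.

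That assembly fails anyway as stated. Bertrand or prime-gap results cannot write every large $m$ as boundedly many large primes plus a bounded remainder, because a greedy choice leaves a remainder growing with $m$ after any fixed number of steps; one would need Vinogradov-type results. In the hard case the available values have several long cycles, so you need a genuine covering theorem for classes with few cycles, and your sketch does not supply one.
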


The proof of the following result is similar to that of~\cite[Theorem~1.6]{Schneider25}.

\begin{thm}\label{theorem:verbally.elliptic} Let $R$ be a purely infinite, regular, right self-injective ring. For every $w \in \free (\N)\setminus \{ 1 \}$, \begin{displaymath}
	\GL(R) \, = \, w(\GL(R))^{128} .
\end{displaymath} In particular, $\GL(R)$ is verbally simple and verbally elliptic. \end{thm}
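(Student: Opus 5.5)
The plan is to realize a moietious involution as a product of two word values via an embedded copy of a large finite alternating group, and then to apply Theorem~\ref{theorem:generation.by.moietious.involutions}, mirroring Remark~\ref{remark:word.images}.

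Fix $w \in \free(\N)\setminus\{1\}$ and let $n$ be as in Theorem~\ref{theorem:larsen.shalev}. Choose an even integer $m \geq \max\{n,4\}$ and let $X \defeq \{1,\ldots,m\}$. By Theorem~\ref{theorem:purely.infinite}, $R_{R}^{\oplus X} \cong R_{R}$. So there is an embedding $\eta \colon R_{R}^{\oplus X} \to R_{R}$ with essential (indeed full) image, and Lemma~\ref{lemma:essential.embedding} yields the homomorphism $\iota \colon G(X) \to G \defeq \GL(R)$.

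Restrict $\iota$ to the subgroup $\{(1,\sigma) \mid \sigma \in \Alt(X)\} \cong \Alt(m)$ and call the resulting homomorphism $\pi \colon \Alt(m) \to G$. Take a fixed-point-free involution $u \in \Alt(m)$; since $m \geq 4$ is even, a product of $m/2$ disjoint transpositions works, provided $m/2$ is even, which we may arrange by taking $m$ divisible by $4$.

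By Lemma~\ref{lemma:essential.embedding}\ref{lemma:essential.embedding.6}, $\pi(u) = \iota(1,u) \in \IME(R)$. By Theorem~\ref{theorem:larsen.shalev}, $u \in w(\Alt(m))^{2}$. By Remark~\ref{remark:word.images}\ref{remark:word.images.2}, $\pi(w(\Alt(m))) \subseteq w(G)$. Hence $\pi(u) \in w(G)^{2}$.

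Since $w(G)$ is closed under conjugation, $\Cl_{G}(\pi(u)) \subseteq w(G)^{2}$. Theorem~\ref{theorem:generation.by.moietious.involutions} then gives
\begin{displaymath}
	G \, = \, \Cl_{G}(\pi(u))^{64} \, \subseteq \, w(G)^{128} .
\end{displaymath}
For the final claims, note that $w(G)^{128} \subseteq \langle w(G) \rangle$. Thus $\langle w(G)\rangle = G$ for every $w \ne 1$, and $\langle w(1)\rangle$ is trivial, which gives verbal simplicity. Verbal ellipticity follows with the uniform bound $128$.

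The only delicate point is the parity bookkeeping for $u$, namely ensuring that $u$ is even and nontrivial; $u$ need not actually be fixed-point-free, since \ref{lemma:essential.embedding.6} only requires $u \ne 1$. Everything else is a direct chaining of the cited results.
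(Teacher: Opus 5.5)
Your proof is correct and follows the paper's argument. The paper likewise embeds $\Sym(m)$ for $m=\max\{n,4\}$ via Lemma~\ref{lemma:essential.embedding}\ref{lemma:essential.embedding.6}, applies Theorem~\ref{theorem:generation.by.moietious.involutions} to a nontrivial involution in the image of $\Alt(m)$, and transfers $\Alt(m)=w(\Alt(m))^{2}$ into $\GL(R)$ through Remark~\ref{remark:word.images} to obtain $w(\GL(R))^{128}$. The only differences are cosmetic: you track a single involution rather than $\Cl_{\GL(R)}(\iota(\Alt(m)))$, and the divisibility of $m$ by $4$ is unnecessary, since, as you note yourself, a double transposition already suffices.
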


\begin{proof} Let $w \in \free(\N)\setminus \{ 1 \}$. Due to Theorem~\ref{theorem:larsen.shalev}, there exists $n \in \N$ such that \begin{equation}\label{eq:larsen.shalev}
	\forall m \in \N_{\geq n} \colon \qquad w(\Alt(m))^{2} \, = \, \Alt(m) .
\end{equation} Let $m \defeq \max \{ n, 4 \}$. According to Theorem~\ref{theorem:purely.infinite} and Lemma~\ref{lemma:essential.embedding}\ref{lemma:essential.embedding.6}, there exists a homomorphism $\iota \colon \Sym(m) \to \GL(R)$ such that $\iota(\I(\Sym(m))\setminus \{ 1\}) \subseteq \IME(R)$. Since $m \geq 4$, we know that $\I(\Alt(m))\setminus \{ 1\} \ne \emptyset$, whence Theorem~\ref{theorem:generation.by.moietious.involutions} entails that \begin{displaymath}
	\GL(R) \, = \, \Cl_{\GL(R)}(\iota(\Alt(m)))^{64} .
\end{displaymath} From this and \begin{displaymath}
	\Alt(m) \, \stackrel{\eqref{eq:larsen.shalev}}{=} \, w(\Alt(m))^{2} \, \stackrel{\ref{remark:word.images}\ref{remark:word.images.1}}{=} \, w^{\otimes 2}(\Alt(m)) ,
\end{displaymath} we conclude that \begin{displaymath}
	\GL(R) \, \stackrel{\ref{remark:word.images}\ref{remark:word.images.2}}{=} \, w^{\otimes 2}(\GL(R))^{64} \, \stackrel{\ref{remark:word.images}\ref{remark:word.images.1}}{=} \, w(\GL(R))^{128} . \qedhere
\end{displaymath} \end{proof}

\section{Goodearl--Boyle dimension}\label{section:goodearl.boyle.dimension}

This final section establishes several connections between structural properties of unit groups on the one hand and a notion of dimension suited for purely infinite, regular, right self-injective rings on the other. The latter is formulated on the basis of Theorem~\ref{theorem:purely.infinite}. According to~\cite[\S1.D, Proposition~1.27(a)]{GoodearlRingTheory}, if $R$ is a regular ring, then the module $R_{R}$ is \emph{non-singular} in the sense of~\cite[\S1.D, p.~30]{GoodearlRingTheory}, i.e., \begin{displaymath}
	\{ x \in R \mid \{ y \in R \mid xy = 0 \} \less {R_{R}} \} \, = \, \{ 0 \} .
\end{displaymath} This justifies the following specialization of the definition given in~\cite[XII, p.~92]{GoodearlBoyle}. %Alternative references: \cite[3, \S7A, (7.5)~Definition, p.~247]{lam} and~\cite[3, \S7D, (7.52)~Theorem, (1), p.~262]{lam}.

\begin{definition}[cf.~{\cite[XII, p.~92]{GoodearlBoyle}}]\label{definition:goodearl.boyle} Let $R$ be a regular, right self-injective ring. The \emph{Goodearl--Boyle dimension} of $R$, denoted by $\mu(R)$, is defined as follows: if $R = \{ 0 \}$, then $\mu(R) \defeq 0$; and if $R \ne \{ 0 \}$, then $\mu(R)$ is the smallest infinite cardinal $\alpha$ such that $R_{R}^{\oplus \alpha} \not\lesssim R$. \end{definition}

\begin{remark}[{\cite[XII, p.~92]{GoodearlBoyle}}]\label{remark:goodearl.boyle} Let $R$ be a non-zero, regular, right self-injective ring. Then $\mu(R)$ is the smallest infinite cardinal $\alpha$ such that $R_{R}^{\oplus \alpha}$ is not isomorphic to an essential submodule of $R_{R}$. \end{remark}

\begin{prop}[{\cite[XII, Proposition~12.1, p.~92]{GoodearlBoyle}}]\label{proposition:goodearl.boyle} A regular, right self-injective ring $R$ is purely infinite if and only if $\mu(R) \ne \aleph_{0}$. \end{prop}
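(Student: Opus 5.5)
The plan is to reduce the statement to Theorem~\ref{theorem:purely.infinite}, treating the zero ring separately. Besides that theorem, the only input needed is that $\mu(R)$ is well defined.

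\emph{The zero ring.} If $R = \{0\}$, then $\mu(R) = 0 \ne \aleph_{0}$ by definition. Also $R$ has no non-zero idempotents at all, so it is vacuously purely infinite. Hence both sides of the equivalence hold.

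\emph{Well-definedness.} Assume $R \ne \{0\}$. Some infinite cardinal $\alpha$ satisfies $R_{R}^{\oplus\alpha} \not\lesssim R_{R}$: for any infinite $\alpha > |R|$ we have $|R^{\oplus\alpha}| \geq \alpha > |R|$, so no embedding into $R$ exists. Therefore $\mu(R)$ is a well-defined infinite cardinal, so $\mu(R) \geq \aleph_{0}$.

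\emph{Reformulation.} Since $\aleph_0$ is the least infinite cardinal, $\mu(R) = \aleph_{0}$ holds precisely when $R_{R}^{\oplus\aleph_{0}} \not\lesssim R_{R}$. Equivalently, $\mu(R) \ne \aleph_{0}$ if and only if $R_{R}^{\oplus\aleph_{0}} \lesssim R_{R}$. It therefore suffices to show that $R$ is purely infinite if and only if $R_{R}^{\oplus\aleph_{0}} \lesssim R_{R}$.

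\emph{Forward direction.} If $R$ is purely infinite, the last equivalent condition in Theorem~\ref{theorem:purely.infinite} says that $R_{R}^{\oplus\aleph_{0}}$ is isomorphic to an essential submodule of $R_{R}$. In particular $R_{R}^{\oplus\aleph_{0}} \lesssim R_{R}$.

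\emph{Backward direction.} If $R_{R}^{\oplus\aleph_{0}} \lesssim R_{R}$, compose with the obvious embedding $R_{R}^{\oplus 2} \hookrightarrow R_{R}^{\oplus\aleph_{0}}$ onto two coordinates. This gives $R_{R}^{\oplus 2} \lesssim R_{R}$. By the second condition in Theorem~\ref{theorem:purely.infinite} with $n = 2$, $R$ is purely infinite.

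There is no genuine obstacle; the argument is bookkeeping around the definition of $\mu(R)$. The only points needing care are the zero-ring convention and the observation that $\mu(R) \geq \aleph_{0}$ is automatic. Together these make ``$\mu(R) \ne \aleph_0$'' equivalent to the single embeddability statement $R_{R}^{\oplus\aleph_{0}} \lesssim R_{R}$.
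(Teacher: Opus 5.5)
Your argument is correct. Note that the paper gives no proof of this proposition: it imports it verbatim from Goodearl--Boyle, where $\mu$ is developed for nonsingular injective modules in general.

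You instead derive it as a formal corollary of Theorem~\ref{theorem:purely.infinite}, which the paper has already quoted from Goodearl's book. Once the zero ring is handled via the convention $\mu(\{0\}) = 0$, and your cardinality argument shows that $\mu(R)$ exists and is at least $\aleph_{0}$ for $R \neq \{0\}$, the condition $\mu(R) \neq \aleph_{0}$ becomes literally equivalent to $R_{R}^{\oplus\aleph_{0}} \lesssim R_{R}$. That condition is sandwiched between the fourth condition of the theorem, which implies it, and the second condition with $n = 2$, which it implies.

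Your route makes the paper's second external citation redundant. It also makes explicit the two bookkeeping points the citation hides: the zero-ring convention and the existence of $\mu(R)$. The citation, in exchange, costs nothing and carries the module-theoretic generality of Goodearl--Boyle. In your version all the genuine ring-theoretic content sits in Theorem~\ref{theorem:purely.infinite}, so no new argument about regular self-injective rings is needed.
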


Our first result involving the Goodearl--Boyle dimension concerns (a qualitative strengthening of) minimal almost periodicity.

\begin{prop}\label{proposition:uniformly.bounded.normal.generation.over.large.subgroup} Let $R$ be a regular, right self-injective ring. For every infinite cardinal $\alpha$ with $\alpha < \mu(R)$, the group $\GL(R)$ has uniformly bounded normal generation over some subgroup isomorphic to $\Sym(\alpha)$. \end{prop}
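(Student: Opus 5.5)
Given an infinite cardinal $\alpha < \mu(R)$, the plan is to embed $\Sym(\alpha)$ into $\GL(R)$ via the homomorphism of Lemma~\ref{lemma:essential.embedding}. Then every non-trivial element of the image (in particular every element of $TT^{-1}\setminus\{1\}$ with $T$ the image subgroup, since $TT^{-1}=T$) lies in $\GLM(R)$. Theorem~\ref{theorem:generation.by.moietious.units} then gives uniform bounded normal generation with constant $n=128$.

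\emph{Step 1: construct the essential embedding.} Since $R$ is non-zero (otherwise $\mu(R)=0$ and there is no such $\alpha$) and $\alpha<\mu(R)$, Definition~\ref{definition:goodearl.boyle} and Remark~\ref{remark:goodearl.boyle} yield an embedding $\eta \colon R_{R}^{\oplus \alpha} \to R_{R}$ with $\eta(R^{\oplus\alpha}) \less R_{R}$. Here the remark applies because $\alpha$ is infinite and smaller than the least infinite cardinal at which essential embeddability fails. Put $X \defeq \alpha$, so $\vert X\vert>1$, and let $\iota \colon G(X) \to \GL(R)$ be the homomorphism of Lemma~\ref{lemma:essential.embedding}, with $G=\GL(R)$.

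\emph{Step 2: identify the subgroup.} Let $H \defeq \{\iota(1,\sigma) \mid \sigma \in \Sym(X)\}$; it is the image of the subgroup $S\cong\Sym(\alpha)$ of $G(X)$. Injectivity of $\iota$ on $S$ is the one point needing care. The footnote to Lemma~\ref{lemma:essential.embedding} asserts that $\psi$ is injective for $R\neq 0$, and $\phi$ is a ring embedding by Lemma~\ref{lemma:essential.new}, so $\iota$ is injective. Directly: if $\sigma\ne 1$, choose $x$ with $\sigma(x)\ne x$. The computation in Lemma~\ref{lemma:essential.embedding}\ref{lemma:essential.embedding.4} gives $\iota(1,\sigma)e_{\{x\}}\iota(1,\sigma)^{-1}=e_{\{\sigma(x)\}}\perp e_{\{x\}}$. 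Since $e_{\{x\}}\ne 0$ is a moiety, this forces $\iota(1,\sigma)\neq 1$. Hence $H\cong\Sym(\alpha)$.

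\emph{Step 3: conclude.} Take $T \defeq H$, so $TT^{-1}=H$. For every $h\in H\setminus\{1\}$ we have $h=\iota(1,\sigma)$ with $\sigma\ne1$, so $h\in\GLM(R)$ by Lemma~\ref{lemma:essential.embedding}\ref{lemma:essential.embedding.4}. Theorem~\ref{theorem:generation.by.moietious.units} then gives $\GL(R)=\Cl_{\GL(R)}(\{h,h^{-1}\})^{128}$. This is precisely uniformly bounded normal generation over $H$ with $n=128$.

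The main obstacle is Step 1, namely extracting an \emph{essential} embedding of $R_R^{\oplus\alpha}$ from $\alpha<\mu(R)$. Everything else is a direct invocation of earlier results. If Remark~\ref{remark:goodearl.boyle} is read as stated, this step is immediate. Otherwise, I would take a mere embedding $R_R^{\oplus\alpha}\lesssim R_R$ and enlarge it to an essential one by adding a complement. Concretely, choose a maximal submodule meeting the image trivially and absorb it into one summand via the injective hull, in the style of the construction of $\eta$ in the proof of Lemma~\ref{lemma:generation.by.moietious.involutions}.
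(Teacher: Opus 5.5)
Your proposal is correct and follows the paper's proof exactly: Remark~\ref{remark:goodearl.boyle} supplies the essential embedding of $R_{R}^{\oplus\alpha}$, Lemma~\ref{lemma:essential.embedding}\ref{lemma:essential.embedding.4} places every non-trivial element of the image of $\Sym(\alpha)$ in $\GLM(R)$, and Theorem~\ref{theorem:generation.by.moietious.units} yields the uniform constant $128$. Your checks that $R \ne \{0\}$ and that $\iota$ is injective on $\Sym(\alpha)$ fill in details the paper leaves implicit, and the fallback in your final paragraph is unnecessary because the remark is available as stated.
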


\begin{proof} This is an immediate consequence of Remark~\ref{remark:goodearl.boyle}, Lemma~\ref{lemma:essential.embedding}\ref{lemma:essential.embedding.4}, and Theorem~\ref{theorem:generation.by.moietious.units}. \end{proof}

\begin{cor}\label{corollary:unit.group.minap} If $R$ is a purely infinite, regular, right self-injective ring, then $\GL(R)$ is minimally almost periodic. \end{cor}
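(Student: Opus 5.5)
The plan is to combine Proposition~\ref{proposition:uniformly.bounded.normal.generation.over.large.subgroup} with Proposition~\ref{proposition:rigidity.for.SIN.reps}\ref{proposition:rigidity.for.SIN.reps.1}.

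First, I reduce to the case $R \ne \{0\}$. If $R = \{0\}$, then $\GL(R)$ is trivial, and the statement holds vacuously.

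So let $R$ be non-zero, purely infinite, regular and right self-injective. By Proposition~\ref{proposition:goodearl.boyle} we have $\mu(R) \ne \aleph_{0}$. Since $R \ne \{0\}$, $\mu(R)$ is by definition an infinite cardinal, so $\aleph_{0} < \mu(R)$. Taking $\alpha \defeq \aleph_{0}$ in Proposition~\ref{proposition:uniformly.bounded.normal.generation.over.large.subgroup} produces a subgroup $T \leq \GL(R)$ with $T \cong \Sym(\aleph_{0})$, such that $\GL(R)$ has uniformly bounded normal generation over $T$.

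Because $T$ is a subgroup, $TT^{-1} = T$. Hence $\GL(R)$ has uniformly bounded normal generation over $TT^{-1}$, and $T$ is infinite. Proposition~\ref{proposition:rigidity.for.SIN.reps}\ref{proposition:rigidity.for.SIN.reps.1} then yields that $\GL(R)$ is minimally almost periodic.

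There is essentially no obstacle. The only point that needs care is the observation that $TT^{-1} = T$ for a subgroup, which is what lets us feed the subgroup obtained from Proposition~\ref{proposition:uniformly.bounded.normal.generation.over.large.subgroup} into Proposition~\ref{proposition:rigidity.for.SIN.reps}.

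As an alternative route avoiding the Goodearl--Boyle dimension, one could obtain $T$ directly. Theorem~\ref{theorem:purely.infinite} gives an essential embedding of $R_{R}^{\oplus \N}$ into $R_{R}$. Lemma~\ref{lemma:essential.embedding}\ref{lemma:essential.embedding.4} then embeds $\Sym(\N)$ into $\GL(R)$ with every non-trivial element landing in $\GLM(R)$, and Theorem~\ref{theorem:generation.by.moietious.units} supplies the uniform bound $128$.
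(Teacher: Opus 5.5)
Your proof is correct and matches the paper's argument: you treat the zero ring separately, and otherwise Proposition~\ref{proposition:goodearl.boyle} gives $\mu(R) > \aleph_{0}$, so Proposition~\ref{proposition:uniformly.bounded.normal.generation.over.large.subgroup} with $\alpha = \aleph_{0}$ supplies an infinite subgroup $T = TT^{-1}$, and Proposition~\ref{proposition:rigidity.for.SIN.reps}\ref{proposition:rigidity.for.SIN.reps.1} then applies. Your alternative route just unpacks the proof of Proposition~\ref{proposition:uniformly.bounded.normal.generation.over.large.subgroup} in the case $\alpha = \aleph_{0}$, using $R \neq \{0\}$ to make the embedded copy of $\Sym(\N)$ infinite.
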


\begin{proof} If $R = \{ 0 \}$, then $\GL(R)$ is trivial, in particular minimally almost periodic. Otherwise, the desired conclusion follows from Proposition~\ref{proposition:goodearl.boyle}, Proposition~\ref{proposition:uniformly.bounded.normal.generation.over.large.subgroup}, and Proposition~\ref{proposition:rigidity.for.SIN.reps}\ref{proposition:rigidity.for.SIN.reps.1}. \end{proof}

The next result establishes a connection between the Goodearl--Boyle dimension of a ring and uniform strong cofinality (Definition~\ref{definition:uniform.strong.cofinality}) of its unit group. The proof involves an application of Theorem~\ref{theorem:abstract.2}.

\begin{thm}\label{theorem:uniform.bergman.unit.group} Let $R$ be a non-zero, purely infinite, regular, right self-injective ring. Then \begin{displaymath}
	\aleph_{0} \, < \, \mu(R) \, \leq \, \scf_{6144}(\GL(R)) .
\end{displaymath} \end{thm}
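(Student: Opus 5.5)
The first inequality is immediate. Since $R$ is purely infinite, Proposition~\ref{proposition:goodearl.boyle} gives $\mu(R) \ne \aleph_{0}$. Since $R \ne \{0\}$, Definition~\ref{definition:goodearl.boyle} makes $\mu(R)$ an infinite cardinal. Together these give $\mu(R) > \aleph_{0}$.

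For the second inequality I will show $\scf_{6144}(\GL(R)) > \alpha$ for every infinite cardinal $\alpha < \mu(R)$. This suffices: Lemma~\ref{lemma:uniform.strong.cofinality}\ref{lemma:uniform.strong.cofinality.1} already gives $\scf_{6144}(\GL(R)) \ge \aleph_{0}$, which covers finite $\alpha$, and $\GL(R)$ is infinite. Fix such an $\alpha$. I apply Theorem~\ref{theorem:abstract.2} to $G \defeq \GL(R)$ with
\begin{displaymath}
	\mathcal{H} \, \defeq \, \{ \Gamma_{R}(e) \mid e \in \Emo(R) \}, \qquad m \defeq 64, \qquad \ell \defeq 32,
\end{displaymath}
so that $3m\ell = 6144$.

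Condition~\ref{theorem:abstract.2.a} is where $\alpha < \mu(R)$ enters. By Remark~\ref{remark:goodearl.boyle} there is an embedding $\eta \colon R_{R}^{\oplus \alpha} \to R_{R}$ with essential image. Apply Lemma~\ref{lemma:essential.embedding} with $X \defeq \alpha$ (so $\vert X \vert > 1$), and let $\iota \colon G(X) \to G$ be the resulting homomorphism.
\begin{itemize}
	\item[---\,] By Lemma~\ref{lemma:essential.embedding}\ref{lemma:essential.embedding.1}, each $e_{\{x\}}$ is a moiety.
	\item[---\,] By Lemma~\ref{lemma:essential.embedding}\ref{lemma:essential.embedding.2}, $\Gamma_{R}(e_{\{x\}}) = \iota(G(X,\{x\}))$.
	\item[---\,] The base group $G^{X} \le G(X)$ is the direct product of its coordinate subgroups, and these are exactly the groups $G(X,\{x\}) \cap G^{X}$, which equal $G(X,\{x\})$ because $\Sym(\{x\})$ is trivial. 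Hence $G^{X}$ is full for $(G(X,\{x\}))_{x \in X}$ by Lemma~\ref{lemma:direct.products.are.full}.
	\item[---\,] Lemma~\ref{lemma:full.homomorphic.images} then shows that $\iota(G^{X})$ is full for $(\Gamma_{R}(e_{\{x\}}))_{x \in X} \in \mathcal{H}^{\alpha}$, and Remark~\ref{remark:full.persistence} passes this fullness to $G$.
\end{itemize}

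For condition~\ref{theorem:abstract.2.b}, let $e \in \Emo(R)$. As in the proof of Lemma~\ref{lemma:generation.by.moietious.involutions.extended}, $S \defeq eRe \cong R$ is purely infinite, regular and right self-injective. By Proposition~\ref{proposition:purely.infinite.involutions} there is some $g' \in \IM(S)$. Theorem~\ref{theorem:generation.by.moietious.involutions} gives $\GL(S) = \Cl_{\GL(S)}(g')^{64}$. Transport this along the isomorphism $a \mapsto a+1-e$ of Lemma~\ref{lemma:corner.group}, and take $F \defeq \{ g'+1-e \}$ and $K \defeq H = \Gamma_{R}(e)$; this yields $K \subseteq \Cl_{H}(F)^{64}$.

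For condition~\ref{theorem:abstract.2.c}, Lemma~\ref{lemma:bounded.generation} provides $g \in \I(R)$ with $G = (\Gamma_{R}(e)g\Gamma_{R}(e)g)^{8}$. Each factor $\Gamma_{R}(e)g\Gamma_{R}(e)g$ is a product of four elements of $\Gamma_{R}(e) \cup \{g\}$, so $G \subseteq (\Gamma_{R}(e) \cup \{ g \})^{32}$.

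Theorem~\ref{theorem:abstract.2} then yields $\scf_{6144}(\GL(R)) > \alpha$. Since $\alpha < \mu(R)$ was arbitrary, $\mu(R) \le \scf_{6144}(\GL(R))$. The only delicate point is condition~\ref{theorem:abstract.2.a}: one must use the singleton subsets, where Lemma~\ref{lemma:essential.embedding}\ref{lemma:essential.embedding.2} gives equality with $\Gamma_{R}(e_{\{x\}})$, and the base group, which is a genuine direct product. The remaining conditions are bookkeeping with earlier results.
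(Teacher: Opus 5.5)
Your proposal is correct and follows essentially the same route as the paper. It applies Theorem~\ref{theorem:abstract.2} with $\mathcal{H} = \{\Gamma_{R}(e) \mid e \in \Emo(R)\}$, $m = 64$, $\ell = 32$, getting condition (A) from an essential embedding of $R_{R}^{\oplus \alpha}$, Lemma~\ref{lemma:essential.embedding}\ref{lemma:essential.embedding.1}+\ref{lemma:essential.embedding.2} and fullness of coordinate subgroups, (B) from $64$-fold normal generation by an involution in $eRe \cong R$, and (C) from Lemma~\ref{lemma:bounded.generation}. The one point you assert without justification is that $\GL(R)$ is infinite, which Theorem~\ref{theorem:abstract.2} requires; it follows at once because $\iota$ is injective when $R \neq \{0\}$ and $G(X)$ contains a copy of $\Sym(X)$.
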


\begin{proof} Note that $\mu(R) > \aleph_{0}$ by Proposition~\ref{proposition:goodearl.boyle}. In particular, $\GL(R)$ is infinite due to Theorem~\ref{theorem:purely.infinite} and Lemma~\ref{lemma:essential.new}. Let $\alpha$ be an infinite cardinal with $\mu(R) > \alpha$. We show that $\scf_{6144}(\GL(R)) > \alpha$ by verifying the conditions specified in Theorem~\ref{theorem:abstract.2} for $m \defeq 64$, $\ell \defeq 32$, and $\mathcal{H} \defeq \{ \Gamma_{R}(e) \mid e \in \Emo(R) \}$.

\ref{theorem:abstract.2.a} Fix any set $X$ with $\vert X \vert = \alpha$. Let $M_{R} \defeq R_{R}^{\oplus X}$, $H \defeq \GL(R)^{X}$ and \begin{displaymath}
	H_{x} \, \defeq \, \{ h \in H \mid h\vert_{X\setminus \{ x\}} = 1 \} \qquad (x \in X) .
\end{displaymath} Consider the group embedding $\psi \colon H \to \GL(M_{R})$ given by \begin{displaymath}
	\psi(g)z \, \defeq \, g \cdot z \qquad (g \in H, \, z \in M) .
\end{displaymath} By Remark~\ref{remark:goodearl.boyle}, since $\mu(R) > \alpha = \vert X \vert$, there exists an embedding $\eta \colon M_{R} \to R_{R}$ with $\im(\eta) \less R_{R}$. Consider the unital ring embedding\footnote{cf.~Lemma~\ref{lemma:essential.new}} \begin{displaymath}
	\phi \colon \, \End(M_{R}) \, \longrightarrow \, R, \quad a \, \longmapsto \, \overline{\eta \circ a \circ {\eta^{-1}}} 
\end{displaymath} as well as the resulting group embedding $\iota \defeq {\phi\vert_{\GL(M_{R})}} \circ \psi \colon H \to \GL(R)$. According to Lemma~\ref{lemma:essential.embedding}\ref{lemma:essential.embedding.1}+\ref{lemma:essential.embedding.2}, for each $x \in X$, the idempotent endomorphism $p_{x} \colon M_{R} \to M_{R}$ defined by \begin{displaymath}
	p_{x}(z)(y) \, \defeq \, \begin{cases}
		\, z(y) & \text{if } y = x , \\
		\, 0 & \text{if } y \in X \setminus \{ x \}
	\end{cases} \qquad (z \in M, \, y \in X) 
\end{displaymath} gives rise to an element $e_{x} \defeq \phi(p_{x}) \in \Emo(R)$ with $\iota(H_{x}) = \Gamma_{R}(e_{x})$. As $H$ is full for $(H_{x})_{x \in X}$ thanks to Lemma~\ref{lemma:direct.products.are.full} and $\iota$ is a homomorphism, Lemma~\ref{lemma:full.homomorphic.images} asserts that $\iota (H)$ is full for $(\iota(H_{x}))_{x \in X} = (\Gamma_{R}(e_{x}))_{x \in X}$, whence $\GL(R)$ is full for $(\Gamma_{R}(e_{x}))_{x \in X} \in \mathcal{H}^{X}$ according to Remark~\ref{remark:full.persistence}.

\ref{theorem:abstract.2.b} Let $e \in \Emo(R)$. As $R_{R} \cong eR_{R}$, we see that \begin{displaymath}
	R \, \stackrel{\ref{remark:cayley}\ref{remark:cayley.1}}{\cong} \, \End(R_{R}) \, \cong \, \End(eR_{R}) \, \stackrel{\ref{remark:cayley}\ref{remark:cayley.1}}{\cong} \, eRe .
\end{displaymath} In particular, $eRe$ is a purely infinite, regular, right self-injective ring. Hence, by Corollary~\ref{corollary:generation.by.moietious.involutions}, there exists $g \in \I(eRe)$ such that $\GL(eRe) = \Cl_{\GL(eRe)}(g)^{64}$. As \begin{displaymath}
	\iota \colon \, \GL(eRe) \, \longrightarrow \, \Gamma_{R}(e), \quad a \, \longmapsto \, a+1-e
\end{displaymath} is a group isomorphism by Lemma~\ref{lemma:corner.group}, we conclude that $\Gamma_{R}(e) = \Cl_{\Gamma_{R}(e)}(\iota(g))^{64}$.

\ref{theorem:abstract.2.c} Let $e \in \Emo(R)$. By Lemma~\ref{lemma:bounded.generation}, there exists $g \in \I(R)$ such that \begin{displaymath}
	\GL(R) \, = \, (\Gamma_{R}(e)g\Gamma_{R}(e)g)^{8} \, \subseteq \, (\Gamma_{R}(e) \cup \{ g \})^{32} .
\end{displaymath}

By Theorem~\ref{theorem:abstract.2}, it follows that $\scf_{6144}(\GL(R)) > \alpha$. This completes the proof. \end{proof}

We obtain a qualitative improvement of Corollary~\ref{corollary:scf.for.rings}.

\begin{cor}\label{corollary:usc.for.rings} Let $R$ be a non-zero, purely infinite, regular, right self-injective ring. Then \begin{displaymath}
	\aleph_{0} \, < \, \mu(R) \, \leq \, \scf(\GL(R)) .
\end{displaymath} In particular, $\GL(R)$ has the Bergman property. \end{cor}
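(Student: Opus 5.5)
The corollary follows directly from Theorem~\ref{theorem:uniform.bergman.unit.group}, combined with the comparison between uniform and ordinary strong cofinality in Lemma~\ref{lemma:uniform.strong.cofinality}\ref{lemma:uniform.strong.cofinality.3}. The first inequality $\aleph_{0} < \mu(R)$ is already part of Theorem~\ref{theorem:uniform.bergman.unit.group}. It comes from Proposition~\ref{proposition:goodearl.boyle} together with the definition of $\mu(R)$: for non-zero $R$, $\mu(R)$ is an infinite cardinal different from $\aleph_{0}$.

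For the second inequality, I would apply Lemma~\ref{lemma:uniform.strong.cofinality}\ref{lemma:uniform.strong.cofinality.3} with $k \defeq 6144$. This gives $\scf_{6144}(\GL(R)) \leq \scf(\GL(R))$. Chaining this with $\mu(R) \leq \scf_{6144}(\GL(R))$ from Theorem~\ref{theorem:uniform.bergman.unit.group} yields $\mu(R) \leq \scf(\GL(R))$.

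To apply the lemma meaningfully, $\GL(R)$ should be infinite; this was noted in the proof of Theorem~\ref{theorem:uniform.bergman.unit.group} via Theorem~\ref{theorem:purely.infinite} and Lemma~\ref{lemma:essential.new}. Alternatively, infiniteness follows from Lemma~\ref{lemma:uniform.strong.cofinality}\ref{lemma:uniform.strong.cofinality.1}, since $\scf_{6144}(\GL(R)) > \aleph_{0} > 0$.

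For the final assertion, we now have $\scf(\GL(R)) \geq \mu(R) > \aleph_{0}$, so in particular $\scf(\GL(R)) \neq \aleph_{0}$. Remark~\ref{remark:strong.distortion.implies.uscf}\ref{remark:strong.distortion.implies.uscf.-1} then shows that $\GL(R)$ has the Bergman property. (This also follows from Corollary~\ref{corollary:scf.for.rings}.)

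There is no real obstacle. The only point to check is that the quoted inequalities are stated for infinite groups, so that the convention $\scf(G) = 0$ for finite $G$ does not interfere; the infiniteness argument above handles this.
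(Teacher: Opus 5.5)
Your proof is correct and is the same argument as the paper's: Theorem~\ref{theorem:uniform.bergman.unit.group} chained with Lemma~\ref{lemma:uniform.strong.cofinality}\ref{lemma:uniform.strong.cofinality.3} gives the inequalities, and Remark~\ref{remark:strong.distortion.implies.uscf}\ref{remark:strong.distortion.implies.uscf.-1} then gives the Bergman property. One small correction on the side remark: your alternative infiniteness argument rests on the convention $\scf_{k}(G) = 0$ for finite $G$, not on Lemma~\ref{lemma:uniform.strong.cofinality}\ref{lemma:uniform.strong.cofinality.1}, which goes the other way; and since item~\ref{lemma:uniform.strong.cofinality.3} also holds for finite groups, no infiniteness check is needed at all.
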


\begin{proof} The first assertion follows by Theorem~\ref{theorem:uniform.bergman.unit.group} and Lemma~\ref{lemma:uniform.strong.cofinality}\ref{lemma:uniform.strong.cofinality.3}. The final assertion follows by Remark~\ref{remark:strong.distortion.implies.uscf}\ref{remark:strong.distortion.implies.uscf.-1}. \end{proof}

We describe further representation-theoretic ramifications.

\begin{cor}\label{corollary:restricted.kazhdan.property} Let $R$ be a regular, right self-injective ring. Then $\GL(R)$ has property $(T_{\alpha})$ for every infinite cardinal $\alpha < \mu(R)$. In particular, if $R$ is non-zero and purely infinite, then $\GL(R)$ has property $(T_{\aleph_{0}})$, but not property $(T)$. \end{cor}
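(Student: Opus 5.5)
The first assertion is a direct combination of Theorem~\ref{theorem:uniform.bergman.unit.group} and Proposition~\ref{proposition:restricted.kazhdan.property}; the second part then reduces to the first by a case split on $R$.

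For the first assertion, fix an infinite cardinal $\alpha < \mu(R)$. Since $\alpha$ is infinite, $\mu(R)$ is then an uncountable cardinal. If $R$ were not purely infinite, Proposition~\ref{proposition:goodearl.boyle} would give $\mu(R) = \aleph_{0}$, which is impossible. If $R = \{0\}$, then $\mu(R) = 0$ by definition, again impossible. Hence $R$ is non-zero and purely infinite, and Theorem~\ref{theorem:uniform.bergman.unit.group} yields
\begin{displaymath}
	\alpha \, < \, \mu(R) \, \leq \, \scf_{6144}(\GL(R)) .
\end{displaymath}
Proposition~\ref{proposition:restricted.kazhdan.property}, applied with $k \defeq 6144$, then shows that $\GL(R)$ has property $(T_{\alpha})$ and fails property $(T)$.

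For the second assertion, suppose $R$ is non-zero and purely infinite. Proposition~\ref{proposition:goodearl.boyle} (or directly the first inequality of Theorem~\ref{theorem:uniform.bergman.unit.group}) gives $\aleph_{0} < \mu(R)$. Taking $\alpha \defeq \aleph_{0}$ in the first part shows that $\GL(R)$ has property $(T_{\aleph_{0}})$. The same application of Proposition~\ref{proposition:restricted.kazhdan.property} shows that $\GL(R)$ does not have property $(T)$. The latter is consistent with the fact that $\GL(R)$ is infinite, hence has the Bergman property but is not finitely generated.

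I do not expect a genuine obstacle. The only step requiring care is the reduction at the start: one must check that the hypothesis $\alpha < \mu(R)$ with $\alpha$ infinite forces $R$ to be non-zero and purely infinite, so that Theorem~\ref{theorem:uniform.bergman.unit.group} applies. Both the definition of $\mu$ and Proposition~\ref{proposition:goodearl.boyle} settle this.
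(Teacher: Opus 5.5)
Your proof is correct and follows the paper's argument, which likewise combines Proposition~\ref{proposition:goodearl.boyle}, Theorem~\ref{theorem:uniform.bergman.unit.group} and Proposition~\ref{proposition:restricted.kazhdan.property}; you additionally spell out the vacuous cases ($R = \{0\}$, or $R$ not purely infinite), which the paper leaves implicit. One small point about your closing aside, which the proof does not need: infinitude alone does not give the Bergman property; in the paper that comes from Corollary~\ref{corollary:usc.for.rings}.
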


\begin{proof} This follows from Proposition~\ref{proposition:goodearl.boyle}, Theorem~\ref{theorem:uniform.bergman.unit.group}, and Proposition~\ref{proposition:restricted.kazhdan.property}. \end{proof}

\begin{cor}\label{corollary:homomorphism.rigidity} Let $R$ be a regular, right self-injective ring and let $\alpha$ be an infinite cardinal with $\alpha < \mu(R)$. For every non-trivial homomorphism $\pi \colon \GL(R) \to H$ to a Hausdorff topological group $H$ \begin{itemize}
	\item[---\,] having small invariant neighborhoods, or
	\item[---\,] being uniformly free from small subgroups\footnote{e.g., a Banach--Lie group (cf.~Remark~\ref{remark:banach.lie})},
\end{itemize} the image $\pi(\GL(R))$ contains a discrete subgroup isomorphic to $ \Sym(\alpha)$. \end{cor}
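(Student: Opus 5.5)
The plan is to combine Proposition~\ref{proposition:uniformly.bounded.normal.generation.over.large.subgroup} with Lemma~\ref{lemma:rigidity.for.SIN.reps} in the first case, and with Proposition~\ref{proposition:rigidity.for.BL.reps} in the second. First, since $\alpha < \mu(R)$ is infinite, $\mu(R) > \aleph_{0}$. Hence $R$ is non-zero and purely infinite by Proposition~\ref{proposition:goodearl.boyle}.

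By Remark~\ref{remark:goodearl.boyle}, fix a set $X$ with $\vert X \vert = \alpha$ and an embedding $\eta \colon R_{R}^{\oplus X} \to R_{R}$ with essential image. Lemma~\ref{lemma:essential.embedding} then yields a homomorphism $\iota \colon \GL(R)(X) \to \GL(R)$. Let $T \defeq \iota(\{1\} \times \Sym(X))$. Then $T$ is a subgroup, so $TT^{-1} = T$. By Lemma~\ref{lemma:essential.embedding}\ref{lemma:essential.embedding.4}, every non-trivial element of $T$ lies in $\GLM(R)$. In particular $\iota$ is injective on $\{1\}\times\Sym(X)$, so $T \cong \Sym(\alpha)$. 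Theorem~\ref{theorem:generation.by.moietious.units} now gives
\begin{displaymath}
	\GL(R) \, = \, \Cl_{\GL(R)}\!\left(\left\{ g,g^{-1}\right\}\right)^{128} \qquad (g \in T\setminus\{1\}),
\end{displaymath}
so $\GL(R)$ has uniformly bounded normal generation over $TT^{-1}$. This is exactly the content of Proposition~\ref{proposition:uniformly.bounded.normal.generation.over.large.subgroup}.

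Next, let $\pi \colon \GL(R) \to H$ be non-trivial. If $H$ has small invariant neighborhoods, Lemma~\ref{lemma:rigidity.for.SIN.reps} shows that $\pi\vert_{T}$ is injective and $\pi(T)$ is uniformly discrete, hence discrete, in $H$. If instead $H$ is uniformly free from small subgroups, we first need $\scf(\GL(R)) \ne \aleph_{0}$, which Corollary~\ref{corollary:scf.for.rings} provides. Proposition~\ref{proposition:rigidity.for.BL.reps} then gives the same conclusion. In either case $\pi(T)$ is a subgroup of $\pi(\GL(R))$, isomorphic to $T \cong \Sym(\alpha)$ through the injective homomorphism $\pi\vert_{T}$, and discrete in $H$, hence in $\pi(\GL(R))$.

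There is no real obstacle here; the only points needing care are these:
\begin{itemize}
	\item confirming that $T$ is a subgroup, so that $TT^{-1} = T$ and bounded normal generation over $T$ is exactly what the rigidity results require;
	\item confirming that ``uniformly discrete'' in Lemma~\ref{lemma:rigidity.for.SIN.reps} indeed implies discreteness;
	\item checking that the non-triviality hypothesis on $\pi$ matches what Lemma~\ref{lemma:rigidity.for.SIN.reps} and Proposition~\ref{proposition:rigidity.for.BL.reps} need.
\end{itemize}
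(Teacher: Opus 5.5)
Your proposal is correct and follows essentially the same route as the paper, which cites Proposition~\ref{proposition:uniformly.bounded.normal.generation.over.large.subgroup} (whose proof you unpack via Remark~\ref{remark:goodearl.boyle}, Lemma~\ref{lemma:essential.embedding}\ref{lemma:essential.embedding.4} and Theorem~\ref{theorem:generation.by.moietious.units}), Lemma~\ref{lemma:rigidity.for.SIN.reps}, Proposition~\ref{proposition:goodearl.boyle}, Corollary~\ref{corollary:usc.for.rings} (or \ref{corollary:scf.for.rings}) and Proposition~\ref{proposition:rigidity.for.BL.reps}. The one step you left implicit, injectivity of $\iota$ on $\{1\}\times\Sym(X)$, holds because $1 \notin \GLM(R)$ when $R \neq \{0\}$: the condition $e \perp e$ forces $e = 0$.
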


\begin{proof} This is a consequence of Proposition~\ref{proposition:uniformly.bounded.normal.generation.over.large.subgroup}, Lemma~\ref{lemma:rigidity.for.SIN.reps}, Proposition~\ref{proposition:goodearl.boyle}, Corollary~\ref{corollary:usc.for.rings} (or Corollary~\ref{corollary:scf.for.rings}), and Proposition~\ref{proposition:rigidity.for.BL.reps}. \end{proof}

We conclude by further examining Theorem~\ref{theorem:uniform.bergman.unit.group} with respect to Example~\ref{example:rings}\ref{example:rings.1}: if $V$ is an infinite-dimensional right vector space over a division ring, then $\mu(\End(V))$ equals the successor cardinal of $\dim(V)$, as noted in~\cite[XII, p.~92]{GoodearlBoyle} (see also~\cite[Chapter~12, p.~143]{GoodearlBook}), and hence Theorem~\ref{theorem:uniform.bergman.unit.group} yields that $\dim(V) < \scf_{6144}(\GL(V))$. Combining our Theorem~\ref{theorem:abstract.2} with the work of Tolstykh~\cite{tolstykh}, we obtain the following slight quantitative improvement for this particular family of examples.

\begin{thm}[cf.~{\cite[Theorem~2.1]{tolstykh}}]\label{theorem:tolstykh} Let $V$ be an infinite-dimensional right vector space over a division ring. Then \begin{displaymath}
	\dim(V) \, < \, \scf_{1764}(\GL(V)) .
\end{displaymath} In particular, $\GL(V)$ has property $(T_{\dim(V)})$. \end{thm}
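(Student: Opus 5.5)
We apply Theorem~\ref{theorem:abstract.2} to $G \defeq \GL(V) = \Sym$-analogue, taking the subgroup family and the constants from Tolstykh's work rather than from the general ring-theoretic machinery. Since $1764 = 3 \cdot 588 = 3\cdot m\ell$, we aim for $m\ell = 588$, for instance $m = 12$ and $\ell = 49$ (or $m=14$, $\ell=42$, $m=21$, $\ell=28$, whichever Tolstykh's constants deliver). Let $\kappa \defeq \dim(V)$. For a decomposition $V = W \oplus W'$ with $\dim W = \dim W' = \kappa$ (a \emph{linear moiety}), put $H(W,W') \defeq \{ g \in \GL(V) \mid g(W) = W, \ g\vert_{W'} = \mathrm{id} \} \cong \GL(W)$. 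Let $\mathcal{H}$ be the set of all such subgroups, which is exactly $\{\Gamma_{R}(e) \mid e \in \Emo(R)\}$ for $R = \End(V)$. The goal is $\scf_{1764}(\GL(V)) > \kappa$.

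For condition~\ref{theorem:abstract.2.a} with $\alpha = \kappa$, we argue exactly as in the proofs of Theorem~\ref{theorem:bergman} and Theorem~\ref{theorem:uniform.bergman.unit.group}. Choose a basis, split it into $\kappa$ disjoint pieces of size $\kappa$, and let $V = \bigoplus_{x \in X} W_{x}$ with $\vert X\vert = \kappa$. The product $\prod_{x} \GL(W_{x})$ embeds blockwise into $\GL(V)$, sending the factor subgroups onto $H(W_{x}, \bigoplus_{y\ne x} W_{y}) \in \mathcal{H}$. Fullness then follows from Lemma~\ref{lemma:direct.products.are.full}, Lemma~\ref{lemma:full.homomorphic.images}, and Remark~\ref{remark:full.persistence}.

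For condition~\ref{theorem:abstract.2.b}, each $H \in \mathcal{H}$ is isomorphic to $\GL(W)$ with $\dim W = \kappa$ infinite. Here we invoke Tolstykh~\cite{tolstykh}, who exhibits a single element (e.g.\ an involution interchanging the two halves of a moiety) whose conjugacy class covers $\GL(W)$ in a small number of steps. This gives $K = H \subseteq \Cl_{H}(F)^{m}$ with $F$ a singleton and $m$ the constant of Tolstykh's width bound, which should be much smaller than the $64$ of Corollary~\ref{corollary:generation.by.moietious.involutions}.

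For condition~\ref{theorem:abstract.2.c}, Tolstykh's proof of his Theorem~2.1 provides, for a moiety subgroup $H = H(W,W')$, finitely many elements $g_{1},\dots,g_{k}$ (involutions swapping $W$ with pieces of $W'$) such that $\GL(V)$ is a bounded product of $H$ and conjugates $g_{i}Hg_{i}$. This is the linear analogue of the Galvin decomposition used in Theorem~\ref{theorem:bergman}, and it gives $\GL(V) = (H \cup F)^{\ell}$.

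The main obstacle is the bookkeeping in these two steps. We must extract from~\cite{tolstykh} explicit constants $m$ and $\ell$ with $3m\ell \le 1764$; Lemma~\ref{lemma:uniform.strong.cofinality}\ref{lemma:uniform.strong.cofinality.2} absorbs any slack, since $\scf_{k}$ is monotone in $k$. If necessary, we may trade between (b) and (c), for example by letting $K$ be a smaller moiety subgroup inside $H$ to shorten $m$. Once both bounds are in hand, Theorem~\ref{theorem:abstract.2} yields $\scf_{1764}(\GL(V)) > \kappa$. Proposition~\ref{proposition:restricted.kazhdan.property} with $\alpha = \kappa$ then gives property $(T_{\dim(V)})$.
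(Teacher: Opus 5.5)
You have the same architecture as the paper. You apply Theorem~\ref{theorem:abstract.2} to the subgroups $\Gamma_{\End(V)}(e)$ for linear moieties $e$, obtain condition~\ref{theorem:abstract.2.a} from the block-diagonal embedding of $\prod_{x}\GL(W_{x})$, and deduce property $(T_{\dim(V)})$ from Proposition~\ref{proposition:restricted.kazhdan.property}. However, the proposal does not prove the statement, because it never determines $m$ and $\ell$. The exponent $1764$ is the only new content of this theorem. Indeed, $\dim(V) < \scf_{6144}(\GL(V))$ already follows from Theorem~\ref{theorem:uniform.bergman.unit.group}, since $\mu(\End(V))$ is the successor of $\dim(V)$. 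Your candidate pairs are simply factorizations of $588$ read off from the target, and the claim that Tolstykh's width ``should be much smaller than $64$'' is a guess. Your version of~\ref{theorem:abstract.2.c}, with several involutions $g_{1},\dots,g_{k}$ attributed to the proof of Tolstykh's Theorem~2.1, gives no bound on $\ell$ at all. The count genuinely matters. Even granting width $28$ in~\ref{theorem:abstract.2.b}, the paper's own Lemma~\ref{lemma:bounded.generation} only yields $\ell = 32$, hence $3\cdot 28\cdot 32 = 2688$, which misses the target.

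Two specific inputs from~\cite{tolstykh} are missing.

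For~\ref{theorem:abstract.2.b}, Tolstykh's Theorem~1.1 gives, for every infinite-dimensional $W$, an involution $g$ with $\GL(W) = \Cl_{\GL(W)}(g)^{28}$. This gives $m = 28$, with $K = H$ and $F = \{g\}$.

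For~\ref{theorem:abstract.2.c}, proceed as follows:
\begin{enumerate}
\item Write $e = e_{0}+e_{1}$ with $e_{0},e_{1}$ orthogonal linear moieties, and put $e_{2} \defeq 1-e$.
\item Since $e_{1}(V) \cong e_{2}(V)$, there is a single involution $g$ with $ge_{0} = e_{0}g = e_{0}$ and $ge_{1} = e_{2}g$.
\item Then $geg = e_{0}+e_{2}$, so $\Gamma_{\End(V)}(e_{0}+e_{2}) = g\Gamma_{\End(V)}(e)g \subseteq (\Gamma_{\End(V)}(e)\cup\{g\})^{3}$.
\item Tolstykh's Lemma~1.2 states $\GL(V) = (\Gamma_{\End(V)}(e_{0}+e_{1})\cup\Gamma_{\End(V)}(e_{0}+e_{2}))^{7}$. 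This is a decomposition over two moiety subgroups overlapping in the moiety $e_{0}$, not over a family of swapping involutions.
\item Combining these gives $\GL(V) = (\Gamma_{\End(V)}(e)\cup\{g\})^{21}$, that is, $\ell = 21$.
\end{enumerate}
Only with these two facts do you get $3m\ell = 3\cdot 28\cdot 21 = 1764$.
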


\begin{proof} The proof is analogous to that of Theorem~\ref{theorem:uniform.bergman.unit.group} and will be included only for the sake of completeness. Let $E \defeq \{ e \in \E(\End(V)) \mid e(V) \cong V \cong (1-e)(V) \}$. Consider the group $G \defeq \GL(V) = \GL(\End(V))$ along with the set \begin{displaymath}
	\mathcal{H} \, \defeq \, \{ \Gamma_{\End(V)}(e) \mid e \in E \} \, \subseteq \, \Sub(G) .
\end{displaymath} We verify the conditions specified in Theorem~\ref{theorem:abstract.2} for this choice of objects with respect to $m \defeq 28$ and $\ell \defeq 21$.	
	
\ref{theorem:abstract.2.a} Let $X$ be a basis of $V$. In particular, $\vert X \vert = \dim(V)$. As $X$ is infinite, we may invoke the axiom of choice to find a family $(Y_{x})_{x \in X}$ of pairwise disjoint subsets of~$X$ such that $X = \bigcupdot_{x \in X} Y_{x}$ and $\vert Y_{x} \vert = \vert X \vert$ for every $x \in X$. For each $x \in X$, consider the unique element $e_{x} \in \E(\End(V))$ such that \begin{displaymath}
	e_{x}(y) \, = \, \begin{cases}
		\, y & \text{if } y \in Y_{x}, \\
		\, 0 & \text{if } y \in X\setminus Y_{x}.
	\end{cases}
\end{displaymath} Evidently, $\{ e_{x} \mid x \in X \} \subseteq E$ by the Schröder--Bernstein theorem. Consider the group $H \defeq \prod_{x \in X} \GL(e_{x}(V))$ along with its subgroups \begin{displaymath}
	H_{x} \, \defeq \, \{ h \in H \mid \forall y \in X \setminus \{ x \} \colon \, h_{y} = 1 \} \qquad (x \in X)
\end{displaymath} and the embedding $\iota \colon H \to G$ defined by \begin{displaymath}
	\iota(h)\vert_{e_{x}(V)} \, \defeq \, h_{x} \qquad (h \in H, \, x \in X) .
\end{displaymath} Since $H$ is full for $(H_{x})_{x \in X}$ according to Lemma~\ref{lemma:direct.products.are.full} and $\iota$ is a homomorphism, Lemma~\ref{lemma:full.homomorphic.images} asserts that $\iota (H)$ is full for $(\iota(H_{x}))_{x \in X} = (\Gamma_{\End(V)}(e_{x}))_{x \in X}$, whence $G$ is full for $(\Gamma_{\End(V)}(e_{x}))_{x \in X} \in \mathcal{H}^{X}$ thanks to Remark~\ref{remark:full.persistence}.	
	
\ref{theorem:abstract.2.b} Let $e \in E$. Since $e(V)$ is infinite-dimensional, \cite[Theorem~1.1]{tolstykh} asserts the existence of an involution $g \in \GL(e(V))$ such that $\GL(e(V)) = \Cl_{\GL(e(V))}(g)^{28}$. As $\GL(e(V)) \cong \Gamma_{\End(V)}(e)$, this entails the desired conclusion.
	
\ref{theorem:abstract.2.c} Let $e \in E$. Using the axiom of choice, we find $e_{0},e_{1} \in E$ such that $e_{0} \perp e_{1}$ and $e = e_{0} + e_{1}$. Moreover, let $e_{2} \defeq 1-e$. Since $1 = e_{0} + e_{1} + e_{2}$, $(e_{0},e_{1},e_{2})\perp$ and $e_{1}(V) \cong e_{2}(V)$, there exists an involution $g \in \GL(V)$ such that $ge_{0} = e_{0}g = e_{0}$ and $ge_{1} = e_{2}g$. Hence, $geg = ge_{0}g + ge_{1}g = e_{0}+e_{2}$ and therefore \begin{displaymath}
	\Gamma_{\End(V)}(e_{0}+e_{2}) \, = \, \Gamma_{\End(V)}(geg) \, = \, g\Gamma_{\End(V)}(e)g \, \subseteq \, (\Gamma_{\End(V)}(e) \cup \{ g \})^{3} .
\end{displaymath} As $\GL(V) = (\Gamma_{\End(V)}(e_{0}+e_{1}) \cup \Gamma_{\End(V)}(e_{0}+e_{2}))^{7}$ according to~\cite[Lemma~1.2]{tolstykh}, we conclude that $\GL(V) = (\Gamma_{\End(V)}(e) \cup \{ g \})^{21}$.
	
Thanks to Theorem~\ref{theorem:abstract.2}, this yields that $\dim(V) < \scf_{1764}(\GL(V))$. In particular, $\GL(V)$ has property $(T_{\dim(V)})$ by Proposition~\ref{proposition:restricted.kazhdan.property}. \end{proof}

\section*{Disclosure of AI use}

No AI tools were used to develop the ideas, arguments, results and proofs in this work, nor to write the original manuscript. OpenAI's ChatGPT (GPT‑6 Astra Pro) was used only during the subsequent proofreading and mathematical review of the manuscript, to identify possible linguistic, typographical, notational and mathematical errors and to suggest corrections, and to assist the author in writing this disclosure statement. All revisions were made by the author, who takes full responsibility for the accuracy and integrity of the final manuscript.

\end{document}